\def\wid1{15.7cm}
\newcommand{\wb}{\circ}
\newcommand{\bb}{\bullet}
\newcommand\Tableau[2][\relax]{
   \ifx\relax#1\relax%
       \else 
         \foreach\box in {#1} { \filldraw[blue!30]\box+(-.5,-.5)rectangle++(.5,.5);
   }
       \fi
    \newcount\row\newcount\col
    \row=0
    \foreach \Row in {#2} {
       \col=1
       \foreach\k in \Row {
          \draw(\the\col,\the\row)+(-.5,-.5)rectangle++(.5,.5);
          \draw(\the\col,\the\row)node{\k};
          \global\advance\col by 1
       }
       \global\advance\row by -1
    }
}
\tikzset{dots/.style={thick,dotted}}
\numberwithin{table}{section}
\newlength\savedwidth
\newtheorem{thm}{Theorem}[section]
\newtheorem{lem}[thm]{Lemma}
\newtheorem{cor}[thm]{Corollary}
\newtheorem{prop}[thm]{Proposition}
\newtheorem{defi}[thm]{Definition}
\theoremstyle{definition}
\newtheorem{remark}[thm]{Remark}
\newtheorem{exam}[thm]{Example}
\numberwithin{equation}{section}
\newcommand{\ls}[2]{{\vphantom{#2}}^{#1\!}{#2}}
\newcommand{\im}{\operatorname{im}}
\newcommand{\Aut}{\operatorname{Aut}}
\newcommand{\mZ}{\mathbb{Z}}
\newcommand{\mN}{\mathbb{N}}
\newcommand{\Hom}{\operatorname{Hom}}
\renewcommand{\rm}{\mathrm}
\newcommand{\End}{\operatorname{End}}
\newcommand{\thra}{\twoheadrightarrow}
\newcommand{\sm}{\setminus}
\newcommand{\lda}{\lambda}
\newcommand{\fr}{\mathfrak}
\newcommand{\scr}{\mathscr}
\newcommand{\ol}{\overline}
\newcommand{\vph}{\vphantom}
\renewcommand{\a}{\alpha}
\renewcommand{\b}{\beta}
\newcommand{\e}{\epsilon}
\newcommand{\g}{\gamma}
\renewcommand{\d}{\delta}
\renewcommand{\k}{\kappa}
\newcommand{\s}{\sigma}
\newcommand{\om}{\omega}
\newcommand{\Om}{\Omega}
\renewcommand{\fr}{\mathfrak}
\newcommand{\Bl}{\operatorname{Bl}}
\newcommand{\lan}{\langle}
\newcommand{\ran}{\rangle}
\newcommand{\Char}{\operatorname{char}}
\newcommand{\hght}{\operatorname{ht}}
\newcommand{\Lda}{\Lambda}
\newcommand{\Par}{\operatorname{\mathsf{Par}}}
\newcommand{\Std}{\operatorname{\mathsf{Std}}}
\newcommand{\cl}{\mathcal}
\newcommand{\wh}{\widehat}
\newcommand{\mtt}{\mathtt}
\newcommand{\qdim}{\operatorname{qdim}}
\newcommand{\res}{\operatorname{res}}
\newcommand{\vn}{\varnothing}
\newcommand{\ot}{\otimes}
\newcommand{\cont}{\operatorname{cont}}
\newcommand{\Shape}{\operatorname{\mathsf{Shape}}}
\newcommand{\Ab}{\operatorname{\mathsf{Ab}}}
\newcommand{\Asf}{\operatorname{\mathsf{A}}}
\newcommand{\cmod}{\text{-}\mathrm{mod}}
\newcommand{\noarrow}{\:\notslash\:}
\newcommand{\BK}{\mathsf{BK}}
\newcommand{\END}{\operatorname{END}}
\newcommand{\veps}{\varepsilon}
\newcommand{\rot}{\mathrm{rot}}
\newcommand{\wt}{\widetilde}
\newcommand{\bi}{\text{\boldmath$i$}}
\newcommand{\bj}{\text{\boldmath$j$}}
\newcommand{\bk}{\text{\boldmath$k$}}
\newcommand{\isoto}[1][]{\mathop{\xrightarrow[#1]{\rule{0pt}{.9ex}{\raisebox{-.6ex}[0ex][-.7ex]{$\mspace{4mu}\sim\mspace{3mu}$}}}}}
\newcommand{\vphi}{\varphi}
\title[RoCK blocks]{RoCK blocks, wreath products and KLR algebras}
\author{Anton Evseev}\thanks{The author was supported by the EPSRC grant EP/L027283/1.}
\subjclass[2010]{Primary~20C08; Secondary~20C30}
\address{School of Mathematics, University of Birmingham, Edgbaston, Birmingham B15 2TT, UK}
\email{a.evseev@bham.ac.uk}
\begin{document}

\begin{abstract}
We consider RoCK (or Rouquier) blocks of symmetric groups and Hecke algebras at roots of unity. We prove a conjecture of Turner asserting that a certain idempotent truncation of a RoCK block of weight $d$ of a symmetric group $\mathfrak S_n$ defined over a field $F$ of characteristic $e$ is Morita equivalent to the principal block of the wreath product $\fr S_e \wr \fr S_d$. This generalises a theorem of Chuang and Kessar that applies to RoCK blocks with abelian defect groups. Our proof relies crucially on an isomorphism between $F\mathfrak S_n$ and a cyclotomic Khovanov--Lauda--Rouquier algebra, and the Morita equivalence we produce is that of graded algebras. We also prove the analogous result for an  Iwahori--Hecke algebra at a root of unity defined over an arbitrary field. 
\end{abstract}

\maketitle

\section{Introduction}\label{sec:intro}

\subsection{The main result}\label{subsec:main}
Let $\xi$ be a fixed element of an arbitrary field $F$. We assume that 
there exists an integer $e\ge 2$ such that $1+\xi+\cdots +\xi^{e-1} = 0$ and let $e$ be the smallest such integer (the \emph{quantum characteristic} of $\xi$). We fix $e$, $F$ and $\xi$ throughout the paper. 

For an integral domain $\cl O$, an invertible element $\xi\in \cl O$ and an integer $n\ge 0$,
the \emph{Iwahori--Hecke algebra}
$\cl H_n(\cl O,\xi)$ is the $\cl O$-algebra defined by the generators 
$T_1,\ldots, T_{n-1}$ subject to the relations 
\begin{align}
\label{H1} (T_r-\xi) (T_r+1) &= 0    \quad && \text{for } 1\le r<n, \\
\label{H2} T_r T_{r+1} T_r &= T_{r+1} T_r T_{r+1} \quad && \text{for } 1\le r <n-1, \\
\label{H3} T_r T_s &= T_s T_r  \quad && \text{for } 1\le r,s< n \text{ such that } |r-s|>1. 
\end{align}
Throughout, we write $\cl H_n = \cl H_n (F,\xi)$. 
The algebra $\cl H_n$ is cellular, and hence $F$ is necessarily a splitting field for this algebra (see e.g.~\cite[Theorem 3.20]{Mathas1999}). 

 It is well known that the blocks of $\cl H_n$ are parameterised by the set 
\begin{equation}\label{Blen}
 \Bl_e (n) = \{ (\rho, d) \in \Par \times \mN \mid \rho 
\text{ is an } e\text{-core and } |\rho| + ed=n \}, 
\end{equation}
where $\Par$ is the set of all partitions.
We write $b_{\rho,d}$ for the block idempotent of $\cl H_n$ 
corresponding to $(\rho,d)\in \Bl_e (n)$, and $\cl H_{\rho,d} =b_{\rho,d} \cl H_n $ denotes the corresponding block (see Section~\ref{sec:rock} for details). 
Representation theory of \emph{RoCK} (or \emph{Rouquier}) blocks of $\cl H_n $ (see Definition~\ref{def:rock}) is much more tractable than that of blocks $\cl H_{\rho,d} $ in general. By a fundamental result of 
Chuang and Rouquier~\cite[Section 7]{ChuangRouquier2008}, 
for any $d\ge 0$ and any two $e$-cores $\rho^{(1)}$ and $\rho^{(2)}$, the algebras $\cl H_{\rho^{(1)}, d}$ and 
$\cl H_{\rho^{(2)}, d}$ are derived equivalent. 
Consequently, in order to understand the structure of an arbitrary block 
$\cl H_{\rho,d}$ up to derived equivalence, it suffices to give a description of the structure of each RoCK block up to derived equivalence. 
If $\xi =1$, then $e=\Char F$ is necessarily prime and $\cl H_n  \cong F\fr S_n$, where $\fr S_n$ denotes the symmetric group on $n$ letters. 
Chuang and Kessar~\cite{ChuangKessar2002} proved that, when $\xi=1$ and $d<\Char F=e$, 
a RoCK block $\cl H_{\rho,d} $ is Morita equivalent to 
the wreath product $\cl H_{\vn,1}  \wr \fr S_d$. Note that here $\cl H_{\vn,1} $ is the principal block of $F\fr S_e$ and that the result of~\cite{ChuangKessar2002} applies precisely to RoCK blocks of symmetric groups with abelian defect. In fact, the aforementioned theorems of Chuang--Rouquier and Chuang--Kessar are stronger, as they hold with $F$ replaced by an appropriate discrete valuation ring. 

When $d\ge \Char F$, the Morita equivalence of Chuang--Kessar 
no longer holds, as a RoCK block 
$\cl H_{\rho,d} $ has more isomorphism classes of simple modules than
$\cl H_{\vn,1}  \wr \fr S_d$. Nevertheless, Turner~\cite{Turner2009} conjectured in general (for $\xi=1$) that $\cl H_{\vn,1}  \wr \fr S_d$ is Morita equivalent to a certain idempotent truncation of a RoCK block. More precisely, for any integers $0\le m\le n$, view $\cl H_m$ as a subalgebra of $\cl H_n$ via the embedding $T_j \mapsto T_j$ for $1\le j<m$. For any $e$-core $\rho$ and $d\ge 0$, define 
\begin{equation}\label{frhod}
 f_{\rho,d} = b_{\rho,0} b_{\rho,1} \cdots b_{\rho,d} \in \cl H_{|\rho|+de} .
\end{equation}
Clearly, the factors in this product commute pairwise, so $f_{\rho,d}$ is an idempotent. The main result of this paper is the following theorem, which settles affirmatively~\cite[Conjecture 82]{Turner2009} (stated in \emph{loc.~cit.} for the case $\xi=1$). 

\begin{thm}\label{thm:main1}
 Let $\cl H_{\rho,d} $ be a RoCK block and $f=f_{\rho,d}$. 
Then we have an algebra isomorphism $f \cl H_{\rho,d}  f \cong \cl H_{\rho,0}  \otimes_F (\cl H_{\vn,1}  \wr \fr S_d)$. Hence, the algebra $f \cl H_{\rho,d}  f$ is Morita equivalent to
$\cl H_{\vn,1}  \wr \fr S_d$. 
\end{thm}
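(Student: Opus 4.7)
The plan is to work inside the cyclotomic KLR algebra isomorphic to $\cl H_n$ with $n=|\rho|+de$, where the $\mZ$-grading and the complete family of idempotents $e(\bi)$ indexed by residue sequences $\bi\in(\mZ/e\mZ)^n$ are available. The first step is a combinatorial description of the image of $f_{\rho,d}$ under the KLR isomorphism. Since each $b_{\rho,k}$ decomposes as $\sum_{\bi}e(\bi)$ summed over residue sequences coming from standard tableaux of partitions with $e$-core $\rho$ and weight $k$, and since the RoCK condition on $\rho$ forces every $e$-hook added to an intermediate shape to be well-separated from the earlier content, $f\coleq f_{\rho,d}$ will be a sum of $e(\bi)$ over sequences of the form $\bi_\rho\bi_1\cdots\bi_d$, where $\bi_\rho$ is the residue content of a fixed standard $\rho$-tableau and each $\bi_k$ is the residue content of an $e$-rim hook added at a position dictated by the RoCK geometry, with the $d$ rim-hook blocks pairwise well-separated from one another and from the $\rho$-block of positions.

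The second step is to construct an isomorphism $\Phi\colon \cl H_{\rho,0}\otimes_F(\cl H_{\vn,1}\wr\fr S_d)\isoto f\cl H_{\rho,d}f$. The three kinds of generators on the left admit natural KLR-candidates on the right: elements of $\cl H_{\rho,0}$ act via KLR generators supported on the first $|\rho|$ positions; the $k$-th tensor factor of $\cl H_{\vn,1}^{\otimes d}$ acts via KLR generators supported on the positions of $\bi_k$; and a Coxeter generator $s_k\in\fr S_d$ is sent to an explicit product of $\psi$-generators that swaps the $k$-th and $(k+1)$-st rim-hook windows. Because the windows are spatially disjoint, the verification of the Hecke relations, the wreath-product relations and the compatibility between the three groups of generators reduces to local KLR computations on essentially independent pieces of the residue sequence, where the relevant $\psi$-braid and quadratic relations simplify drastically.

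Bijectivity of $\Phi$ will follow from a dimension count once injectivity is in hand: a basis of $f\cl H_{\rho,d}f$ indexed by pairs of suitable standard tableaux matches the dimension of the right-hand side via the bijection between $(\rho,d)$-bitableaux and triples (standard $\rho$-tableau, $d$-tuple of standard $e$-hook tableaux, permutation in $\fr S_d$). The Morita equivalence statement then follows because $\cl H_{\rho,0}$ is a matrix algebra over $F$---since the unique partition with $e$-core $\rho$ and weight $0$ is $\rho$ itself---and tensoring with a matrix algebra preserves Morita equivalence classes. The main obstacle will be the second step: the KLR lifts of the Coxeter generators $s_k$ must be chosen so that the braid relations hold on the nose, not merely up to correction by $y$-generators, and controlling the degrees and coefficients of the $\psi$-monomials that slide one rim hook past another is where the serious combinatorial work lies.
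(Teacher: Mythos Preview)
Your outline captures the broad architecture of the paper's proof: translate to the cyclotomic KLR algebra, split off the $\cl H_{\rho,0}$ factor as a matrix algebra, construct a homomorphism from $\cl H_{\vn,1}\wr\fr S_d$ to the centraliser, and finish with a dimension count. The combinatorial description of $f$ as a sum of idempotents $e(\bj\bi^{(1)}\cdots\bi^{(d)})$ with $\bj\in I^{\rho,0}$ and each $\bi^{(k)}\in I^{\vn,1}_{+\kappa}$ is correct and matches the paper.

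However, there is a genuine gap at exactly the point you flag as the main obstacle, and your proposed remedy will not work. The naive element $\sigma_r=\psi_{w_r}$ obtained by sliding one rim-hook window past the next does \emph{not} satisfy $\sigma_r^2=e_{\d^d}$ or the braid relations, and this failure is not something that can be repaired by a local calculation exploiting ``spatial disjointness'': the two windows being swapped are adjacent, not disjoint, and every strand of one crosses every strand of the other. Concretely, already for $e=2$ the correct element turns out to be $\tau_r=(\sigma_r+1)e_{\d^d}$, not $\sigma_r e_{\d^d}$; for general $e$ the correction term lies in the degree-zero part of $\iota(\ol R_\d^{\ot r-1}\ot(\ol R_\d)_{\{0,1\}}^{\ot 2}\ot\ol R_\d^{\ot d-r-1})$ and is not given by any obvious closed formula in the $\psi$'s.

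The paper's solution is conceptual rather than computational. One passes to a certain quotient $\ol R_{d\d}$ of the \emph{affine} (not cyclotomic) KLR algebra, introduces the Kang--Kashiwara--Kim intertwiners $\vphi_w$ (which differ from $\psi_w$ by polynomial corrections and satisfy exact braid relations), and defines $\tau_r$ implicitly by expanding $\vphi_{w_r}e_{\d^d}$ modulo a suitable ideal $\cl J$ as $\tau_r(z_r-z_{r+1})^e+\e_r(z_r-z_{r+1})^{e-1}$. The symmetric-group relations for the $\tau_r$ are then deduced from those for the $\vphi_{w_r}$ by a degree-filtration argument, not by direct manipulation of $\psi$-monomials. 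A further combinatorial ingredient you omit is the containment $f\cl H_{\rho,d}f\subset R^{\Lda_0}_{\cont(\rho),d\d}$, which uses the RoCK condition in an essential way (via the fact that the residue $\kappa$ cannot occur among the last $e$ entries of any $\bj\in I^{\rho,0}$) and is needed before one can even attempt to define the map.
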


The second statement follows from the first one because $\cl H_{\rho,0} $ is 
a split simple algebra. 

\begin{remark}
 The formula defining the idempotent appearing in~\cite[Conjecture 82]{Turner2009} is different to~\eqref{frhod}, but the resulting idempotent is 
equal to $f_{\rho,d}$: see Proposition~\ref{prop:falt}.
\end{remark}

While Theorem~\ref{thm:main1} is stated purely in the language of representation theory of symmetric groups and Hecke algebras at roots of unity, the proof given in this paper 
relies crucially on the fact that $\cl H_{\rho,d} $ is isomorphic to a certain cyclotomic Khovanov--Lauda--Rouquier (KLR) algebra. A consequence of this fact is that each of the algebras $\cl H_{\rho,d} $, $f_{\rho,d} \cl H_{\rho,d}  f_{\rho,d}$ and $\cl H_{\vn,1}  \wr \fr S_d$  has
a natural $\mZ$-grading. Moreover, 
$\cl H_{\vn,1}  \wr \fr S_d$ is nonnegatively graded: this observation plays an important role in the proof. 
The isomorphism and the Morita equivalence in Theorem~\ref{thm:main1} are those of graded algebras (see Theorem~\ref{thm:main2} for a more precise statement).

In order to explain the meaning of Theorem~\ref{thm:main1} in more detail, we recall certain well-known general facts on idempotent truncation (see e.g.~\cite[Section 6.2]{Green1980}).
 Let $A$ be an algebra over a field $k$ and $\veps\in A$ be an idempotent. Let $A\cmod$ be the category of left modules over $A$. 
Then we have an exact functor $\cl F\colon A\cmod \to \veps A\veps\cmod$ defined as follows: for any $A$-module $V$, set $\cl F(V) = \veps V$, and for any morphism $\phi\colon V\to W$ of $A$-modules, set $\cl F(\phi) = \phi|_{\veps V}$. 
Further, the image $\cl F(D)$ of any simple $A$-module $D$ is either simple or $0$, and, if $\{ D_{\lda} \mid \lda \in \Lda\}$ is a complete and irredundant set of representatives of isomorphism classes of simple $A$-modules,
then $\{ \veps D_{\lda} \mid \lda \in \Lda,\, \veps D_{\lda} \ne 0\}$ is a complete and irredundant set of representatives of isomorphism classes of simple $\veps A\veps$-modules. Informally, $\veps A\veps\cmod$ captures the part of the structure of $A\cmod$ that corresponds to the simple modules $D\in A\cmod$ such that $\veps D\ne 0$. In particular, if $\veps D\ne 0$ for all simple $A$-modules $D$, then $A$ is Morita equivalent to $\veps A\veps$. 

When $A=\cl H_{\rho,d} $ is a RoCK block and $\veps = f_{\rho,d}$, it is the case that $\veps D\ne 0$ for all simple $A$-modules $D$ if and only if $d<\Char F$ or $\Char F=0$: see Proposition~\ref{prop:Mor}.
Thus, when $d<\Char F$ or $\Char F=0$, Theorem~\ref{thm:main1} 
yields a Morita equivalence between the RoCK block $\cl H_{\rho,d}$ and 
$\cl H_{\vn,1} \wr \fr S_d$. This equivalence was proved by Chuang and Miyachi~\cite[Theorem 18]{ChuangMiyachi2010} under the assumption that either $\Char F=0$ or  $\xi$ belongs to the prime subfield of $F$ 
by using the aforementioned result of Chuang--Kessar (for $\xi=1$) and similar 
results obtained independently by Miyachi~\cite{Miyachi2001} 
and Turner~\cite{Turner2002} for RoCK blocks of finite general linear groups (for $\xi \ne 1$). 
The proof given below is different from the arguments in the above papers. The isomorphism in Theorem~\ref{thm:main1} is constructed uniformly for all cases and is quite explicit, once the statement of the theorem is translated into the language of KLR algebras.

In the case when $\xi=1$ and $e=2$, Theorem~\ref{thm:main1} was proved by Turner (see~\cite[Theorem 84]{Turner2009}) using a Brauer morphism.
Independently of the present work, 
the same result was proved for $e=2$ and arbitrary $\xi$ by 
Konishi~\cite{Konishi2015}.

In the case when $\Char F=0$, the decomposition matrix 
of a RoCK block $\cl H_{\rho,d} $ was determined by Chuang and Tan~\cite[Theorem 1.1]{ChuangTan2002} and, independently, by 
Leclerc and Miyachi~\cite[Corollary 10]{LeclercMiyachi2002}. 
After a certain relabelling, this matrix may be seen to be identical to the decomposition matrix of 
$\cl H_{\vn,1}  \wr \fr S_d$. 
When $\xi=1$, the decomposition matrix of a RoCK block $\cl H_{\rho,d} $ was determined by Turner~\cite[Theorem 132]{Turner2009} and was shown to coincide with that
of $\cl H_{\vn,1}  \wr \fr S_d$ by Paget~\cite[Theorem 3.4]{Paget2005}, again, after a certain relabelling.
These results are closely related to Theorem~\ref{thm:main1} but are not directly implied by it, as we do not describe explicitly the $\cl H_{\vn,1} \wr \fr S_d$-modules which are the images of Specht and simple modules of $\cl H_{\rho,d} $ under the composition of the Morita equivalence of Theorem~\ref{thm:main1} and the functor $\cl F$.

 In addition to conjecturing the statement of Theorem~\ref{thm:main1}, Turner~\cite{Turner2009} has constructed two remarkable algebras that he conjectured to be Morita equivalent respectively to the whole RoCK block $\cl H_{\rho,d} $ and to a RoCK block of a $\xi$-Schur algebra (see~\cite[Conjectures 165 and 178]{Turner2009} respectively). 
 After the present paper was submitted, 
 the first of these conjectures was proved 
 in~\cite{EvseevKleshchev2016} using results contained here.



\subsection{Outline of the paper}
Section~\ref{sec:rock} contains the definition of a RoCK block. In Section~\ref{sec:klr}, we recall the definition of KLR algebras, state some of their standard properties and 
state a graded version of Theorem~\ref{thm:main1} as Theorem~\ref{thm:main2}. The proof of Theorem~\ref{thm:main2} occupies Sections~\ref{sec:comb}--\ref{sec:surj}. A detailed outline of the proof is given in~\S\ref{subsec:outline}, after the required notation is introduced. 

In Section~\ref{sec:concl}, we prove two simple results that have already been referred to above and clarify Theorem~\ref{thm:main1}. In Section~\ref{sec:alt}, we give two alternative descriptions of the images in $f_{\rho,d} \cl H_{\rho,d} f_{\rho,d}$ of elementary transpositions of $\fr S_d$ under 
the isomorphism of Theorem~\ref{thm:main1}, specifically,
\begin{enumerate}[(i)]
\item an explicit formula for those images in terms of generators of the relevant KLR algebra (given without proof), see~Equation~\eqref{tauexpl}; 
\item a formula in terms of the generators $T_r$ of $\cl H_{|\rho|+de}$ and the grading on 
$f_{\rho,d} \cl H_{\rho,d} f_{\rho,d}$, see Proposition~\ref{prop:Xires}; in the case when $\xi=1$, we give a simple description of the whole isomorphism in these terms, not just of its restriction to $\fr S_d$: see Theorem~\ref{thm:Hecke}.  
\end{enumerate}
These results provide different viewpoints on the isomorphism and may be useful for determining the images of simple and Specht modules of $\cl H_{\rho,d}$ under the Morita equivalence of Theorem~\ref{thm:main1} composed with the functor from $\cl H_{\rho,d}\cmod$
to $f_{\rho,d} \cl H_{\rho,d}f_{\rho,d}\cmod$ described above.

\subsection{General notation}
The symbol $\mN$ denotes the set of nonnegative integers. 
For integers $n>r>0$, we denote by $s_r$ the elementary transposition $(r,r+1) \in \fr S_n$.
If $\cl O$ is a commutative ring, then $\cl O^{\times}$ denotes the set of all invertible elements of $\cl O$. The centre of an algebra $A$ is denoted by $Z(A)$. 
Throughout, subalgebras of $F$-algebras are not assumed to contain the identity element and $F$-algebra homomorphisms are not assumed to preserve the identity unless they are described as ``unital''. 

By a \emph{graded} vector space (algebra, module) we mean a $\mZ$-graded one.
If $V$ is a graded vector space, then $V_{\{n\}}$ denotes its $n$-th homogeneous component, so that $V=\bigoplus_{n\in \mZ} V_{\{n\}}$.  If $v\in V$, we write $v_{\{n\}}$ for the 
$n$-th component of $v$, so that $v_{\{n\}}\in V_{\{n\}}$ and $v=\sum_{n\in \mZ} v_{\{n\}}$.
For a subset $S\subset \mZ$, we set 
$V_S = \bigoplus_{n\in S} V_{\{n\}}\le V$ and $v_{S} = \sum_{n\in S} v_{\{n\}}$. We abbreviate $V_{<0}$ for $V_{\mZ_{<0}}$, etc.

If the graded vector space $V$ is finite-dimensional, then its graded dimension is defined as
$\qdim V = \sum_{n\in \mZ} (\dim V_{\{n\}}) q^n \in \mZ[q,q^{-1}]$.  
If $A$ is a graded algebra and $m\in \mZ$, then the graded algebra
$A\lan m\ran$ is defined to be the same algebra as $A$ with the grading given by
$A\lan m\ran_{\{n\}} = A_{\{n-m\}}$ for all $n\in\mZ$. 

If $U,V$ are $F$-vector spaces, we write $U\ot V$ for $U\ot_F V$. 
If $X$ is a subset of an $F$-vector space, then $FX$ denotes the $F$-span of $X$.
If $X$ and $Y$ are vector subspaces of an algebra $A$, we write 
$XY=F\{ xy \mid x\in X, y\in Y\}$. 
For a symbol $x$, we often use the notation $x^m$ as an abbreviation for $x,\ldots,x$ ($m$ entries) or $(x,\ldots,x)$ (as appropriate). Also, we write $x^{\ot m} = x\ot \cdots \ot x$.

\subsection*{Acknowledgement}
I am grateful to Alexander Kleshchev for helpful comments that led to improvements in the paper and, in particular, for pointing out the relevance of ``R-matrices'' for KLR algebras constructed in~\cite{KangKashiwaraKim2013}: this resulted in a considerable simplification of the proof of Theorem~\ref{thm:main1}, which previously relied on lengthy explicit computations in KLR algebras.

\section{RoCK blocks}\label{sec:rock}

If $\lda = (\lda_1,\ldots,\lda_r)$ is a partition (so that $\lda_1\ge \cdots \ge \lda_r>0$ are integers), we write $\ell(\lda)=r$ and $|\lda|=\sum_{j=1}^r \lda_j$. 
Let $\cl O$ be an integral domain and $t\in \cl O^{\times}$. 
For any partition $\lda$ of an integer $n$, let $S^{\lda, \cl O,t}$ be the Specht $\cl H_n (\cl O,t)$-module defined as in~\cite[Section 3.2]{Mathas1999}.
We write $S^{\lda} = S^{\lda,F,\xi}$. 
Note that $S^{\lda}$ is the dual of the ``Specht module'' associated with $\lda$ constructed in~\cite{DipperJames1986}.

For the definition of an $e$-core and $e$-weight of a partition, we refer the reader e.g.~to~\cite[Chapter 2]{JamesKerber1981}. 
If $n\ge 0$ and $(\rho,d)\in \Bl_e (n)$ (cf.~\eqref{Blen}), 
then we define $b_{\rho,d}\in \cl H_{n}$
to be the unique block idempotent of $\cl H_n$ such that $b_{\rho,d} S^{\lda}=S^{\lda}$ for all partitions $\lda$ of $n$ with $e$-core $\rho$ and $e$-weight $d$ (see~\cite[Theorem 4.13]{DipperJames1987}). 
The corresponding block algebra is $\cl H_{\rho,d}:= b_{\rho,d} \cl H_n$.

We introduce notation related to abacus representations of partitions (see~\cite[Section 2.7]{JamesKerber1981}). 
Let $\lda$ be a partition and $l\ge 1$, $N\ge \ell(\lda)$ be integers.
We set $\lda_r=0$ for every integer $r>\ell(\lda)$.  
The abacus display $\Ab_N^{l} (\lda)$ is the subset of $\mN \times \{ 0,\ldots,l-1\}$ defined by the property that 
$(t,i)\in \Ab_N(\lda)$ if and only if $l t+i \in \{\lda_1+N-1,\lda_2+N-2,\ldots,\lda_{N}\}$, whenever $t\ge 0$ and $0\le i<l$. 
The set $\mN \times \{ 0,\ldots,l-1\}$ is visualised as a table with infinitely many rows and $l$ columns. The columns $0,\ldots,l-1$ are drawn from left to right, and the 
entries $(0,i),(1,i),(2,i),\ldots$ of each column are drawn from the top down. 
The columns $\mN \times \{ i\}$ are referred to as \emph{runners}, and the elements of 
$\Ab_N^{l} (\lda)$ are referred to as \emph{beads}. 
In particular, the number of beads of $\Ab_N^{l} (\lda)$ on runner $i$ is defined as
$|\Ab_N^{l} (\lda) \cap (\mN \times \{i\})|$. 
We will write $\Ab_N (\lda) = \Ab^e_N (\lda)$, and we view $\Ab^1_N (\lda)$ 
as a subset of $\mN$ in the obvious way.

\begin{defi}\label{def:rock} \cite[Definition 52]{Turner2009}
 Let $\rho$ be an $e$-core. We say that $\rho$ is a \emph{Rouquier core} for an integer 
$d\ge 1$ if there exists an integer $N\ge \ell (\rho)$ such that for all 
$i=0,\ldots,e-2$, the abacus display 
$\Ab_N (\rho)$ has at least $d-1$ more beads on runner $i+1$ than on runner $i$. 
In this case, the block 
$\cl H_{\rho,d} $ is said to be a \emph{RoCK block}.  
\end{defi}

If $\rho$ is a Rouquier core for some $d\ge 1$ and $N\ge \ell(\rho)$ is such that there is an abacus display as above, then
 $\kappa=-N+e\mZ \in \mZ/e\mZ$ will be called a \emph{residue} of $\rho$. It is easy to show that $\rho$ has only one residue; in particular, this fact is a consequence of Lemma~\ref{lem:CK}. 

\begin{exam}\label{ex:rock1}
Let $e=3$ and $\rho = (8,6,4,2,2,1,1)$. 
The abacus display of $\rho$ for $N=7$ is 
\[
\begin{matrix}
\wb & \bb & \bb \\
\wb & \bb & \bb \\
\wb & \wb & \bb  \\
\wb & \wb & \bb \\
\wb & \wb & \bb \\
\wb & \wb & \wb \\
\wb & \wb & \wb \\
 & \vdots &  
\end{matrix}
\]
where the elements of the set $\Ab_7 (\rho)$ are represented by $\bb$.
Thus,
$\cl H_{\rho,d}$ is a RoCK block of residue $2+3\mZ$ for each $d=1,2,3$.  
\end{exam}

\section{KLR presentation of $\cl H_{n}$}\label{sec:klr}

\subsection{The KLR algebra and the Brundan--Kleshchev isomorphism}\label{subsec:klr1}

KLR algebras (also called quiver Hecke algebras) were introduced by Khovanov and Lauda~\cite{KhovanovLauda2009} and independently by Rouquier~\cite{Rouquier2008}. We follow the presentation given in~\cite{BrundanKleshchev2009a}. 

Let $I=\mZ/e\mZ = \{0,1,\ldots,e-1\}$. If $\bi\in I^n=I\times \cdots \times I$ for some $n\ge 0$, 
then $\bi_r$  denotes the $r$-th entry of $\bi$ 
(for $1\le r\le n$). Usually, we will write $i_r$ instead of $\bi_r$ and will use a similar convention for other bold symbols. 
If $\bi\in I^n$ and $\bj\in I^m$, we denote by $\bi \bj$
the concatenation $(i_1,\ldots,i_n,j_1,\ldots,j_m)$ of $\bi$ and $\bj$.

Consider the quiver $\Gamma$ with vertex set $I$, a directed edge from $i$ to $i+1$ for each $i\in I$ and no other edges. Write $i\to j$ if there is an edge from $i$ to $j$ but not from $j$ to $i$, $i\leftrightarrows j$ if there are edges between $i$ and $j$ in both directions,
and $i \noarrow j$ if $j\ne i,i\pm 1$. 
Let $\mtt{C}=(\mtt{c}_{ij})_{i,j\in I}$ be the corresponding generalized Cartan matrix (of type $A_{e-1}^{(1)}$): 
\[
 \mtt c_{ij} = 
\begin{cases}
 2 & \text{if } i=j, \\
 0 & \text{if } i\noarrow j \\
 -1 & \text{if } i \to j \text{ or } i \leftarrow j, \\
 -2 & \text{if } i \leftrightarrows j. 
 \end{cases}
\]

For $i,j\in I$, define polynomials $L_{ij} \in F[y,y']$ by 
\[
L_{ij} (y,y') =
\begin{cases}
 0 & \text{if } i=j \\
 1 & \text{if } i \noarrow j, \\
 y' - y  & \text{if } i \to j, \\
 y - y' & \text{if } i\leftarrow j, \\
 (y'-y)(y-y') & \text{if } i \rightleftarrows j.
\end{cases}
\]

The symmetric group $\fr S_n$ acts on $I^n$ as follows:
$w  (i_1,\ldots, i_n) = (i_{w^{-1} (1)}, \ldots, i_{w^{-1} (n)})$ for $w\in \fr S_n$. 
The \emph{KLR algebra} $R_n$ is the $F$-algebra generated by the set
\begin{equation}\label{eq:klrgens}
 \{ e(\bi) \mid \bi \in I^n \} \cup \{ y_1,\ldots, y_n \} \cup 
\{ \psi_1,\ldots,\psi_{n-1} \}
\end{equation}
subject to the relations
\begin{align}
  \label{rel:eid} e(\bi) e(\bj) &= \d_{\bi, \bj} e(\bi), \displaybreak[1] \\
 \sum_{\bi \in I^n} e(\bi) &=1, \displaybreak[1] \\
  y_r e(\bi) &= e(\bi) y_r, \displaybreak[1] \\
 \label{rel:epsi} \psi_r e(\bi) &= e(s_r \bi) \psi_r, \displaybreak[1] \\
 \label{rel:ycomm} y_r y_s &= y_s y_r, \\
 \label{rel:psicomm} \psi_r \psi_s &= \psi_s \psi_r \quad \hspace{-0.7mm}
  \text{if } |r-s|>1, \\
\label{rel:ypsi0}  \psi_r y_s &= y_s \psi_r \quad \text{if } s\ne r,r+1, \displaybreak[1] \\
\label{rel:ypsi1}  \psi_r y_{r+1} e(\bi) &= 
\begin{cases}
 (y_r \psi_r +1) e(\bi) & \text{if } i_r = i_{r+1}, \\
  y_r \psi_r e(\bi) & \text{if } i_r \ne i_{r+1}; \\
\end{cases} \displaybreak[1] \\
\label{rel:ypsi2} y_{r+1} \psi_r e(\bi) &= 
 \begin{cases}
  (\psi_r y_r +1) e(\bi) & \text{if } i_r = i_{r+1}, \\
   \psi_r y_r e(\bi) & \text{if } i_r \ne i_{r+1};
 \end{cases} \displaybreak[1] \\
 \psi_r^2 e(\bi) &= L_{i_r,i_{r+1}} (y_r, y_{r+1}) e(\bi), \displaybreak[1] \label{rel:psisq}\\
\psi_r \psi_{r+1} \psi_r e(\bi) &=
\begin{cases}
 (\psi_{r+1} \psi_r \psi_{r+1} +1) e(\bi) & \text{if } i_{r+2} = i_r \to i_{r+1}, \\
 (\psi_{r+1} \psi_r \psi_{r+1} -1) e(\bi) & \text{if } i_{r+2} = i_r \leftarrow i_{r+1}, \\ 
 (\psi_{r+1} \psi_r \psi_{r+1} - 2y_{r+1} +y_r +y_{r+2}) e(\bi) & \text{if } i_{r+2} = i_r \rightleftarrows i_{r+1}, \\
 \psi_{r+1} \psi_r \psi_{r+1} e(\bi) & \text{otherwise} \label{rel:psibr}
\end{cases}
\end{align}
for all $\bi,\bj\in I^n$ and all admissible $r$ and $s$. 

Let $(\fr h, \Pi, \Pi^{\vee})$ be a realization of the Cartan matrix $\mtt C$ (see~\cite[\S 1.1]{Kac1990}), with simple roots $\{ \a_i \mid i\in I\}$, 
simple coroots
$\{ \a_i^{\vee} \mid i\in I\}$ and fundamental dominant weights 
$\{ \Lda_i \mid i\in I\}$ satisfying 
$\lan \Lda_i, \a_j^{\vee} \ran = \d_{ij}$ for $i,j\in I$. 
Let $P_+ = \bigoplus_{i\in I} \mN \Lda_i$ and $Q_+ = \bigoplus_{i\in I} \mN \a_i$.
If $\a=\sum_{i\in I} n_i \a_i\in Q_+$, write 
$\hght(\a) = \sum_{i\in I} n_i$.


Let $\Lda\in P_+$. 
The \emph{cyclotomic KLR algebra} $R^{\Lda}_{n}$ is defined as the quotient of $R_n$ by the $2$-sided ideal generated by the set
$\{ y_1^{\lan \Lda,\a_{i_1}^{\vee}\ran} e(\bi) \mid \bi\in I^n \}$. 
It follows from the above relations that the algebras $R_n$ and $R^{\Lda}_{n}$ are both graded by the following rules: $\deg(e(\bi))=0$, 
$\deg(\psi_r e(\bi))= - \mtt c_{i_r,i_{r+1}}$ and 
$\deg(y_t) = 2$ whenever $\bi \in I^n$, $1\le r<n$ and $1\le t\le n$ (see~\cite{BrundanKleshchev2009a}). 

\begin{remark}\label{rem:KLdiag}
Elements of $R_n$ may be represented as linear combinations of diagrams described by Khovanov and Lauda~\cite{KhovanovLauda2009}. While diagrams are not used explicitly in our proof, the reader may find it helpful to translate some of the assertions below into the language of diagrams. 
\end{remark}

For each $i\in I$, define $\hat i\in F$ by 
\begin{equation}\label{bari}
 \hat i = \begin{cases} i & \text{if } \xi=1, \\
             \xi^{i} & \text{if } \xi \ne 1.
          \end{cases}
\end{equation}
Here, and in the sequel, if $\xi=1$, then $i$ is identified with an element of $F$ via the embedding $I=\mZ/e\mZ\to F$; and if $\xi\ne 1$, then $\xi^i=\xi^{i'} \in F$, where $i'\in \mZ$ is any representative of the coset $i$. 
Let $H^{\Lda}_{n}=H^{\Lda}_{n} (\xi)$ be the \emph{cyclotomic Hecke algebra} with parameter $\xi$; see~\cite{BrundanKleshchev2009a}.
That is, $H^{\Lda}_n$ is the $F$-algebra
generated by the set
$\{ T_1,\ldots,T_{n-1}, X_1,\ldots,X_n \}$
if $\xi=1$ and by the set
$\{ T_1,\ldots,T_{n-1}, X_1^{\pm 1},\ldots, X_n^{\pm 1}\}$ if $\xi\ne 1$
 subject to the relations~\eqref{H1}--\eqref{H3}, the relation
 $\prod_{i\in I} (X_1-\hat i)^{\lan \Lda,\a_i^{\vee}\ran}=0$
  and the following relations:
 \begin{enumerate}[(a)]
 \item if $\xi=1$:
 $X_l X_t = X_t X_l$, 
 $X_{r+1} = T_r X_r T_r + T_r$ and, if $t\notin \{r,r+1\}$, 
 $X_t T_r = T_r X_t$; 
 \item 
 if $\xi\ne 1$:
 $X_t^{\pm 1} X_l^{\pm 1} = X_l^{\pm 1} X_t^{\pm 1}$, $X_t X_t^{-1} =1=X_t^{-1} X_t$, 
 $X_{r+1} = \xi^{-1} T_r X_r T_r$ and, if $t\notin \{r,r+1\}$, $T_r X_t = X_t T_r$,
 \end{enumerate}
for $1\le r<n$, $1\le t,l\le n$.
In particular, $H^{\Lda_0}_{n}$ is isomorphic to $\cl H_n$ via the map given by $T_r\mapsto T_r$ for $1\le r<n$ and $X_1\mapsto \hat 0$. 
In the sequel, we identify these two algebras. 

Brundan and Kleshchev~\cite[Main Theorem]{BrundanKleshchev2009a} and independently Rouquier~\cite[Corollary 3.17]{Rouquier2008} proved that the algebra $H_n^{\Lda}$ is isomorphic to $R_n^{\Lda}$. More precisely, we have the following result.

\begin{thm}[Brundan--Kleshchev]\label{thm:BKiso_detailed}
Let $y$ and $y'$ be indeterminates. 
There exist power series
$P_i, Q_i \in F[[y,y']]$, $i\in I$, such that $Q_i$ is invertible for each $i$ and:
\begin{enumerate}[(i)]
\item\label{BKid1}
For each $n\ge 0$ and $\Lda\in P_+$, there is an isomorphism
$\BK_n^{\Lda} \colon H^\Lda_n \isoto R^\Lda_n$ given by 
\begin{align}\label{BK1}
 \BK_n^{\Lda} (T_r) &= 
 \sum_{\bi\in I^n} (\psi_r Q_{i_r-i_{r+1}} (y_r,y_{r+1}) - P_{i_r-i_{r+1}} (y_r,y_{r+1}))e(\bi) \quad \text{and} \\
 \BK_n^\Lda (X_t) &= 
 \begin{cases}
  \sum_{\bi\in I^n} (y_r + i_r) e(\bi) &\text{if } \xi=1, \\
  \sum_{\bi\in I^n} \xi^{i_r} (1-y_r) e(\bi) & \text{if } \xi\ne 1
 \end{cases} \label{BK2}
\end{align}
for $1\le r<n$ and $1\le t\le n$. 
\item\label{BKid2} For every $\bi\in I^n$ and $t\in \{1,\ldots,n\}$, the element $(\BK_n^{\Lda} (X_t)-\hat i_t) e(\bi)$ is nilpotent.
\item\label{BKid3} If $\xi=1$, then $P_i,Q_i\in F[[y-y']]$ for all $i\in I$. 
\end{enumerate}
\end{thm}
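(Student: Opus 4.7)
The plan is to construct the isomorphism $\BK_n^{\Lambda}$ by defining preimages of the KLR generators $e(\bi), y_r, \psi_r$ inside $H_n^\Lambda$ and then checking the KLR relations.

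First I would establish the block-type decomposition. A classical argument using the commutativity of the Jucys--Murphy elements $X_1,\ldots,X_n$ together with the cyclotomic relation on $X_1$ shows that in a composition series of any $H_n^\Lambda$-module the element $X_t$ acts with eigenvalues of the form $\hat i$ for $i\in I$; equivalently, $\prod_{\bi\in I^n}\prod_{t=1}^n (X_t-\hat i_t)^{N}=0$ for $N$ large. From this, the standard Fitting-decomposition argument produces a complete set of pairwise orthogonal idempotents $e(\bi)\in H_n^\Lambda$ indexed by $\bi\in I^n$ (most of which are zero) such that $X_t e(\bi)$ has unique eigenvalue $\hat i_t$. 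Define
\[
y_t e(\bi) =
\begin{cases}
(X_t-\hat i_t)\, e(\bi) & \text{if } \xi=1,\\
\hat i_t^{-1}(\hat i_t-X_t)\, e(\bi) & \text{if } \xi\neq 1;
\end{cases}
\]
by construction $y_t e(\bi)$ is nilpotent, which establishes (ii) as well as (the inverse of) formula (BK2).

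Second, the central technical step is to define $\psi_r$. Informally, one wants to ``straighten'' $T_r$ so that its quadratic relation becomes $\psi_r^2 e(\bi)=L_{i_r,i_{r+1}}(y_r,y_{r+1})e(\bi)$. I would seek power series $P_i(y,y'),Q_i(y,y')\in F[[y,y']]$ with $Q_i$ a unit and set
\[
\psi_r e(\bi) = (T_r+P_{i_r-i_{r+1}}(y_r,y_{r+1}))\, Q_{i_r-i_{r+1}}(y_r,y_{r+1})^{-1} e(\bi).
\]
The defining relations on $P_i,Q_i$ come from imposing relations (\ref{rel:psisq}) and (\ref{rel:psibr}): expanding $(T_r-\xi)(T_r+1)e(\bi)=0$ in terms of $\psi_r$ and the nilpotent variables $y_r,y_{r+1}$ yields, after formal inversion in $F[[y_r,y_{r+1}]]$, explicit recursive formulas for $P_i,Q_i$ in each of the cases $i_r=i_{r+1}$, $i_r\noarrow i_{r+1}$, $i_r\rightarrow i_{r+1}$, $i_r\leftarrow i_{r+1}$, $i_r\leftrightarrows i_{r+1}$. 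When $\xi=1$ a short check shows the resulting series lie in $F[[y-y']]$, giving (iii).

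Third, I would verify the remaining KLR relations systematically. The relations (\ref{rel:eid})--(\ref{rel:ypsi2}) are formal and follow from the definitions, the commutativity of $X_1,\ldots,X_n$, and the standard identities $T_r X_t=X_t T_r$ for $t\neq r,r+1$ together with $X_{r+1}=\xi^{-1}T_r X_r T_r$ (resp.\ $T_r X_r T_r + T_r$). Relation (\ref{rel:psisq}) is exactly the design of $P_i,Q_i$. Finally, relation (\ref{rel:epsi}) reduces to a compatibility statement between $T_r$ and the generalized eigenspace decomposition, which I would prove from $X_{r+1}=\xi^{-1}T_r X_r T_r$ modulo nilpotents. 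The cyclotomic relation $y_1^{\lan\Lambda,\alpha_{i_1}^\vee\ran}e(\bi)=0$ follows by pushing the defining cyclotomic relation on $X_1$ through the substitution $X_1\mapsto \hat i_1(1-y_1)$.

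The hardest step will be the $\psi$-braid relation (\ref{rel:psibr}), because it links three strands and the inhomogeneous correction terms $\pm 1$ and $-2y_{r+1}+y_r+y_{r+2}$ must come out exactly. I expect this to follow by inductively using the quadratic relation together with the $e$-dependence of $P_i,Q_i$: expand $T_rT_{r+1}T_r=T_{r+1}T_rT_{r+1}$ in the straightened generators, reduce modulo higher powers of $y$'s using already-verified relations, and match coefficients. Once a map $H_n^\Lambda\to R_n^\Lambda$ is constructed in the opposite direction (sending $T_r,X_t$ to the right-hand sides of (BK1)--(BK2)), showing the two maps are mutually inverse is immediate on generators. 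To conclude that this is an isomorphism rather than just a surjection, I would invoke a dimension count: both $H_n^\Lambda$ and $R_n^\Lambda$ have finite $F$-dimension equal to the ``cyclotomic'' number dictated by $\Lambda$ (which for $H_n^\Lambda$ is classical and for $R_n^\Lambda$ was established via induction on $n$ and on $\Lambda$), so any surjection between them is an isomorphism.
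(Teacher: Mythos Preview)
Your proposal outlines the strategy of the original Brundan--Kleshchev argument and is correct in spirit: one constructs idempotents $e(\bi)$ in $H_n^\Lambda$ via the Fitting decomposition for the Jucys--Murphy elements, defines $y_t$ and $\psi_r$ in terms of $X_t$ and $T_r$ via suitable power series, and then verifies the KLR relations (the braid relation being the hard step). This works, and is exactly what \cite{BrundanKleshchev2009a} does.

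However, the paper does not reprove this theorem from scratch. It instead invokes the main result of \cite[Sections~3--4]{BrundanKleshchev2009a}, which asserts that an isomorphism of the form \eqref{BK1}--\eqref{BK2} exists whenever the power series satisfy certain explicit identities listed there. The paper's actual contribution is to write down concrete formulas for $P_i$ and $Q_i$ (separately for $\xi=1$ and $\xi\ne 1$) and to note that they satisfy those identities. The purpose is not merely existence of some isomorphism but the refinements \eqref{BKid2} and \eqref{BKid3}: the power series can be chosen to depend only on the \emph{difference} $i_r-i_{r+1}$, and when $\xi=1$ they lie in $F[[y-y']]$. Securing this requires a slight change from Brundan--Kleshchev's original choices of $Q_i$ for $i\in\{1,-1\}$. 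Part~\eqref{BKid2} then follows immediately from the nilpotency of $y_t$ in the finite-dimensional algebra $R_n^\Lambda$, and part~\eqref{BKid3} from inspection of the explicit formulas.

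So your route would succeed but amounts to redoing the lengthy verification in \cite{BrundanKleshchev2009a}; the paper's route is a one-page citation-plus-formula-check. What your approach buys is self-containment; what the paper's approach buys is brevity and a direct focus on the specific features (dependence on $i_r-i_{r+1}$ and on $y-y'$) that are actually used later.
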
 

\begin{proof}
 It is proved in~\cite[Sections 3 and 4]{BrundanKleshchev2009a}
 (see, in particular,~\cite[(3.41)--(3.42) and (4.42)--(4.43)]{BrundanKleshchev2009a})
 that 
 one has an isomorphism $H^{\Lda}_n \isoto R^{\Lda}_n$ given by 
 \[
  T_r \mapsto 
  \sum_{\bi\in I^n} (\psi_r Q'_{r,\bi} (y_r,y_{r+1}) - P'_{r,\bi} (y_r,y_{r+1}))e(\bi)
 \]
 and~\eqref{BK2}
 if power series $P'_{r,\bi}, Q'_{r,\bi}\in F[[y,y']]$, $1\le r<n$, $\bi\in I^n$, satisfy certain explicit identities.\footnote{We write $P'_{r,\bi}, Q'_{r,\bi}$ for the power series 
 denoted in~\cite{BrundanKleshchev2009a} by $P_r (\bi), Q_r (\bi)$ for $\xi \ne 1$ and 
 by $p_r (\bi), q_r (\bi)$ for $\xi=1$.}
 If $\xi=1$, then the power series $P'_{r,\bi}, Q'_{r,\bi}$ given by~~\cite[(3.22) and (3.30)]{BrundanKleshchev2009a} satisfy the required identities. Moreover, one easily checks that 
  $P'_{r,\bi} = P_{i_r-i_{r+1}}$
 and $Q_{r,\bi} = Q'_{i_r-i_{r+1}}$ for all $r,\bi$ provided the power series $P_i$ and $Q_i$, $i\in I$, are defined as follows:
 \begin{align}
 \label{Pideg}
  P_i &= 
  \begin{cases}
  1  & \text{if } i=0, \\
  (i+y-y')^{-1} \hspace{6.3mm} & \text{if } i\ne 0,
  \end{cases}\\
\label{Sideg}
   Q_i &= 
   \begin{cases}
   1+ y'-y & \text{if } i=0, \\
   1-P_i & \text{if } i\notin \{0,1,-1\} \\
   (1-P_i^2)/(y'-y) & \text{if } e\ne 2 \text{ and } i=-1, \\
   1 & \text{if } e\ne 2 \text{ and } i=1, \\
   (1-P_i)/(y'-y)&  \text{if } e=2 \text{ and } i=1.
   \end{cases}
\end{align}
  Since $P_i,Q_i\in F[[y-y']]$ for all $i$, 
  this proves~\eqref{BKid1} when $\xi=1$ and~\eqref{BKid3}. 
 
 Assuming that $\xi\ne 1$, let 
 \begin{align}
P_{i} &= 
\begin{cases}
1& \text{if } i=0,\\ 
(1-\xi)\left(1- \xi^{i} (1-y)(1-y')^{-1} \right)^{-1} & \text{if } i\ne 0,
\end{cases} \label{Pinondeg} \\
Q_{i}&=\begin{cases}
          1 - \xi + \xi y'-y \hspace{31.5mm} &  \text{ if } i=0, \\
         \frac{\xi^{i} (1-y) - \xi (1-y')}{ \xi^{i} (1-y)- (1-y')} & \text{ if } i\notin \{0,1,-1\}, \\
\frac{ \xi^{-1} (1-y) - \xi (1-y')}{ (\xi^{-1} (1-y) - (1-y'))^2}
  & \text{ if } e\ne 2 \text{ and } i=-1, \\
          1 & \text{ if } e\ne 2 \text{ and } i=1,  \\
\frac{1}{\xi (1-y')-(1-y)} & \text{ if } e=2 \text{ and } i=1 \label{Sinondeg}
 \end{cases}
\end{align}
for all $i\in I$. (The only difference from power series given by~\cite[(4.27) and (4.36)]{BrundanKleshchev2009a} is a slight one in the formulas for $Q_i$ in the cases when $i\in \{1,-1\}$.)
As in~\cite{BrundanKleshchev2009a}, one checks that the 
power series $P'_{r,\bi} = P_{i_r-i_{r+1}}$ and $Q_{r,\bi} = Q_{i_r-i_{r+1}}$ 
satisfy the required properties, i.e.~the identities~\cite[(4.27) and (4.33)--(4.35)]{BrundanKleshchev2009a}, so one has an isomorphism given by~\eqref{BK1}--\eqref{BK2}. 

Finally,~\eqref{BKid2} follows from~\eqref{BK2} and the fact that $y_t\in R^{\Lda}_n$ is nilpotent for $1\le t\le n$, as $\deg(y_t)=2$ and $R^{\Lda}_n\cong H^{\Lda}_n$ is well known to be finite-dimensional. 
\end{proof}

From now on, we assume that $\BK_n^{\Lda}$ is as in Theorem~\ref{thm:BKiso_detailed}, with $P_i$ and $Q_i$ given by~\eqref{Pideg}--\eqref{Sinondeg}. 
We write 
\[
\BK_n = \BK_n^{\Lda_0} \colon \cl H_n \isoto R^{\Lda_0}_n.
\] 

For $0\le m\le n$, we define a unital algebra homomorphism
$\iota_m^n \colon R_m^{\Lda_0} \to R_n^{\Lda_0}$ by
\begin{equation}\label{iotamn}
 e(\bi) \mapsto \sum_{\bj\in I^{n-m}} e(\bi\bj), \quad 
\psi_r \mapsto \psi_r, \quad y_t \mapsto y_t 
\end{equation}
for $\bi\in I^m$, $1\le r<m$ and $1\le t\le m$.
For every $\a\in Q_+$, define 
\[
 I^{\a} = \{(i_1,\ldots,i_n) \in I^n \mid \sum_{r=1}^n \a_{i_r} = \a \}
\]
and $e_{\a} = \sum_{\bi\in I^{\a}} e(\bi)$, viewed either as an element 
of $R_n$ or of $R^{\Lda_0}_n$, depending on the context. The element $e_{\a}$ is a central idempotent. 
We write $R_{\a} = R_n e_{\a}$ and $R^{\Lda_0}_{\a} = R_n^{\Lda_0} e_{\a}$. 

We identify every partition $\lda$ with its Young diagram, defined to be the set
\[
\{ (a,b)\in \mZ_{>0}\times \mZ_{>0} \mid 1\le a\le \ell(\lda), \, 1\le b\le \lda_a \}.
\]
 Whenever set-theoretic notation is used for a partition 
$\lda$, it is to be viewed as a Young diagram. 
The residue of a box $(a,b)\in \mZ_{>0}\times \mZ_{>0}$ 
is defined to be 
$\res ((a,b)) = b-a+e\mZ \in I$, and the \emph{residue content} of $\lda$ is defined 
as $\cont(\lda) = \sum_{(a,b) \in \lda} \a_{\res ((a,b))}\in Q_+$.
Note that if $\mu\subset \lda$ are partitions and $\lda\sm \mu$ is an $e$-hook (i.e.~a connected skew diagram of size $e$ containing no $2\times 2$-squares), then 
each element of $I$ occurs exactly once among the residues of the boxes of $\lda\sm \mu$. It follows that, if $\rho$ is the $e$-core and $d$ is the $e$-weight of $\lda$, then $\cont(\lda) = \cont(\rho) + d\d$, where $\d:=\a_0+\a_1+\cdots +\a_{e-1} \in Q_+$ is the fundamental imaginary root.

Recall that, if $m\le n$, then 
$\cl H_m $ is viewed as a subalgebra of $\cl H_n $ 
via $T_r\mapsto T_r$, $1\le r<m$. We state some standard properties of 
the isomorphism $\BK_n$.  

\begin{prop}\label{prop:BKiso}
Let $n\ge 0$ be an integer. 
 \begin{enumerate}[(i)]
  \item\label{BKiso1} For $0\le m\le n$, we have 
$\iota_m^n \circ \BK_m = \BK_n|_{\cl H_m }$.
  \item\label{BKiso2} For all $(\rho,d)\in \Bl_e (n)$, we have 
$\BK_n (b_{\rho,d}) = e_{\cont(\rho)+d\d}$. 
Hence, for every $(\rho,d)\in \Bl_e (n)$, the map $\BK_n$ restricts to an algebra isomorphism from $\cl H_{\rho,d} $ onto $R^{\Lda_0}_{\cont(\rho)+d\d}$. 
 \end{enumerate}
\end{prop}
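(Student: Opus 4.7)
\textbf{Proof plan for part (\ref{BKiso1}).} Both $\iota_m^n\circ\BK_m$ and $\BK_n|_{\cl H_m}$ are unital $F$-algebra homomorphisms $\cl H_m\to R_n^{\Lda_0}$, so it suffices to verify they agree on a generating set. Under the identification $\cl H_m=H^{\Lda_0}_m$, the cyclotomic relation forces $X_1=\hat 0\in F$, so $\cl H_m$ is generated as an algebra by $T_1,\ldots,T_{m-1}$. For $1\le r<m$, I would apply~\eqref{BK1} to expand $\BK_m(T_r)$ as a sum over $\bi\in I^m$, then~\eqref{iotamn} to rewrite $\iota_m^n$ as an additional sum over $\bj\in I^{n-m}$; combining indices into $\bk=\bi\bj\in I^n$ and noting that $k_r=i_r$, $k_{r+1}=i_{r+1}$ since $r+1\le m$, produces precisely $\BK_n(T_r)$ as given by~\eqref{BK1}. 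This step is purely mechanical.

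\textbf{Proof plan for part (\ref{BKiso2}).} First I would check that $e_\alpha$ is central in $R_n^{\Lda_0}$: the generators $y_t$ and $e(\bj)$ commute with every $e(\bi)$, and relation~\eqref{rel:epsi} combined with the $\fr S_n$-invariance of $I^\alpha$ gives $\psi_r e_\alpha=e_\alpha\psi_r$. Hence $b':=\BK_n^{-1}(e_{\cont(\rho)+d\d})$ is a central idempotent of $\cl H_n$, so is a sum of block idempotents. The strategy is to show that $b'$ acts as the identity on each Specht module $S^\lda$ with $\lda\vdash n$ of $e$-core $\rho$ and $e$-weight $d$, and as zero on all other Specht modules; the defining property of $b_{\rho,d}$ then forces $b'=b_{\rho,d}$, yielding $\BK_n(b_{\rho,d})=e_{\cont(\rho)+d\d}$, from which the isomorphism $\cl H_{\rho,d}\cong R^{\Lda_0}_{\cont(\rho)+d\d}$ is immediate.

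The module-action computation is the technical core. By Theorem~\ref{thm:BKiso_detailed}(\ref{BKid2}), $\BK_n(X_t)-\hat i_t$ acts nilpotently on $e(\bi)V$ for any $R_n^{\Lda_0}$-module $V$. Combined with the classical Jucys--Murphy eigenvalue theorem for Hecke algebras --- $X_t$ acts on $S^\lda$ with generalised eigenvalues $\hat{\res(b)}$ where $b$ runs over the boxes of $\lda$ indexed by standard tableaux --- this shows that $e(\bi)\cdot S^\lda\ne 0$ precisely when $\bi$ is the residue sequence of some standard $\lda$-tableau. Consequently, $e_\alpha$ acts as the identity on $S^\lda$ iff $\alpha=\cont(\lda)$, and as zero otherwise. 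Since any $\lda$ with $e$-core $\rho_\lda$ and $e$-weight $d_\lda$ satisfies $\cont(\lda)=\cont(\rho_\lda)+d_\lda\d$, $b'$ acts as the identity on $S^\lda$ exactly when $\cont(\rho_\lda)+d_\lda\d=\cont(\rho)+d\d$.

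\textbf{Main obstacle.} The final step requires that this equality of residue contents forces $\rho_\lda=\rho$ and $d_\lda=d$. Matching box-counts by residue and the height constraint $|\rho_\lda|+ed_\lda=n=|\rho|+ed$ reduces this to the assertion that distinct $e$-cores lie in distinct $\mZ\d$-cosets of $\bigoplus_i\mZ\a_i$. This is a classical combinatorial fact verified via the abacus description: the residue content of an $e$-core modulo $\mZ\d$ is a repackaging of the bead-count differences $(b_i(\rho)-b_{i-1}(\rho))_i$ between consecutive runners, which in turn uniquely determines the $e$-core. Granting this (which can be cited from standard block-theoretic sources), $b'=b_{\rho,d}$ as required.
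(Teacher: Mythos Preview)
Your plan is correct. For part~(\ref{BKiso1}) the paper says only that it ``easily follows from~\eqref{BK1}'', which is precisely the generator-by-generator verification you outline. For part~(\ref{BKiso2}) the paper gives no argument at all, deferring instead to~\cite[\S2.9 and Theorem~5.6(ii)]{Kleshchev2010}; your Jucys--Murphy eigenvalue argument together with the injectivity of $\rho\mapsto\cont(\rho)\bmod\mZ\d$ on $e$-cores is exactly the standard content behind that citation (the latter fact is \cite[Theorem~2.7.41]{JamesKerber1981}, which the paper itself invokes just after Theorem~\ref{thm:BKdim}).
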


\begin{proof}
\eqref{BKiso1} easily follows from~\eqref{BK1}, and 
~\eqref{BKiso2} follows from~\cite[\S 2.9 and Theorem 5.6(ii)]{Kleshchev2010}.
\end{proof}

Let $\cl H_{\rho,d} $ be a RoCK block of $\cl H_n $, and consider the idempotent 
$f_{\rho,d}\in \cl H_{\rho,d}$ defined by~\eqref{frhod}. 
By Proposition~\ref{prop:BKiso}, we have 
\begin{equation}\label{BKf}
\BK_n (f_{\rho,d}) = \prod_{r=0}^d \iota_{|\rho|+er}^{|\rho|+ed} (e_{\cont(\rho)+r\d})
=\sum_{\substack{\bj\in I^{\cont (\rho)} \\ \bi^{(1)},\ldots,\bi^{(d)} \in I^{\d}}} e(\bj \bi^{(1)} \ldots \bi^{(d)}).
\end{equation}
 In particular, $\BK_n (f_{\rho,d})\in R^{\Lda_0}_n$ is homogeneous of degree $0$.

If $A$ is a graded algebra over $F$, then the wreath product $A\wr \fr S_d$ is defined as the algebra $A^{\otimes d} \otimes F\fr S_d$ with multiplication given by 
\[
 (x_1\otimes \cdots \otimes x_d \otimes \sigma) \otimes 
(y_1 \otimes \cdots \otimes y_d \otimes \tau) = 
x_1 y_{\sigma^{-1}(1)} \otimes \cdots \otimes x_d y_{\sigma^{-1} (d)} \otimes \sigma\tau
\]
for $x_1,\ldots,x_d,y_1,\ldots,y_d\in A$ and $\sigma,\tau \in \fr S_d$. 
We identify $F\fr S_d$ with a unital subalgebra of $A\wr \fr S_d$ via the map
$\s\mapsto 1^{\ot d} \ot \s$, $\s\in \fr S_d$, and we identify 
$A^{\ot d}$ with the unital subalgebra $A^{\ot d} \ot 1$ of $A\wr \fr S_d$ in the obvious way.
If $A$ is graded, then we will view $A\wr \fr S_d$ as a graded algebra via the rule 
\[
 \deg(x_1\otimes \cdots \otimes x_d \otimes \sigma) = \sum_{r=1}^d \deg (x_r)
\]
whenever $x_1,\ldots,x_d\in A$ are homogeneous and $\sigma\in \fr S_r$. 

Observe that the algebra $R_{\d}$ is nonnegatively graded 
(that is, $(R_{\d})_{<0}=0$): this follows from the fact that 
$e(\bi) e_{\d} = 0$ if $\bi\in I^e$ is such that $i_r=i_{r+1}$ for some $r$. 
Hence, the wreath product 
$R^{\Lda_0}_{\d}\wr \fr S_d$ is also nonnegatively graded. 

We will prove the following result, which may be viewed as a graded version of Theorem~\ref{thm:main1} and clearly implies that theorem (due to Proposition~\ref{prop:BKiso}\eqref{BKiso2}).

\begin{thm}\label{thm:main2}
 Let $n\ge 0$ and $(\rho,d)\in \Bl_e (n)$ be such that $\rho$ is a Rouquier core for $d$. If $f=\BK_n (f_{\rho,d})$, then $fR_{\cont(\rho)+d\d}^{\Lda_0} f$ and
$R_{\cont(\rho)}^{\Lda_0} \otimes_F (R_{\d}^{\Lda_0} \wr \fr S_d)$ are isomorphic as graded algebras. 
\end{thm}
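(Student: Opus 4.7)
The plan is to construct a degree-preserving algebra homomorphism
$$\Phi\colon R^{\Lda_0}_{\cont(\rho)} \ot \bigl(R^{\Lda_0}_{\d} \wr \fr S_d\bigr) \longrightarrow f R^{\Lda_0}_{\cont(\rho)+d\d} f$$
and then prove it is bijective by comparing graded dimensions. The tensor-factor part of $\Phi$ is built from a concatenation-style embedding of $R_{\cont(\rho)} \ot R_{\d}^{\ot d}$ into $R_{\cont(\rho)+d\d}$: KLR generators $\psi$, $y$, $e(\cdot)$ acting on the $r$-th factor are sent to their avatars acting on strands $|\rho|+(r{-}1)e+1,\dots,|\rho|+re$, possibly after lower-order corrections. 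By~\eqref{BKf} this image sits inside $f(\cdots)f$, and it must be shown to factor through the individual cyclotomic quotients---the subtle point being that $R^{\Lda_0}_{\cont(\rho)+d\d}$ only imposes a cyclotomic relation on $y_1$, whereas each tensor factor $R^{\Lda_0}_{\d}$ imposes one on its own first strand; the Rouquier condition is what allows these interior cyclotomic relations to be enforced by a suitable choice of the corrections.

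Next, for each $1\le r<d$ I build a homogeneous degree-zero element $\tau_r \in f R^{\Lda_0}_{\cont(\rho)+d\d} f$ that will serve as the image of the transposition $s_r\in\fr S_d$. Following the R-matrix formalism of Kang--Kashiwara--Kim for KLR algebras (as hinted at in the acknowledgement), $\tau_r$ is defined summand by summand: for each pair $\bi^{(r)},\bi^{(r+1)}\in I^{\d}$ occurring in $f$, one writes down an intertwiner---a specific combination of $\psi$'s and $y$'s localized on the $2e$ strands of the $r$-th and $(r{+}1)$-th $\d$-blocks---that converts $e(\cdots\bi^{(r)}\bi^{(r+1)}\cdots)$ into $e(\cdots\bi^{(r+1)}\bi^{(r)}\cdots)$ via a chosen reduced word for the block-swap permutation. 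The Rouquier spacing condition of Definition~\ref{def:rock}, which separates adjacent runners by at least $d-1$ beads, forces the local intertwiner to be homogeneous of degree zero and, after suitable normalization by polynomials in the $y$'s, to be independent (up to tractable scalars) of the reduced word chosen.

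Having defined $\Phi$ on generators, I then verify:
\begin{enumerate}[(i)]
\item $\tau_r^2$ equals the relevant restriction of $f$, so each $\tau_r$ is an involution;
\item the braid relations $\tau_r\tau_{r+1}\tau_r = \tau_{r+1}\tau_r\tau_{r+1}$ and the far commutation $\tau_r\tau_s = \tau_s\tau_r$ for $|r-s|>1$;
\item $\tau_r$ commutes with the image of $R^{\Lda_0}_{\cont(\rho)}$ and with all tensor factors of $R^{\Lda_0}_{\d}{}^{\ot d}$ other than the $r$-th and $(r{+}1)$-th;
\item conjugation by $\tau_r$ interchanges those two tensor factors.
\end{enumerate}
Each identity is local, taking place inside a KLR algebra on at most $\hght(\cont(\rho))+3e$ strands, and should follow from the defining relations~\eqref{rel:eid}--\eqref{rel:psibr} combined with the rigid residue pattern enforced by the Rouquier hypothesis.

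Finally, once $\Phi$ is known to be a graded algebra homomorphism, surjectivity is proved by expressing a standard basis element $\psi_w e(\bk) y^{\mbf a}$ of $f R^{\Lda_0}_{\cont(\rho)+d\d} f$ as a product of images of the tensor factors with an appropriate word in the $\tau_r$'s: one decomposes $w$ into a coarse ``block permutation'' (realized by a product of $\tau_r$'s) and a residual permutation preserving each $\d$-block (lying in the image of $R^{\Lda_0}_{\d}{}^{\ot d}$). The argument closes by a graded-dimension comparison: $\qdim(fR^{\Lda_0}_{\cont(\rho)+d\d}f)$ can be computed via Brundan--Kleshchev, while the right-hand side has graded dimension $\qdim R^{\Lda_0}_{\cont(\rho)}\cdot\bigl(\qdim R^{\Lda_0}_{\d}\bigr)^d\cdot [d]_{q}!$, and the Rouquier condition is precisely what makes the two agree. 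I expect the main obstacle to be the verification of (i)--(ii): the R-matrix machinery generally yields these relations only up to invertible scalars, so determining the exact scalars required for \emph{on-the-nose} involution and braid identities is the combinatorial heart of the proof, which Sections~\ref{sec:comb}--\ref{sec:surj} are presumably devoted to.
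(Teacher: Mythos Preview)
Your global architecture---build a graded homomorphism, verify the symmetric-group relations for certain elements $\tau_r$, prove surjectivity, then match graded dimensions---is the same as the paper's. But the two steps you flag as ``the main obstacle'' are precisely where your description goes wrong, and the errors are not cosmetic.

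First, your construction of $\tau_r$ does not work as stated. The block-swap $\psi_{w_r}e_{\d^d}$ is already degree~$0$ simply because each $\d$-block contains every residue exactly once, so crossing degrees cancel in pairs; the Rouquier condition plays no role here. More importantly, $\psi_{w_r}e_{\d^d}$ is \emph{not} an involution, and no rescaling by polynomials in the $y$'s will make it one. The paper does not normalize an intertwiner; it extracts a \emph{leading coefficient}. One passes to the affine quotient $\ol R_{d\d}=R_{d\d}/\langle e(\bi):\bi\notin\cl E_d\rangle$, shows that $e_{\d^d}\ol R_{d\d}e_{\d^d}/\cl J$ is free over $F[z_1,\dots,z_d]$ with $z_r=y_{(r-1)e+1}+\cl J$ (Lemma~\ref{lem:fintens}), and defines $\tau_r$ as the coefficient of $(z_r-z_{r+1})^e$ in the expansion of the degree-$2e$ element $\vphi_{w_r}e_{\d^d}$ (equation~\eqref{tau_r}). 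There is also an error term $\e_r$ (the $(z_r-z_{r+1})^{e-1}$ coefficient) with no analogue in the raw Kang--Kashiwara--Kim formalism. The relations $\tau_r^2=e_{\d^d}$ and the braid relation then follow \emph{exactly}---not up to scalars---by applying the same leading-term projection to the known identities $\vphi_{w_r}^2e_{\d^d}=(-1)^e(z_r-z_{r+1})^{2e}$ and $\vphi_{w_r}\vphi_{w_{r+1}}\vphi_{w_r}=\vphi_{w_{r+1}}\vphi_{w_r}\vphi_{w_{r+1}}$ from Lemma~\ref{lem:KaKaKi}.

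Second, you misplace the role of the Rouquier hypothesis and misstate the dimension. The wreath product has graded dimension $d!\,(\qdim R^{\Lda_0}_{\d})^d$, with $\fr S_d$ in degree~$0$; there is no $q$-factorial. The Rouquier condition enters in two places, neither of which you name: it forces the map $R_{d\d}\to R^{\Lda_0}_{\cont(\rho),d\d}$ to kill every $e(\bi)$ with $\bi\notin\cl E_{d,\k}$ (Lemma~\ref{lem:rocksh} and Proposition~\ref{prop:im_om}), so that the construction in $\ol R_{d\d}$ descends; and it is what makes the tableau count behind Proposition~\ref{prop:qdim} produce exactly $d!\,\qdim(R^{\Lda_0}_{\cont(\rho)})(\qdim R^{\Lda_0}_{\d})^d$. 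The paper also separates off the $R^{\Lda_0}_{\cont(\rho)}$ factor at the outset via the centralizer $C_{\rho,d}$ (Proposition~\ref{prop:cent} and~\eqref{eq:tensoriso}), so that the wreath-product construction can be carried out $\rho$-independently in $\ol R_{d\d}$ and only then transported; your proposal to build everything directly inside $fR^{\Lda_0}_{\cont(\rho)+d\d}f$ would entangle these two layers.
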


\subsection{The standard basis and some general properties of $R_{\a}$}

Fix $\a \in Q_+$, and let $n=\hght(\a)$.
Fow $w\in \fr S_n$, let $\ell(w)$ be the smallest $m$ such that $w=s_{r_1} \cdots s_{r_m}$ for some $r_1,\ldots,r_m\in \{1,\ldots,n-1\}$. An expression $w=s_{r_1}\cdots s_{r_m}$ is said to be \emph{reduced} if $m=\ell(w)$. 
 We write $<$ and $\le$ for the Bruhat partial order on $\fr S_{n}$. 
That is, $v\le w$ if 
and only if there is a reduced expression $w=s_{r_1}\cdots s_{r_k}$ such that $v= s_{r_{a_1}} \cdots s_{r_{a_t}}$ for some $a_1,\ldots,a_t$ satisfying $1\le a_1<\cdots<a_t \le k$
(cf.~e.g.~\cite[Section 3.1]{Mathas1999}).

If $w\in \fr S_n$, we set $\psi_w = \psi_{r_1} \cdots \psi_{r_m}\in R_n$ where $w=s_{r_1}\cdots s_{r_m}$ is an arbitrary but fixed reduced expression for $w$. Note that, in general, $\psi_w$ depends on the choice of a reduced expression. In the sequel, all results involving elements $\psi_w$ are asserted to be true for \emph{any} choice of reduced expressions used to construct $\psi_w$.

\begin{thm}\label{thm:basis}
\begin{enumerate}[(i)]
\item\label{basis1} \cite[Theorem 2.5]{KhovanovLauda2009} \cite[Theorem 3.7]{Rouquier2008} The set 
\[
 \{ \psi_w y_1^{m_1} \cdots y_n^{m_n} e(\bi) \mid w\in \fr S_n, m_1,\ldots, m_n\in \mN, \bi\in I^{\a} \}
\]
is a basis of $R_{\a}$. 
\item\label{basis2} 
Let $r_1,\ldots,r_k \in \{1,\ldots,n-1\}$, $w=s_{r_1}\cdots s_{r_k}$
and $g_0,\ldots,g_k \in F[y_1,\ldots,y_n]$.
Then 
$g_0 \psi_{r_1}g_1 \psi_{r_2} \cdots g_{k-1} \psi_{r_k}g_{k} e_{\a}$ belongs to the span of 
elements of the form
\begin{equation}\label{eqbasis}
 \psi_{r_{a_1}} \cdots \psi_{r_{a_l}} y_1^{m_1} \cdots y_n^{m_n} e(\bi)
\end{equation}
where $1\le a_1<\cdots <a_l\le k$, $m_1,\ldots,m_n\in \mN$, $\bi\in I^{\a}$
and the expression $s_{r_{a_1}} \cdots s_{r_{a_l}}$ is reduced. 
%
\end{enumerate}
\end{thm}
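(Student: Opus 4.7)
Part (i) is the standard basis theorem for $R_\a$ of Khovanov--Lauda \cite[Theorem 2.5]{KhovanovLauda2009} and, independently, Rouquier \cite[Theorem 3.7]{Rouquier2008}, so I would simply invoke it. For part (ii), the plan is to argue by induction on $k$. The base case $k=0$ is immediate since $g_0 e_\a$ is a linear combination of elements $y_1^{m_1}\cdots y_n^{m_n} e(\bi)$, which is the $l=0$ case of~\eqref{eqbasis}.

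For the inductive step, the first move is to push each $g_j$ past the $\psi$-factors to its right. Relations \eqref{rel:ypsi0}--\eqref{rel:ypsi2} yield, for any $g\in F[y_1,\ldots,y_n]$ and $\bi\in I^\a$, the commutation rule
\[
 g\,\psi_r\,e(\bi) = \psi_r\,g^{s_r}\,e(\bi) + h_{g,r,\bi},
\]
where $g^{s_r}$ is $g$ with $y_r$ and $y_{r+1}$ swapped, and $h_{g,r,\bi}\in F[y_1,\ldots,y_n]\,e(\bi)$ is a ``defect'' polynomial (contributing only when $i_r=i_{r+1}$, from the $+1$ terms in those relations). Iterating this rule transforms the original expression into $\psi_{r_1}\cdots\psi_{r_k}\,p(y)\,e_\a$ plus a sum of defect terms of the form $g'_0\,\psi_{r_{j_1}}\,g'_1\cdots\psi_{r_{j_l}}\,g'_l\,e_\a$ with $(j_1<\cdots<j_l)$ a subsequence of $(1,\ldots,k)$ and $l<k$. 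Each defect term lies in the claimed span by the inductive hypothesis, since any reduced subword of $(r_{j_1},\ldots,r_{j_l})$ is a fortiori a reduced subword of $(r_1,\ldots,r_k)$.

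It thus remains to show that $\psi_{r_1}\cdots\psi_{r_k}\,p(y)\,e(\bi)$ lies in the claimed span for each $\bi\in I^\a$ and $p\in F[y_1,\ldots,y_n]$. If $s_{r_1}\cdots s_{r_k}$ is reduced, expanding $p$ into monomials puts this directly into the required form (with $l=k$ and $a_j=j$). Otherwise, by Matsumoto's strong exchange condition, a finite sequence of commutations \eqref{rel:psicomm} and braid moves \eqref{rel:psibr} transforms the word so that two adjacent letters coincide, at which point \eqref{rel:psisq} replaces the resulting $\psi_r^2$ by the polynomial $L_{i_r,i_{r+1}}(y_r,y_{r+1})$. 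Each braid move contributes correction terms ($+1$, $-1$, or $-2y_{r+1}+y_r+y_{r+2}$) that omit one of the three $\psi$'s involved, and each quadratic reduction removes two $\psi$'s outright, so the output is a sum of expressions each with strictly fewer than $k$ $\psi$-factors; the primary induction on $k$ then applies. The main obstacle is verifying that the surviving $\psi$-factors, after the braid and commutation rearrangements, can be identified with $\psi$-factors of a subword of the \emph{original} word $(r_1,\ldots,r_k)$ rather than of some intermediate rearranged word. This is cleanest to see in the Khovanov--Lauda graphical calculus, in which $\psi$'s become strand crossings and the KLR relations become Reidemeister-like moves: the reduction of any diagram is a linear combination of reduced sub-diagrams whose crossings (and labels) descend from a subset of the original diagram's crossings. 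Threading this bookkeeping carefully through the nested inductions completes the proof.
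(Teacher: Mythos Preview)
Your first step (pushing polynomials past the $\psi$'s via relations \eqref{rel:ypsi0}--\eqref{rel:ypsi2}) matches the paper's proof exactly. The paper then finishes in one line: having reached the span of the elements \eqref{eqbasis} \emph{without} the reducedness condition, it invokes \cite[Proposition~2.5]{BrundanKleshchevWang2011} to pass to reduced subwords. What you do differently is attempt to reprove that proposition from scratch.

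The obstacle you flag at the end is genuine and your resolution is incomplete. After a braid move via \eqref{rel:psibr}, the main term is indexed by a rearranged word $(r_1',\ldots,r_k')$; once you apply the quadratic relation \eqref{rel:psisq} and invoke the inductive hypothesis, you obtain reduced subwords of $(r_1',\ldots,r_k')$, not of the original $(r_1,\ldots,r_k)$. The diagrammatic calculus does not automatically fix this: a Reidemeister~III move preserves the \emph{set} of crossings but alters their linear order, which is precisely what the subsequence condition in \eqref{eqbasis} records. To close the gap one must show two things and combine them by a further induction: first, that the set of group elements admitting a reduced subword expression is invariant under braid and commutation moves on the ambient word (a direct check on the local moves); second, that two reduced expressions for the same $v$ yield $\psi$-products differing by terms supported on strictly shorter elements (so that one may switch to the reduced subword sitting inside the original word). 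This is exactly the content packaged in the Brundan--Kleshchev--Wang result the paper cites, so unless you intend to redo that argument in full, it is cleaner to cite it.
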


\begin{proof}[Proof of~\eqref{basis2}]
Using relations~\eqref{rel:ypsi0}--\eqref{rel:ypsi2} repeatedly, we see that
$g_0 \psi_{r_1}g_1 \psi_1 \cdots g_{k-1} \psi_{r_k}g_{k} e_{\a}$ belongs to the span of
the elements~\eqref{eqbasis} without the condition that 
$s_{r_{a_1}} \cdots s_{r_{a_l}}$ be reduced.
Now the result follows from~\cite[Proposition 2.5]{BrundanKleshchevWang2011}.
\end{proof}


Let $\mu=(\mu_1,\ldots,\mu_l)$ be a composition of $n$, i.e.~a sequence of 
nonnegative integers such that $\sum_{r=1}^l \mu_r=n$. 
Let 
\[
\fr S_{\mu}= \Aut(\{1,\ldots,\mu_1\}) \times \Aut(\{\mu_1+1,\ldots,\mu_1+\mu_2\}) \times \cdots \cong \fr S_{\mu_1} \times \cdots \times \fr S_{\mu_l}, 
\]
a standard parabolic subgroup of $\fr S_n$. 
Denote by $\scr D^{\mu}_n$ (respectively, $\ls{\mu}{\scr D}_n$) the set of the minimal length left (resp.~right) coset representatives of $\fr S_{\mu}$ in $\fr S_n$. 
Note that an element $\s\in \fr S_n$ belongs to $\scr D^{\mu}_n$ if and only if 
$\s(r)<\s(t)$ for all $r,t\in \{1,\ldots,n\}$ 
such that $t\in \fr S_{\mu} \cdot r$ and $r<t$; this fact and its analogue for $\ls{\mu}{\scr D}_n$ will be used repeatedly.
If $\nu$ is another composition of $n$, set ${\vph{\scr D_n}}^{\nu\vph{\mu}}\!\scr D_n^{\mu}= \ls{\nu}{\scr D}_n \cap \scr D^{\mu}_n$: this is the set of the minimal length double 
$(\fr S_\nu, \fr S_\mu)$-coset representatives in $\fr S_n$.

Let $\mu=(n_1,\ldots,n_{l})$ be a composition of $n$.
There is an obvious map
$\iota_{\mu} \colon R_{n_1} \ot \cdots \ot R_{n_l}\to R_n$, defined as the unique algebra homomorphism satisfying 
\begin{align*}
 \iota_{\mu} (e(\bi^{(1)}) \ot \cdots \ot e(\bi^{(l)})) 
&= e(\bi^{(1)} \ldots \bi^{(l)}), \\
\iota_{\mu} (1^{\ot r-1} \ot \psi_k \ot 1^{\ot l-r})
&= \psi_{n_1+\cdots+n_{r-1} + k}, \\
\iota_{\mu} (1^{\ot r-1} \ot y_t \ot 1^{\ot l-r})
&= y_{n_1+\cdots+n_{r-1} + t}
\end{align*}
whenever $\bi^{(1)}\in I^{n_1},\ldots,\bi^{(l)} \in I^{n_l}$, $1\le r\le l$, 
$1\le k<n_r$ and $1\le t\le n_r$.

A \emph{composition} of $\a$ is a tuple $(\g_1,\ldots, \g_{l})$ such that $\g_j \in Q_+$ for each $j$ and $\sum_{j=1}^l \g_j = \a$. 
Let $\g=(\g_1,\ldots,\g_{l})$ be a composition of $\a$, and set $\mu=\mu(\g):=(\hght(\g_1),\ldots,\hght(\g_l))$. 
By restricting the map
$\iota_{\mu}$ to $R_{\g_1} \ot \cdots \ot R_{\g_l}$, we obtain an algebra homomorphism
$\iota_{\g} \colon R_{\g_1} \ot \cdots \ot R_{\g_l} \to R_{\a}$.  
The image of this homomorphism will be denoted by $R_{\g}$ and the image of 
$e_{\g_1}\ot \cdots \ot e_{\g_l}$ will be denoted by $e_{\g}$, so that $e_{\g}=e_{\g_1,\ldots,\g_{l}}$ 
is the identity element of the subalgebra $R_{\g}$ of $R_{\a}$. 
If $w\in \fr S_n$, then $w= w' v$ for some (uniquely determined) $w'\in \scr D_{n}^{\mu}$ and $v\in \fr S_{\mu}$.
We then have $\ell(w) = \ell(w')+ \ell(v)$, and hence one can obtain a reduced expression for $w$ by concatenating reduced expressions for $w'$ and $v$. Using 
a reduced expression of this form to define each $\psi_w$, one deduces the following result from Theorem~\ref{thm:basis} (cf.~the proof of~\cite[Proposition 2.16]{KhovanovLauda2009}). 

\begin{cor}\label{cor:parabfree} 
For any composition $\g$ of $\a$, $R_{\a}e_{\g}$ 
is freely generated as a right $R_{\g}$-module by the set
$\{ \psi_w \mid w\in \scr D^{\mu(\g)}_n \}$.
\end{cor}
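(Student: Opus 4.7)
My plan is to deduce this directly from the standard basis theorem (Theorem~\ref{thm:basis}\eqref{basis1}) by organizing its basis according to the coset decomposition $\fr S_n = \bigsqcup_{w'\in \scr D^{\mu}_n} w' \fr S_{\mu}$ where $\mu=\mu(\g)$. The crucial length-additivity fact $\ell(w'v)=\ell(w')+\ell(v)$ for $w'\in\scr D^{\mu}_n$ and $v\in\fr S_{\mu}$ is what allows reduced expressions to be concatenated, which in turn lets us factor the generator $\psi_w$ as $\psi_{w'}\psi_v$ compatibly with the decomposition.

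First I would fix, once and for all, a choice of reduced expression for each $w'\in\scr D^{\mu}_n$ and each $v\in\fr S_\mu$, using the latter choices (restricted to each factor $\fr S_{\hght(\g_j)}$) to construct the elements $\psi_v$ that appear in the subalgebras $R_{\g_1},\ldots,R_{\g_l}$ and hence in $R_{\g}$. For a general $w\in\fr S_n$ with unique factorization $w=w'v$, set $\psi_w:=\psi_{w'}\psi_v$, which is well-defined as a reduced expression by the length additivity. Let $I^{\g}\subset I^{\a}$ denote the set of tuples $\bi=\bi^{(1)}\cdots\bi^{(l)}$ with $\bi^{(j)}\in I^{\g_j}$, so that $e_{\g}=\sum_{\bi\in I^{\g}} e(\bi)$.

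Next I would read off two bases. On the one hand, $R_{\a}e_{\g}$ has basis
\[
\{\psi_{w'}\psi_v\, y_1^{m_1}\cdots y_n^{m_n}\, e(\bi)\mid w'\in\scr D^{\mu}_n,\ v\in\fr S_\mu,\ \mathbf m\in\mN^n,\ \bi\in I^{\g}\}
\]
by restricting the basis of Theorem~\ref{thm:basis}\eqref{basis1} to those $\bi\in I^{\g}$ and rewriting $\psi_w=\psi_{w'}\psi_v$. On the other hand, applying Theorem~\ref{thm:basis}\eqref{basis1} separately to each factor $R_{\g_j}$ and transferring the tensor product basis through the injection $\iota_{\g}$ yields that $R_{\g}$ has basis
\[
\{\psi_v\, y_1^{m_1}\cdots y_n^{m_n}\, e(\bi)\mid v\in\fr S_\mu,\ \mathbf m\in\mN^n,\ \bi\in I^{\g}\}.
\]
Consequently the right $R_{\g}$-module map $\bigoplus_{w'\in\scr D^{\mu}_n} \psi_{w'}\otimes R_{\g}\to R_{\a}e_{\g}$ defined by $\psi_{w'}\otimes x\mapsto \psi_{w'}x$ carries basis to basis and is therefore an isomorphism.

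The only genuine subtlety, and the one small point to verify, is that $\iota_{\g}$ is injective and that its image, with the standard basis of the tensor product, really produces the basis of $R_{\g}$ listed above using \emph{the same} $\psi_v$ as those appearing in $\psi_w=\psi_{w'}\psi_v$. This is immediate from the length-additivity within $\fr S_\mu=\fr S_{\hght(\g_1)}\times\cdots\times\fr S_{\hght(\g_l)}$, which guarantees that a reduced expression for $v\in\fr S_\mu$ can be obtained by concatenating reduced expressions in each factor; the basis theorem applied to each $R_{\g_j}$ then yields the product basis as claimed. No further computation is required.
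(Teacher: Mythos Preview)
Your proof is correct and follows essentially the same approach as the paper: the paper also chooses reduced expressions for $w\in\fr S_n$ by concatenating reduced expressions for $w'\in\scr D_n^{\mu}$ and $v\in\fr S_{\mu}$, and then deduces the result from Theorem~\ref{thm:basis}\eqref{basis1} (referring to~\cite[Proposition~2.16]{KhovanovLauda2009} for the analogous argument). You have simply made explicit the details that the paper leaves implicit.
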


\begin{prop}\label{prop:double}
 Let $\g=(\g_1,\ldots,\g_{l})$ and $\g'=(\g'_1,\ldots,\g'_m)$ be 
compositions of $\a$. Let $\mu=\mu(\g)$ and $\nu=\mu(\g')$. 
Then 
$e_{\g'} R_{\a} e_{\g} = 
\sum_{v\in \ls{\nu}{\scr D}^{\mu}_n} R_{\g'} \psi_v R_{\g}$. 
\end{prop}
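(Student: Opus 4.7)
The inclusion $\supseteq$ is immediate: since $e_{\g'}$ and $e_\g$ are the unit elements of the subalgebras $R_{\g'}$ and $R_\g$, we have $R_{\g'}\psi_v R_\g=e_{\g'}(R_{\g'}\psi_v R_\g)e_\g\subseteq e_{\g'}R_\a e_\g$ for every $v\in \ls{\nu}{\scr D}_n^\mu$.

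For $\subseteq$, the plan is to combine Corollary~\ref{cor:parabfree} with the classical double-coset factorisation in $\fr S_n$. By Corollary~\ref{cor:parabfree}, $R_\a e_\g=\bigoplus_{w\in \scr D_n^\mu}\psi_w R_\g$, so it suffices to show that $e_{\g'}\psi_w R_\g\subseteq \sum_{v\in \ls{\nu}{\scr D}_n^\mu}R_{\g'}\psi_v R_\g$ for each $w\in \scr D_n^\mu$. The combinatorial input I need is that every such $w$ factors as $w=uv$, where $v\in\ls{\nu}{\scr D}_n^\mu$ is the unique double-coset representative with $w\in \fr S_\nu v\fr S_\mu$, $u\in \fr S_\nu$, and $\ell(w)=\ell(u)+\ell(v)$; this is a standard fact for parabolic subgroups of Coxeter groups. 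Because the lengths add, concatenating reduced expressions for $u$ and $v$ produces a reduced expression for $w$, so I may take $\psi_w=\psi_u\psi_v$.

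Every simple reflection occurring in this reduced expression for $\psi_u$ lies inside one of the standard factors of $\fr S_\nu$, so $\psi_u$ belongs to the unital subalgebra $B:=\iota_\nu(R_{\nu_1}\ot\cdots\ot R_{\nu_m})\subseteq R_n$. The key structural observation is that the residue-content idempotent $e_{\g'}=\iota_\nu(e_{\g'_1}\ot\cdots\ot e_{\g'_m})$ is central in $B$: indeed, each $e_{\g'_j}=\sum_{\bi\in I^{\g'_j}}e(\bi)$ is central in $R_{\nu_j}$ because the set $I^{\g'_j}$ is stable under the action of the relevant $\fr S_{\nu_j}$, so relation~\eqref{rel:epsi} gives $\psi_k e_{\g'_j}=e_{\g'_j}\psi_k$ for each generator. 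Hence $e_{\g'}\psi_u=\psi_u e_{\g'}\in Be_{\g'}=\iota_\nu(R_{\g'_1}\ot\cdots\ot R_{\g'_m})=R_{\g'}$, so $e_{\g'}\psi_w R_\g=(\psi_u e_{\g'})\psi_v R_\g\subseteq R_{\g'}\psi_v R_\g$, as required.

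The only subtlety I foresee is matching the reduced expressions that define the $\psi_w$'s in Corollary~\ref{cor:parabfree} with the concatenated expressions $\psi_u\psi_v$ used above. By Theorem~\ref{thm:basis}\eqref{basis2}, two such choices differ by a combination of elements $\psi_{w'}y_1^{m_1}\cdots y_n^{m_n}e(\bi)$ with $w'<w$ in Bruhat order; writing each such lower $w'$ as $w''v''$ with $w''\in \scr D_n^\mu$, $v''\in \fr S_\mu$, and absorbing $\psi_{v''}y_1^{m_1}\cdots y_n^{m_n}e(\bi)$ into $R_\g$, a straightforward induction on $\ell(w)$ handles this bookkeeping. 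I expect that induction to be the only fiddly point; the substantive content of the proof is the centrality of $e_{\g'}$ in $B$ together with the double-coset factorisation.
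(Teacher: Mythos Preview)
Your argument is correct and follows essentially the same route as the paper: apply Corollary~\ref{cor:parabfree}, factor each $w\in\scr D_n^\mu$ as $uv$ with $u\in\fr S_\nu$, $v\in\ls{\nu}{\scr D}_n^\mu$ and $\ell(w)=\ell(u)+\ell(v)$, choose $\psi_w=\psi_u\psi_v$, and use that $e_{\g'}$ commutes past $\psi_u$ into $R_{\g'}$. Your final paragraph's worry is unnecessary: the paper's standing convention (stated just before Theorem~\ref{thm:basis}) is that all results involving $\psi_w$ hold for \emph{any} choice of reduced expression, so Corollary~\ref{cor:parabfree} may simply be invoked with the convenient expressions $\psi_w=\psi_u\psi_v$, exactly as the paper does, and no inductive bookkeeping is required.
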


\begin{proof}
By Corollary~\ref{cor:parabfree}, 
$e_{\g'} R_{\a} e_{\g} = \sum_{w\in \scr D^{\mu}_n} e_{\g'} \psi_w R_{\g}$. Let $w\in \scr D^{\mu}_n$, and let $u\in \fr S_\nu$ and 
$v\in \ls{\nu}{\scr D}_n$ be such that $w=uv$ and $\ell(w) = \ell(u) + \ell(v)$. 
It is easy to show that
$v\in \ls{\nu}{\scr D}_n^\mu$. We may assume that $\psi_w$ is defined in such a way that $\psi_w = \psi_u \psi_v$ (for each $w$ in question). 
Thus, $e_{\g'} \psi_w = e_{\g'} \psi_u \psi_v \in R_{\g'} \psi_v$, and the result follows. 
\end{proof}

Define $R'_{\a}$ to be the subalgebra of $R_{\a}$ generated by 
\[
\{ e(\bi) \mid \bi \in I^{\a} \} \cup \{ \psi_r e_{\a} \mid 1\le r<n\} \cup \{ (y_r - y_t) e_{\a} \mid 1\le r,t\le n\}.
\]

The following fact was observed 
in~\cite[Lemma 3.1]{BrundanKleshchev2012} (in a slightly different context). For the reader's convenience, we give a proof.

\begin{prop}\label{prop:tildebasis}
 As a right $F[y_2-y_1,y_3-y_2,\ldots, y_n-y_{n-1}]$-module, $R'_{\a}$ is freely generated by the set $\{ \psi_w e(\bi) \mid w\in \fr S_n, \bi\in I^{\a} \}$.
\end{prop}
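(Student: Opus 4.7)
The argument splits into linear independence and spanning, with the spanning part being the technical heart.

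For independence, suppose $\sum_{w,\bi}\psi_w e(\bi)\, p_{w,\bi}(y_2-y_1,\ldots,y_n-y_{n-1})=0$. Since $y_s e(\bi)=e(\bi)y_s$ by the defining relations, each summand expands as an $F$-linear combination of standard-basis elements $\psi_w y_1^{m_1}\cdots y_n^{m_n} e(\bi)$ from Theorem~\ref{thm:basis}\eqref{basis1}. Hence all those coefficients vanish, and since $F[y_2-y_1,\ldots,y_n-y_{n-1}]$ is a polynomial ring in $n-1$ free generators whose monomials are linearly independent inside $F[y_1,\ldots,y_n]$, this forces every $p_{w,\bi}=0$.

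For spanning, let $M$ be the right $F[y_2-y_1,\ldots,y_n-y_{n-1}]$-submodule of $R_\a$ generated by $\{\psi_w e(\bi)\}$; evidently $M\subseteq R'_\a$. To obtain the reverse inclusion I will show that $M$ is closed under left multiplication by every generator of $R'_\a$. Closure under $e(\bj)$ is immediate from $e(\bj)\psi_w=\psi_w e(w^{-1}\bj)$; the substantive cases are left multiplication by $\psi_r e_\a$ and by $(y_r-y_t)e_\a$. Both rest on the key identity
\[
\psi_r g\, e(\bi) \;=\; s_r(g)\,\psi_r e(\bi) \;+\; \delta_{i_r,i_{r+1}}\,\partial_r(g)\,e(\bi)
\qquad \bigl(g\in F[y_2-y_1,\ldots,y_n-y_{n-1}]\bigr),
\]
in which $\partial_r(g):=(s_r(g)-g)/(y_r-y_{r+1})$ is the Demazure operator. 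One proves this identity by induction on $\deg g$, the base cases $g=y_s$ being immediate from~\eqref{rel:ypsi0}--\eqref{rel:ypsi2}, and the inductive step following from the twisted Leibniz rule $\partial_r(fg)=\partial_r(f)g + s_r(f)\partial_r(g)$. The fact that $\partial_r$ stabilises $F[y_2-y_1,\ldots,y_n-y_{n-1}]$ is verified by direct computation on the generators $y_{j+1}-y_j$.

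Granted this identity, left multiplication of $\psi_w e(\bi)$ by $(y_r-y_t)e_\a$ is handled by induction on $\ell(w)$: commuting the $y$-difference past a single $\psi$ of a reduced expression for $\psi_w$ produces a term with one fewer $\psi$ and a translation-invariant correction, both lying in $M$ by induction. Left multiplication by $\psi_r e_\a$ is proved by a similar induction on $\ell(w)$: the case $\ell(s_r w)>\ell(w)$ just prepends $\psi_r$, while otherwise one rewrites $\psi_r\psi_w$ using the quadratic relation~\eqref{rel:psisq} or braid relation~\eqref{rel:psibr}, appealing to~\cite[Proposition 2.5]{BrundanKleshchevWang2011} to equate different reduced expressions for $\psi_w$ modulo strictly lower-length terms. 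The decisive point — and the source of the main obstacle in the bookkeeping — is that every correction produced by these relations is translation-invariant: $L_{i,j}(y_r,y_{r+1})$ is visibly a polynomial in $y_{r+1}-y_r$, and the braid correction $-2y_{r+1}+y_r+y_{r+2}$ equals $-(y_{r+1}-y_r)+(y_{r+2}-y_{r+1})$. With translation-invariance preserved at every step the nested induction closes, yielding $R'_\a\subseteq M$ and hence the proposition.
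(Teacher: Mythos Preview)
Your approach is sound and the key insight---that the Demazure operators $\partial_r$ preserve $P:=F[y_2-y_1,\ldots,y_n-y_{n-1}]$ and that every correction term in relations~\eqref{rel:psisq} and~\eqref{rel:psibr} lies in $P$---is exactly right. One caution: \cite[Proposition~2.5]{BrundanKleshchevWang2011} only gives error terms in $F[y_1,\ldots,y_n]$, not in $P$, so you cannot cite it directly; the $P$-refined statement must itself be folded into your induction (most cleanly by inducting on the total number of $\psi$-factors in an arbitrary, not necessarily reduced, word, so that products $\psi_{r_1}\cdots g\cdots\psi_{r_m}e(\bi)$ with $g\in P$ and the independence-of-reduced-expression statement are proved simultaneously). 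The paper takes a completely different and much shorter route: it uses the translation homomorphism $\omega\colon R_\alpha\to F[z]\otimes R_\alpha$ given by $y_t\mapsto z\otimes1+1\otimes y_t$, $\psi_r\mapsto 1\otimes\psi_r$, $e(\bi)\mapsto 1\otimes e(\bi)$, which satisfies $\omega(x)=1\otimes x$ for all $x\in R'_\alpha$. Since $R_\alpha$ is free as a left $M$-module on $\{y_1^j e_\alpha\mid j\ge0\}$ by Theorem~\ref{thm:basis}\eqref{basis1}, writing $x\in R'_\alpha$ as $\sum_{j=0}^m u_j y_1^j$ with $u_j\in M$ and $u_m\ne0$ and reading off the $z^m$-component of $\omega(x)=1\otimes x$ forces $m=0$, i.e.\ $x\in M$. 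This five-line argument encodes translation-invariance once, in $\omega$, rather than tracking it through a nested induction; your approach is more laborious but yields as a byproduct a $P$-refinement of Theorem~\ref{thm:basis}\eqref{basis2}.
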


\begin{proof}
 The fact that the given set generates a free right module $U$ over $F[y_2-y_1,\ldots, y_n-y_{n-1}]$ follows immediately from Theorem~\ref{thm:basis}. 
It remains only to show that $U=R'_{\a}$. Clearly, $U\subset R'_{\a}$.  

Let $z$ be an indeterminate, and consider $F[z]\otimes R_{\a}$, which is 
an $F[z]$-algebra by extension of scalars, and hence an $F$-algebra. As is observed in~\cite[\S 1.3.2]{KangKashiwaraKim2013}, there is an $F$-algebra homomorphism
$\om\colon R_n \to F[z]\otimes R_n$ given by 
\begin{equation}\label{KaKaKitwist}
 e(\bi) \mapsto 1\otimes e(\bi), \quad \psi_r \mapsto 1\otimes \psi_r, \quad y_t \mapsto z\otimes 1 + 1\otimes y_t
\end{equation}
for $\bi\in I^n$, $1\le r<d$, $1\le t\le d$. Note that $\om(x) = 1\otimes x$ for all $x\in R'_{\a}$. 
Also, it follows from Theorem~\ref{thm:basis} that $R_{\a}$ is freely generated as a left $U$-module by the set $\{ y_1^j e_{\a} \mid j\ge 0\}$. 
Let $0\ne x\in R'_{\a}$, and write 
$x=\sum_{j=0}^m u_j y_1^j$ where $u_j\in U$ for all $j$ and $u_m\ne 0$. Then $1 \otimes x = \om(x) \in z^m \otimes u_m + \sum_{j=0}^{m-1} z^j \otimes R_{\a}$, which forces $m=0$. Hence, $x\in U$. 
\end{proof}

Let $\g=(\g_1,\ldots,\g_{l})$ be a composition of $\a$. 
We define $R'_{\g} = R_{\g} \cap R'_{\a}$. 
Note that $R'_{\g}$ need not be equal to 
$\iota_{\g}(R'_{\g_1} \ot \cdots \ot R'_{\g_{l}})$.

\begin{cor}\label{cor:tildeg}
 As a right $F[y_2-y_1,y_3-y_2,\ldots, y_n-y_{n-1}]$-module, $R'_{\g}$ is freely generated by the set $\{ \psi_w e(\bi) \mid w\in \fr S_{\mu(\g)}, \bi\in I^{\a} \}$.
\end{cor}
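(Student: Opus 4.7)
The plan is to refine Proposition~\ref{prop:tildebasis} by cutting down to the subalgebra $R_{\g}$, using the parabolic decomposition from Corollary~\ref{cor:parabfree}.

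I would begin by checking that each proposed generator $\psi_w e(\bi)$ with $w\in\fr S_{\mu(\g)}$ and $\bi=\bi^{(1)}\cdots\bi^{(l)}$ of the form $\bi^{(r)}\in I^{\g_r}$ (``segment-consistent'') lies in $R'_{\g}$. Indeed $e(\bi)=\iota_{\g}(e(\bi^{(1)})\ot\cdots\ot e(\bi^{(l)}))\in R_{\g}$, and a reduced expression for such a $w$ uses only $\psi_r$ with $s_r\in\fr S_{\mu(\g)}$, each of which pulls back through $\iota_{\g}$ to a $\psi_k$ in the appropriate tensor factor; membership in $R'_{\a}$ is immediate. (For $\bi$ not segment-consistent one has $\psi_w e(\bi)e_{\g}=0$, so such elements are not in $R_{\g}$ and may be omitted.)

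For the substantive direction, take $x\in R'_{\g}$ and expand it uniquely via Proposition~\ref{prop:tildebasis}:
\[
 x=\sum_{w\in\fr S_n,\,\bi\in I^{\a}}\psi_w e(\bi)\,f_{w,\bi},\qquad f_{w,\bi}\in F[y_2-y_1,\ldots,y_n-y_{n-1}].
\]
Right-multiplying by $e_{\g}$ and invoking uniqueness already kills every term with $\bi$ not segment-consistent. To constrain $w$, factor $w=w'v$ with $w'\in\scr D^{\mu(\g)}_n$ and $v\in\fr S_{\mu(\g)}$, and choose reduced expressions so that $\psi_w=\psi_{w'}\psi_v$. Regrouping,
\[
 x=\sum_{w'\in\scr D^{\mu(\g)}_n}\psi_{w'}\,r_{w'},\qquad r_{w'}:=\sum_{v\in\fr S_{\mu(\g)},\ \bi\text{ seg.\ cons.}}\psi_v e(\bi)\,f_{w'v,\bi}.
\]
Each $r_{w'}$ lies in $R_{\g}$: the factors $\psi_v e(\bi)$ are in $R_{\g}$ by the first step, and $f_{w'v,\bi}\,e(\bi)\in R_{\g}$ because each $y_j e_{\g}$ is the image under $\iota_{\g}$ of a $y$-generator in the appropriate tensor factor. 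Corollary~\ref{cor:parabfree} then expresses $R_{\a} e_{\g}=\bigoplus_{w'\in\scr D^{\mu(\g)}_n}\psi_{w'} R_{\g}$ as a free right $R_{\g}$-module, and $R_{\g}=\psi_{\id} R_{\g}$ is precisely the $w'=\id$ summand; since $x\in R_{\g}$, all $r_{w'}$ with $w'\ne\id$ must vanish. A second application of the linear independence in Proposition~\ref{prop:tildebasis} then forces $f_{w'v,\bi}=0$ for all such $v,\bi$, so that $f_{w,\bi}$ is supported on $w\in\fr S_{\mu(\g)}$ and segment-consistent $\bi$, as required.

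Linear independence of the generating set is inherited directly from Proposition~\ref{prop:tildebasis}. The only mildly delicate point, and what I would expect to be the main bookkeeping obstacle, is ensuring that the ``cross-block'' differences $(y_j-y_{j-1})e_{\g}$ straddling a boundary of $\mu(\g)$ still lie in $R_{\g}$; this reduces to the elementary observation that each $y_j e_{\g}$ individually lies in the image of $\iota_{\g}$.
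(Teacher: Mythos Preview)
Your argument is correct and is precisely the unpacking of the paper's one-line proof, which simply cites Corollary~\ref{cor:parabfree} and Proposition~\ref{prop:tildebasis}: choose reduced expressions so that $\psi_w=\psi_{w'}\psi_v$ with $w'\in\scr D^{\mu(\g)}_n$ and $v\in\fr S_{\mu(\g)}$, use the free $R_\g$-basis of Corollary~\ref{cor:parabfree} to force $w'=\id$, and read off the remaining freeness from Proposition~\ref{prop:tildebasis}. Your observation about the cross-block differences $(y_j-y_{j-1})e_\g$ lying in $R_\g$ (because each $y_j e_\g$ does) is exactly the small check that makes the deduction ``immediate'', and your remark that the non-segment-consistent $\bi$ contribute nothing is the natural way to read the paper's indexing set $I^\a$.
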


\begin{proof}
This is an immediate consequence of Corollary~\ref{cor:parabfree} and Proposition~\ref{prop:tildebasis}.
\end{proof}

\subsection{Outline of the proof of Theorem~\ref{thm:main2}}\label{subsec:outline}

We denote by $\Par(n)$ the set of all partitions of $n$ and by $\Par_e(\rho,d)$ the set of all partitions with $e$-core $\rho$ and $e$-weight $d$.
A \emph{standard tableau} of size $n$ 
is a map $\mtt t\colon \{1,\ldots,n\} \to \mZ_{>0}\times \mZ_{>0}$ such that $\mtt t$ is 
a bijection onto the Young diagram of a partition $\lda$ and the inverse of this bijection is increasing along the rows and columns of $\lda$. 
In this situation, we say that $\lda$ is the \emph{shape} of $\mtt t$ 
and write $\Shape(\mtt t) = \lda$.  
We write $\Std(\lda)$ for the set of all standard tableaux of shape $\lda$. 
The \emph{residue sequence} of a standard tableau $\mtt t$ is defined as 
$\bi^{\mtt t} = (\res(\mtt t(1)), \ldots, \res(\mtt t(n)))\in I^n$. 
For any $e$-core $\rho$ and $d\ge 0$, define $I^{\rho,d}$ to be the set of all 
$\bi\in I^n$ such that there exist $\lda\in \Par_e(\rho,d)$ and a standard tableau $\mtt t$ of shape $\lda$ satisfying $\bi^{\mtt t} = \bi$.


Let $(\rho,d)$ be an element of $\Bl_e (n)$ (for some $n\ge 0$) such that $\rho$ is a Rouquier $e$-core for $d$. Let $\kappa$ be the residue of the RoCK block $\cl H_{\rho,d}$. As in the statement of Theorem~\ref{thm:main2}, let $f=\BK_n (f_{\rho,d})$. 
For $m\ge 0$, $\bi\in I^m$ and $j\in I$, 
set $\bi^{+j} = (i_1+j, \ldots, i_m + j)$.
Let $I^{\vn,1}_{+j} = \{\bi^{+j} \mid \bi \in I^{\vn,1}\}$. 
Define the set 
\begin{equation}\label{defE}
 \cl E_{d,j} = \{ w  (\bi^{(1)} \ldots \bi^{(d)}) \mid w \in \scr D_{ed}^{(e^d)}, \, \bi^{(1)}, \ldots, \bi^{(d)} \in I^{\varnothing,1}_{+j} \}, 
\end{equation}
 and set $\cl E_d=\cl E_{d,0}$. The following  alternative description of 
$\cl E_{d,j}$ is verified easily:
a tuple $\bi\in I^{de}$ lies in $\cl E_{d,j}$ if and only if one can partition the set $\{1,\ldots,de\}$ into subsets $Y_1,\ldots,Y_d$ of size $e$ each such that for each $r=1,\ldots,d$, one has $(i_{a_1},\ldots, i_{a_e})\in I^{\vn,1}_{+j}$ where
$Y_r = \{ a_1,\ldots,a_e \}$ and $a_1<\cdots <a_e$.  

Define $\ol{R}_{d\d}$ to be the quotient of 
$R_{d\d}$ by the $2$-sided ideal generated by the set 
$\{ e(\bi) \mid \bi\in I^{d\d} \sm \cl E_d\}$ and 
$\wh{R}_{d\d}$ to be 
the quotient of $R_{d\d}$ by the $2$-sided ideal 
generated by $\{ e(\bi) \mid \bi\in I^{d\d} \sm \cl E_{d,\k}\}$. 
It is clear from the definition of a KLR algebra 
that $R_{d\d}$ has a graded automorphism given by 
\begin{equation}\label{rot1}
 e(\bi) \mapsto e(\bi^{+\k}), \quad \psi_r e_{d\d} \mapsto \psi_r e_{d\d}, 
\quad  y_t e_{d\d} \mapsto y_t e_{d\d}
\end{equation}
whenever $\bi\in I^{d\d}$, $1\le r<de$ and $1\le t\le de$.
This automorphism corresponds to a rotational symmetry of the quiver $\Gamma$. 
Further, the map~\eqref{rot1} clearly induces an isomorphism 
$\ol{R}_{d\d} \isoto \wh{R}_{d\d}$, which restricts to an isomorphism
$\rot_{\k} \colon e_{\d^d} \ol{R}_{d\d} e_{\d^d} \isoto e_{\d^d} \wh{R}_{d\d} e_{\d^d}$.

There is an obvious graded homomorphism $R_{d\d} \to R^{\Lda_0}_{\cont(\rho)+d\d}$, obtained as the composition of 
the natural projection $R_{\cont(\rho)+d\d}\thra R_{\cont(\rho)+d\d}^{\Lda_0}$ 
with the map $R_{d\d} \to R_{\cont(\rho)+d\d}$, 
$x\mapsto \iota_{\cont(\rho),d\d} (e_{\cont(\rho)} \ot x)$.
Using special combinatorial properties of RoCK blocks, we show in Section~\ref{sec:comb} that 
this map factors through $\wh{R}_{d\d}$ and hence induces a graded algebra homomorphism 
$\Om\colon e_{\d^d} \wh{R}_{\d^d} e_{\d^d} \to fR^{\Lda_0}_{\cont(\rho)+d\d} f$.
Further, the image $C_{\rho,d}$ of $\Om$ has the property that $fR^{\Lda_0}_{\cont(\rho)+d\d} f$ 
is isomorphic to $R^{\Lda_0}_{\cont(\rho)} \ot C_{\rho,d}$ as a graded algebra (see Propositions~\ref{prop:cent} and~\ref{prop:im_om}). 
Thus, it is enough to show that $R_{\d}^{\Lda_0} \wr \fr S_d\cong C_{\rho,d}$ as graded algebras. 

In Section~\ref{sec:delta}, we prove some elementary results on the 
structure of $R_{\d}^{\Lda_0}$, which are needed later. 
In Section~\ref{sec:Theta}, we construct a graded algebra homomorphism
$\Theta\colon R^{\Lda_0}_{\d} \wr \fr S_d \to \ol{R}_{d\d}$. This allows us to define a homomorphism $\Xi\colon R^{\Lda_0}_{\delta}\wr \fr S_d \to C_{\rho,d}$ as 
the composition 
\begin{equation}\label{comp}
  R^{\Lda_0}_{\d} \wr \fr S_d \xrightarrow{\Theta} e_{\d^d} \ol{R}_{d\d} e_{\d^d}
\xrightarrow{\rot_\k} e_{\d^d} \wh{R}_{d\d} e_{\d^d}
\xrightarrow{\Om} C_{\rho,d}. 
\end{equation}
In Section~\ref{sec:surj}, we show that $\Xi$ is surjective. Proposition~\ref{prop:qdim} states that $R^{\Lda_0}_{\d} \wr \fr S_d$ and $C_{\rho,d}$ have the same (graded) dimension, so we are then able to deduce that $\Xi$ is an isomorphism, which concludes the proof. 

The definition of the map $\Theta$, unlike those of $\rot_{\k}$ and $\Omega$, is far from straightforward. The crux of the proof is the construction in Section~\ref{sec:Theta} of appropriate elements 
$\tau_r = \Theta(s_r)\in e_{\d^d} \ol{R}_{d\d} e_{\d^d}$, 
where, as before, $s_r =(r,r+1)\in \fr S_d\subset R^{\Lda_0}_{\d} \wr \fr S_d$
for $1\le r<d$. 
In order to define $\tau_r$ and prove that they satisfy required relations, we adapt to the present context the ideas that 
Kang, Kashiwara and Kim~\cite{KangKashiwaraKim2013} use to construct homomorphisms (``R-matrices'') between certain modules over KLR algebras.  

The results of Section~\ref{sec:Theta} are stated purely in the language of KLR algebras and do not involve a Rouquier core $\rho$.
Intertwiners of the same flavour as $\tau_r$ appear to play an important role in representation theory of KLR algebras and were originally discovered (in a different context) by Kleshchev, Mathas, and Ram~\cite[Section 4]{KleshchevMathasRam2012}.
More recently, module endomorphisms that are closely related to the elements $\tau_r$ have been constructed by Kleshchev and Muth (for KLR algebras of all untwisted affine types), also using the approach of~\cite{KangKashiwaraKim2013}: see~\cite[Theorem 4.2.1]{KleshchevMuth2013}.\footnote{
For any fixed $\bi\in I^{\vn,1}$, Kleshchev and Muth find a certain element of $R_{de}$, denoted by $\sigma_r +c$ in~\cite[(4.2.3)]{KleshchevMuth2013}, such that the image of this element in $\ol{R}_{d\d}$ multiplied by our $\Theta(e(\bi)^{\ot d} \ot 1)$ is equal to 
$\Theta (e(\bi)^{\ot d} \ot s_r)$.}

\begin{remark}\label{rem:KM}
The main result of Section~\ref{sec:Theta} is Theorem~\ref{thm:Theta}, which gives a partial description of the algebra $e_{\d^d} \ol R_{d\d} e_{\d^d}$.
A similar (but more explicit) result has independently been obtained by Kleshchev and Muth~\cite{KleshchevMuth2015} for all KLR algebras of untwisted affine ADE
types.  More precisely, Theorem~\ref{thm:Theta} can be deduced from
~\cite[Theorem 5.9]{KleshchevMuth2015}, which describes a certain idempotent truncation of $e_{\d^d} \ol R_{d\d} e_{\d^d}$ as an affine zigzag algebra and is proved using explicit diagrammatic computations, which are generally avoided below. Also, 
Proposition~\ref{prop:d1basis} together with Lemma~\ref{lem:Rbardiso} are
equivalent to 
the type A case of~\cite[Corollary 4.16]{KleshchevMuth2015}.  
\end{remark}

\section{Combinatorics of a RoCK block}\label{sec:comb}


\subsection{The algebra $C_{\rho,d}$}\label{subsec:Crhod}
Let $\mtt t$ be a standard tableau of size $n\ge 0$. 
For any $m\le n$, we write 
$\mtt t_{\le m}= \mtt t|_{\{ 1,\dots,m\}}$.
The degree $\deg(\mtt t)$ of $\mtt t$ is defined  as follows (see~\cite[\S 4.11]{BrundanKleshchev2009}). For $i\in I$, an \emph{$i$-node} is a node $a\in \mZ_{>0}\times \mZ_{>0}$ of residue $i$. Let $\mu$ be a partition. 
For a node $a\in \mZ_{>0}\times \mZ_{>0}$, we say that $a$ is an \emph{addable} node for $\mu$ if 
$a\notin \mu$ and $\mu\cup \{a\}$ is the Young diagram of a partition, and we say that $\mu$ is a \emph{removable} 
node of $\mu$ if $a\in \mu$ and $\mu \sm \{a\}$ is the Young diagram of a partition. 
We say that a node $(r,t) \in \mZ_{>0}\times \mZ_{>0}$ is \emph{below} a node $(r',s')$ if $r>r'$. 
If $a$ is an addable $i$-node of $\mu$, define 
\begin{equation}\label{demu}
 d_a (\mu) = \# \{\text{addable $i$-nodes for $\mu$ below $a$} \} 
    - \# \{ \text{removable $i$-nodes of $\mu$ below $a$} \}. 
\end{equation}
Finally, define recursively
\begin{equation}\label{eq:deg}
 \deg(\mtt t) = 
\begin{cases}
 d_{\mtt t(n)} (\Shape(\mtt t)) + \deg(\mtt t_{\le n-1}) & \text{if } n>0, \\
0 & \text{if } n=0. \\
\end{cases}
\end{equation}

Recall the definition of the set $I^{\rho,d}\subset I^{\cont(\rho)+d\d}$ from~\S\ref{subsec:outline}
for any $e$-core $\rho$ and $d\ge 0$.

\begin{thm}\label{thm:BKdim} \cite[Theorem 4.20]{BrundanKleshchev2009}
 For any integer $n\ge 0$ and $\bi,\bj\in I^n$, we have 
\[
 \qdim \left( e(\bi) R^{\Lda_0}_{n} e(\bj) \right) = 
\sum_{\substack{\lambda\in \Par(n) \\ 
\mtt s, \mtt t \in \Std(\lda) \\ 
\bi^{\mtt s}=\bi, \, \bi^{\mtt t} = \bj}} q^{\deg(\mtt s)+\deg(\mtt t)}.
\]
In particular, if $(\rho,d)\in \Bl_e (n)$, then 
$e(\bi)e_{\cont(\rho)+d\d} \ne 0$ in $R^{\Lda_0}_{\cont(\rho)+d\d}$ 
if and only if $\bi\in I^{\rho,d}$.
\end{thm}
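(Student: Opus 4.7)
The plan is to deduce the identity from the existence of a \emph{graded cellular basis} of $R^{\Lambda_0}_n$ indexed by pairs of standard tableaux of the same shape, with degrees and residue sequences that match the right-hand side term by term. Since the summand for $(\lambda,\mtt s,\mtt t)$ contributes exactly one basis element lying in $e(\bi^{\mtt s})R^{\Lambda_0}_n e(\bi^{\mtt t})$ of degree $\deg(\mtt s)+\deg(\mtt t)$, the dimension formula will then be immediate. The second assertion will follow at once: $e(\bi) e_{\cont(\rho)+d\delta} \ne 0$ iff the left-hand side above is nonzero for $\bj=\bi$, which occurs iff some $\lambda \in \Par_e(\rho,d)$ admits a standard tableau $\mtt t$ with $\bi^{\mtt t}=\bi$, and this is exactly the definition of $I^{\rho,d}$.

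First I would fix, for each $\lambda \in \Par(n)$, the initial tableau $\mtt t^{\lambda}$ filling $\lambda$ in order along rows. For any $\mtt s \in \Std(\lambda)$, let $w(\mtt s) \in \fr S_n$ be the unique permutation with $\mtt s = w(\mtt s) \cdot \mtt t^{\lambda}$, and choose a reduced expression to define $\psi_{w(\mtt s)}\in R_n$. One then sets
\[
\psi_{\mtt s\, \mtt t} \;=\; \psi_{w(\mtt s)}\, y^{\lambda}\, \psi_{w(\mtt t)}^{\ast}\, e(\bi^{\mtt t^\lambda})
\]
for a suitable homogeneous element $y^{\lambda}\in R_n$ (built from Garnir-type products of $y_k$'s) whose degree is chosen so that $\deg(\psi_{\mtt s\, \mtt t}) = \deg(\mtt s)+\deg(\mtt t)$; this requires verifying, by induction on $n$ using the recursion \eqref{eq:deg} and the relations \eqref{rel:ypsi1}--\eqref{rel:psibr}, that adding the box $\mtt s(n)$ to $\Shape(\mtt t_{\le n-1})$ contributes $d_{\mtt s(n)}(\Shape(\mtt s_{\le n-1}))$ to the degree. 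By the relation \eqref{rel:epsi}, $\psi_{\mtt s\,\mtt t}$ lies in $e(\bi^{\mtt s})R^{\Lambda_0}_n e(\bi^{\mtt t})$.

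Next I would show that $\{\psi_{\mtt s\,\mtt t}\}$ descends to a basis of $R^{\Lambda_0}_n$. For spanning, one uses Theorem~\ref{thm:basis}\eqref{basis2} together with the straightening relations in $R_n^{\Lambda_0}$ (which force $y_1^{\langle \Lambda_0,\alpha^{\vee}_{i_1}\rangle}e(\bi)=0$) to rewrite an arbitrary basis monomial $\psi_w y_1^{m_1}\cdots y_n^{m_n}e(\bi)$ as an $F$-linear combination of $\psi_{\mtt s\,\mtt t}$'s. For linear independence, one invokes the Brundan--Kleshchev isomorphism (Theorem~\ref{thm:BKiso_detailed}) to transfer the question to $H^{\Lambda_0}_n \cong \mathcal H_n$, whose ungraded dimension is $n!=\sum_{\lambda \in \Par(n)}|\Std(\lambda)|^2$; a count then shows the candidate set has exactly $n!$ elements, so spanning forces independence. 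Passing to the graded setting and summing $q^{\deg(\mtt s)+\deg(\mtt t)}$ over those $(\mtt s,\mtt t)$ with $\bi^{\mtt s}=\bi$, $\bi^{\mtt t}=\bj$ yields $\qdim(e(\bi)R^{\Lambda_0}_ne(\bj))$.

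The main obstacle will be the degree-bookkeeping when constructing $\psi_{\mtt s\,\mtt t}$, and more seriously the straightening needed for spanning. One has to control the nonnegative combination of $y_r$'s that appears when commuting $\psi_{w(\mtt s)}$ past $y^{\lambda}$ past $\psi_{w(\mtt t)}^{\ast}$, and show that the error terms all lie in the span of $\psi_{\mtt u\,\mtt v}$ with $(\mtt u,\mtt v)$ strictly dominating $(\mtt s,\mtt t)$ in an appropriate order; this is where the cellular (triangular) structure is essential. Once this is in place, the theorem follows by the counting argument above, and the ``in particular'' statement is a direct translation of the nonvanishing of $e(\bi)e_{\cont(\rho)+d\delta}$ into the existence of $(\lambda,\mtt t)$ with $\lambda \in \Par_e(\rho,d)$ and $\bi^{\mtt t}=\bi$.
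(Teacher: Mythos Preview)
The paper does not attempt to prove the graded dimension formula; it is quoted as \cite[Theorem 4.20]{BrundanKleshchev2009}. The only thing the paper argues is the ``in particular'' clause, and there your deduction has a gap. You write that $e(\bi)e_{\cont(\rho)+d\delta}\ne 0$ iff some $\lambda\in\Par_e(\rho,d)$ admits a standard tableau $\mtt t$ with $\bi^{\mtt t}=\bi$. But the sum in the formula is over all $\lambda\in\Par(n)$; if $\bi\in I^{\cont(\rho)+d\delta}$ then automatically $\cont(\lambda)=\cont(\rho)+d\delta$ for any contributing $\lambda$, but the further step---that this forces $\lambda\in\Par_e(\rho,d)$---requires knowing that residue content determines the $e$-core. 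The paper cites \cite[Theorem 2.7.41]{JamesKerber1981} for exactly this, and without it the equivalence with the definition of $I^{\rho,d}$ is unjustified.

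Regarding the main formula, the outline you give (construct homogeneous elements $\psi_{\mtt s\,\mtt t}\in e(\bi^{\mtt s})R^{\Lambda_0}_n e(\bi^{\mtt t})$ of degree $\deg(\mtt s)+\deg(\mtt t)$ and show they form a basis) is a valid strategy, essentially the Hu--Mathas graded cellular basis, which is a different and later argument than Brundan--Kleshchev's original. But your sketch is not a proof. The element $y^{\lambda}$ is not ``Garnir-type''; in the actual construction it is a specific monomial $\prod_k y_k^{d_k(\lambda)}$ with exponents dictated by the degree of $\mtt t^{\lambda}$, and your placement of $e(\bi^{\mtt t^{\lambda}})$ at the right end rather than between the $\psi$'s means the element does not lie in $e(\bi^{\mtt s})R^{\Lambda_0}_n e(\bi^{\mtt t})$ as claimed. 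More seriously, the straightening you identify as the ``main obstacle'' is the entire content of the Hu--Mathas paper: one must show that applying defining relations to rewrite an arbitrary monomial produces error terms lying in cells for strictly more dominant shapes, and a dimension count alone cannot establish this triangularity.
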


The second assertion of the theorem follows from the first one
because any two partitions with the same residue content have the same $e$-core,~see~\cite[Theorem 2.7.41]{JamesKerber1981}.

If $X$ is a subset of $\mN \times \{0,\ldots,e-1\}$ and $a,b\in \mN \times\{0,\ldots,e-1\}$ 
are such that $a\in X$ and $b\notin X$, then we say that the set 
$(X\sm \{a\}) \cup \{b\}$ is obtained from $X$ by the move $a\to b$. 
If $(t,i)\in \mN\times\{0,\ldots,e-1\}$, then we say that the \emph{next} node after $(t,i)$ is the unique node $(t',j)\in \mN\times\{0,\ldots,e-1\}$ such that $et'+j=et+i+1$ 
(i.e.~$(t',j)=(t,i+1)$ if $i<e-1$ and $(t',j)=(t+1,0)$ if $i=e-1$). 
We will use the following elementary fact.  

\begin{lem}\label{lem:stepres}
Let $N\ge 0$ and $\lda$ be a partition such that $\ell(\lda)\le N$. Let $a\in  \mN\times\{0,\ldots,e-1\}$ and $b=(t,i)$ be the next node after $a$.
If $a\in \Ab_N (\lda)$, $b\notin \Ab_N (\lda)$ and $\Ab_N (\mu)$ is obtained from $\Ab_N (\lda)$ by the move $a\to b$, then $\mu\sm \lda$ consists of a single node of residue 
$i-N+e\mZ$. 
\end{lem}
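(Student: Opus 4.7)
The plan is to reduce the abacus move to a direct computation with the beta-numbers $\lambda_r + N - r$ that define $\operatorname{Ab}_N^e(\lambda)$. Writing $a = (s,h)$, the assumption $a \in \operatorname{Ab}_N(\lambda)$ gives a unique $r \in \{1,\ldots,N\}$ with $es + h = \lambda_r + N - r$, and the definition of ``next node'' gives $et + i = es + h + 1$. So the move $a \to b$ replaces a single beta-number $\lambda_r + N - r$ by $\lambda_r + N - r + 1$ while leaving all the others unchanged.

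Next I would argue that the resulting abacus display forces $\mu_r = \lambda_r + 1$ and $\mu_q = \lambda_q$ for $q \ne r$. Since $b \notin \operatorname{Ab}_N(\lambda)$, the new set of beta-numbers still consists of $N$ distinct values, and the hypothesis that it equals $\operatorname{Ab}_N(\mu)$ for a partition $\mu$ means these values, when sorted decreasingly, are $\mu_1 + N - 1 > \cdots > \mu_N$. Comparing multisets shows that only the $r$-th sorted entry has changed, and it has grown by $1$; the partition condition $\mu_{r-1} \ge \mu_r$ is exactly the statement that $(r, \lambda_r + 1)$ is an addable node of $\lambda$. Consequently $\mu \setminus \lambda$ is the single box $(r, \lambda_r + 1)$.

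Finally I would compute the residue of this box:
\[
\operatorname{res}((r, \lambda_r + 1)) = (\lambda_r + 1) - r + e\mZ = (\lambda_r + N - r + 1) - N + e\mZ = (et + i) - N + e\mZ = i - N + e\mZ,
\]
using $et \equiv 0 \pmod{e}$ in the last step. This is the asserted residue.

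There is no real obstacle here: the whole statement is a bookkeeping exercise in the bijection between abacus positions and beta-numbers, so the only care needed is to verify that incrementing a single beta-number by $1$ does indeed correspond to adding a single box to row $r$ of $\lambda$, which is immediate from the formula $\lambda_r + N - r \mapsto (\lambda_r + 1) + N - r$.
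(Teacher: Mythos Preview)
Your proof is correct. The paper does not actually supply a proof of this lemma: it simply introduces it with the phrase ``We will use the following elementary fact'' and moves on. Your argument via beta-numbers is exactly the standard way to justify this fact, and all the steps check out---in particular, the condition $b\notin\Ab_N(\lda)$ guarantees $\lda_{r-1}>\lda_r$ (when $r>1$), so the incremented beta-number stays in the same sorted position and $\mu$ differs from $\lda$ only in row $r$.
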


In the rest of this section, we assume that $\cl H_{\rho,d} $ 
is a RoCK block of residue $\kappa$ and that $\Ab_N (\rho)$ is an 
abacus display witnessing this fact. 
If $X$ and $Y$ are subsets of $\mZ_{>0} \times \mZ_{>0}$ and there exists $c\in \mZ \times \mZ$ such that 
$Y= \{x+c \mid x\in X\}$, then we say that $Y$ is a translate of $X$. The concept of two skew tableaux (viewed as maps $\{1,\ldots,n\}\to \mZ_{>0}\times \mZ_{>0}$) being translates of each other is defined similarly. 
A partition of the form $(k,1^{e-k})$ for some $k\in \{1,\ldots,e\}$ will be called an $e$-hook partition. The following lemma includes a key combinatorial property of RoCK blocks, proved by Chuang and Kessar. 

\begin{lem}\label{lem:CK}
Let  $1\le r<d$. Suppose that $\mu\in \Par_e (\rho,r)$ and 
$\lda\in \Par_e (\rho, r+1)$ 
are such that $\mu\subset \lda$. Then 
$\lda\sm \mu$ is the translate of a Young diagram of an $e$-hook partition. 
Moreover, the residue of the top-left corner of $\lda\sm \mu$ is equal to $\kappa$. 
\end{lem}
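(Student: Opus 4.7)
The plan is to work entirely inside the abacus display $\Ab_N(\mu)$, whose per-runner bead counts $(\beta_0,\ldots,\beta_{e-1})$ coincide with those of $\rho$. Since $\mu\subset\lambda$ and $|\lambda|-|\mu|=e$, the display $\Ab_N(\lambda)$ is obtained from $\Ab_N(\mu)$ by moving a bead on some runner $i$ from row $t$ to row $t+1$; in particular, $(t,i)$ is a bead and $(t+1,i)$ is a gap in $\Ab_N(\mu)$. The shape of $\lambda\sm\mu$ is controlled, via the James abacus correspondence, by the bead-gap pattern on the $e-1$ abacus positions lying strictly between $(t,i)$ and $(t+1,i)$ in linear order, namely $(t,i+1),\ldots,(t,e-1),(t+1,0),\ldots,(t+1,i-1)$. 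A routine analysis of that correspondence shows that $\lambda\sm\mu$ is a translate of the Young diagram of an $e$-hook partition $(k,1^{e-k})$ precisely when this pattern is an initial run of beads followed by a run of gaps; moreover, $k-1$ is then equal to the number of gaps in the pattern.

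The heart of the argument is the pair of structural claims about $\Ab_N(\mu)$: \emph{(a)} $(t,j)$ is a bead for every $j>i$, and \emph{(b)} $(t+1,j)$ is a gap for every $j<i$. Both are proved by contradiction, combining the Rouquier inequality $\beta_{j'}-\beta_j\ge (j'-j)(d-1)$ with the weight bound $\sum_j w_j=r\le d-1$, where $w_j$ denotes the $e$-weight of $\mu$ on runner $j$. I will use two standard displacement estimates: a gap at $(s,j)$ with $s<\beta_j$ forces $w_j\ge\beta_j-s$, and a bead at $(s,j)$ with $s\ge\beta_j$ forces $w_j\ge s-\beta_j+1$. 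For (a), assuming in addition a gap at $(t,j)$ with $j>i$, a case split on whether $t<\beta_i$, $\beta_i\le t<\beta_j$, or $t\ge\beta_j$ assembles these estimates to give $w_i+w_j\ge \beta_j-\beta_i+1\ge d$, contradicting the weight bound. Claim (b) is proved by the symmetric analysis, using the gap at $(t+1,i)$ and the bead at $(t,i)$.

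Once (a) and (b) are established, the intermediate bead pattern is exactly $e-1-i$ beads followed by $i$ gaps, so $\lambda\sm\mu$ is a translate of the Young diagram of the $e$-hook partition $(i+1,1^{e-1-i})$, proving the first assertion of the lemma. For the residue statement, I will write out the sorted bead sequence of $\mu$ around position $et+i$ and read off the parts of $\mu$ and $\lambda$ that change under the move; this shows that $\lambda\sm\mu$ consists of a horizontal arm of $i+1$ boxes in some row $a$ together with a vertical leg of $e-1-i$ further boxes below its leftmost entry, and that its top-left corner $(a,b)$ satisfies $b-a=e(t+1)-N$. Reducing modulo $e$ gives residue $-N+e\mZ=\kappa$.

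I expect the main obstacle to be the weight-counting bookkeeping behind (a) and (b), and in particular the boundary subcase of (b) with $t+1=\beta_j$: here the gap at $(t+1,i)$ alone yields only $w_i\ge\beta_i-\beta_j\ge d-1$, which fails to exceed $r$ strictly, and one must pick up the additional contribution $w_j\ge 1$ coming from the bead at $(t+1,j)=(\beta_j,j)$ lying just above the core range on runner $j$ in order to reach the strict inequality $w_i+w_j\ge d$. Once this careful bookkeeping is carried out, the passage from the bead pattern to the shape and to the residue of the top-left corner of $\lambda\sm\mu$ is a routine consequence of the abacus correspondence.
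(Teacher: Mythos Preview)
Your proposal is correct and follows essentially the same route as the paper's argument, with the difference that you supply a self-contained proof of the structural claims (a) and (b), whereas the paper simply cites \cite[Lemma~4(1),(2)]{ChuangKessar2002} for both the bead pattern and the hook-shape conclusion. Your weight-counting argument for (a) and (b) is the standard one (and is in effect what Chuang--Kessar do); the case split and the boundary subcase you flag are handled exactly as you describe. For the residue of the top-left corner, the paper identifies that box as the one added by the single move $(t,e-1)\to(t+1,0)$ and then invokes Lemma~\ref{lem:stepres}, whereas you compute $b-a=e(t+1)-N$ directly from the bead position; these are the same computation in slightly different packaging.
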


\begin{proof}
 The first statement is a part of~\cite[Lemma 4(2)]{ChuangKessar2002}. 
 By standard properties of the abacus (cf.~\cite[Section 2.7]{JamesKerber1981}), there exists $(t,i)\in \Ab_N (\mu)$ such that 
 $\Ab_N(\lda)$ is obtained from $\Ab_N (\mu)$ by the move $(t,i)\to (t+1,i)$. 
 By~\cite[Lemma 4(1)]{ChuangKessar2002}, $\{ (t,i), (t,i+1),\ldots (t,e-1) \}\subset \Ab_N (\mu)$ and $\{ (t+1,0),\ldots, (t+1,i-1)\}\cap \Ab_N (\mu) = \varnothing$. 
 Hence, $\Ab_N (\lda)$ may be obtained from $\Ab_N (\mu)$ by the following moves (in the given order), each of which corresponds to adding a single box to a Young diagram:
 \begin{align}
 (t,e-1) &\to (t+1,0), \notag \\ 
 \label{moves} (t,e-2) &\to (t,e-1), \ldots, (t,i) \to (t,i+1), \\
 (t+1,0) &\to (t+1,1), \ldots, (t+1,i-1) \to (t+1,i). \notag
 \end{align}
 Hence, if $\nu$ denotes the partition such that $\nu \supset \mu$ and $\nu \sm \mu$  consists of a single box which is the top-left corner of $\lda\sm \mu$, then 
 $\Ab_N (\nu)$ is obtained from $\Ab_N (\mu)$ by the move $(t,e-1)\to (t+1,0)$. 
 By Lemma~\ref{lem:stepres}, the residue of the only box of $\nu\sm \mu$ 
 is $-N+e\mZ = \k$.  
\end{proof}

\begin{exam}\label{ex:rock2}
As in Example~\ref{ex:rock1}, let $e=3$ and $\rho=(8,6,4,2,2,1,1)$, so that $\k=2+3\mZ$. 
Let $\nu = (8,6,4,4,3,1,1)\in \Par_3 (\rho,1)$, 
$\mu=(11,6,4,4,3,1,1)\in \Par_3 (\rho,2)$ and 
$\lda=(11,6,4,4,3,3,2)\in \Par_3 (\rho,3)$. Then $\rho\subset \nu\subset \mu \subset \lda$, and each of $\nu\sm \rho$, $\mu\sm \nu$, $\lda \sm \mu$ is a translate of the Young diagram of a $3$-hook partition. These translates are shown as hooks with thick boundaries in the following Young diagram of shape $\lda$, which also gives the $3$-residues of all boxes: 
\begin{equation}\label{exdiag}
\begin{array}{c}
\begin{tikzpicture}[scale=0.5,draw/.append style={thin,black}]
 \Tableau{{0,1,2,0,1,2,0,1,2,0,1}, {2,0,1,2,0,1}, {1,2,0,1}, {0,1,2,0}, {2,0,1}, {1,2,0}, {0,1}}
 \draw[line width = 0.9mm] (8.5,.5)--(11.5,.5)--(11.5,-.5)--(8.5,-.5)--cycle;
 \draw[line width = 0.9mm] (2.5,-2.5)--(4.5,-2.5)--(4.5,-3.5)--(3.5,-3.5)--(3.5,-4.5)--(2.5,-4.5)--cycle;
 \draw[line width = 0.9mm] (1.5,-4.5)--(3.5,-4.5)--(3.5,-5.5)--(2.5,-5.5)--(2.5,-6.5)--(1.5,-6.5)--cycle;
\end{tikzpicture}         
\end{array}
\end{equation}
\end{exam}


Let $j\in I$. 
The following lemma is an immediate consequence of the description of the set $\cl E_{d,j}$ given in~\S\ref{subsec:outline}. 

\begin{lem}\label{lem:sh1}
 Let $\bi^{(1)},\ldots, \bi^{(d)}\in I^e$. 
If $\bi = (\bi^{(1)} \ldots \bi^{(d)}) \in \cl E_{d,j}$ and, 
for some $k>0$, $\bi^{(1)},\ldots, \bi^{(k)} \in I^{\d}$, 
then $\bi^{(1)},\ldots,\bi^{(k)} \in I^{\varnothing,1}_{+j}$. 
\end{lem}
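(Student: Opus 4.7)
The plan is to reduce the lemma to a simple observation about the first entry of elements of $I^{\varnothing,1}_{+j}$ and then use an inductive argument on the blocks $\bi^{(1)}, \ldots, \bi^{(k)}$ to show that the decomposition witnessing $\bi \in \cl E_{d,j}$ must cleanly separate out each of these blocks.

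First I would record the key observation about $I^{\varnothing,1}_{+j}$. Every partition in $\Par_e(\varnothing,1)$ is an $e$-hook $(k,1^{e-k})$, and for any standard tableau $\mtt t$ of such a shape, $\mtt t(1) = (1,1)$, so the residue sequence $\bi^{\mtt t}$ begins with $\res((1,1)) = 0$. Consequently every tuple in $I^{\varnothing,1}$ starts with $0$, and every tuple in $I^{\varnothing,1}_{+j}$ starts with $j$.

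Next, using the description of $\cl E_{d,j}$ recalled in \S\ref{subsec:outline}, fix a partition $Y_1,\ldots,Y_d$ of $\{1,\ldots,de\}$ into sets of size $e$ witnessing $\bi \in \cl E_{d,j}$. I would prove by induction on $s = 1,\ldots,k$ that, after a suitable relabelling of the $Y_r$'s, one has $Y_s = \{(s-1)e+1,\ldots,se\}$. For the inductive step, suppose $Y_1,\ldots,Y_{s-1}$ have already been shown to be the first $s-1$ blocks. Position $(s-1)e+1$ belongs to some $Y_r$ with $r \ge s$, and this $Y_r$ is disjoint from $\{1,\ldots,(s-1)e\}$, so $(s-1)e+1$ is its smallest element. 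By the first paragraph, $i_{(s-1)e+1} = j$. Now suppose for contradiction that $Y_r \ne \{(s-1)e+1,\ldots,se\}$ and let $(s-1)e+1 < t \le se$ be the least index in $\{(s-1)e+1,\ldots,se\}$ not in $Y_r$. Then $t$ lies in some $Y_{r'}$ with $r' \ne 1,\ldots,s-1,r$, and again $Y_{r'}$ avoids $\{1,\ldots,t-1\}$, so $t$ is the smallest element of $Y_{r'}$, forcing $i_t = j$. But $\bi^{(s)} \in I^{\delta}$ contains each residue exactly once and already has $i_{(s-1)e+1} = j$, a contradiction. Hence $Y_r = \{(s-1)e+1,\ldots,se\}$ and $\bi^{(s)} \in I^{\varnothing,1}_{+j}$.

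There is no substantial obstacle here; the only thing one must be careful about is to invoke the hypothesis $\bi^{(s)} \in I^{\delta}$ (which gives the multiplicity-one property for the residue $j$ inside block $s$) at exactly the right place in the inductive step. The rest of the argument is a straightforward extraction of smallest elements from the $Y_r$'s.
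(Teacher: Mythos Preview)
Your proof is correct and spells out precisely the argument the paper leaves implicit: the paper gives no proof beyond remarking that the lemma ``is an immediate consequence of the description of the set $\cl E_{d,j}$ given in~\S\ref{subsec:outline}'', and your inductive argument using that description together with the fact that every element of $I^{\varnothing,1}_{+j}$ begins with $j$ is exactly the natural way to unpack that remark.
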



\begin{lem}\label{lem:rocksh}
If $\bj \in I^{\rho,0}$ and $\bi\in I^{de}$ are such that 
$\bj\bi\in I^{\rho,d}$, then $\bi\in \cl E_{d,\k}$. 
\end{lem}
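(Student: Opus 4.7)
The plan is to unpack the definition of $I^{\rho,d}$ and then decompose the added skew shape $\lda / \rho$ into the $d$ translates of $e$-hook partitions that Lemma~\ref{lem:CK} provides, showing that each hook contributes the required factor in $I^{\vn,1}_{+\k}$.

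First, from $\bj \bi \in I^{\rho,d}$ write $\bj \bi = \bi^{\mtt t}$ for some standard tableau $\mtt t$ of a partition $\lda \in \Par_e(\rho,d)$. Since $\bj \in I^{\rho,0}$ has residue content $\cont(\rho)$, the shape of $\mtt t_{\le |\rho|}$ has the same residue content, and hence the same $e$-core $\rho$ by \cite[Theorem 2.7.41]{JamesKerber1981}; being of size $|\rho|$, it must equal $\rho$ itself. Therefore the restriction of $\mtt t$ to $\{|\rho|+1,\ldots,|\rho|+de\}$ is a standard filling of the skew shape $\lda / \rho$, and $\bi$ is its residue sequence.

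Next, pick any chain $\rho = \lda^{(0)} \subset \lda^{(1)} \subset \cdots \subset \lda^{(d)} = \lda$ with $\lda^{(r)} \in \Par_e(\rho,r)$; such a chain exists by iteratively removing $e$-hooks from $\lda$ while staying in $\Par_e(\rho,\cdot)$. Set $H_r := \lda^{(r)} \setminus \lda^{(r-1)}$. By Lemma~\ref{lem:CK}, each $H_r$ is a translate of an $e$-hook partition whose top-left corner has residue $\k$, and the $H_r$ are pairwise disjoint with union $\lda / \rho$. Define $Y_r := \{\,k - |\rho| : \mtt t(k) \in H_r\,\} \subset \{1,\ldots,de\}$, so $Y_1,\ldots,Y_d$ partition $\{1,\ldots,de\}$ into subsets of size $e$.

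Fix $r$, enumerate $Y_r = \{k_1 < \cdots < k_e\}$, and translate $H_r$ to an $e$-hook partition $H$ rooted at $(1,1)$, relabeling $k_s \mapsto s$. The resulting filling of $H$ inherits strict row- and column-increase from $\mtt t$, which forces each newly added box to be addable (since $H_r$, being a translate of a partition shape, has all its northern/western neighbours inside $H_r$ whenever they exist); hence this filling is a genuine standard tableau of $H$. Comparing residues of boxes in $H$ and in $H_r$ (and using that the top-left of $H_r$ has residue $\k$) shows that the residue sequence of $\mtt t|_{H_r}$ taken in the order $k_1 < \cdots < k_e$ is an element of $I^{\vn,1}$ shifted by $\k$, i.e.\ lies in $I^{\vn,1}_{+\k}$. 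By the alternative description of $\cl E_{d,\k}$ given in \S\ref{subsec:outline}, we conclude $\bi \in \cl E_{d,\k}$.

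The only point that requires care is the compatibility claim in the last paragraph: a standard tableau of $\lda$ may add boxes belonging to different hooks $H_r$ in an interleaved order, so one needs to check that the sub-tableau on each $H_r$ is itself standard after translation; this goes through because each $H_r$ is a translate of a partition, which is exactly the property that makes row/column-increase equivalent to the ``addable box'' condition.
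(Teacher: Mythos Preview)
Your proof is correct and follows essentially the same route as the paper's: both arguments pick a chain $\rho=\lda^{(0)}\subset\cdots\subset\lda^{(d)}=\lda$, invoke Lemma~\ref{lem:CK} to identify each step as a translated $e$-hook with top-left residue $\k$, and then use the preimages of these hooks under $\mtt t$ to exhibit the partition of $\{1,\dots,de\}$ required by the alternative description of $\cl E_{d,\k}$. The paper is terser---it simply asserts ``as $\mtt t$ is a standard tableau, $(i_{a_{r1}-|\rho|},\ldots,i_{a_{re}-|\rho|})\in I^{\vn,1}_{+\k}$''---whereas you spell out why the restriction of $\mtt t$ to each $H_r$ translates to a genuine standard tableau of the hook; this extra care is justified and does not change the underlying argument.
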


\begin{proof}
By the hypothesis, $\bj\bi = \bi^{\mtt t}$ for some 
 $\lda\in \Par_e (\rho,d)$ and some standard tableau $\mtt t$ of shape $\lda$. 
 Since $\rho$ is the $e$-core of $\lda$, there is a sequence 
\[
 \rho =\lda^{0} \subset \lda^{1} \subset \cdots \subset \lda^{d} = \lda
\]
of partitions such that $\lda^{r} \in \Par_e (\rho,r)$ for each $r=0,\ldots,d$. 
By Lemma~\ref{lem:CK}, each $\lda^{r}\sm \lda^{r-1}$ is  a translate of an $e$-hook partition, and the top-left corner of $\lda^{r}\sm \lda^{r-1}$ 
has residue $\k$. 
For $1\le r\le d$, let 
$\mtt t^{-1}(\lda^{r}\sm \lda^{r-1})=\{ a_{r1},\ldots,a_{re} \}$, 
with $a_{r1}<\cdots<a_{re}$. Since $\bj\in I^{\rho,0}$ and 
$\Par_e (\rho,0) = \{ \rho \}$, we have 
$\mtt t^{-1} (\rho) = \{ 1,\ldots, |\rho|\}$.
Hence, as $\mtt t$ is a standard tableau, 
$(i_{a_{r1}-|\rho|},\ldots,i_{a_{re}-|\rho|})\in I^{\vn,1}_{+\k}$. Therefore, 
the partition of $\{1,\ldots,de\}$ into the subsets 
$\{ a_{r1}-|\rho|,\ldots,a_{re}-|\rho| \}$, $r=1,\ldots,d$, witnesses the fact that 
$\bi\in \cl E_{d,\k}$. 
\end{proof}

\begin{lem}\label{lem:jkappa}
 For all $\bj \in I^{\rho,0}$, we have 
$\k \notin \{ j_{\max\{1,|\rho|-e+1\} }, \ldots, j_{|\rho|-1}, j_{|\rho|} \}$.
\end{lem}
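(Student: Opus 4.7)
The plan is to argue by contrapositive, translating the statement into an abacus inequality. Since $\Par_e(\rho,0) = \{\rho\}$, any $\bj \in I^{\rho,0}$ is the residue sequence $\bi^{\mtt t}$ of some standard tableau $\mtt t$ of shape $\rho$. Suppose for a contradiction that $j_r = \k$ for some $r \in \{\max\{1, |\rho|-e+1\}, \ldots, |\rho|\}$; setting $\mu = \mtt t(\{1,\ldots,r\})$ and $a = \mtt t(r)$, one obtains a subpartition $\mu \subseteq \rho$ in which $a$ is a removable $\k$-node, with $|\rho\setminus\mu| = |\rho| - r \le e - 1$. It thus suffices to prove the following claim: \emph{if $\mu \subseteq \rho$ admits a removable $\k$-node, then $|\rho\setminus\mu| \ge e$.}

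To establish the claim I would pass to the abacus. Fix $N$ witnessing the Rouquier condition, so $\Ab_N(\rho)$ has runner counts $b_0 \le b_1 \le \cdots \le b_{e-1}$ and $\k = -N + e\mZ$. By Lemma~\ref{lem:stepres} the removable $\k$-node of $\mu$ corresponds to a bead of $\Ab_N(\mu)$ at $(t^*, 0)$ with the position $(t^*-1, e-1)$ empty, for some $t^* \ge 1$. Moreover, $\mu \subseteq \rho$ is equivalent to $p^\mu_k \le p^\rho_k$ for every $k$ when bead positions are listed in increasing order, and $|\rho\setminus\mu| = \sum_k(p^\rho_k - p^\mu_k)$.

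Next, I would split on $t^*$. If $t^* \le b_0$, the Rouquier condition forces every abacus position $\le et^*-1$ to be a bead in $\Ab_N(\rho)$, and the pointwise inequality forces every such position to be a bead in $\Ab_N(\mu)$ as well, contradicting the vacancy at $(t^*-1,e-1)$. If instead $t^* \ge b_0 + 1$, introduce $J^c = \min\{i : b_i \ge t^*\}$ and $j_1 = \min\{i : b_i \ge t^*+1\}$; row $t^*-1$ of $\Ab_N(\rho)$ then carries beads at exactly the columns $J^c, \ldots, e-1$, and the first bead of row $t^*$ sits at column $j_1$. A rank-by-rank comparison, exploiting that the vacancy at $(t^*-1, e-1)$ in $\Ab_N(\mu)$ squeezes the $B_{t^*} := \sum_i \min(b_i, t^*)$ beads of rank $\le B_{t^*}$ into the positions $\le et^*-2$, shows that each of the $J_{t^*} = e - J^c$ beads of row $t^*-1$ of $\rho$ must shift leftward by at least $1$ in $\mu$, while the bead of rank $B_{t^*}+1$ must shift by at least $j_1$. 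Summing yields $|\rho|-|\mu| \ge J_{t^*} + j_1$, and since $\{i : b_i \ge t^*+1\} \subseteq \{i : b_i \ge t^*\}$ gives $j_1 \ge J^c$, we deduce $J_{t^*}+j_1 \ge e$, contradicting $|\rho\setminus\mu| \le e-1$.

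The main obstacle is the displacement bookkeeping in the second case. A naive argument tracking only the two beads directly involved in the hypothetical removal of $a$ yields a bound like $j_1 + 1$, which is insufficient. One must recognise that the vacancy at $(t^*-1, e-1)$ in $\Ab_N(\mu)$ forces the entire row $t^*-1$ of $\rho$-beads strictly to the left in $\mu$, and combine this with the leftward shift forced by the bead at $(t^*,0)$; only then does the Rouquier inequality $j_1 \ge J^c$ yield the tight bound $J_{t^*}+j_1 \ge e$.
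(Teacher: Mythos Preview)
Your argument is correct, and it takes a genuinely different route from the paper's proof. The paper chooses the index $a$ to be \emph{maximal} among those with $j_a=\kappa$ in the last $e$ positions; this forces all subsequent abacus moves (from $\Ab_N(\mu^1)$ to $\Ab_N(\rho)$) to be horizontal, after which a direct move-count in rows $t$ and $t+1$ gives $m+(e-m)=e$. You dispense with the maximality trick entirely and instead prove the stronger standalone claim that \emph{any} subpartition $\mu\subseteq\rho$ with a removable $\kappa$-node satisfies $|\rho\setminus\mu|\ge e$, via a static rank-by-rank comparison of bead positions. The two counts match up: your $J_{t^*}$ is the paper's $m$, your $j_1$ is the column of the leftmost bead in the paper's row $t+1$, and your inequality $j_1\ge J^c$ is exactly the paper's observation that row $t+1$ has at most $m$ beads. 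What your approach buys is a cleaner intermediate statement and no need to track a sequence of moves; what the paper's maximality trick buys is that one never has to invoke the rank-ordering characterisation of containment. One small point you leave implicit: in Case~2 you should note that $J^c$ and $j_1$ are well defined, which follows because the bead of $\mu$ at position $et^*$ forces (via $p^\mu_k\le p^\rho_k$) some bead of $\rho$ at position $\ge et^*$, hence $b_{e-1}\ge t^*+1$.
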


The reader may find it helpful to check, by inspecting the residues in~\eqref{exdiag},
that the lemma holds for $e$ and $\rho$ as in Example~\ref{ex:rock2}, i.e.~that 
for any $\bi\in I^{\rho,0}$, none of the last $3$ entries of $\bi$ is equal to $2$.

\begin{proof}
Fix an integer $N\ge \ell(\rho)$ such that $-N+e\mZ =\k$. 
Let $\mtt t$ be a standard tableau of shape $\rho$ 
such that $\bi^{\mtt t} = \bj$. 
Suppose for contradiction that $j_a = \k$ for some $a>|\rho|-e$, and choose such $a$ to be largest possible. Let 
$\mu^{r}$ be the shape of $\mtt t_{\le a-1+r}$ for $r=0,\ldots,|\rho|-a+1$. 
Then, by Lemma~\ref{lem:stepres}, $\Ab_N(\mu^{1})$ is obtained from $\Ab_N(\mu^{0})$ 
by the move $(t,e-1) \to (t+1,0)$ for some $t\ge 0$. 
By maximality of $a$, the abacus $\Ab_N (\rho)$ can be obtained from $\Ab_N (\mu^{1})$ by $|\rho|-a<e$ ``horizontal'' moves, i.e.~moves of the form
$(t',u) \to (t',u+1)$ for $t'\ge 0$, $0\le u<e-1$. 
Recall that, since $\rho$ is a Rouquier core, for each $t'\in \mN$ there exists 
$u\in \{0,\ldots,e-1\}$ such that 
$\Ab_N (\rho) \cap (\{t'\} \times \{0,\ldots,e-1\}) = \{(t',u),(t',u+1),\ldots,(t',e-1)\}$, and the size of this intersection is weakly decreasing as $t'$ increases. 
Let $m$ be the number of beads in row $t$ of $\Ab_N (\rho)$. 
Since $(t,e-1)\notin \Ab_N(\mu^{1})$, at least $m$ horizontal moves in row $t$ are required to transform row $t$ of 
$\Ab_N(\mu^{1})$ to row $t$ of $\Ab_N(\rho)$. Further, row $t+1$ of $\Ab_N(\rho)$ has at most $m$ beads, so the leftmost bead of that row is in
column numbered at least $e-m$. On the other hand, the leftmost bead of row $t$ in $\Ab_N (\mu^{1})$ is in column $0$, so at least $e-m$ horizontal moves are required to transform row $t+1$ of $\Ab_N(\mu^{1})$ into row $t+1$ of 
$\Ab_N (\rho)$. Hence, in total, at least $e$ horizontal 
moves are needed to transform 
$\Ab_N(\mu^{1})$ into $\Ab_N (\rho)$, which is a contradiction. 
\end{proof}

Combining Equation~\eqref{BKf}, Theorem~\ref{thm:BKdim} and  
Lemmas~\ref{lem:sh1} and~\ref{lem:rocksh}, we obtain the following formula:
\begin{equation}\label{BKf2}
 \BK_{|\rho|+ed} (f_{\rho,d}) = 
\sum_{\substack{\bj\in I^{\rho,0} \\ \bi^{(1)},\ldots,\bi^{(d)} \in I^{\vn,1}_{+\k}}} e(\bj \bi^{(1)} \ldots \bi^{(d)}).
\end{equation}

If $\a\in Q_+$ and $\g=(\g_1,\ldots,\g_l)$ is a composition of $\a$, we define
$R^{\Lda_0}_{\g}$ to be the image of $R_{\g}$ under the natural projection
$R_{\a} \thra R_{\a}^{\Lda_0}$.

\begin{prop}\label{prop:bigwall}
If $f=\BK_{|\rho|+ed} (f_{\rho,d})$, then 
$f R^{\Lda_0}_{\cont(\rho)+d\d} f \subset 
e_{\cont(\rho),\d^d} R^{\Lda_0}_{\cont(\rho), d\d} e_{\cont(\rho),\d^d}$.
\end{prop}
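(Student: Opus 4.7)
The strategy is to prove the stronger containment $f R^{\Lda_0}_{\cont(\rho)+d\d} f \subset R^{\Lda_0}_{\cont(\rho), d\d}$; this implies the proposition because $f = e_{\cont(\rho), \d^d} f$, which is visible from formula~\eqref{BKf2} since every idempotent $e(\bj\bi^{(1)}\cdots\bi^{(d)})$ appearing in $f$ has $\bj \in I^{\cont(\rho)}$ and each $\bi^{(r)} \in I^{\d}$.

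First, I would lift $x \in R^{\Lda_0}_{\cont(\rho)+d\d}$ to $\wt x \in R := R_{\cont(\rho)+d\d}$ and set $\wt f = \sum_{\bi \in S} e(\bi)$, where $S$ denotes the support of $f$ from~\eqref{BKf2}. Since every $e(\bi)$ in $\wt f$ lies in $R_\g$ with $\g = (\cont(\rho), d\d)$, we have $\wt f e_\g = \wt f$ and hence $\wt f \wt x \wt f = \wt f (e_\g \wt x e_\g) \wt f$. Applying Proposition~\ref{prop:double} with $\mu = (|\rho|, de)$, I obtain $e_\g \wt x e_\g \in \sum_{v \in \ls\mu{\scr D}^\mu_n} R_\g \psi_v R_\g$. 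The $v = 1$ summand yields $\wt f R_\g R_\g \wt f \subset R_\g$, whose image in $R^{\Lda_0}$ lies in $R^{\Lda_0}_\g$, as required.

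For $v \neq 1$, I plan to show the contribution $\wt f R_\g \psi_v R_\g \wt f$ lies entirely in the cyclotomic ideal. Expanding $R_\g \psi_v R_\g$ in the standard basis using Theorem~\ref{thm:basis}(ii), and noting that elements of $R_\g$ are spanned by basis elements whose $w$-component lies in $\fr S_\mu$, every basis element appearing has $w$-component in the double coset $\fr S_\mu v \fr S_\mu$. Thus it suffices to prove the key combinatorial claim: \emph{if $\bi, \bi' \in S$ and $w \in \fr S_n$ satisfy $w \bi' = \bi$, then $w \in \fr S_\mu$.} Granting this, any surviving basis element $\psi_w y^\bm e(\bi')$ with $\bi', w\bi' \in S$ automatically has $w \in \fr S_\mu$, putting it in $R_\g$, so the $v \ne 1$ contribution maps to zero in $R^{\Lda_0}$ (any component with residues outside $I^{\rho,d}$ is killed by Theorem~\ref{thm:BKdim}).

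To prove the combinatorial claim, I would track the $\k$-residues. By Lemma~\ref{lem:jkappa}, the last $e$ entries of the left block of any element of $S$ avoid the residue $\k$, while the right block necessarily contains $\k$'s at the positions $|\rho|+1, |\rho|+1+e, \ldots, |\rho|+1+(d-1)e$, coming from the first entries of each $\bi^{(r)} \in I^{\vn,1}_{+\k}$. A bijection $w$ must carry $\k$-positions of $\bi'$ to $\k$-positions of $\bi$. Writing $w = u v_k u'$ with $u, u' \in \fr S_\mu$ and $v_k$ the minimal double coset representative, I would argue by case analysis on $k = me + s$ ($0 \le s < e$) that $v_k$ alone transports one of the mandatory right-block $\k$-residues of $\bi'$ into the forbidden last-$e$ window of the left block of $v_k \bi'$. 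The additional freedom of $u, u' \in \fr S_\mu$ does not rescue the situation: attempting to permute a $\k$ out of the forbidden left-block window using $u$ forces the corresponding pre-image under $w$ to land at a mandatory $\k$-position of the right block of $w\bi'$ where the value comes instead from the left block of $\bi'$, which by Lemma~\ref{lem:jkappa} is not $\k$ in the relevant positions—again contradicting $w\bi' \in S$.

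The main obstacle will be this last step, completing the case analysis for $w = u v_k u'$ with nontrivial $u, u'$, where one must simultaneously balance the residue constraints coming from \emph{both} halves of the block partition of $w\bi'$. Once the key claim is established, the remainder of the argument is routine.
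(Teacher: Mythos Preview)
Your key combinatorial claim is false. Take $e=3$, $\rho=(8,6,4,2,2,1,1)$ (the Rouquier core of Example~\ref{ex:rock1}), $\k=2$, $d=1$. Choose a standard tableau $\mtt t$ of shape $\rho$ with $\mtt t(3)=(1,3)$, so $\bj=\bi^{\mtt t}\in I^{\rho,0}$ has $j_3=\res((1,3))=2=\k$. Any $\bi^{(1)}\in I^{\vn,1}_{+\k}$ has first entry $\k$, so $\bi'=\bj\bi^{(1)}\in S$ satisfies $\bi'_3=\bi'_{|\rho|+1}=\k$. Then $w=(3,\,|\rho|+1)$ fixes $\bi'$, giving $w\bi'=\bi'\in S$ with $w\notin\fr S_{(|\rho|,de)}$. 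The gap in your $u,u'$ argument is that Lemma~\ref{lem:jkappa} only forbids $\k$ among the \emph{last $e$} entries of $\bj\in I^{\rho,0}$; earlier entries of $\bj$ may well equal $\k$, and $u'\in\fr S_\mu$ can move such a $\k$ into the last-$e$ window before $v_k$ acts, so your ``the value comes from the left block and is not $\k$'' step does not go through. (Separately, the assertion that straightening $R_\g\psi_v R_\g$ into the standard basis keeps the $w$-component in $\fr S_\mu v\fr S_\mu$ is also not justified, though this would be moot if the key claim held: one could bypass Proposition~\ref{prop:double} entirely and just expand $\wt f R_\a\wt f$ directly in the standard basis.)

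The paper repairs this by working with the \emph{finer} parabolic $\mu=(|\rho|,e^d)$ rather than your $(|\rho|,de)$. The point is that $f=e_{\cont(\rho),\d^d}$ is then the identity element of $R_{\cont(\rho),\d^d}$, so after Proposition~\ref{prop:double} it suffices to show $f\psi_w f=0$ only for \emph{minimal} double coset representatives $w\in{}^{\mu}\!\scr D^{\mu}_n\setminus\fr S_{(|\rho|,de)}$. For such $w$, the monotonicity conditions built into minimality ($w$ and $w^{-1}$ increasing on each $\mu$-block) give exactly the extra leverage your argument lacks: if $a$ is maximal with $w(a)\in\{1,\dots,|\rho|\}$ and $a\in X_r$, minimality forces $w^{-1}$ to map the interval $\{w(b),\dots,|\rho|\}$ (of size $>e$, by Lemma~\ref{lem:jkappa}) into the single block $X_r$ of size $e$, a contradiction.
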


\begin{proof}
Let $n= |\rho|+de$, $\mu = (|\rho|,e^d)=(|\rho|,e,\ldots,e)$ 
and $\nu=(|\rho|, de)$. 
By~\eqref{BKf}, we have $f=e_{\cont(\rho), \d^d}$. 
Hence, by Proposition~\ref{prop:double}, 
$f R^{\Lda_0}_{\cont(\rho)+d \d} f = 
\sum_{w\in \ls{\mu}{\scr D}^{\mu}_n} 
R_{\cont(\rho), \d^d}^{\Lda_0} \psi_w R^{\Lda_0}_{\cont(\rho),\d^d}$, 
so it will suffice to prove that $f \psi_w f =0$ for all 
$w\in \ls{\mu}{\scr D}^{\mu}_n \setminus \fr S_\nu$. 
Due to Equation~\eqref{BKf2} and relations~\eqref{rel:eid} and~\eqref{rel:epsi}, 
it is enough to show that for all such $w$ we have 
$w  (\bj \bi^{(1)} \ldots \bi^{(d)}) \ne 
\bj' \bi'^{(1)} \ldots,\bi'^{(d)}$ whenever 
$\bj,\bj'\in I^{\rho,0}$ and $\bi^{(r)}, \bi'^{(r)} \in I^{\vn,1}_{+\k}$ for $r=1,\ldots,d$. Let $Y=\{1,\ldots,|\rho|\}$ and $X_r = \{ |\rho|+(r-1)e+1,\ldots,|\rho|+re\}$ for $r=1,\ldots,d$.  Let $a\in \{1,\ldots,n\}$ be maximal subject to 
$w(a) \in Y$. Since $w\notin \fr S_\nu$, we have  $a>|\rho|$. 
Let $X_r \ni a$ and $b= |\rho| + (r-1)e+1$. Since $w\in \scr D^{\mu}_n$, we have $w(b) <w(a)\le |\rho|$. Since $i^{(r)}_1 = \k$, our assertion is true if $j'_{w(b)}\ne \k$, so we may assume that 
$j'_{w(b)} = \k$. By Lemma~\ref{lem:jkappa}, this implies that 
$w(b) \le |\rho|-e$. For each $c\in Z:=\{w(b),w(b)+1,\ldots,|\rho|\}$, we have 
$w^{-1} (c) \ge b$ because $w\in \ls{\mu}{\scr D}^{\mu}_n$ and 
$w^{-1}(c) \le a$ by maximality of $a$. Since $|Z|>e$ and 
$\{ b,b+1,\ldots,a\} \subset X_r$, this is clearly impossible. 
\end{proof}

If $V$ is a graded vector space, let $\END(V)$ be the algebra of all 
endomorphisms of $V$ (as an ungraded vector space), 
endowed with the unique grading such that 
$\deg(gv) = \deg(g) + \deg(v)$ for all homogeneous elements $g\in \END(V)$ and $v\in V$. 

\begin{prop}\label{prop:wt0}
 Let $\lda$ be an $e$-core. Then there exists a graded vector space $V$ such that
$R_{\cont(\lda)}^{\Lda_0}\cong \END (V)$ as graded algebras. 
\end{prop}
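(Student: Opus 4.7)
The plan is to identify $A := R^{\Lambda_0}_{\cont(\lambda)}$ as a graded matrix algebra by exhibiting a faithful graded simple $A$-module and invoking simplicity of defect-zero blocks.

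First, since $\lambda$ is an $e$-core, $\Par_e(\lambda,0) = \{\lambda\}$. By Proposition~\ref{prop:BKiso}\eqref{BKiso2}, $A$ is isomorphic as an ungraded $F$-algebra to $\cl H_{\lambda,0}$. The latter is a weight-zero (i.e.\ defect-zero) block of $\cl H_{|\lambda|}$, so it is split semisimple with a single isomorphism class of simple modules, namely the Specht module $S^{\lambda}$. Consequently, $A \cong \Mat_k(F)$ as ungraded $F$-algebras, where $k = \dim_F S^{\lambda} = |\Std(\lambda)|$; in particular $A$ is a simple $F$-algebra of dimension $k^2$.

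Next I would exhibit a graded $A$-module $V$ whose underlying ungraded module is $S^{\lambda}$. One can take $V$ to be the graded Specht module in the sense of Brundan--Kleshchev--Wang (or Hu--Mathas), which has a basis $\{v_{\mtt t} : \mtt t \in \Std(\lambda)\}$ with $\deg(v_{\mtt t}) = \deg(\mtt t)$ and whose underlying ungraded module is $S^\lambda$; its graded dimension is consistent with the formula of Theorem~\ref{thm:BKdim}, since the only partition of $e$-core $\lambda$ and $e$-weight $0$ is $\lambda$ itself. In particular $\dim_F V = k$.

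Finally, the action gives a graded $F$-algebra homomorphism $\phi\colon A \to \END(V)$. Since $\phi(1) = \id_V \ne 0$, the kernel $\ker(\phi)$ is a proper graded two-sided ideal of $A$; but $A$ is ungraded-simple, so $\ker(\phi) = 0$ and $\phi$ is injective. A dimension count
\[
\dim_F A = k^2 = (\dim_F V)^2 = \dim_F \END(V)
\]
then forces $\phi$ to be surjective, hence an isomorphism of graded algebras. The main (essentially technical) ingredient is the existence of a graded lift of the simple module $S^\lambda$; this is a standard input from graded representation theory of cyclotomic KLR algebras.
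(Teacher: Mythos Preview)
Your argument is correct. Both your proof and the paper's follow the same overall arc: first observe that $A=R^{\Lambda_0}_{\cont(\lambda)}\cong\cl H_{\lambda,0}$ is a split simple algebra (the paper cites cellularity with a single cell; you cite defect zero), hence ungraded-isomorphic to a matrix algebra; then grade the unique simple module and conclude. The difference lies in how the grading on $V$ is obtained. The paper invokes a general theorem from graded ring theory \cite[Theorem~9.6.8]{NastasescuVanOystaeyen2004} asserting that a simple module over a graded algebra always admits a compatible grading. You instead supply the grading concretely via the graded Specht module of Brundan--Kleshchev--Wang. Your route is more explicit and self-contained within the KLR/Hecke framework already in play (indeed the paper cites \cite{BrundanKleshchevWang2011} elsewhere), while the paper's route is shorter and requires no knowledge of the module beyond its existence. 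Either is perfectly adequate here.
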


\begin{proof}
 It is well known that $R_{\cont(\lda)}^{\Lda_0} \cong \cl H_{\rho,0}$ is a split simple algebra (e.g.~because it is a cellular algebra with only one cell, see~\cite[Corollary 5.38]{Mathas1999}), so 
$\cl H_{\rho,0} \cong \End(V)$ as an ungraded algebra for some vector space $V$.
By~\cite[Theorem 9.6.8]{NastasescuVanOystaeyen2004}, $V$ can be graded as an 
$R^{\Lda_0}_{\cont(\rho)}$-module, and the result follows. 
\end{proof}

If $B$ is a subset and $A$ is a subalgebra of an algebra $A'$, 
the centraliser of $B$ in $A$ is defined as
$C_A (B) = \{ a\in A\mid ab=ba \; \forall b\in B\}$. 
We will use the following elementary fact. 

\begin{prop}\label{prop:cent}
 Let $A$ be a finite-dimensional graded $F$-algebra. 
Suppose that $B$ is a unital graded subalgebra of $A$ such that
$B \cong \END (V)$ for some graded vector space $V$. Let $C= C_A (B)$. Then 
there is a graded algebra isomorphism  $B\ot C \isoto A$ given by
$b\ot c \mapsto bc$ for $b\in B$, $c\in C$. Moreover, 
for any homogeneous primitive idempotent $\veps$ of $B$, we have a graded algebra isomorphism
$C\isoto \veps A \veps$ given by $c\mapsto \veps c$. 
\end{prop}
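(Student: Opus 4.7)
The plan is to use a homogeneous matrix-unit realisation of $B\cong \END(V)$ to directly compare the two sides. I will choose a homogeneous $F$-basis $v_1,\ldots,v_n$ of $V$ with $\deg(v_i)=d_i$, and let $e_{ij}\in B$ be the matrix units determined by $e_{ij}v_k=\d_{jk}v_i$, so that $e_{ij}e_{kl}=\d_{jk}e_{il}$, $\sum_i e_{ii}=1_B=1_A$, and each $e_{ij}$ is homogeneous of degree $d_i-d_j$. Any homogeneous idempotent in a graded algebra must sit in degree $0$, and after reordering the basis I may also assume that a prescribed homogeneous primitive idempotent $\veps$ equals $e_{11}$. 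The map $\Phi\colon B\ot C\to A$, $b\ot c\mapsto bc$, is then clearly a graded map, and it is an algebra homomorphism because $(b_1c_1)(b_2c_2)=b_1b_2c_1c_2$ once $c_1$ is moved past $b_2$.

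For injectivity I will write an arbitrary element of $\ker\Phi$ as $\sum_{i,j}e_{ij}\ot c_{ij}$ with $c_{ij}\in C$, and rewrite the relation $\sum_{i,j}e_{ij}c_{ij}=0$ by multiplying on the left by $e_{kl}$ and on the right by $e_{mn}$. Since each $c_{ij}$ commutes with every matrix unit, the double sum collapses to $e_{kn}c_{lm}=0$ for all $k,l,m,n$. Taking $k=n$ and summing over $n$ yields $c_{lm}=\bigl(\sum_n e_{nn}\bigr)c_{lm}=0$, so $\Phi$ is injective.

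Before addressing surjectivity I will establish the second statement of the proposition, which then supplies the required dimension count. With $\veps=e_{11}$, the map $C\to \veps A\veps$, $c\mapsto \veps c=\veps c\veps$, is a graded algebra homomorphism (using $\veps c=c\veps$ and $\veps^2=\veps$), and a two-sided inverse is provided by $x\mapsto \sum_i e_{i1}xe_{1i}$: a direct check against each $e_{pq}$ shows that this element lies in $C$, and the two composites return to $x$ and to $c=\sum_i e_{ii}c$ respectively. Combined with the Peirce decomposition $A=\bigoplus_{i,j}e_{ii}Ae_{jj}$ and the graded vector space isomorphisms $e_{ii}Ae_{jj}\isoto \veps A\veps$ given by $x\mapsto e_{1i}xe_{j1}$, this gives $\dim A=n^2\dim(\veps A\veps)=\dim B\cdot\dim C$. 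Since $A$ is finite-dimensional and $\Phi$ is injective, $\Phi$ is therefore a graded algebra isomorphism.

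I do not foresee a real obstacle: the only substantive input is that the isomorphism $B\cong \END(V)$ is one of \emph{graded} algebras, which is what allows me to choose homogeneous matrix units and to identify any prescribed homogeneous primitive idempotent with $e_{11}$; everything else is the standard double centraliser argument for a split simple subalgebra, adapted to respect the grading.
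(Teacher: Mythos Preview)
Your proof is correct and follows essentially the same route as the paper's: homogeneous matrix units, the inverse map $x\mapsto \sum_i e_{i1}xe_{1i}$ landing in $C$, and the Peirce decomposition $A=\bigoplus_{i,j}e_{ii}Ae_{jj}$ with each piece isomorphic to $e_{11}Ae_{11}$. One small wording fix: ``reordering the basis'' does not in general force an arbitrary homogeneous primitive idempotent $\veps$ to equal some $e_{ii}$; what you actually need (and what works) is to \emph{choose} the homogeneous basis so that $v_1$ spans the graded line $\veps V$ and $v_2,\ldots,v_n$ span the graded complement $(1-\veps)V$, which is possible precisely because $\veps$ has degree $0$.
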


\begin{proof}
We view $V$ as a $B$-module via the given isomorphism. 
 Let $\{v_1,\ldots,v_m\}$ be a homogeneous basis of $V$.
For $1\le i,j\le m$, let $e_{ij}\in B$ be the element given by 
$e_{ij} v_k = \d_{jk} v_i$ for $k=1,\ldots,m$. Then $\{e_{ij} \mid 1\le i,j\le m\}$ is a homogeneous basis of $B$, and 
$\{ e_{ii} \mid 1\le i\le m\}$ is a full set of primitive idempotents in $B$; in particular, $\sum_{i=1}^m e_{ii}=1$.
Let $C'=e_{11} A e_{11}$.  
It is straightforward to check that, for any $x\in C'$, the element $\xi(x):=\sum_{i=1}^m e_{i1} xe_{1i}\in A$ commutes with $e_{jk}$ for $1\le j,k\le m$, so $\xi(x)\in C$.
It follows easily that 
the maps
$\xi\colon C'\to C$ and $C\to C'$, $y \mapsto e_{11} y$, are mutually inverse isomorphisms of graded algebras. 
For any $i$ and $j$, the graded vector space $e_{ii} A e_{jj}$ is isomorphic to $C'$, 
as the maps $C'\to e_{ii} A e_{jj}$, $x \mapsto e_{i1} x e_{1j}$ and 
$e_{ii}Ae_{jj} \to C'$, $y\mapsto e_{1i} y e_{j1}$ are mutual inverses. 
Observe also that for all $x\in C'$ we have $e_{ij} \xi(x) = e_{i1} x e_{1j}$ whenever $1\le i,j\le m$. 
It follows that $e_{ij} C= e_{ii} A e_{jj}$ for all $i,j$. 
Therefore, the graded algebra homomorphism defined in the statement of the proposition is an isomorphism $B\ot C \isoto A$. 
The last statement has already been proved for $\veps=e_{11}$ and follows in the general case because $\veps$ and $e_{11}$ 
are conjugate by an invertible element of $B_{\{0\}}$ 
(both being primitive idempotents of $B_{\{0\}}$). 
\end{proof}

If $\a,\b \in Q_+$ and $\hght(\b)=m\le n=\hght(\a)$, define a graded algebra 
homomorphism
$\iota_{\b}^{\a} \colon R^{\Lda_0}_{\b} \to R^{\Lda_0}_{\a}$ by 
$x\mapsto e_{\a} \iota_m^n (x)$ (cf.~\eqref{iotamn}). 
As before, let 
\begin{equation}\label{eq:fdef}
f=\BK_{|\rho|+de} (f_{\rho,d}) = e_{\cont(\rho),\d^d}\in R^{\Lda_0}_{\cont(\rho)+d\d}.
\end{equation}
Observe that $f$ centralises $\iota_{\cont(\rho)}^{\cont(\rho)+d\d} (R^{\Lda_0}_{\cont(\rho)})$ and that $f\ne 0$: the latter fact follows easily 
from~\eqref{BKf} and Theorem~\ref{thm:BKdim}. 
Hence, by Proposition~\ref{prop:wt0}, 
the map $R^{\Lda_0}_{\cont(\rho)} \to fR^{\Lda_0}_{\cont(\rho)+d\d}f$ given by 
$x\mapsto \iota_{\cont(\rho)}^{\cont(\rho)+d\d} (x)f$ is an injective unital graded algebra homomorphism, and its image is isomorphic to $\END(V)$ for some graded vector space $V$. Therefore, 
defining 
\begin{equation}\label{Crhod}
 C_{\rho,d} = C_{fR^{\Lda_0}_{\cont(\rho)+d\d}f} (\iota_{\cont(\rho)}^{\cont(\rho)+d\d} (R_{\cont(\rho)}^{\Lda_0})),
\end{equation}
we have a graded algebra isomorphism 
\begin{equation}\label{eq:tensoriso}
R_{\cont(\rho)}^{\Lda_0} \ot C_{\rho,d} \isoto f R^{\Lda_0}_{\cont(\rho)+d\d} f
\end{equation}
given by $a \ot c \mapsto \iota_{\cont(\rho)}^{\cont(\rho)+d\d}(a) c$ for $a\in R_{\cont(\rho)}^{\Lda_0}$ and $c\in C_{\rho,d}$, due to Proposition~\ref{prop:cent}.
Hence, in order to prove Theorem~\ref{thm:main2}, it suffices to construct a graded isomorphism from $R^{\Lda_0}_{\d}\wr \fr S_d$ onto $C_{\rho,d}$. 
Most of the remainder of the paper is devoted to this task. 

\begin{prop}\label{prop:im_om}
 Let $\om\colon R_{d\d} \to R^{\Lda_0}_{\cont(\rho),d\d}$ be the graded algebra homomorphism defined as 
the composition $R_{d\d} \to R_{\cont(\rho), d\d} \thra R^{\Lda_0}_{\cont(\rho), d\d}$ where the second map is the natural projection and the first one is given by
$x \mapsto \iota_{\cont(\rho),d\d} (e_{\cont(\rho)} \ot x)$. Then:
\begin{enumerate}[(i)]
 \item\label{im_om1} We have $C_{\rho,d} = \om(e_{\d^d} R_{d\d} e_{\d^d})$.
 \item\label{im_om2} For any $\bi\in I^{d\d} \sm \cl E_{d,\k}$, we have 
$\om(e(\bi))=0$. 
\end{enumerate}
\end{prop}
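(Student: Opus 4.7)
My plan is to prove (ii) first by directly unpacking the definitions, then (i) by establishing two opposite inclusions; the non-trivial direction relies on the structural tensor decomposition from Proposition~\ref{prop:cent}.

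For part (ii), I would unfold the composition defining $\om$ to obtain
\[
\om(e(\bi)) \;=\; \sum_{\bj \in I^{\cont(\rho)}} e(\bj\bi) \quad \text{in } R^{\Lda_0}_{\cont(\rho)+d\d}.
\]
By Theorem~\ref{thm:BKdim}, each summand vanishes unless $\bj\bi \in I^{\rho,d}$. Assume otherwise: if $\bj\bi = \bi^{\mtt t}$ for some standard tableau $\mtt t$ of shape $\lda \in \Par_e(\rho, d)$, then $\mtt t_{\le |\rho|}$ is a standard tableau of some partition $\mu$ of $|\rho|$ with residue content $\cont(\rho)$; since residue content determines the $e$-core (as noted just after Theorem~\ref{thm:BKdim}), $\mu$ has $e$-core $\rho$, and combined with $|\mu|=|\rho|$ this forces $\mu = \rho$. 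Hence $\bj \in I^{\rho,0}$, and Lemma~\ref{lem:rocksh} then gives $\bi \in \cl E_{d,\k}$, contradicting the hypothesis. Thus every summand vanishes.

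For the easier inclusion $\om(e_{\d^d} R_{d\d} e_{\d^d}) \subseteq C_{\rho,d}$ of part (i), I would verify that $\iota_{\cont(\rho)}^{\cont(\rho)+d\d}(R^{\Lda_0}_{\cont(\rho)})$ and $\om(R_{d\d})$ centralise each other inside $R^{\Lda_0}_{\cont(\rho)+d\d}$: the former is generated by the images of the $\psi_r, y_t$ with $r<|\rho|, t\le|\rho|$ and the $e(\bi)$ for $\bi \in I^{\cont(\rho)}$, while the latter involves $\psi_{|\rho|+s}, y_{|\rho|+t}$ and idempotents on the last $de$ positions, and the index separation of at least $2$ makes all pairs commute via~\eqref{rel:ycomm},~\eqref{rel:psicomm},~\eqref{rel:ypsi0}. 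Since $\om(e_{\d^d}) = f$, the product $\om(e_{\d^d} R_{d\d} e_{\d^d}) = f\, \om(R_{d\d})\, f$ centralises $\iota_{\cont(\rho)}^{\cont(\rho)+d\d}(R^{\Lda_0}_{\cont(\rho)})$ and so lies in $C_{\rho,d}$.

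For the reverse inclusion, I would note that $R^{\Lda_0}_{\cont(\rho), d\d}$ is generated by the two commuting subalgebras above (these being the cyclotomic projections of the two factors in $\iota_{\cont(\rho), d\d}(R_{\cont(\rho)} \ot R_{d\d}) = R_{\cont(\rho), d\d}$), so sandwiching by $f$ and applying Proposition~\ref{prop:bigwall} gives
\[
f R^{\Lda_0}_{\cont(\rho)+d\d} f \;=\; \iota_{\cont(\rho)}^{\cont(\rho)+d\d}(R^{\Lda_0}_{\cont(\rho)}) \cdot \om(e_{\d^d} R_{d\d} e_{\d^d}).
\]
Set $A = f R^{\Lda_0}_{\cont(\rho)+d\d} f$ and $B = \iota_{\cont(\rho)}^{\cont(\rho)+d\d}(R^{\Lda_0}_{\cont(\rho)}) f \cong \END(V)$ (the latter by Proposition~\ref{prop:wt0}). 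Applying Proposition~\ref{prop:cent} and picking a homogeneous primitive idempotent $\veps$ of $B$, the map $c \mapsto \veps c$ is a graded isomorphism $C_{\rho,d} \isoto \veps A \veps$. For any $a \in A$ written as $\sum_i b_i x_i$ with $b_i \in B$ and $x_i \in \om(e_{\d^d} R_{d\d} e_{\d^d})$, the fact that each $x_i$ commutes with $\veps$ and that $\veps B \veps = F\veps$ gives $\veps a \veps = \veps\bigl(\sum_i \lambda_i x_i\bigr)$ for some $\lambda_i \in F$; hence $\veps A \veps = \veps \cdot \om(e_{\d^d} R_{d\d} e_{\d^d})$, so the restriction of $c \mapsto \veps c$ to $\om(e_{\d^d} R_{d\d} e_{\d^d}) \subseteq C_{\rho,d}$ is already surjective onto $\veps A \veps$, and being a restriction of a bijection is itself a bijection, forcing $C_{\rho,d} = \om(e_{\d^d} R_{d\d} e_{\d^d})$. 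The main subtlety is the first displayed equation of this paragraph: verifying that $R^{\Lda_0}_{\cont(\rho), d\d}$ really factors as the product of the two commuting subalgebras (rather than requiring the full tensor-product description) depends on the fact that $\iota_{\cont(\rho), d\d}$ maps the two factors to commuting subalgebras whose product is $R_{\cont(\rho), d\d}$, and that this structure survives the cyclotomic projection and the $f$-sandwich together with Proposition~\ref{prop:bigwall}.
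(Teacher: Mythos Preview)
Your proof is correct and follows essentially the same route as the paper: part (ii) is argued via Theorem~\ref{thm:BKdim} and Lemma~\ref{lem:rocksh}, and part (i) combines Proposition~\ref{prop:bigwall} with the tensor decomposition of Proposition~\ref{prop:cent}. The only cosmetic difference is that for the reverse inclusion in (i) the paper invokes the injectivity of the map $B\otimes C\to A$ from Proposition~\ref{prop:cent} directly (writing $x=\sum a_jc_j$ with $a_1=f$ and concluding $x=c_1$), whereas you route the same information through the isomorphism $C\isoto\veps A\veps$; both are equivalent readings of that proposition.
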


\begin{proof}
It is clear from the definition that 
$\om(e_{\d^d} R_{d\d} e_{\d^d}) \subset 
e_{\cont(\rho),\d^d} R_{\cont(\rho)+d\d}^{\Lda_0} e_{\cont(\rho),\d^d} = f R_{\cont(\rho)+d\d} f$
and that $\om(R_{d\d})$ commutes with 
$\iota_{\cont(\rho)}^{\cont(\rho)+d\d} (R_{\cont(\rho)}^{\Lda_0})$. 
Thus, $\om(e_{\d^d} R_{d\d} e_{\d^d}) \subset C_{\rho,d}$. 
For the converse, let $x\in C_{\rho,d}$. 
Then it follows from Proposition~\ref{prop:bigwall}
that $x=\sum_{j=1}^m a_j c_j$ for some  
$a_1,\ldots,a_m\in \iota_{\cont(\rho)}^{\cont(\rho)+d\d} (R_{\cont(\rho)}^{\Lda_0})$ and 
$c_1,\ldots,c_m\in \om(e_{\d^d} R_{d\d} e_{\d^d})\subset C_{\rho,d}$, 
where $a_1,\ldots,a_m$ may be assumed to form a basis of 
$\iota_{\cont(\rho)}^{\cont(\rho)+d\d} (R^{\Lda_0}_{\cont(\rho)})$, with 
$a_1=f$. 
Due to injectivity of the map in Proposition~\ref{prop:cent}, we infer that 
$x=c_1$, so $x\in \om(e_{\d^d} R_{d\d} e_{\d^d})$, 
and~\eqref{im_om1} is proved. 

For~\eqref{im_om2}, note that for all $\bj\in I^{\rho,0}$ we have  
$\bj\bi\notin I^{\rho,d}$ by Lemma~\ref{lem:rocksh} and hence 
$e(\bj\bi) = 0$ by Theorem~\ref{thm:BKdim}. 
Thus, $\om(e(\bi))=\sum_{\bj\in I^{\rho,0}} e(\bj \bi) = 0$. 
\end{proof}

Recall the definition of the quotient $\wh{R}_{d\d}$ of $R_{d\d}$ in~\S\ref{subsec:outline}. By Proposition~\ref{prop:im_om}, the map $\om$ defined in the statement of the proposition induces a homomorphism 
$\wh{R}_{d\d} \to R^{\Lda_0}_{\cont(\rho),d\d}$, which restricts to a 
surjective graded algebra homomorphism 
$\Om\colon e_{\d^d} \wh{R}_{d\d} e_{\d^d} \thra C_{\rho,d}$. 

\subsection{The graded dimension of $C_{\rho,d}$}
In this subsection, we prove the following result:

\begin{prop}\label{prop:qdim}
 We have $\qdim(C_{\rho,d}) = \qdim(R^{\Lda_0}_{\d}\wr \fr S_d) = 
d! \qdim(R^{\Lda_0}_{\d})^d$. In particular, $C_{\rho,d}$ is nonnegatively graded. 
\end{prop}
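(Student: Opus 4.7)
The plan is to compute both sides of $\qdim(C_{\rho,d}) = d! \qdim(R^{\Lda_0}_\d)^d$ explicitly and compare. First I would use the graded isomorphism~\eqref{eq:tensoriso} to reduce the claim to
$\qdim(f R^{\Lda_0}_{\cont(\rho)+d\d} f) = D_\rho^2 \cdot d! \cdot \qdim(R^{\Lda_0}_\d)^d$,
where $D_\rho := \sum_{\mtt s \in \Std(\rho)} q^{\deg \mtt s}$; note that $\qdim(R^{\Lda_0}_{\cont(\rho)}) = D_\rho^2$ by Theorem~\ref{thm:BKdim}, since $\rho$ is the unique partition of $|\rho|$ with residue content $\cont(\rho)$. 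Combining Theorem~\ref{thm:BKdim} with~\eqref{BKf2} and Lemma~\ref{lem:rocksh} (to identify the allowed residue sequences) would yield
\[
\qdim(f R^{\Lda_0}_{\cont(\rho)+d\d} f) = \sum_{\lda \in \Par_e(\rho,d)} N_\lda^2,
\]
where $N_\lda := \sum_{\mtt s} q^{\deg \mtt s}$, the sum running over those $\mtt s \in \Std(\lda)$ satisfying $\Shape(\mtt s_{\le |\rho|+re}) \in \Par_e(\rho, r)$ for every $r = 0, \ldots, d$.

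Next I would decompose each contributing $\mtt s$. By Lemma~\ref{lem:CK}, the successive differences $\mu^{(r)} \sm \mu^{(r-1)}$, with $\mu^{(r)} := \Shape(\mtt s_{\le |\rho|+re})$, are translates of $e$-hook partitions, so $\mtt s$ corresponds bijectively to a tuple $(\mtt s^0, (\mu^{(r)})_{r=0}^d, \mtt s^1, \ldots, \mtt s^d)$ where $\mtt s^0 = \mtt s_{\le |\rho|} \in \Std(\rho)$ and each $\mtt s^r$ is the induced standard filling of the translated hook $\mu^{(r)} \sm \mu^{(r-1)}$. The central technical step will be to prove the degree additivity
\[
\deg(\mtt s) = \deg(\mtt s^0) + \sum_{r=1}^d \deg(\wt{\mtt s^r}),
\]
where $\wt{\mtt s^r}$ denotes $\mtt s^r$ regarded as a standard tableau of its underlying untranslated $e$-hook partition. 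Term-by-term in the recursion~\eqref{eq:deg}, for a box added inside the $r$-th hook the difference between $d_a(\mu)$ computed in the ambient partition and the corresponding value in the standalone hook comes from addable and removable $i$-nodes below $a$ lying outside the current hook. The Rouquier bead-separation hypothesis on $\Ab_N(\rho)$ will force these ``external'' contributions to depend only on the residue $i$ (not on the particular hook or which box within it); summing over the $e$ boxes of a single hook, whose residues exhaust $I$, these $i$-dependent terms then cancel.

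Finally I would carry out the sum. An abacus analysis using the Rouquier condition shows that at every stage a hook added on runner $i$ necessarily has shape $(i+1, 1^{e-1-i})$, and that the chains $\rho = \mu^{(0)} \subset \cdots \subset \mu^{(d)} = \lda$ are in bijection with standard multitableaux of the $e$-quotient $(\lda^{(0)}, \ldots, \lda^{(e-1)})$ of $\lda$. Writing $d_i := |\lda^{(i)}|$ and $H_i := \sum_{\mtt s \in \Std((i+1,1^{e-1-i}))} q^{\deg \mtt s}$, so that $\qdim(R^{\Lda_0}_\d) = \sum_{i=0}^{e-1} H_i^2$ by Theorem~\ref{thm:BKdim}, the degree additivity gives
\[
N_\lda = D_\rho \cdot \binom{d}{d_0, \ldots, d_{e-1}} \cdot \prod_{i=0}^{e-1} |\Std(\lda^{(i)})| \cdot \prod_{i=0}^{e-1} H_i^{d_i}.
\]
Summing $N_\lda^2$ over $\lda$, using the classical identity $\sum_{\nu \vdash d_i} |\Std(\nu)|^2 = d_i!$ for each $i$ together with $\binom{d}{d_0, \ldots, d_{e-1}}^2 \prod_i d_i! = d! \binom{d}{d_0, \ldots, d_{e-1}}$ and the multinomial theorem, one obtains $\qdim(f R^{\Lda_0}_{\cont(\rho)+d\d} f) = D_\rho^2 \cdot d! \cdot (\sum_i H_i^2)^d$, which is what was needed. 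The second asserted equality $d! \qdim(R^{\Lda_0}_\d)^d = \qdim(R^{\Lda_0}_\d \wr \fr S_d)$ is immediate since the factor $F\fr S_d$ sits in degree zero, and nonnegativity of the grading follows from that of $R^{\Lda_0}_\d$ noted just before Theorem~\ref{thm:main2}. The main obstacle in this argument is the degree-additivity claim, which will require a careful residue-by-residue accounting of addable and removable nodes made tractable only by the Rouquier bead-separation hypothesis.
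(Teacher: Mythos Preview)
Your proposal is correct and follows essentially the same route as the paper: reduce via~\eqref{eq:tensoriso} to computing $\qdim(fR^{\Lda_0}_{\cont(\rho)+d\d}f)$, decompose each contributing tableau into a $\rho$-part and $d$ hook pieces (the paper's map $\b$), establish degree additivity (the paper's Lemma~\ref{lem:betadeg}), count chains via the $e$-quotient (Lemma~\ref{lem:preimage}), and sum using $\sum_{\nu\vdash m}|\Std(\nu)|^2=m!$ and the multinomial theorem. One minor point: your sketch of the degree-additivity step says the external contributions ``depend only on the residue $i$'' and then cancel over a hook, whereas the paper's argument is slightly different in mechanism---it uses the abacus formula~\eqref{da_abacus} and shows directly that any bead outside the critical zone $Z$ contributes $+1$ to one summand of the hook's total degree, $-1$ to another, and $0$ to the rest, so the total external contribution vanishes hook by hook regardless of the ambient configuration; this avoids having to argue that the per-residue external term is actually independent of the stage $r$.
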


Turner (\cite[Proposition 81]{Turner2009}) proved the same result for ungraded dimensions in the case when $\xi=1$. The proof given below is similar. 
If $\mu$ is an $e$-core and $a\ge 0$, let $\Std_e(\mu,a)$ be the set of all standard tableaux with shape belonging to $\Par_e(\mu,a)$.
Let 
\[
\Std'_e (\rho,d) = \{ \mtt t\in \Std_e(\rho,d) \mid \mtt t_{\le |\rho|+rd} \in \Std_e(\rho,r) \text{ for all } r=0,\ldots,d \}. 
\]
Define the map 
\begin{align*}
 \b \colon \Std'_e(\rho,d) \to \Std_e(\rho) \times \Std_e(\vn,1)^{\times d}
\end{align*}
by $\b(\mtt t) = (\mtt t_{\le |\rho|}, \mtt s_1,\ldots,\mtt s_d)$ where, 
for $r=1,\ldots,d$, the tableau
$\mtt s_r$ is the unique standard tableau which is a translate of the skew tableau of size $e$ given by 
$m\mapsto \mtt t(|\rho|+e(r-1)+m)$, $m=1,\ldots,e$; such a translate exists and has a shape belonging to $\Par_e(\vn,1)$ by 
Lemma~\ref{lem:CK}. 

For any $i\in \{0,\ldots,e-1\}$, 
let $v_i$ be the number of beads in the 
$i$-th column of $\Ab_N(\rho)$. 
For any $\lda \in \Par_e(\rho,d)$, let $\lda^{(i)}$ be the partition such that 
$\Ab_{v_k}^{1} (\lda^{(i)})$ is the projection onto the first component 
of $\Ab_N (\lda) \cap (\mN \times \{i\})$.
Up to a permutation, the sequence 
$(\lda^{(0)},\ldots,\lda^{(e-1)})$ is known as the $e$-quotient of $\lda$. 

\begin{lem}\label{lem:preimage}
 Let $\lda \in \Par_e(\rho,d)$ and 
 $(\mtt u,\mtt s_1,\ldots,\mtt s_d)\in \Std(\rho) \times \Std_e(\vn,1)^{\times d}$. 
 For each $i=0,\ldots,e-1$, let 
$d_i = \# \{ r\in \{1,\ldots,d\} \mid \Shape(\mtt s_r) = (i+1,1^{e-i-1}) \}$. 
Then 
\[
|\b^{-1} (\mtt u,\mtt s_1,\ldots,\mtt s_d) \cap \Std(\lda)|
= 
\begin{cases}
\prod_{i=0}^{e-1} |\Std(\lda^{(i)})| & \text{if } |\lda^{(i)}|=d_i 
\text{ for } i=0,\ldots,e-1, \\
0 & \text{otherwise.}
\end{cases}
\]
\end{lem}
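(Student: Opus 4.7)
The plan is to reduce the count to counting standard tableaux on the components of the $e$-quotient of $\lambda$, by exploiting the fact that in a RoCK block each $e$-hook addition corresponds, on the abacus, to a bead drop on a single runner, which in turn records a single-box addition to one component of the $e$-quotient.

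First I would recall the standard abacus combinatorics for the $e$-quotient: moving a bead on runner $i$ one row down in $\Ab_N(\mu)$ (with all other runners unchanged) corresponds exactly to adding a single box to the $i$-th component $\mu^{(i)}$ in the $e$-quotient. Since $\rho$ is a Rouquier core, for any $\mu \in \Par_e(\rho, r)$ the abacus $\Ab_N(\mu)$ can itself be obtained from $\Ab_N(\rho)$ by $r$ such single-row bead drops (there is enough ``room'' on each runner by Definition~\ref{def:rock}), and hence the $e$-quotient map is a bijection $\Par_e(\rho, r) \isoto \{ (\nu^{(0)}, \ldots, \nu^{(e-1)}) : \sum_i |\nu^{(i)}| = r\}$.

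Second, I would establish the key correspondence: when $\mu \subset \lambda$ with $\mu \in \Par_e(\rho, r)$, $\lambda \in \Par_e(\rho, r+1)$, the shape of the $e$-hook $\lambda \setminus \mu$ records \emph{which} runner the bead drops on. This is read off directly from the explicit sequence of unit box additions~\eqref{moves} in the proof of Lemma~\ref{lem:CK}: if the dropping bead starts at $(t, i^*)$ and ends at $(t+1, i^*)$, then the intermediate moves build up an $e$-hook whose arm has length $i^*$ and whose leg has length $e - 1 - i^*$, i.e.\ shape $(i^* + 1, 1^{e - i^* - 1})$. Thus an $e$-hook of shape $(i+1, 1^{e-i-1})$ is added exactly when a bead drops on runner $i$, i.e.\ exactly when one box is added to the $i$-th component of the $e$-quotient.

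Third, I would assemble the count. Given $(\mtt u, \mtt s_1, \ldots, \mtt s_d)$, a preimage $\mtt t \in \b^{-1}(\mtt u, \mtt s_1, \ldots, \mtt s_d) \cap \Std(\lambda)$ is the same data as a chain $\rho = \mu^{0} \subset \mu^{1} \subset \cdots \subset \mu^{d} = \lambda$ in which $\mu^{r} \setminus \mu^{r-1}$ has the shape of $\Shape(\mtt s_r)$ (the restriction $\mtt u$ itself is fixed and contributes no choices). By Step~2 this chain is equivalent, for each $i \in I$, to a chain $\vn = \nu_i^0 \subset \nu_i^1 \subset \cdots \subset \nu_i^{d_i} = \lambda^{(i)}$ of single-box additions in the $i$-th component of the $e$-quotient, where the interleaving of the $i$-components across the $d$ steps is already prescribed by which $r$'s have $\Shape(\mtt s_r) = (i+1, 1^{e-i-1})$. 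Such a chain in component $i$ exists iff $|\lambda^{(i)}| = d_i$, and then is exactly a standard tableau of shape $\lambda^{(i)}$; these choices are independent across $i$, yielding the claimed product.

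The main obstacle is Step~2, pinning down the precise correspondence between the hook shape $(k, 1^{e-k})$ and the runner index $i^*$ on which the bead drops. This requires care: one has to inspect the three groups of moves in~\eqref{moves} and verify that the resulting skew shape is genuinely a translate of $(i^* + 1, 1^{e - i^* - 1})$ with the stated orientation, in particular that the arm (respectively leg) of the hook is produced by the horizontal moves in row $t+1$ (respectively row $t$). Once this identification is made, Steps~1 and~3 are purely bookkeeping.
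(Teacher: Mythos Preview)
Your proposal is correct and follows essentially the same approach as the paper's proof. Both arguments reduce to the bijection between preimages and chains $\rho=\mu^0\subset\cdots\subset\mu^d=\lambda$ with prescribed hook shapes, then translate each chain into a tuple of standard tableaux on the $e$-quotient components via the hook-shape-to-runner correspondence. The only notable difference is that the paper obtains this correspondence by citing \cite[Lemma~4(2)]{ChuangKessar2002} together with Lemma~\ref{lem:CK}, whereas you extract it directly from the decomposition~\eqref{moves}; your derivation is correct (the $i$ horizontal moves in row $t+1$ build the arm, the $e-1-i$ moves in row $t$ build the leg).
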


\begin{proof}
The map 
\begin{equation}\label{eq:map}
\mtt t\mapsto 
(\Shape(\mtt t_{\le |\rho|}), \Shape(\mtt t_{\le |\rho|+e}), \ldots, 
\Shape(\mtt t_{\le |\rho|+ed}))
\end{equation}
is clearly a bijection from 
$\b^{-1} (\mtt u,\mtt s_1,\ldots,\mtt s_d) \cap \Std(\lda)$ onto the set of 
sequences $\rho=\mu^{0} \subset \mu^{1} \subset \cdots \subset \mu^d =\lda$ 
of partitions such that $\mu^{r}\sm \mu^{r-1}$ is a translate of 
$\Shape(\mtt s_r)$ for each $r=1,\ldots,d$. If $1\le r\le d$, $0\le i\le e-1$, $\nu \in \Par_e(\rho,r-1)$ and $\mu\in {\Par_e(\rho,r)}$ are such that $\nu \subset \mu$, 
then by Lemma~\ref{lem:CK} 
and~\cite[Lemma 4(2)]{ChuangKessar2002},
$\mu\sm \nu$ is a translate of $(i+1,1^{e-i-1})$ if and only if 
$\Ab_N (\mu)$ is obtained from $\Ab_N (\nu)$ by the move $(t,i) \to (t+1,i)$ for some $t\ge 0$. 
It follows immediately that 
$\b^{-1} (\mtt u,\mtt s_1,\ldots,\mtt s_d) \cap \Std(\lda)=
\varnothing$
unless $|\lda^{(i)}|=d_i$ for all $i=0,\ldots,e-1$. 
Assuming that $|\lda^{(i)}|=d_i$ for all $i$, for any given 
sequence $\rho = \mu^0 \subset \mu^1 \subset \cdots \subset \mu^d =\lda$ as above
 and any $i\in \{0,\ldots,e-1\}$, let $\mtt t_i\in \Std(\lda^{(i)})$ be defined as follows. Let $\{m_1<\cdots<m_{d_i}\}$ be the set of elements 
$m\in \{1,\ldots,d\}$ such that $\Shape(\mtt s_m)=(i+1,1^{e-i-1})$: then 
$\Shape((\mtt t_i)_{\le k})=(\mu^{m_k})^{(i)}$ for all 
$k=1,\ldots,d_i$.
This assignment of a tuple $(\mtt t_0,\ldots,\mtt t_{e-1})$ to each sequence
$\rho=\mu^0 \subset \cdots \subset \mu^d =\lda$ with the above properties defines a bijection from the set of such sequences onto
$\Std(\lda^{(0)}) \times \cdots \times \Std(\lda^{(e-1)})$ and therefore, in view of~\eqref{eq:map}, completes the proof.
\end{proof}

\begin{lem}\label{lem:betadeg}
 For any $\mtt t\in \Std'_e (\rho,d)$, if $\beta(\mtt t) = (\mtt u,\mtt s_1,\ldots,\mtt s_d)$, then $\deg(\mtt t) = \deg(\mtt u) + \sum_{r=1}^d \deg (\mtt s_r)$.
\end{lem}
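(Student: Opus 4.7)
The plan is to proceed by induction on $d$. For $d=0$ the statement is trivial, since both sides reduce to $\deg(\mtt u)$. For the inductive step with $d\ge 1$, set $\mtt t' := \mtt t_{\le |\rho|+e(d-1)}$, so that $\mtt t'\in \Std'_e(\rho,d-1)$ and $\beta(\mtt t') = (\mtt u,\mtt s_1,\ldots,\mtt s_{d-1})$. The inductive hypothesis gives $\deg(\mtt t') = \deg(\mtt u) + \sum_{r=1}^{d-1}\deg(\mtt s_r)$, so the lemma reduces to the single identity $\deg(\mtt t) - \deg(\mtt t') = \deg(\mtt s_d)$. Writing $a_m := \mtt t(|\rho|+e(d-1)+m)$, $\mu^{m-1} := \Shape(\mtt t_{\le |\rho|+e(d-1)+m-1})$, $b_m := \mtt s_d(m)$, and $\nu^{m-1} := \Shape((\mtt s_d)_{\le m-1})$ for $m=1,\ldots,e$, unravelling the recursion \eqref{eq:deg} turns this identity into
\[
\sum_{m=1}^e d_{a_m}(\mu^{m-1}) \;=\; \sum_{m=1}^e d_{b_m}(\nu^{m-1}).
\]

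I would prove the stronger term-by-term equality $d_{a_m}(\mu^{m-1}) = d_{b_m}(\nu^{m-1})$ for each $m$. By Lemma~\ref{lem:CK}, $a_m$ is a translate of $b_m$, so both are $i$-nodes of the same residue $i\in I$, and the current hook region $H_{m-1} := \mu^{m-1}\setminus \lambda^{(d-1)}$ (where $\lambda^{(d-1)} := \Shape(\mtt t_{\le |\rho|+e(d-1)})$) is a translate of $\nu^{m-1}$. Addable and removable $i$-nodes of $\mu^{m-1}$ that are adjacent to $H_{m-1}$ are in natural bijection with the addable and removable $i$-nodes of $\nu^{m-1}$; moreover, being ``below $a_m$'' corresponds, under this bijection, to being ``below $b_m$''. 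Consequently the proof reduces to showing that the addable and removable $i$-nodes of $\mu^{m-1}$ lying below $a_m$ and disjoint from $H_{m-1}$ have equal count, so that their signed contribution to $d_{a_m}(\mu^{m-1})$ vanishes.

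The hard part will be this cancellation of the ``external'' addable and removable $i$-nodes, and it relies crucially on the Rouquier hypothesis on $\rho$. I would work with the abacus display $\Ab_N(\mu^{m-1})$, which is obtained from $\Ab_N(\rho)$ by a controlled sequence of at most $d$ bead moves: one along a fixed row-pair per completed hook (as described in the proof of Lemma~\ref{lem:CK}), together with an initial segment of the analogous sequence for the current hook. Addable and removable $i$-nodes correspond to legal single-bead moves between adjacent runners of the abacus. The separation condition in Definition~\ref{def:rock} is precisely strong enough to guarantee that after these perturbations the rows of the abacus that correspond to partition rows below $a_m$ and outside $H_{m-1}$ still exhibit the staircase pattern of a Rouquier configuration; in such rows, every addable $i$-node is matched by a removable $i$-node within the same row, delivering the required cancellation. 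The technical core of the argument is the bookkeeping that translates ``below $a_m$ in the Young diagram'' into a statement about abacus rows and verifies that the staircase survives in all relevant rows after the at-most-$d$ moves performed so far.
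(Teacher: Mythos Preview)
Your reduction by induction on $d$ to a single hook is correct and is essentially the paper's reduction to equation~\eqref{betadeg1}. However, the stronger term-by-term equality $d_{a_m}(\mu^{m-1}) = d_{b_m}(\nu^{m-1})$ that you propose to prove is \emph{false} in general, so the approach breaks down. Take $e=2$, $d=1$, $\rho=(2,1)$ (a Rouquier core for $d=1$, with $\kappa=0$), and add the hook of shape $(2)$ to obtain $\lambda=(4,1)$. Then $a_1=(1,3)$ has residue $0$; the partition $(2,1)$ has addable $0$-nodes $(2,2)$ and $(3,1)$ below $a_1$ and no removable $0$-nodes, so $d_{a_1}(\mu^0)=2$, whereas $d_{b_1}(\nu^0)=d_{(1,1)}(\varnothing)=0$. (The full sum over $m=1,2$ does give $2+(-1)=1=\deg(\mtt s_1)$, as it must.) Your cancellation claim---that external addable and removable $i$-nodes below $a_m$ balance for the fixed residue $i=\res(a_m)$---is exactly what fails: in this example \emph{all} addable nodes of $(2,1)$ have residue $0$ and \emph{all} removable nodes have residue $1$. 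The Rouquier staircase does not force a residue-by-residue balance, and no bookkeeping will produce one.

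The paper's proof works only with the \emph{sum} over $m=1,\ldots,e$. Using the abacus formula~\eqref{da_abacus}, it shows that any bead lying outside the $e+1$ positions touched by the hook move contributes $+1$ to exactly one summand of $\sum_m d_{a_m}$, $-1$ to exactly one other, and $0$ to the rest, so its net contribution vanishes. One may therefore freely delete and insert such beads, reducing directly to the abacus display of $\mtt s_r$ itself (shifted by some empty rows). No ``staircase survives perturbation'' analysis is required beyond the structural input from Lemma~\ref{lem:CK}.
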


\begin{proof}
Due to~\eqref{eq:deg}, it is enough to prove that for any $r=1,\ldots,d$, 
\begin{equation}\label{betadeg1}
\sum_{k=1}^e d_{\mtt t(|\rho|+e(r-1)+k)} (\Shape(\mtt t_{\le |\rho|+e(r-1)+k})) =\deg(\mtt s_r). 
\end{equation}

We will use the following general fact, which follows easily from~\eqref{demu}.
Let $\lda$ be a partition with $\ell(\lda)\le N$. For $t\ge 0$ and $0\le i<e$, write 
\[
c_{<t,i} (\lda)=|\Ab_N (\lda) \cap \{ (0,i),\ldots, (t-1,i) \}|.
\]
Let $a=(k,\lda_k)$ be a removable node of $\lda$, and let $(t,i)$ be the bead of $\Ab_N (\lda)$ corresponding to this node, in the sense that 
$\lda_k+N-k = et+i$. Then we have 
\begin{equation}\label{da_abacus}
 d_a (\lda) = 
\begin{cases}
  c_{<t,i-1}(\lda) -  c_{<t,i}(\lda) & \text{if } i>0, \\
  c_{<t,e-1}(\lda) - c_{<t+1,0}(\lda) & \text{if } i=0. 
\end{cases}
\end{equation}

Let $\mu = \Shape(\mtt t_{\le |\rho|+e(r-1)})$, $\nu = \Shape(\mtt t_{\le |\rho|+er})$, and let 
$(t,i)\to (t+1,i)$ be the move converting 
$\Ab_N(\mu)$ to $\Ab_N(\nu)$. As in the proof of Lemma~\ref{lem:CK}, we have 
$\{ (t,i), (t,i+1),\ldots (t,e-1) \}\subset \Ab_N (\mu)$ and 
$\{ (t+1,0),\ldots, (t+1,i-1)\}\cap \Ab_N (\mu) = \varnothing$. 
There exists an ordering $M_1,\ldots,M_e$ of the $e$ moves listed under~\eqref{moves} such that, for each $k=1,\ldots,d$, 
$\Ab_N(\Shape(\mtt t(|\rho|+e(r-1)+k)))$ is obtained from
$\Ab_N(\Shape(\mtt t(|\rho|+e(r-1)+k-1)))$ by the move $M_k$. 
Let $\Asf=\Ab_N(\mu)$ and consider any abacus $\Asf'$ obtained from $\Asf$ by arbitrary addition or deletion of beads in any positions not belonging to the set  $Z:=\{ (t,i),\dots,(t,e-1), (t+1,0),\dots, (t+1,i)\}$. Let $\mu'$ be the partition defined by the condition that $\Ab_{N'} (\mu') = \Asf'$, where $N'$ is the number of beads in $\Asf'$, and set
\begin{equation}\label{eq:Asfd}
  \mathsf{d} (\Asf') = 
  \sum_{k=1}^e d_{a_k} (\mu' \cup \{a_1,\dots,a_k\}),
\end{equation}
where $a_1,\dots,a_e$ are the nodes added to $\mu'$ by the moves $M_1,\dots,M_e$ in this order. In particular, $\mathsf{d}(\Asf)$ is the left-hand side of~\eqref{betadeg1}. 
We claim that $\mathsf d(\Asf') = \mathsf d(\Asf)$. To prove this, it suffices to show that, for any $\Asf'$ as above,
$\mathsf{d}(\Asf')$ does not change when one 
alters $\Asf'$ by adding or deleting a bead in a position $(t',j) \notin Z$.
 If $t'>t+1$ or $t'=t+1$ and $j>i$, then 
a bead in position $(t',j)$ does not affect the calculation of 
$\mathsf{d} (\Asf')$ via 
the formula~\eqref{da_abacus}. On the other hand, if 
$t'<t$ or $t'=t$ and $j<i$, then the total contribution of any bead in position $(t',j)$ to the calculation of $\mathsf{d} (\Asf')$ via~\eqref{da_abacus} is $0$
(because such a bead contributes $1$ to one of the summands of~\eqref{eq:Asfd}, $-1$ to another summand and $0$ to the remaining summands).  
This proves the claim. 

Now consider the abacus $\Asf'$ obtained from $\Asf$ by deleting all beads outside positions $(t,i),\dots, (t,e-1), (t+1,0),\dots, (t+1,i)$ and then adding a bead in each of the positions $(t,0),\dots, (t,i-1)$. Then 
$\mathsf{d}(\Asf')= \mathsf{d} (\Asf)$.
On the other hand, for each $k=1,\dots,e$, the abacus obtained from 
$\Asf'$ by the moves $M_1,\dots,M_k$ is precisely the abacus 
$\Ab_{e}((\mtt s_r)_{\le k})$ with $t$ empty rows added on the top. It follows by~\eqref{da_abacus} that $\mathsf{d}(\Asf')= \deg(\mtt s_r)$, and we have proved~\eqref{betadeg1}.
\end{proof}

\begin{proof}[Proof of Proposition~\ref{prop:qdim}]
The second equality in the statement is obvious, so we only need to prove that
$\qdim (C_{\rho,d}) = d! \qdim(R_{\d}^{\Lda_0})^d$. 
For any $e$-core $\mu$ and $a\ge 0$, let 
\[
X_{\mu,a} = \{ (\mtt t,\mtt t') \in \Std_e(\mu,a) \mid \mtt t \text{ and } \mtt t' \text{ have the same shape } \}.
\]
 Using~\eqref{eq:fdef}, Theorem~\ref{thm:BKdim} and Lemmas~\ref{lem:preimage} and~\ref{lem:betadeg},
we compute
\begin{align*}
 & \qdim(f R^{\Lda_0}_{\cont(\rho)+d\delta} f) = 
\sum_{(\mtt t,\mtt t') \in X_{\rho,d} \cap (\Std'_e(\rho,d)\times \Std'_e(\rho,d))} q^{\deg(\mtt t)+\deg (\mtt t')} \\
&=
\sum_{\mtt u,\mtt u'\in \Std(\rho)} q^{\deg(\mtt u) + \deg(\mtt u')}  
 \sum_{\substack{d_0,\ldots,d_{e-1}\ge 0 \\ d_0+\cdots+d_{e-1} =d}}
\binom{d}{d_0,\ldots,d_{e-1}}^2 \\
& \times 
\prod_{i=0}^{e-1} \left( \sum_{\mtt s_{i,1},\ldots,\mtt s_{i,d_i}, 
\mtt s'_{i,1}, \ldots, \mtt s'_{i,d_i}\in \Std_e(i+1,1^{e-i-1})} 
q^{\sum_{l=1}^{d_i} (\deg(\mtt s_{i,l}) + \deg(\mtt s'_{i,l}))} 
\sum_{\lda^i \in \Par(d_i)} |\Std(\lda^i)|^2 \right) \\
&= d! \sum_{\mtt u,\mtt u'\in \Std(\rho)} q^{\deg(\mtt u) + \deg(\mtt u')}
\sum_{\substack{d_0,\ldots,d_{e-1}\ge 0 \\ d_0+\cdots+d_{e-1} =d}}
\binom{d}{d_0,\ldots,d_{e-1}}  \\
&\times \prod_{i=0}^{e-1} \left( \sum_{\mtt s_{i,1},\ldots,\mtt s_{i,d_i}, 
\mtt s'_{i,1}, \ldots, \mtt s'_{i,d_i}\in \Std_e(i+1,1^{e-i-1})} 
q^{\sum_{l=1}^{d_i} (\deg(\mtt s_{i,l}) + \deg(\mtt s'_{i,l}))} 
 \right) \\
&= d!\sum_{\mtt u,\mtt u'\in \Std(\rho)} q^{\deg(\mtt u) + \deg(\mtt u')} 
 \sum_{(\mtt s_1,\mtt s'_1),\ldots, (\mtt s_d,\mtt s'_d) \in X_{\vn,1}} 
q^{\sum_{r=1}^d (\deg(\mtt s_r) + \deg(\mtt s'_r))} \\
&= d!  \qdim(R^{\Lda_0}_{\cont(\rho)}) \qdim(R_{\d}^{\Lda_0})^d,
\end{align*}
where for the third equality we use the classical identity 
$\sum_{\mu \in \Par(m)} |\Std(\mu)|^2 = m!$, which holds for all $m\ge 0$.
The desired identity is now obtained by dividing both sides by 
$\qdim(R^{\Lda_0}_{\cont(\rho)})$
and using the graded isomorphism~\eqref{eq:tensoriso}.  
\end{proof}

\section{A homogeneous basis of $R^{\Lda_0}_{\d}$}\label{sec:delta}

Recall that $\d = \a_0+\cdots+\a_{e-1}\in Q_+$ is the fundamental imaginary root. 
In $R_{\d}$, we have $\psi_r \psi_{r+1} \psi_r e_{\d} = \psi_{r+1} \psi_r \psi_{r+1} e_{\d}$ for all $r=1,\ldots,e-2$ because for each $\bi\in I^{\d}$ we have $i_t\ne i_{t'}$ for $1\le t<t'\le e$. Hence, by Matsumoto's Theorem 
(see e.g.~\cite[Theorem 1.8]{Mathas1999}), for any $v\in \fr S_e$, the element
$\psi_v e_{\d}$ does not depend on the choice of a reduced expression for $v$. 
For each $\bi, \bj\in I^{\d}$, let $w_{\bi,\bj}$ be the unique element of $\fr S_e$ such that $w_{\bi,\bj} \bj = \bi$.

The following three lemmas are left as  exercises for the reader. 
In the first two, we identify every $i\in I$ with the corresponding element of 
$\{ 0,1,\ldots,e-1\} \subset \mZ$.
If $\bi\in I^n$, we say that a tuple
$\bj$ is a \emph{subsequence} of $\bi$ if $\bj = (i_{a_1},\ldots,i_{a_m})$
for some $a_1,\ldots,a_m\in \{1,\ldots,n\}$ such that $a_1<\ldots<a_m$.

\begin{lem}\label{lem:hkdesc}
An element $\bi\in I^{\d}$ belongs to $I^{\vn,1}$ if and only if 
$i_1=0$ and both
$(1,2,\ldots,i_e-1)$ and 
$(e-1,e-2,\ldots,i_e+1)$ 
are subsequences of $\bi$. 
\end{lem}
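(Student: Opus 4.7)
The plan is to unfold both sides of the equivalence combinatorially. First, since a partition has empty $e$-core and $e$-weight one exactly when its Young diagram is a single $e$-hook, the partitions in $\Par_e(\vn,1)$ are precisely $\lda_k = (k,1^{e-k})$ for $k=1,\ldots,e$. I would then record the residues of the boxes of $\lda_k$: along the first row one has $\res((1,1))=0,\res((1,2))=1,\ldots,\res((1,k))=k-1$, while the column below the corner gives $\res((2,1))=e-1,\res((3,1))=e-2,\ldots,\res((e-k+1,1))=k$ (all mod $e$). Each element of $I$ appears exactly once, consistent with $\cont(\lda_k)=\delta$.

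For the forward direction, let $\mtt t\in \Std(\lda_k)$; since $1$ must sit in the corner, $i_1=0$. A standard tableau of shape $\lda_k$ is determined by the partition $\{a_1<\cdots<a_{k-1}\}\sqcup\{b_1<\cdots<b_{e-k}\}$ of $\{2,\ldots,e\}$ into the values placed in the row and those placed in the column; this forces $(i_{a_1},\ldots,i_{a_{k-1}})=(1,2,\ldots,k-1)$ and $(i_{b_1},\ldots,i_{b_{e-k}})=(e-1,e-2,\ldots,k)$. The last position $e$ lies either in $\{a_j\}$, giving $i_e=k-1$, or in $\{b_j\}$, giving $i_e=k$. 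In the first case dropping the final entry from the row subsequence yields $(1,\ldots,i_e-1)$ as a subsequence while the whole column subsequence is $(e-1,\ldots,i_e+1)$; in the second case the whole row subsequence is $(1,\ldots,i_e-1)$ while dropping the final entry from the column gives $(e-1,\ldots,i_e+1)$. Either way both required subsequences are present.

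For the converse, suppose $\bi\in I^{\delta}$ satisfies $i_1=0$ together with the two subsequence conditions, and set $m=i_e$; distinctness of entries forces $m\in\{1,\ldots,e-1\}$. The two given subsequences have lengths $m-1$ and $e-m-1$, totalling $e-2$, so together with positions $1$ and $e$ they exhaust all $e$ entries of $\bi$; hence they occupy disjoint index sets $\{a_1<\cdots<a_{m-1}\}$ and $\{b_1<\cdots<b_{e-m-1}\}$ partitioning $\{2,\ldots,e-1\}$. I would then define $\mtt t$ of shape $(m+1,1^{e-m-1})$ by $\mtt t(1)=(1,1)$, $\mtt t(e)=(1,m+1)$, $\mtt t(a_j)=(1,j+1)$ and $\mtt t(b_j)=(j+1,1)$. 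Row entries $1,a_1,\ldots,a_{m-1},e$ and column entries $1,b_1,\ldots,b_{e-m-1}$ are strictly increasing, so $\mtt t$ is standard, and by construction $\bi^{\mtt t}=\bi$, proving $\bi\in I^{\vn,1}$. There is no real obstacle; the only point demanding care is the bookkeeping of the two cases $i_e=k-1$ versus $i_e=k$ in the forward direction, together with the edge cases $m\in\{1,e-1\}$ where one of the two subsequences is empty.
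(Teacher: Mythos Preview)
Your proof is correct. The paper leaves this lemma (together with the two that follow it) as an exercise for the reader, so there is no proof in the paper to compare against; the argument you give is exactly the straightforward combinatorial unfolding one would expect, and the edge cases $m\in\{1,e-1\}$ and $k\in\{1,e\}$ are handled correctly by your conventions on empty subsequences.
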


\begin{lem}\label{lem:hooktableaux}
 Let $\bi\in I^{\vn,1}$. 
Then the set $\{\mtt t\in \Std(e) \mid \bi^{\mtt t} = \bi\}$ 
consists of precisely two tableaux, namely, 
a standard tableau of shape $(i_e+1,1^{e-i_e-1})$ and degree $1$ 
and a standard tableau of shape $(i_e,1^{e-i_e})$ and degree $0$. 
\end{lem}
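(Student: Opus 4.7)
The plan is to combinatorially enumerate the tableaux and then compute their degrees directly. First, since $\bi \in I^{\d}$, any $\mtt t$ with $\bi^{\mtt t} = \bi$ has $\cont(\Shape(\mtt t)) = \d$, so $\Shape(\mtt t)$ has empty $e$-core and $e$-weight $1$ and hence is an $e$-hook $\lda_k := (k, 1^{e-k})$ for some $k \in \{1, \ldots, e\}$. A standard tableau of shape $\lda_k$ is determined by the set $S_{\mtt t} = \{m \in \{2, \ldots, e\} : \mtt t(m) \in \text{row } 1\}$, of size $k - 1$, and its residue sequence places $0$ at position $1$, then $1, 2, \ldots, k - 1$ in order at the positions of $S_{\mtt t}$, and $e - 1, e - 2, \ldots, k$ in order at the positions outside $S_{\mtt t}$.

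Next, given $\bi \in I^{\vn, 1}$, Lemma~\ref{lem:hkdesc} together with $\bi \in I^{\d}$ (hence distinct entries) yields $i_1 = 0$, $i_e \in \{1, \ldots, e - 1\}$, and the existence of $(1, \ldots, i_e - 1)$ and $(e - 1, \ldots, i_e + 1)$ as subsequences of $\bi$ occupying the positions $\{2, \ldots, e-1\}$. I would then rule out $k \notin \{i_e, i_e + 1\}$ by case analysis: if $k > i_e + 1$, the required row subsequence $(1, \ldots, k - 1)$ would force $i_e + 1$ to occur strictly after position $e$ (where the unique occurrence of $i_e$ sits), which is impossible, and symmetrically for $k < i_e$. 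For the two remaining values $k = i_e + 1$ and $k = i_e$, both required subsequences are present, and the tableau is uniquely determined by the prescription above.

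For the degree, I would evaluate the step contributions $d_{\mtt t(m)}(\mu_{m-1})$, where $\mu_{m-1} := \Shape(\mtt t_{\le m-1}) = (a, 1^b)$ with $a + b = m - 1$. The addable nodes of $(a, 1^b)$ are $(1, a+1)$, $(b+2, 1)$, and (when $a \ge 2$, $b \ge 1$) $(2, 2)$, with residues $a$, $-(b+1)$, $0$; the removable nodes are at the ends of the row and the column, with residues $a - 1$ and $-b$. For $2 \le m \le e - 1$, matching these residues against $i_m$ reduces to the congruences $m \equiv 0$ or $m \equiv 1 \pmod{e}$, neither of which holds in this range, so $\mtt t(m)$ is the unique addable $i_m$-node of $\mu_{m-1}$ and there are no removable ones, giving contribution $0$. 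At $m = e$ the congruence $m \equiv 0 \pmod e$ is satisfied: the addable $i_e$-nodes of $\mu_{e-1} = (i_e, 1^{e - i_e - 1})$ are $(1, i_e + 1)$ and $(e - i_e + 1, 1)$. For $\Shape(\mtt t) = (i_e + 1, 1^{e - i_e - 1})$ we have $\mtt t(e) = (1, i_e + 1)$ with the other addable $i_e$-node below it, contributing $1$; for $\Shape(\mtt t) = (i_e, 1^{e - i_e})$ we have $\mtt t(e) = (e - i_e + 1, 1)$ with the other addable $i_e$-node above it, contributing $0$. In both situations a short check rules out removable $i_e$-nodes.

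The main obstacle is this degree computation, which requires careful bookkeeping of residues modulo $e$ at every intermediate step to exclude spurious addable or removable nodes in $\mu_{m-1}$; the combinatorial identification of the two tableaux is then routine.
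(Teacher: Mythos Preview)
Your argument is correct. The paper leaves this lemma (together with the two surrounding ones) as an exercise for the reader, so there is no proof to compare against; your direct enumeration of hook-shaped tableaux via Lemma~\ref{lem:hkdesc} and the step-by-step evaluation of $d_{\mtt t(m)}(\mu_{m-1})$ is precisely the verification the paper has in mind.
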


\begin{lem}\label{lem:lee}
 If $0\le m<e$, then all standard tableaux of size $m$ have degree $0$. 
\end{lem}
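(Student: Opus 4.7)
The plan is to proceed by induction on $m$. The base case $m = 0$ is immediate: the empty tableau has degree $0$ by~\eqref{eq:deg}. For the inductive step, fix a standard tableau $\mtt t$ of size $m$ with $1 \le m < e$, and set $\mu = \Shape(\mtt t_{\le m-1})$ and $a = \mtt t(m)$, so that $|\mu| = m - 1 \le e - 2$ and $a$ is an addable node of $\mu$ of some residue $i$. Using~\eqref{eq:deg} and the inductive hypothesis applied to $\mtt t_{\le m-1}$, the task reduces to showing that $d_a(\mu) = 0$.

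The key observation will be a diagonal count. Every addable node of $\mu$ has the form $(r, \mu_r + 1)$ for some $1 \le r \le \ell(\mu) + 1$ (using the convention $\mu_{\ell(\mu) + 1} = 0$), and therefore lies on a diagonal $\mu_r + 1 - r \in [-\ell(\mu), \mu_1]$. Since $\ell(\mu) + \mu_1 \le |\mu| + 1 \le e - 1$, this interval has length at most $e - 1$, and hence contains no two integers congruent modulo $e$. Moreover, for rows $r < r'$ both contributing addable nodes, the diagonal $\mu_r + 1 - r$ is strictly greater than $\mu_{r'} + 1 - r'$, because $\mu$ is weakly decreasing. Combining these two facts, $\mu$ possesses at most one addable $i$-node, namely $a$. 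The analogous argument applied to removable nodes (whose diagonals lie in $[-\ell(\mu) + 1, \mu_1 - 1]$) shows that $\mu$ has at most one removable $i$-node.

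The final, and most delicate, step will be to rule out the coexistence of an addable $i$-node and a removable $i$-node of $\mu$. By the interval argument of the previous paragraph, both would share a common diagonal $k$, so one can write them as $(r_a, r_a + k)$ and $(r_b, r_b + k)$, with defining conditions $\mu_{r_a} = r_a + k - 1$, $\mu_{r_b} = r_b + k$ and $\mu_{r_b + 1} < r_b + k$. A short case analysis on the sign of $r_a - r_b$, using weak monotonicity of $\mu$, produces a contradiction in every case; the subtlest subcase is $r_a = r_b + 1$, where one obtains $\mu_{r_b + 1} = r_b + k$, contradicting the removability of $(r_b, r_b + k)$. Thus, whenever $a$ is an addable $i$-node of $\mu$, no removable $i$-node of $\mu$ exists, so the two counts entering the definition of $d_a(\mu)$ both vanish. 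This yields $d_a(\mu) = 0$ and completes the induction.
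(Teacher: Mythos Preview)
Your proof is correct. The paper leaves this lemma (together with the two preceding lemmas) as an exercise for the reader, so there is no argument in the paper to compare against; your diagonal-counting approach is a clean and complete solution to that exercise.
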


\begin{prop}\label{prop:d1basis}
  The algebra $R^{\Lda_0}_{\d}$ has a homogeneous basis 
  $\cl B_0 \sqcup \cl B_1 \sqcup \cl B_2$ where 
\begin{align*}
 \cl B_0 &= \{ \psi_{w_{\bj,\bi}} e(\bi) \mid \bi,\bj \in I^{\vn,1}, i_e=j_e \}, \\
 \cl B_1 &= \{ \psi_{w_{\bj,\bi}} e(\bi) \mid \bi, \bj \in I^{\vn,1}, i_{e} = j_e\pm 1 \}, \\
 \cl B_2 &= \{ \psi_{w_{\bj,\bi}} y_e e(\bi) \mid \bi, \bj \in I^{\vn,1}, i_e=j_e \}. 
\end{align*}
and $\deg (x) = m$ for all $x\in \cl B_m$, $m=0,1,2$. 
\end{prop}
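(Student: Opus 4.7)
The plan is to compare the proposed set against the graded dimension of $R^{\Lda_0}_{\d}$ computed via Brundan--Kleshchev (Theorem~\ref{thm:BKdim}), and to deduce linear independence using the standard basis of $R_{\d}$ (Theorem~\ref{thm:basis}(i)).

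First, for each pair $\bi,\bj\in I^\d$ I would compute $\qdim(e(\bj) R^{\Lda_0}_{\d} e(\bi))$ via Theorem~\ref{thm:BKdim}, summing $q^{\deg\mtt s +\deg\mtt t}$ over pairs of standard tableaux $(\mtt s,\mtt t)$ of common shape with $\bi^{\mtt s}=\bi$, $\bi^{\mtt t}=\bj$. By Lemma~\ref{lem:hooktableaux}, each $\bi\in I^{\vn,1}$ admits exactly two such tableaux, of shapes $(i_e,1^{e-i_e})$ and $(i_e+1,1^{e-i_e-1})$ with degrees $0$ and $1$ respectively. Matching shapes across the two residue sequences gives
\[
\qdim(e(\bj) R^{\Lda_0}_{\d} e(\bi)) =
\begin{cases}
1+q^2 & \text{if } \bi,\bj\in I^{\vn,1} \text{ and } i_e=j_e, \\
q & \text{if } \bi,\bj\in I^{\vn,1} \text{ and } |i_e-j_e|=1, \\
0 & \text{otherwise.}
\end{cases}
\]
Summed over $(\bi,\bj)$, this matches $|\cl B_0|+|\cl B_1|q+|\cl B_2|q^2$ term by term in each pair.

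Next I would verify the claimed degrees using the identity $\deg(\psi_w e(\bi))=\sum_{l<l',\,w(l)>w(l')}(-\mtt c_{\bi_l,\bi_{l'}})$, obtained by tracking residues through any reduced expression for $w$. When $i_e=j_e=k$, the element $w_{\bj,\bi}$ fixes positions $1$ and $e$ (the residues $0$ and $k$ occur uniquely in each of $\bi$ and $\bj$); by Lemma~\ref{lem:hkdesc}, its only possible effect on positions $\{2,\ldots,e-1\}$ is to reshuffle the increasing subsequence $(1,\ldots,k-1)$ against the decreasing subsequence $(e-1,\ldots,k+1)$. Hence every inversion of $w_{\bj,\bi}$ pairs an ``up'' residue $a\in\{1,\ldots,k-1\}$ with a ``down'' residue $b\in\{k+1,\ldots,e-1\}$, with $|a-b|\in\{2,\ldots,e-2\}$, which is never $\pm 1\pmod e$; these residues are non-adjacent in $\G$ and all contributions vanish. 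So $\deg(\psi_{w_{\bj,\bi}}e(\bi))=0$, and adding $y_e$ (of degree $2$) gives $\cl B_2$ in degree $2$. When $|i_e-j_e|=1$, a direct check of all possible inversions shows that exactly one involves an adjacent pair of residues --- the pair at positions $p$ and $e$, where $p$ is the position in $\bi$ of $\min(i_e,j_e)$ --- contributing $1$; the remaining inversions are of the same ``up/down'' type as before and contribute $0$, giving $\cl B_1$ in degree $1$.

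Finally, since the graded cardinality of $\cl B_0\sqcup\cl B_1\sqcup\cl B_2$ matches $\qdim R^{\Lda_0}_{\d}$, it suffices to show that the proposed set spans. By Theorem~\ref{thm:basis}(i), $e(\bj) R_{\d} e(\bi)$ has standard basis $\{\psi_{w_{\bj,\bi}} y_1^{m_1} \cdots y_e^{m_e} e(\bi):m_*\in\mN\}$, whose image under $R_\d\thra R^{\Lda_0}_{\d}$ spans $e(\bj) R^{\Lda_0}_{\d} e(\bi)$. The cyclotomic relation forces $y_1 e(\bi)=0$, so only $m_1=0$ survives; a degree inspection then shows that each homogeneous component of degree $0$, $1$, or $2$ in $e(\bj) R^{\Lda_0}_{\d} e(\bi)$ contains a unique proposed candidate, and Step~1 forces that candidate to be nonzero. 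The main obstacle will be to confirm, when $i_e=j_e$, that the specific candidate $\psi_{w_{\bj,\bi}}y_e e(\bi)$ of $\cl B_2$ --- rather than one of the alternatives $\psi_{w_{\bj,\bi}}y_t e(\bi)$ for $2\le t\le e-1$ --- generates the $1$-dimensional degree-$2$ piece. To handle this I would exploit the commutations $\psi_r y_{r+1}e(\bi')=y_r\psi_r e(\bi')$ and $\psi_r^2 e(\bi')=L_{i'_r,i'_{r+1}}(y_r,y_{r+1})e(\bi')$ (both valid on $I^\d$ since residues there are distinct) to show that the various $\psi_{w_{\bj,\bi}}y_t e(\bi)$ for $t=2,\ldots,e$ all coincide up to a nonzero scalar in the cyclotomic quotient, so that nonvanishing of any one (guaranteed by the dimension count) forces nonvanishing of $\psi_{w_{\bj,\bi}}y_e e(\bi)$.
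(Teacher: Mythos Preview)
Your overall strategy --- compute graded dimensions via Theorem~\ref{thm:BKdim}, then produce a spanning set of the right size --- matches the paper's. Your direct computation of the degrees of $\psi_{w_{\bj,\bi}}e(\bi)$ via inversions is a nice touch (the paper instead reads the degrees off from the graded dimension formula once spanning is known), but it is not where the content lies.

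The genuine gap is in your treatment of the degree-$2$ piece. Your proposal to show that the elements $\psi_{w_{\bj,\bi}} y_t e(\bi)$ for $t=2,\ldots,e$ ``all coincide up to a nonzero scalar'' is simply false: in fact $y_t e(\bi)=0$ in $R^{\Lda_0}_\d$ for every $t<e$, so these elements are zero, not nonzero multiples of the $t=e$ one. For example when $e=3$ and $\bi=(0,1,2)$, the relation $\psi_1^2 e(\bi)=(y_2-y_1)e(\bi)$ combined with $\psi_1 e(\bi)=e(1,0,2)\psi_1=0$ and $y_1 e(\bi)=0$ gives $y_2 e(\bi)=0$, while the dimension count shows $y_3 e(\bi)\ne 0$. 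So your commutation argument cannot work as stated, and you have not yet established that the particular element $\psi_{w_{\bj,\bi}} y_e e(\bi)$ is nonzero.

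The paper's resolution is cleaner and bypasses the issue entirely: one first observes (Lemma~\ref{lem:lee} plus Theorem~\ref{thm:BKdim}) that $R^{\Lda_0}_{e-1}$ is concentrated in degree $0$, so $y_1=\cdots=y_{e-1}=0$ already in $R^{\Lda_0}_{e-1}$, and hence in $R^{\Lda_0}_e$ via $\iota_{e-1}^e$. With this in hand, the image of the standard basis of Theorem~\ref{thm:basis}\eqref{basis1} in $e(\bj)R^{\Lda_0}_\d e(\bi)$ reduces to $\{\psi_{w_{\bj,\bi}} y_e^m e(\bi):m\ge 0\}$, and the graded dimension bound $R^{\Lda_0}_\d=(R^{\Lda_0}_\d)_{\{0,1,2\}}$ leaves only $m\in\{0,1\}$. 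The comparison with $\qdim$ then forces both to be nonzero when $i_e=j_e$.
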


If $e=2$, then the homogeneous basis given by the proposition is
simply $\{ e(01), y_2 e(01) \}$. 

\begin{proof}
Note that by Lemma~\ref{lem:lee} and Theorem~\ref{thm:BKdim} we have $R^{\Lda_0}_{e-1}=(R^{\Lda_0}_{e-1})_{\{0\}}$, so $y_1=\cdots=y_{e-1}=0$ in $R^{\Lda_0}_{e-1}$. Since the graded algebra homomorphism $\iota_{n-1}^n$ (see \S\ref{subsec:klr1}) is well defined, we have 
$y_1=\cdots=y_{e-1}=0$ in $R_e^{\Lda_0}$ as well. 
By Lemma~\ref{lem:hooktableaux} and Theorem~\ref{thm:BKdim}, 
the following statements are true for any 
$\bi,\bj\in I^{\vn,1}$:
\indent
\begin{enumerate}
 \item[1.] If $i_e = j_e$, then $\qdim(e(\bj) R^{\Lda_0}_{\d} e(\bi))=1+q^2$.
 \item[2.] If $i_e=j_e\pm 1$, then 
$\qdim(e(\bj) R^{\Lda_0}_{\d} e(\bi))=q$.
 \item[3.] If $j_e \notin \{ i_e,i_e-1,i_e+1\}$, then $e(\bj) R^{\Lda_0}_{\d} e(\bi)=0$. 
\end{enumerate}
In particular, $R^{\Lda_0}_{\d} = (R^{\Lda_0}_{\d})_{ \{0,1,2\} }$. 
By Theorem~\ref{thm:basis}\eqref{basis1}, it follows that, whenever 
$\bi, \bj\in I^{\vn,1}$ and $i_e=j_e$, the vector space 
$e(\bj) R^{\Lda_0}_{\d} e(\bi)$ is spanned by 
$\{ \psi_{w_{\bj,\bi}} e(\bi), \psi_{w_{\bj,\bi}} y_e e(\bi)\}$ and hence, by comparing graded dimensions, that these elements form a basis of 
$e(\bj) R^{\Lda_0}_{\d} e(\bi)$ and 
 have degrees $0$ and $2$ respectively (the latter fact is also easily seen directly from definitions). 
Similarly, if $j_e = i_e\pm 1$ (and hence $e>2$), 
then the singleton set 
$\{ \psi_{w_{\bj,\bi}} e(\bi) \}$ spans 
the vector space $e(\bj) R^{\Lda_0}_e e(\bi)$ and hence forms a basis of this space; moreover, the unique element of this set has degree $1$.
The proposition follows from the above assertions. 
\end{proof}

 

\begin{lem}\label{lem:d1gen}
Assume that $e>2$. Then 
\begin{enumerate}[(i)]
\item\label{d1gen1}
If $\bi,\bj \in I^{\vn,1}$ and $j_e\in \{ i_e-1,i_e+1\}$, then 
\[
\psi_{w_{\bi,\bj}} \psi_{w_{\bj,\bi}} e(\bi) = 
\begin{cases}
-y_e e(\bi) & \text{if } j_e=i_e-1, \\
\phantom{-}y_e e(\bi) & \text{if } j_e=i_e+1.
\end{cases}
\]
\item\label{d1gen2}
The algebra $R_{\d}^{\Lambda_0}$ is generated by $(R_{\d}^{\Lda_0})_{\{0,1\}}$. 
\end{enumerate}
\end{lem}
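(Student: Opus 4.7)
The plan is to use Proposition~\ref{prop:d1basis} as the main structural input, in particular the vanishing $y_1 = \cdots = y_{e-1} = 0$ in $R^{\Lda_0}_{\d}$.

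For part~\eqref{d1gen1}, first I would observe that both $\psi_{w_{\bj,\bi}} e(\bi)$ and $\psi_{w_{\bi,\bj}} e(\bj)$ belong to $\cl B_1$, hence are homogeneous of degree~$1$. Thus $\psi_{w_{\bi,\bj}}\psi_{w_{\bj,\bi}} e(\bi)$ lies in $e(\bi) R^{\Lda_0}_{\d} e(\bi)_{\{2\}}$, which by Proposition~\ref{prop:d1basis} is one-dimensional and spanned by $y_e e(\bi)$. Consequently the product equals $c(\bi,\bj)\, y_e e(\bi)$ for some $c(\bi,\bj) \in F$, and the task is to identify this scalar as $\pm 1$ with the prescribed sign.

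I would treat first the special case $\bj = s_{e-1}\bi$. Lemma~\ref{lem:hkdesc} implies that every $\bi \in I^{\vn,1}$ satisfies $i_{e-1} \in \{i_e - 1, i_e + 1\}$ (it is the terminal entry of either the ascending or the descending part of the middle shuffle), and the same lemma makes it easy to verify that $s_{e-1}\bi \in I^{\vn,1}$ with $j_e = i_{e-1}$. In this case $w_{\bi,\bj} = w_{\bj,\bi} = s_{e-1}$, so the quadratic relation~\eqref{rel:psisq} yields
\[
  \psi_{e-1}^2 e(\bi) = L_{i_{e-1},i_e}(y_{e-1},y_e)\, e(\bi) = \pm y_e\, e(\bi),
\]
using $y_{e-1}=0$, with sign dictated by the direction of the arrow between $i_{e-1}$ and $i_e$ in $\G$. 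For arbitrary $\bi, \bj$ with $j_e = i_e \pm 1$, I would reduce to this special situation by an intertwining argument: for any $\bi' \in I^{\vn,1}$ with $i'_e = i_e$, the permutation $w_{\bi',\bi}$ lies in $\fr S_{e-1}$, so $\psi_{w_{\bi',\bi}}$ commutes with $y_e$ by~\eqref{rel:ypsi0}. Sandwiching the special-case identity by such intertwiners on both sides (and analogously for $\bj$), and noting that the resulting degree-$0$ composition $\psi_{w_{\bi,\bi'}}\psi_{w_{\bi',\bi}} e(\bi) \in e(\bi) R^{\Lda_0}_\d e(\bi)_{\{0\}} = F e(\bi)$ must equal $e(\bi)$ itself by the description of $\cl B_0$, propagates the scalar and shows that $c(\bi,\bj)$ depends only on the sign of $j_e - i_e$.

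For part~\eqref{d1gen2}, the basis elements in $\cl B_0 \sqcup \cl B_1$ already lie in $(R^{\Lda_0}_{\d})_{\{0,1\}}$, so it suffices to show each $\psi_{w_{\bj,\bi}} y_e e(\bi) \in \cl B_2$ lies in the subalgebra generated by this component. For any $\bi \in I^{\vn,1}$, Lemma~\ref{lem:hkdesc} forces $i_e \in \{1,\ldots,e-1\}$; since $e > 2$, at least one of $i_e \pm 1$ again lies in $\{1,\ldots,e-1\}$, so some $\bk \in I^{\vn,1}$ with $k_e = i_e \pm 1$ exists. Part~\eqref{d1gen1} then yields $y_e e(\bi) = \pm\, \psi_{w_{\bi,\bk}}\psi_{w_{\bk,\bi}} e(\bi)$, and hence
\[
  \psi_{w_{\bj,\bi}} y_e e(\bi) = \pm\, \psi_{w_{\bj,\bi}}\,\psi_{w_{\bi,\bk}}\,\psi_{w_{\bk,\bi}} e(\bi)
\]
is a product of three elements of $(R^{\Lda_0}_{\d})_{\{0,1\}}$, finishing the proof. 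The main obstacle is the universality step in part~\eqref{d1gen1}: controlling the sandwiching when reduced expressions for $w_{\bi,\bi'} w_{\bi',\bj}$ (and the analogous composites) fail to concatenate, so that $\psi_u\psi_v$ does not reduce to $\psi_{uv}$ but picks up correction terms via~\eqref{rel:psisq} and~\eqref{rel:psibr}. A degree count using Proposition~\ref{prop:d1basis} should show these corrections lie in $e(\bi) R^{\Lda_0}_\d e(\bi)_{\{<2\}}$ and therefore contribute nothing to the $y_e e(\bi)$-component, leaving the leading scalar intact.
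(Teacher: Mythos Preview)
Your overall strategy coincides with the paper's: reduce to the single square $\psi_{e-1}^2$ and transport the result by degree-$0$ intertwiners. The difference is only in how tightly the reduction is controlled, and this is where your write-up is loose.

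First, the identity $\psi_{w_{\bi,\bi'}}\psi_{w_{\bi',\bi}} e(\bi)=e(\bi)$ does not follow from the \emph{description} of $\cl B_0$ alone; that only says the product is a scalar multiple of $e(\bi)$, not that the scalar is $1$. The paper's argument is the one you want: since $\deg(\psi_{w_{\bi',\bi}}e(\bi))=0$ and each factor $\psi_r$ contributes a nonnegative amount, every individual swap is between nonadjacent colours, so the quadratic relation gives $\psi_r^2=1$ at each step and the product telescopes to $e(\bi)$.

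Second, your closing worry about failure of concatenation is a symptom of not having pinned down $\bi'$. If you require not only $i'_e=i_e$ but also $i'_{e-1}=j_e$ (such $\bi'$ exists by Lemma~\ref{lem:hkdesc}), then with $\bj':=s_{e-1}\bi'$ one has $j'_e=j_e$ and
\[
w_{\bi,\bj}=w_{\bi,\bi'}\,s_{e-1}\,w_{\bj',\bj},
\]
which is length-additive because $w_{\bi,\bi'},w_{\bj',\bj}\in\fr S_{(e-1,1)}$ and $s_{e-1}\in\ls{(e-1,1)}{\scr D}_e^{(e-1,1)}$. Hence $\psi_{w_{\bi,\bj}}e_\d=\psi_{w_{\bi,\bi'}}\psi_{e-1}\psi_{w_{\bj',\bj}}e_\d$ exactly, with no correction terms to chase. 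This is precisely the decomposition the paper uses (with $\bk=\bi'$, $\bk'=\bj'$), and it makes the degree-count salvage you propose unnecessary. By contrast, for a generic $\bi'$ with only $i'_e=i_e$, the product $\psi_{w_{\bi,\bi'}}\psi_{w_{\bi',\bj}}e(\bj)$ lands in the one-dimensional space $e(\bi)(R^{\Lda_0}_\d)_{\{1\}}e(\bj)$ but possibly with a scalar $\ne 1$, and your degree argument does not detect that scalar.

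Part~\eqref{d1gen2} is fine and matches the paper.
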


\begin{proof}
\eqref{d1gen1} It is easy to see 
that there exists $\bk\in I^{\vn,1}$ such that 
$k_e=i_e$ and $k_{e-1} = j_e$. Then $\bk':=s_{e-1}  \bk$ also belongs to $I^{\vn,1}$,
and $k'_e = j_e$. 
We have $w_{\bi,\bj} = w_{\bi,\bk} s_{e-1} w_{\bk',\bj}$ and, since $s_{e-1} \in \ls{(e-1,1)}{\scr D}^{(e-1,1)}_e$ and 
$w_{\bi,\bk}, w_{\bk',\bj} \in \fr S_{(e-1,1)}$, it follows that 
$\psi_{w_{\bi,\bj}} e_{\d} = \psi_{w_{\bi,\bk}} \psi_{e-1} 
\psi_{w_{\bk',\bj}} e_{\d}$.
Similarly, 
$\psi_{w_{\bj,\bi}}e_{\d} = \psi_{w_{\bj, \bk'}} \psi_{e-1} 
\psi_{w_{\bk,\bi}} e_{\d}$.
 Since $i_e=k_e$, we have $\deg(\psi_{w_{\bk,\bi}} e(\bi))=0$, and hence, applying repeatedly the case $i_r \noarrow i_{r+1}$ of the relation~\eqref{rel:psisq} and using the fact that $w_{\bi,\bk} = w_{\bk,\bi}^{-1}$, 
 we obtain $\psi_{w_{\bi,\bk}} \psi_{w_{\bk,\bi}} e(\bi)= e(\bi)$. 
 Similarly, $\psi_{w_{\bj,\bk'}} \psi_{w_{\bk',\bj}} e(\bk') = e(\bk')$. 
Set $\varepsilon =1$ if $j_e=i_e+1$ and $\varepsilon=-1$ if $j_e=i_e-1$. 
 Using the above equalities together with the fact that $y_{e-1} =0$ (see the proof of Proposition~\ref{prop:d1basis}) and that $\psi_{w_{\bk,\bi}}$ commutes with 
 $y_e$ (as $w_{\bk,\bi} (e)=e$), we obtain 
 \begin{align*}
 \psi_{w_{\bi,\bj}} \psi_{w_{\bj,\bi}} e(\bi) &= 
 \psi_{w_{\bi,\bk}} \psi_{e-1} \psi_{w_{\bk',\bj}}
 \psi_{w_{\bj, \bk'}} \psi_{e-1} \psi_{w_{\bk,\bi}} e(\bi)\\
 &= 
 \psi_{w_{\bi,\bk}} \psi_{e-1} \psi_{w_{\bk',\bj}}
 \psi_{w_{\bj,  \bk'}} e(\bk') \psi_{e-1} \psi_{w_{\bk,\bi}} e(\bi)
  \\
  &=\psi_{w_{\bi,\bk}} \psi_{e-1}^2 e(\bk')  \psi_{w_{\bk,\bi}} e(\bi) \\
  &=  \varepsilon 
  \psi_{w_{\bi,\bk}} (y_e-y_{e-1}) \psi_{w_{\bk,\bi}} e(\bi) \\
  &=  \varepsilon \psi_{w_{\bi,\bk}} y_e \psi_{w_{\bk,\bi}} e(\bi)
  = \varepsilon \psi_{w_{\bi,\bk}} \psi_{w_{\bk,\bi}} y_e e(\bi)
  = \varepsilon y_e e(\bi).
 \end{align*}

\eqref{d1gen2} 
Since $e\ge 3$, for each $\bi\in I^{\vn,1}$, there exists $\bj\in I^{\vn,1}$ such that $j_e \in \{i_e-1,i_e+1\}$. The result now follows from~\eqref{d1gen1} and Proposition~\ref{prop:d1basis}. 
\end{proof}

\section{Wreath product relations in a quotient of a KLR algebra}\label{sec:Theta}

In this section we construct a unital graded algebra homomorphism 
$\Theta\colon R^{\Lda_0}_{\d} \wr \fr S_d \to e_{\d^d} \ol{R}_{d\d}e_{\d^d}$ (cf.~\eqref{comp}).
As is mentioned in~\S\ref{subsec:outline}, 
we adapt ideas from~\cite[Section 1]{KangKashiwaraKim2013}
 in order to define the images $\tau_r$ of elementary transpositions $s_r \in \fr S_d$ under $\Theta$. The elements $\tau_r$ are defined by Equation~\eqref{tau_r}, and their needed properties are summarised in Theorem~\ref{thm:Theta}.
The present set-up is quite different from that of~\cite{KangKashiwaraKim2013}: in particular, the ``error terms'' $\e_r$ appearing in~\eqref{tau_r} have no analogue in~\cite{KangKashiwaraKim2013}. Consequently, the presentation below is largely self-contained.

\subsection{The intertwiners $\vphi_w$}\label{subsec:vphi}

Fix $n\ge 0$.
We recall necessary facts from~\cite[\S 1.3.1]{KangKashiwaraKim2013}. For $1\le r<n$, define $\vphi_r \in R_{n}$ by
\begin{equation}\label{defvphi}
 \vphi_r e(\bi) =\begin{cases}
                    (\psi_r y_r - y_r \psi_r) e(\bi) = (\psi_r (y_r-y_{r+1}) +1)e(\bi) & \text{if } i_r = i_{r+1}, \\
                     \psi_r e(\bi) & \text{if } i_r \ne i_{r+1}
                   \end{cases}
\end{equation}
for all $\bi \in I^n$.

If $w= s_{r_1} \cdots s_{r_m}$ is a reduced expression in $\fr S_n$, define $\vphi_w: = \vphi_{r_1} \cdots \vphi_{r_m}$. It follows from part~\eqref{KaKaKi2} of the following lemma and Matsumoto's theorem that $\vphi_w$ depends only on $w$, not on the choice of the reduced expression. In particular, we note that 
\begin{equation}\label{vphiprod}
 \vphi_v \vphi_w = \vphi_{vw}
\end{equation}
whenever $v,w\in \fr S_n$ and $\ell(vw) = \ell(v)+\ell(w)$. 
Also, we will repeatedly use the fact that
$\vphi_w e(\bi) = e(w \bi) \vphi_w$ for all $\bi\in I^n$, $w\in \fr S_n$.

\begin{lem}\label{lem:KaKaKi} \cite[Lemma 1.3.1]{KangKashiwaraKim2013}
For $1\le r<n$, $1\le t\le n$, $w\in \fr S_n$ and $\bi\in I^n$, 
 \begin{enumerate}[(i)] 
  \item $\vphi_r^2 e(\bi) = (L_{i_r, i_{r+1}} (y_r, y_{r+1}) + \d_{i_r,i_{r+1}} ) e(\bi)$;
 \item\label{KaKaKi2} $\vphi_r \vphi_{r+1} \vphi_r = \vphi_{r+1} \vphi_r \vphi_{r+1}$ if $r<n-1$;
 \item\label{KaKaKi3} $\vphi_w y_t = y_{w(t)} \vphi_w$; 
 \item\label{KaKaKi4} if $1\le k<n$ and $w(k+1) = w(k)+1$, then $\vphi_w \psi_k = \psi_{w(k)} \vphi_w$; 
 \item\label{KaKaKi5} $\vphi_{w^{-1}} \vphi_w e(\bi) = \prod_{\substack{1\le a<b\le n\\ w(a)>w(b)}} (L_{i_a, i_b} (y_a, y_b) + \d_{i_a,i_b}) e(\bi)$. 
 \end{enumerate}
\end{lem}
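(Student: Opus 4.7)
The plan is to prove parts~(i)--(v) essentially by following the direct verifications of Kang--Kashiwara--Kim, treating the intertwiners $\vphi_r$ as polynomial corrections of the $\psi_r$ that are designed precisely to satisfy cleaner intertwining properties, at the cost of a more complicated square. Throughout, the two cases $i_r = i_{r+1}$ and $i_r \ne i_{r+1}$ in the definition~\eqref{defvphi} must be handled separately; in every case the calculation reduces to a direct application of the KLR relations~\eqref{rel:epsi} and \eqref{rel:ypsi1}--\eqref{rel:psibr}. All assertions should be verified after multiplying on the right by a fixed idempotent $e(\bi)$.

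For (i), when $i_r \ne i_{r+1}$ the correction term in~\eqref{defvphi} is absent, so $\vphi_r^2 e(\bi) = \psi_r^2 e(\bi)$, which equals $L_{i_r, i_{r+1}}(y_r, y_{r+1}) e(\bi)$ by~\eqref{rel:psisq}, and the Kronecker term vanishes. When $i_r = i_{r+1}$ one has $L_{i_r, i_{r+1}} = 0$, and expanding $\vphi_r^2 e(\bi) = (\psi_r(y_r - y_{r+1}) + 1)^2 e(\bi)$ via~\eqref{rel:ypsi1}--\eqref{rel:ypsi2} together with $\psi_r^2 e(\bi) = 0$ collapses the result to $e(\bi)$, matching the $+\delta_{i_r,i_{r+1}}$ term.

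Statement (iii) for $w = s_r$ is a direct verification in each of the sub-cases $t \in \{r, r+1\}$ and $t \notin \{r, r+1\}$; the correction $+1$ built into $\vphi_r$ when $i_r = i_{r+1}$ is exactly what balances the failure of $\psi_r$ to commute with $y_r, y_{r+1}$ recorded by~\eqref{rel:ypsi1}--\eqref{rel:ypsi2}. The general case of (iii) then follows by induction on $\ell(w)$ using~\eqref{vphiprod}. Part (iv) is similar: the condition $w(k+1) = w(k)+1$ permits a reduced expression of $w$ whose final factor on the right is $s_k$ and whose corresponding factor on the left is $s_{w(k)}$, and the claim reduces via~\eqref{vphiprod} to the base identity $\vphi_r \psi_r = \psi_r \vphi_r$, which is immediate from~\eqref{defvphi}. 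The braid identity (ii) is the most computational step and I expect it to be the main obstacle: it genuinely mixes the relations~\eqref{rel:psisq}--\eqref{rel:psibr} with~\eqref{defvphi}. One partitions according to the sub-cases of~\eqref{rel:psibr} together with the ``repeated residue'' situations, expands both sides by~\eqref{defvphi}, moves polynomial factors past the $\psi$'s using (iii), and verifies in each case that the $\pm 1$ corrections in~\eqref{rel:psibr} cancel precisely against the correction terms built into $\vphi_r$.

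Finally, (v) is proved by induction on $\ell(w)$. The base case $w = 1$ is trivial since the product over inversions is empty. For $\ell(w) \ge 1$, choose a reduced expression $w = s_r w'$ with $\ell(w) = \ell(w') + 1$; then $w^{-1} = (w')^{-1} s_r$, and~\eqref{vphiprod} yields $\vphi_{w^{-1}} \vphi_w e(\bi) = \vphi_{(w')^{-1}} \vphi_r^2 \vphi_{w'} e(\bi)$. Applying (i) to $\vphi_r^2 e(w' \bi)$ produces the factor $L_{(w'\bi)_r, (w'\bi)_{r+1}}(y_r, y_{r+1}) + \delta_{(w'\bi)_r, (w'\bi)_{r+1}}$, and commuting this polynomial past $\vphi_{w'}$ by iterated use of~(iii) transforms $y_r, y_{r+1}$ into $y_a, y_b$ where $a, b$ are the (unique) indices with $w'(a) = r+1$, $w'(b) = r$. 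These $(a,b)$ are precisely the new inversion pair of $w$ relative to $w'$, so combining with the inductive hypothesis applied to $w'$ yields the product over all inversions of $w$.
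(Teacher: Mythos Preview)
The paper does not give its own proof of this lemma at all; it is simply quoted from \cite[Lemma~1.3.1]{KangKashiwaraKim2013}. So there is no argument in the paper to compare against, and your outline is essentially reproducing the standard verification from that reference. Your treatments of (i), (ii), (iii), and (v) are correct in strategy. In~(v) there is a harmless swap of labels: since $\ell(s_r w')>\ell(w')$ one has $(w')^{-1}(r)<(w')^{-1}(r+1)$, so the new inversion pair $(a,b)$ with $a<b$ satisfies $w'(a)=r$ and $w'(b)=r+1$, not the other way round; the resulting factor is still $L_{i_a,i_b}(y_a,y_b)+\delta_{i_a,i_b}$ as required.

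Your argument for (iv), however, contains a genuine gap. The hypothesis $w(k+1)=w(k)+1$ means precisely that $(k,k+1)$ is \emph{not} an inversion of $w$, so $\ell(ws_k)=\ell(w)+1$ and $w$ has \emph{no} reduced expression ending in $s_k$; the reduction you describe cannot be performed. Moreover, the claimed ``base identity'' $\vphi_r\psi_r=\psi_r\vphi_r$ is false: for $i_r=i_{r+1}$ one has $\psi_r^2 e(\bi)=0$, and a direct computation from~\eqref{defvphi} gives $\vphi_r\psi_r e(\bi)=\psi_r y_r\psi_r e(\bi)$ while $\psi_r\vphi_r e(\bi)=-\psi_r y_r\psi_r e(\bi)$. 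A correct approach is to note that $ws_k=s_{w(k)}w$, with both sides of length $\ell(w)+1$, so once (ii) is established~\eqref{vphiprod} yields $\vphi_w\vphi_k=\vphi_{w(k)}\vphi_w$. On $e(\bi)$ with $i_k\ne i_{k+1}$ one has $\vphi_k e(\bi)=\psi_k e(\bi)$ and $\vphi_{w(k)}e(w\bi)=\psi_{w(k)}e(w\bi)$, giving (iv) immediately. When $i_k=i_{k+1}$, expanding both $\vphi_k$ and $\vphi_{w(k)}$ via~\eqref{defvphi} and using~(iii) shows that $(\vphi_w\psi_k-\psi_{w(k)}\vphi_w)(y_k-y_{k+1})e(\bi)=0$; since right multiplication by $y_k-y_{k+1}$ is injective on $R_n$ by Theorem~\ref{thm:basis}\eqref{basis1}, the identity follows.
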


Suppose now that $n=2e$. 
Recalling the definition before Corollary~\ref{cor:tildeg},
consider the subalgebra $R'_{\d^2}$ of $R_{2\d}$.

\begin{lem}\label{lem:diagram}
Let $w=(1, e+1) (2,e+2) \cdots (e,2e) \in \fr S_{2e}$,
 and let $K$ be the ideal of $F[y_1,\ldots,y_{2e}]$ generated by the set 
$\{ y_r-y_t \mid 1\le r,t\le e\} \cup \{ y_r-y_t \mid e+1\le r,t\le 2e\}$. Then, in $R_{2\d}$, we have 
\begin{equation}\label{diag0}
 \vphi_w e_{\d,\d} \in \psi_w ((y_1-y_{e+1})^e +K)e_{\d,\d} + \sum_{v\in \scr D_{2e}^{(e,e)} \setminus \{ w\}} \psi_v R'_{\d,\d}. 
\end{equation}
\end{lem}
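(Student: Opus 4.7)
The plan is to expand $\vphi_w e_{\delta,\delta}$ with respect to a chosen reduced expression $w = s_{r_1}\cdots s_{r_m}$ (where $m = e^2 = \ell(w)$), isolate the leading $\psi_w$-contribution, and show everything else lies in the error sum. Observe that $w$ is the longest element of $\scr D^{(e,e)}_{2e}$, so any $v\in\fr S_{2e}$ with $\ell(v)<m$ decomposes as $v = v'u$ with $v'\in\scr D^{(e,e)}_{2e}$, $u\in\fr S_{(e,e)}$ and necessarily $v'\ne w$, since $\ell(v')\le \ell(v)<\ell(w)$.

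For each $\bi\bj\in I^{\delta,\delta}$, I would expand $\vphi_w e(\bi\bj)$ by applying~\eqref{defvphi} at each step: when the two residues at positions $r_k,r_k+1$ of the current residue sequence coincide, write $\vphi_{r_k} = \psi_{r_k}(y_{r_k}-y_{r_k+1})+1$; otherwise write $\vphi_{r_k}=\psi_{r_k}$. The resulting expansion is a sum of terms indexed by the subset $S\subseteq\{1,\ldots,m\}$ of matching steps where the $\psi$-part is chosen. The ``top'' term ($S = \{1,\ldots,m\}$) has the form $\psi_{r_1}g_1\cdots\psi_{r_m}g_m \, e(\bi\bj)$ with $g_k\in\{1,\,y_{r_k}-y_{r_k+1}\}$, while every other term has strictly fewer than $m$ factors of $\psi$, hence by Theorem~\ref{thm:basis}\eqref{basis2} lies in $\sum_{v:\,\ell(v)<m}\psi_v R'_{\delta,\delta}\subseteq \sum_{v'\ne w}\psi_{v'}R'_{\delta,\delta}$.

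For the top term, I would push each $g_k$ rightward past the subsequent $\psi$'s via~\eqref{rel:ypsi0}--\eqref{rel:ypsi2}. Each commutation $y_s\psi_r = \psi_r y_{s_r(s)} + (\mathrm{error})$ produces an error with $\psi$-length one less and coefficient an idempotent $e(\cdot)\in R'$, so these errors again land in $\sum_{v'\ne w}\psi_{v'}R'_{\delta,\delta}$. What survives is $\psi_w\prod_{k\in S^*}(y_{a_k}-y_{b_k})\, e(\bi\bj)$, where $S^*$ is the set of matching steps and $\{a_k,b_k\}$ is the inversion of $w$ resolved at step $k$. Because every inversion of $w$ satisfies $a_k\le e<b_k$, and because $\bi,\bj\in I^{\delta}$ are permutations of $I$, there are \emph{exactly} $e$ matching steps --- one per residue class $c\in I$, namely the inversion joining the position of $c$ in $\bi$ to the position of $c$ in $\bj$. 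Thus the surviving polynomial is a product of $e$ factors of the form $y_a-y_b$ with $a\le e<b$; modulo $K$ each factor reduces to $y_1-y_{e+1}$, yielding the contribution $\psi_w(y_1-y_{e+1})^e\,e(\bi\bj)$ modulo $\psi_w K\,e(\bi\bj)$.

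Summing over $\bi\bj\in I^{\delta,\delta}$ then gives the claim. The main obstacle will be the combinatorial bookkeeping in the top-term analysis: identifying, for each matching step $k$, the unordered pair $\{a_k,b_k\}$ with the specific inversion of $w$ resolved there, and verifying that after all commutations the $g_k$-factor becomes $y_{a_k}-y_{b_k}$ (up to sign) with errors truly in $\sum_{v'\ne w}\psi_{v'}R'_{\delta,\delta}$. This is essentially a wiring-diagram argument tied carefully to~\eqref{rel:ypsi0}--\eqref{rel:ypsi2}, and the key qualitative point --- that each surviving factor has one index in each half $\{1,\ldots,e\}$, $\{e+1,\ldots,2e\}$ --- is an immediate consequence of the structure of the inversions of $w$, which is what lets the $K$-reduction collapse the product to $(y_1-y_{e+1})^e$.
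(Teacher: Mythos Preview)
Your overall strategy matches the paper's: fix $\bi = \bj\bk \in I^{\delta,\delta}$, expand $\vphi_w e(\bi)$ along a reduced expression, identify $e$ ``matching'' steps (one for each residue class, since each of $\bj,\bk$ is a permutation of $I$), and show that the leading contribution is $\psi_w(y_1-y_{e+1})^e e(\bi)$ modulo $K$ while the remainder lies in the error sum. However, there are two points where your execution diverges from the paper's, one a simplification you are missing and one a genuine gap.

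\medskip
\textbf{The paper's simplification.} Rather than first writing down the full top term $\psi_{r_1}g_1\cdots\psi_{r_m}g_m$ and then pushing each $g_k$ rightward past the remaining $\psi$'s (which generates further error terms via~\eqref{rel:ypsi1}--\eqref{rel:ypsi2}), the paper processes the matching steps in decreasing order of $k$. At each matching step one splits $\vphi_{r_k}e(u_k\bi) = \psi_{r_k}(y_{r_k}-y_{r_k+1})e(u_k\bi) + e(u_k\bi)$, and then uses Lemma~\ref{lem:KaKaKi}\eqref{KaKaKi3} to commute $(y_{r_k}-y_{r_k+1})$ past the \emph{remaining $\vphi$'s to its right} (not $\psi$'s), which is exact: it becomes $y_{u_k^{-1}(r_k)} - y_{u_k^{-1}(r_k+1)}$ with $u_k^{-1}(r_k)\le e < u_k^{-1}(r_k+1)$, hence congruent to $y_1-y_{e+1}$ modulo $K$. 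This yields a clean splitting into $2^e$ summands, with the top one equal to $\psi_w$ times a product already lying in $(y_1-y_{e+1})^e + K$, and the remaining $2^e - 1$ summands each a product of fewer than $e^2$ factors $\psi_{r_j}$ interspersed with polynomials. Your alternative (push past $\psi$'s) also works, but creates an extra layer of error terms to absorb.

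\medskip
\textbf{The gap.} You invoke Theorem~\ref{thm:basis}\eqref{basis2} to place each lower term in $\sum_{v'\ne w}\psi_{v'}R'_{\d,\d}$. But Theorem~\ref{thm:basis}\eqref{basis2} only puts such a product in the span of elements $\psi_{r_{a_1}}\cdots\psi_{r_{a_l}}\,y_1^{m_1}\cdots y_{2e}^{m_{2e}}\,e(\bi')$, and the monomial $y_1^{m_1}\cdots y_{2e}^{m_{2e}}$ lies in $R_{\d,\d}$ but generally \emph{not} in $R'_{\d,\d}$. So as written you only obtain membership in $\sum_{v'\ne w}\psi_{v'}R_{\d,\d}$. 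The paper repairs this at the end with a short basis argument: both $\vphi_w e(\bi)$ and the top term $\psi_w((y_1-y_{e+1})^e + K)e(\bi)$ lie in $R'_{2\d}$ (each is built from $\psi_r$, $e(\bi')$, and differences $y_r - y_t$), so their difference $a$ does too; writing $a = \sum_{v'\ne w}\psi_{v'}a_{v'}$ with $a_{v'}\in R_{\d,\d}$, one expands each $a_{v'}$ in the basis of Theorem~\ref{thm:basis}\eqref{basis1} and compares with the basis of $R'_{2\d}$ from Proposition~\ref{prop:tildebasis} (cf.\ Corollary~\ref{cor:tildeg}) to conclude $a_{v'}\in R'_{\d,\d}$. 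You need this step (or an equivalent observation that $R'_{2\d}e_{\d,\d} = \bigoplus_{v'\in\scr D^{(e,e)}_{2e}}\psi_{v'}R'_{\d,\d}$) to finish.
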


\begin{proof}
 The idea of the proof is the same as that of~\cite[Proposition 1.4.4]{KangKashiwaraKim2013}. 
Note that $w$ is fully commutative (see~\cite[Lemma 3.17]{KleshchevMathasRam2012}),
and hence $\psi_w$ does not depend on the choice of a reduced expression for $w$.
Let $\bj,\bk\in I^{\d}$ and $\bi=\bj\bk$. It is enough to show that 
\[
\vphi_w e(\bi) \in \psi_w ((y_1-y_{e+1})^e +K)e(\bi) + 
\sum_{v\in \scr D_{2e}^{(e^2)} \setminus \{ w\}} \psi_v R'_{\d,\d}
\]
for all such $\bi$.
Let $w=s_{r_{e^2}} \cdots s_{r_2} s_{r_1}$ be a reduced expression, and let $u_k = s_{r_{k-1}} \cdots s_{r_1}$ for $1\le k\le e^2$. 
We have 
\begin{equation}\label{diag1}
\vphi_w e(\bi) = \vphi_{r_{e^2}} \cdots \vphi_{r_1} e(\bi). 
\end{equation}
For each $k$, one can replace the multiple $\vphi_{r_k}$  by
$\vphi_{r_k} e(u_k \bi)$ without changing the value of the expression
on the right-hand side. 

Let $k$ run in the decreasing order through the elements of $\{ 1,\ldots,e^2\}$ satisfying $(u_k \bi)_{r_k} = (u_k \bi)_{r_k+1}$. Note that
there are exactly $e$ such values of $k$
 since each of $\bj$ and $\bk$  is a permutation of $(0,1,\ldots,e-1)$. 
For each such $k$, we have $\vphi_{r_k} e(u_k \bi) = \psi_{r_k} (y_{r_k} - y_{r_k+1}) 
e(u_k \bi) + e(u_k \bi)$. 
Consequently, the product~\eqref{diag1} decomposes as a sum of two summands, which correspond to $\psi_{r_k} (y_{r_k} - y_{r_k+1}) e(u_k \bi)$ and 
$e(u_k \bi)$ respectively. By Lemma~\ref{lem:KaKaKi}\eqref{KaKaKi3}, the first summand does not change if we remove the factor $(y_{r_k} - y_{r_k+1})$ and instead insert 
$y_{u_k^{-1} (r_k)} - y_{u_k^{-1} (r_k+1)}$
at the right end of the product (note that $u_k^{-1} (r_k)\le e$ and $u_k^{-1} (r_k+1)>e$). After these manipulations are performed for all such $k$ in the decreasing order, we have decomposed the product~\eqref{diag1} into $2^e$ summands. Of these, all the summands for which the second option was chosen at least once belong to 
$\sum_{v\in \scr D_{2e}^{(e^2)} \setminus \{ w\}} \psi_v R_{\d,\d}$ by Theorem~\ref{thm:basis}\eqref{basis2} together with the argument used to prove
Corollary~\ref{cor:parabfree}.
The remaining summand belongs to $\psi_w ((y_1-y_{e+1})^e e(\bi) +K)$, as in all cases we have 
 $y_{u_k^{-1} (r_k)} - y_{u_k^{-1} (r_{k+1})}\in y_1 - y_{e+1} +K$. 
Thus, 
$\vphi_w e(\bi) \in \psi_w ((y_1-y_{e+1})^e +K)e_{\d,\d} +a$ 
for some $a= \sum_{v\in \scr D_{2e}^{(e^2)}\sm \{w\}} \psi_v a_v$, 
with $a_v\in R_{\d,\d}e(\bi)$ for each $v$.
Since $\vphi_w e(\bi) \in R'_{2\d}$ and $\psi_w ((y_1-y_{e+1})^e +K) e(\bi) \subset R'_{2\d}$, we have 
$a\in R'_{2\d}$.
By Theorem~\ref{thm:basis}\eqref{basis1}, we can write 
$a_v = \sum_{z\in \fr S_{(e,e)}} \psi_z e(\bi) x_{v,z}$ 
with $x_{v,z} \in F[y_1,\ldots,y_{2e}]$, so that 
$a= \sum_{v\in \scr D_{2e}^{(e^2)}\sm \{w\}} 
\sum_{z\in \fr S_{(e,e)}} \psi_v \psi_z e(\bi) x_{v,z}$.
We may assume that for any such $v$ and $z$ the reduced expression for the definition of $\psi_{vz}$ is chosen in such a way that $\psi_{vz} = \psi_v \psi_z$. Then, by Theorem~\ref{thm:basis}\eqref{basis1} and Proposition~\ref{prop:tildebasis}, $x_{v,z} \in R'_{\d,\d}$ for all $v$ and $z$, and hence
$a\in \sum_{v\in \scr D_{2e}^{(e^2)} \sm \{w\}}\psi_v R'_{\d,\d}$. 
\end{proof}

\subsection{Quotients of $R_{\d}$}\label{subsec:quot0}
Throughout this subsection, we view $\psi_r$ and $y_t$ for $1\le r<e$ and $1\le t\le e$ as elements of either $R_\d$ or $R^{\Lda_0}_{\d}$ (depending on the context) via the natural projections $R_n\thra R_{\d}$ and $R^{\Lda_0}_n\thra R^{\Lda_0}_{\d}$.
Let $V$ be the $2$-sided ideal of $R_{\d}$ generated by 
$\cl E_1=\{ e(\bi) \mid \bi \in I^{\d} \setminus I^{\vn,1} \}$,
so that $\ol{R}_{\d} = R_{\d}/V$ (cf.~\S\ref{subsec:outline}).
Let $\ol{\hphantom{a}\vphantom{B}}\colon R_{\d} \thra \ol{R}_{\d}$ be the natural projection. 

\begin{lem}\label{lem:psi1}
 We have $\ol{\psi}_{1}=0$.
\end{lem}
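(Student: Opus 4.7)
The plan is to show that $\psi_1 e_\d$ lies in the ideal $V$, so its image $\ol\psi_1$ in $\ol R_\d$ vanishes. Since $e_\d = \sum_{\bi \in I^\d} e(\bi)$, it suffices to show that $\psi_1 e(\bi) \in V$ for every $\bi \in I^\d$.

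There are two cases. If $\bi \notin I^{\vn,1}$, then $e(\bi) \in V$ by definition of $V$, so $\psi_1 e(\bi) \in V$. If $\bi \in I^{\vn,1}$, I would use relation~\eqref{rel:epsi} to rewrite
\[
\psi_1 e(\bi) \;=\; e(s_1 \bi)\,\psi_1,
\]
and then argue that $s_1 \bi \notin I^{\vn,1}$, so that $e(s_1 \bi) \in V$. For this, note that any tuple in $I^\d$ has all $e$ entries distinct (the residue content $\d = \a_0 + \cdots + \a_{e-1}$ has every simple root appearing with multiplicity one). By Lemma~\ref{lem:hkdesc}, membership in $I^{\vn,1}$ forces the first entry to be $0$; hence if $\bi \in I^{\vn,1}$, then $i_1 = 0$ and $i_2 \neq 0$, so $(s_1\bi)_1 = i_2 \neq 0$, which puts $s_1\bi$ outside $I^{\vn,1}$.

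Combining the two cases, $\psi_1 e_\d \in V$, hence $\ol\psi_1 = 0$. There is no real obstacle here: the only nontrivial input is the characterisation of $I^{\vn,1}$ via Lemma~\ref{lem:hkdesc}, together with the observation that $s_1$ swaps the first two entries but cannot restore the required value $0$ at position $1$ once $i_1=0$ is moved away.
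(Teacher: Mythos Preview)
Your proof is correct and follows essentially the same approach as the paper: both arguments use relation~\eqref{rel:epsi} together with the observation that every $\bi\in I^{\vn,1}$ satisfies $i_1=0$ and $i_2\ne 0$, so $s_1\bi\notin I^{\vn,1}$. The paper phrases this as a brief contradiction argument, whereas you spell out the case analysis explicitly, but the content is the same.
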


\begin{proof}
 If not, then by the relation~\eqref{rel:epsi} there exists $\bi \in I^{\vn,1}$ such that 
 $s_1  \bi\in I^{\vn,1}$. Since every $\bj \in I^{\vn,1}$ satisfies $j_1=0$ and $j_2\ne 0$, this is impossible. 
\end{proof}

\begin{lem}\label{lem:eta}
There is a unital graded algebra homomorphism $\eta\colon R^{\Lda_0}_{\d} \to \ol{R}_{\d}$  given by 
\[
 \eta(e(\bi)) = \ol{e(\bi)}, \quad  \eta(\psi_r) = \ol{\psi}_r, \quad \eta(y_t) = \ol{y}_t - \ol{y}_1
\]
for $\bi\in I^{\d}$, $1\le r<e$ and $1\le t\le e$.
\end{lem}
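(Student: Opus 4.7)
The plan is to define an auxiliary unital graded homomorphism $\eta'\colon R_\d \to \ol R_\d$ on generators by $e(\bi)\mapsto \ol{e(\bi)}$, $\psi_r\mapsto \ol\psi_r$ and $y_t\mapsto \ol y_t - \ol y_1$, and then show that $\eta'$ factors through the two-sided ideal of $R_\d$ cutting out $R^{\Lda_0}_\d$. To see that $\eta'$ is well defined one must check the relations \eqref{rel:eid}--\eqref{rel:psibr}. Two observations keep this essentially routine. First, every $\bi\in I^\d$ has pairwise distinct entries (as $\d=\sum_{i\in I}\a_i$), so $i_r\ne i_{r+1}$ always; this kills the ``$\d_{i_r,i_{r+1}}$''-corrections in \eqref{rel:ypsi1}--\eqref{rel:ypsi2} and renders all cases of \eqref{rel:psibr} other than the last one vacuous. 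Second, the polynomials $L_{ij}(y,y')$, as well as the $-2y_{r+1}+y_r+y_{r+2}$ appearing in \eqref{rel:psibr}, depend only on differences of the $y$'s, and are therefore invariant under a simultaneous shift $y_t\mapsto y_t+c$; consequently \eqref{rel:psisq} is preserved under $y_t\mapsto \ol y_t-\ol y_1$.

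The one delicate point is the interaction of $\ol y_1$ with $\ol\psi_r$. After applying $\eta'$, relations \eqref{rel:ypsi0} and (the $i_r\ne i_{r+1}$ versions of) \eqref{rel:ypsi1}--\eqref{rel:ypsi2} reduce to showing that $\ol\psi_r$ commutes with $\ol y_1$. For $r\ge 2$ this is a direct instance of \eqref{rel:ypsi0} in $R_\d$ (with $s=1$), while for $r=1$ the commutation would fail in $R_\d$ itself; here we invoke Lemma~\ref{lem:psi1}, which gives $\ol\psi_1=0$ in $\ol R_\d$, so both sides of the required identity vanish. This is the one place where the defining quotient of $\ol R_\d$ plays an essential role.

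Once $\eta'$ is in hand, I would verify that it annihilates the defining ideal of the cyclotomic quotient, generated by the elements $y_1^{\lan\Lda_0,\a_{i_1}^{\vee}\ran}e(\bi)$ for $\bi\in I^\d$. Using $\lan\Lda_0,\a_i^{\vee}\ran=\d_{i,0}$, this splits into two cases: for $i_1=0$ the generator $y_1e(\bi)$ maps to $(\ol y_1-\ol y_1)\ol{e(\bi)}=0$ by construction, and for $i_1\ne 0$ Lemma~\ref{lem:hkdesc} forces $\bi\notin I^{\vn,1}$, so already $\ol{e(\bi)}=0$ in $\ol R_\d$. The induced map $\eta\colon R^{\Lda_0}_\d\to\ol R_\d$ is graded because $\deg(\ol y_t-\ol y_1)=2=\deg(y_t)$ and the other generators are sent to elements of matching degree, and it is unital because the identity $\sum_{\bi\in I^\d,\,i_1=0}e(\bi)$ of $R^{\Lda_0}_\d$ maps to $\sum_{\bi\in I^{\vn,1}}\ol{e(\bi)}$, which equals $1_{\ol R_\d}$. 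Beyond isolating the role of Lemma~\ref{lem:psi1} indicated above, I do not anticipate any real obstacle.
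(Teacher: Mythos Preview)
Your proposal is correct and follows essentially the same route as the paper: both arguments reduce the verification of \eqref{rel:ypsi0}--\eqref{rel:ypsi2} to the commutation of $\ol\psi_r$ with $\ol y_1$, handle $r\ge 2$ via \eqref{rel:ypsi0} and $r=1$ via Lemma~\ref{lem:psi1}, and note that the remaining relations (including the cyclotomic one) are immediate from $i_r\ne i_{r+1}$ and $\eta(y_1)=0$. Your write-up is a bit more explicit about the cyclotomic relation and unitality than the paper's, but there is no substantive difference.
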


\begin{proof} 
To begin with, $\eta$ is a homomorphism from the free algebra on the standard generators of $R^{\Lda_0}_{\d}$ to $\ol{R}_{\d}$.
It is immediate that $\eta$ respects the defining relations of
$R^{\Lda_0}_{\d}$ (including the cyclotomic relation 
$y_1^{\d_1,0} e(\bi)=0$), with the possible exception of relations~\eqref{rel:ypsi0}--~\eqref{rel:ypsi2} 
(note that $\eta (y_{r+1}- y_r) = \ol{y}_{r+1}- \ol{y}_r$ for $1\le r<e$ and that $\eta (y_1)=0$). 

Recall that $i_r \ne i_{r+1}$ for all $\bi \in I^{\d}$ and $1\le r<d$.  We have $\eta(\psi_1)=0$ by Lemma~\ref{lem:eta}, so both sides of~\eqref{rel:ypsi0},~\eqref{rel:ypsi1},~\eqref{rel:ypsi2} are mapped by $\eta$ to $0$ for $r=1$. For $r>1$, we have 
\[
 \eta(y_{r+1} \psi_r) = (\ol{y}_{r+1} - \ol{y}_{1}) \ol\psi_{r} = (\ol y_{r+1}\ol \psi_{r}  - \ol \psi_{r} \ol y_{1})  
= \ol\psi_{r} (\ol y_{r} - \ol y_{1}) = \eta(\psi_r y_r),
\]
where the second equality is due to~\eqref{rel:ypsi0} and the third one is due to~\eqref{rel:ypsi1}. Relations~\eqref{rel:ypsi0} and~\eqref{rel:ypsi1} for $r>1$ are checked similarly. 
%
\end{proof}

\begin{lem}\label{lem:Rbardiso}
Let $z$ be an indeterminate, and view $F[z]$ as a graded algebra with $\deg(z)=2$. 
We have a graded algebra isomorphism $R^{\Lda_0}_{\d} \otimes F[z]\isoto \ol{R}_{\d}$ given by $a \ot z^m \mapsto \eta(a)\ol{y}_1^m$ 
for all $m\ge 0$ and $a\in R^{\Lda_0}_{\d}$. 
\end{lem}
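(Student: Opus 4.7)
The plan is to exhibit a two-sided inverse for the proposed map $\Psi\colon R^{\Lda_0}_{\d}\ot F[z]\to \ol R_{\d}$ sending $a\ot z^m$ to $\eta(a)\ol y_1^m$. First I would check that $\Psi$ is a well-defined graded algebra homomorphism. By Lemma~\ref{lem:eta}, $\eta$ is a graded algebra map, and $z\mapsto \ol y_1$ is a graded algebra map out of $F[z]$ (since $\deg z=2=\deg \ol y_1$), so by the universal property of the tensor product it suffices to verify that $\ol y_1$ commutes with $\eta(R^{\Lda_0}_{\d})$. Commutation with $\eta(e(\bi))=\ol{e(\bi)}$ and $\eta(y_t)=\ol y_t-\ol y_1$ is immediate. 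Commutation with $\eta(\psi_r)=\ol\psi_r$ holds for $r\ge 2$ by relation~\eqref{rel:ypsi0}, and for $r=1$ it is trivial since $\ol\psi_1=0$ by Lemma~\ref{lem:psi1}.

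Next I would construct the inverse by building a graded algebra homomorphism $\phi\colon R_{\d}\to R^{\Lda_0}_{\d}\ot F[z]$ defined on the KLR generators by
\[
  e(\bi)\mapsto e(\bi)\ot 1,\qquad \psi_r\mapsto \psi_r\ot 1,\qquad y_t\mapsto y_t\ot 1+1\ot z.
\]
To verify that $\phi$ respects the defining relations of $R_{\d}$, the nonobvious points are relations~\eqref{rel:ypsi1}--\eqref{rel:ypsi2} and the quadratic and braid relations~\eqref{rel:psisq}--\eqref{rel:psibr}: in the former, both sides acquire matching $\psi_r e(\bi)\ot z$ correction terms under the $y$-shift, while in the latter the polynomials $L_{ij}$ and the braid correction $-2y_{r+1}+y_r+y_{r+2}$ depend only on the differences $y_{r+1}-y_r$, so they are invariant under the simultaneous central shift $y_t\mapsto y_t+z$. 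To see that $\phi$ factors through $\ol R_{\d}$, I would apply Theorem~\ref{thm:BKdim} with $n=e$: the partitions of residue content $\d$ are precisely the $e$-hooks, so by Lemma~\ref{lem:hooktableaux} we have $e(\bi)=0$ in $R^{\Lda_0}_{\d}$ for every $\bi\in I^{\d}\sm I^{\vn,1}$, hence $\phi$ kills every generator of the ideal $V$ and induces a graded homomorphism $\ol\phi\colon\ol R_{\d}\to R^{\Lda_0}_{\d}\ot F[z]$.

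Finally, to verify that $\Psi$ and $\ol\phi$ are mutual inverses it is enough to compare them on generators. One finds $\Psi\circ\ol\phi(\ol{e(\bi)})=\ol{e(\bi)}$, $\Psi\circ\ol\phi(\ol\psi_r)=\ol\psi_r$ and $\Psi\circ\ol\phi(\ol y_t)=\eta(y_t)+\ol y_1=\ol y_t$; in the other direction, using that $y_1=0$ in $R^{\Lda_0}_{\d}$ (as noted in the proof of Proposition~\ref{prop:d1basis}), one obtains $\ol\phi\circ\Psi(1\ot z)=\ol\phi(\ol y_1)=y_1\ot 1+1\ot z=1\ot z$, while the remaining checks are equally routine. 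The only technical point of the whole argument is the invariance of the defining relations of $R_{\d}$ under the central $y$-shift $y_t\mapsto y_t+z$, which is precisely what allows $\phi$ (and hence $\ol\phi$) to be defined.
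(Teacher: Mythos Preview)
Your proof is correct and follows essentially the same route as the paper's: you verify that $\ol y_1$ centralises $\eta(R^{\Lda_0}_{\d})$ to get the forward map, construct the inverse via the central $y$-shift homomorphism $R_{\d}\to R^{\Lda_0}_{\d}\ot F[z]$ (which the paper attributes to~\cite{KangKashiwaraKim2013}), factor it through $\ol R_{\d}$ using Theorem~\ref{thm:BKdim}, and check mutual invertibility on generators. Your exposition is slightly more explicit about why the defining relations survive the shift and about the inverse checks, but the argument is the same; note that the appeal to Lemma~\ref{lem:hooktableaux} is unnecessary, since Theorem~\ref{thm:BKdim} already gives $e(\bi)=0$ in $R^{\Lda_0}_{\d}$ for $\bi\notin I^{\vn,1}$ directly from the definition of $I^{\vn,1}$.
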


\begin{proof}
 By Lemma~\ref{lem:psi1} and the defining relations of $\ol R_{\d}$, the element $\ol{y}_1$ centralises $\eta(R^{\Lda_0}_{\d})$. Hence, it follows from Lemma~\ref{lem:eta} that we have a graded unital algebra homomorphism 
 $\xi\colon R^{\Lda_0}_{\d} \ot F[z]\to \ol{R}_{\d}$ defined as in the statement of the lemma. 
 
 By an observation in~\cite[\S 1.3.2]{KangKashiwaraKim2013} (cf.~\eqref{KaKaKitwist}), there is a unital algebra homomorphism $R_{\d} \to R_{\d}^{\Lda_0} \ot F[z]$ given by 
 \begin{equation}\label{twist2}
 e(\bi) \mapsto e(\bi) \ot 1, \quad \psi_r \mapsto \psi_r \ot 1, \quad 
  y_t \mapsto y_t \ot 1 + 1 \ot z 
 \end{equation}
 for $\bi\in I^{\d}$, $1\le r<e$, $1\le t\le e$.
 By Theorem~\ref{thm:BKdim}, we have $e(\bi)=0$ in $R^{\Lda_0}_{\d}$ for $\bi\in I^{\d}\sm I^{\vn,1}$, so the map~\eqref{twist2} factors through $\ol{R}_{\d}$. It is easy to check that the resulting homomorphism $\ol{R}_{\d}\to R^{\Lda_0}_{\d}\ot F[z]$ is both a left and a right inverse to $\xi$.   
\end{proof}

Using  the fact that $\ol R_{\d}$ is nonnegatively graded and 
Proposition~\ref{prop:d1basis}, 
we immediately deduce the following result. 

\begin{cor}\label{cor:d1_01}
The map $\eta$ restricts to a vector space isomorphism 
from $(R^{\Lda_0}_{\d})_{\{0,1\}}$ onto
$(\ol R_{\d})_{\{0,1\}}$. Moreover,
$
(\ol{R}_{\d})_{\{0,1\}} = \sum_{\bi\in I^{\vn,1}} 
\sum_{v\in \fr S_e} F\ol\psi_v \ol{e(\bi)}.
$
\end{cor}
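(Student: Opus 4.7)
The plan is to derive both claims directly from Lemma~\ref{lem:Rbardiso} and Proposition~\ref{prop:d1basis}. First, the graded isomorphism $\xi \colon R^{\Lda_0}_\d \ot F[z] \isoto \ol{R}_\d$ of Lemma~\ref{lem:Rbardiso} (with $\deg z = 2$) satisfies $F[z]_{\{0\}} = F \cdot 1$ and $F[z]_{\{1\}} = 0$, so it restricts to a graded vector space isomorphism $(R^{\Lda_0}_\d)_{\{0,1\}} \ot 1 \isoto (\ol R_\d)_{\{0,1\}}$. Since $\xi(a \ot 1) = \eta(a)$, this immediately yields the first claim.

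For the second claim, Proposition~\ref{prop:d1basis} provides the homogeneous basis $\cl B_0 \cup \cl B_1 = \{\psi_{w_{\bj,\bi}} e(\bi) \mid \bi, \bj \in I^{\vn,1},\ |j_e - i_e| \le 1\}$ of $(R^{\Lda_0}_\d)_{\{0,1\}}$. Applying $\eta$, its image $\{\ol\psi_{w_{\bj,\bi}} \ol{e(\bi)} \mid \bi, \bj \in I^{\vn,1},\ |j_e - i_e| \le 1\}$ is a basis of $(\ol R_\d)_{\{0,1\}}$, and in particular $(\ol R_\d)_{\{0,1\}} \subseteq \sum_{\bi, v} F \ol\psi_v \ol{e(\bi)}$. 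For the reverse inclusion, I would fix $v \in \fr S_e$ and $\bi \in I^{\vn,1}$ and show that $\ol\psi_v \ol{e(\bi)}$ is either zero or one of the displayed basis vectors. Using $\ol\psi_v \ol{e(\bi)} = \ol{e(v\bi)}\, \ol\psi_v \ol{e(\bi)}$, the case $v\bi \notin I^{\vn,1}$ gives zero because $\ol{e(v\bi)} = 0$. Otherwise $v \bi \in I^{\vn,1}$; since $\fr S_e$ acts freely on $I^{\d}$, we have $v = w_{v\bi,\bi}$, so if $|(v\bi)_e - i_e| \le 1$ we land in the basis, while if $|(v\bi)_e - i_e| \ge 2$ the dimension computations in the proof of Proposition~\ref{prop:d1basis} give $e(v\bi) R^{\Lda_0}_\d e(\bi) = 0$, hence $\psi_v e(\bi) = 0$ in $R^{\Lda_0}_\d$, and applying $\eta$ yields $\ol\psi_v \ol{e(\bi)} = 0$.

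Since both structural ingredients are already in place, there is no substantial obstacle. The only mild subtlety is the case analysis for the reverse inclusion, which hinges on recognising that the elements of $\fr S_e$ mapping $\bi$ to another tuple in $I^{\vn,1}$ are exactly the $w_{\bj,\bi}$ with $\bj \in I^{\vn,1}$, and that the potentially degree-$2$ possibilities are forced to vanish by the vanishing statement in Proposition~\ref{prop:d1basis}.
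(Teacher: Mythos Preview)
Your proof is correct and follows essentially the same approach as the paper, which simply notes that the result is immediate from the nonnegative grading of $\ol{R}_{\d}$ and Proposition~\ref{prop:d1basis}. You make the reasoning explicit by invoking Lemma~\ref{lem:Rbardiso} directly for the first claim and by supplying the case analysis for the reverse inclusion in the second claim; the minor notational point is that $|j_e-i_e|\le 1$ should be read as $j_e\in\{i_e-1,i_e,i_e+1\}$ in $I=\mZ/e\mZ$, matching the phrasing in Proposition~\ref{prop:d1basis}.
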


Let $K$ be the left ideal of 
$\ol{R}_{\d}$ generated by the set $\{ \ol y_k-\ol y_t \mid 1\le k,t \le e\}$. 
For $\bi\in I^{\d}$, $1\le r<e$ and $1\le r,t\le e$, we have 
$(\ol y_k-\ol y_t) \ol \psi_r \ol{e(\bi)} = \ol{\psi}_r \ol{e(\bi)} 
(\ol y_{s_r(k)}-\ol y_{s_r(t)})$.
Hence, $K$ is a 2-sided ideal of $\ol R_{\d}$.

\begin{lem}\label{lem:ztensor}
Let $z$ be an indeterminate.
 The $F$-linear map $(\ol{R}_{\d})_{\{0,1\}} \otimes F[z] \to \ol{R}_{\d}/{K}$ given by $a\otimes z^m \mapsto a \bar y_1^m +K$ is an isomorphism of vector spaces. 
\end{lem}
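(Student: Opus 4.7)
The plan is to pull the statement back through the graded isomorphism $\xi\colon R^{\Lda_0}_\d \ot F[z] \isoto \ol R_\d$ of Lemma~\ref{lem:Rbardiso}. From the construction of $\xi$ (together with Lemma~\ref{lem:eta}) one reads off $\xi(y_t \ot 1) = \ol y_t - \ol y_1$ and $\xi(1 \ot z) = \ol y_1$, so that $\xi^{-1}(\ol y_k - \ol y_t) = (y_k - y_t) \ot 1$ for all $1 \le k, t \le e$. Because $F[z]$ is central, the two-sided ideal of $R^{\Lda_0}_\d \ot F[z]$ generated by these elements equals $J \ot F[z]$, where $J$ is the two-sided ideal of $R^{\Lda_0}_\d$ generated by $\{y_k - y_t \mid 1 \le k, t \le e\}$. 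Thus $\xi$ descends to a vector space isomorphism $(R^{\Lda_0}_\d/J) \ot F[z] \isoto \ol R_\d / K$.

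The crux is then to identify $J$ with the top graded piece of $R^{\Lda_0}_\d$. By the observation recorded in the proof of Proposition~\ref{prop:d1basis}, $y_1 = \cdots = y_{e-1} = 0$ in $R^{\Lda_0}_\d$, so $J$ equals the two-sided ideal generated by $y_e$ alone. Since Proposition~\ref{prop:d1basis} shows $R^{\Lda_0}_\d = (R^{\Lda_0}_\d)_{\{0,1,2\}}$ and $\deg(y_e) = 2$, any product of the form $a y_e b$ in $R^{\Lda_0}_\d$ automatically lies in $(R^{\Lda_0}_\d)_{\{2\}}$, giving $J \subseteq (R^{\Lda_0}_\d)_{\{2\}}$. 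The reverse inclusion is immediate, since every element of the basis $\cl B_2$ of Proposition~\ref{prop:d1basis} is explicitly of the shape $\psi_{w_{\bj,\bi}} y_e e(\bi) \in J$. Hence $J = (R^{\Lda_0}_\d)_{\{2\}}$, and the canonical projection restricts to a vector space isomorphism $(R^{\Lda_0}_\d)_{\{0,1\}} \isoto R^{\Lda_0}_\d/J$.

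Tensoring this isomorphism with $F[z]$ and composing with $\xi$ produces a vector space isomorphism $(R^{\Lda_0}_\d)_{\{0,1\}} \ot F[z] \isoto \ol R_\d/K$ sending $a' \ot z^m$ to $\eta(a') \ol y_1^m + K$. Using Corollary~\ref{cor:d1_01} to identify $(R^{\Lda_0}_\d)_{\{0,1\}}$ with $(\ol R_\d)_{\{0,1\}}$ via $\eta$, the composite becomes exactly the map of the lemma, finishing the proof. The only really substantive step in this plan is the identification $J = (R^{\Lda_0}_\d)_{\{2\}}$; the remaining manipulations are formal consequences of the tensor decomposition supplied by Lemma~\ref{lem:Rbardiso}.
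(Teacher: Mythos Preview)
Your proof is correct and follows essentially the same approach as the paper's own proof: pull back $K$ through the isomorphism of Lemma~\ref{lem:Rbardiso}, identify the preimage as $(R^{\Lda_0}_\d)_{\{2\}} \ot F[z]$, and then apply Corollary~\ref{cor:d1_01}. The paper compresses your identification $J = (R^{\Lda_0}_\d)_{\{2\}}$ into the single phrase ``Using Proposition~\ref{prop:d1basis}'', but the content is the same.
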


\begin{proof} 
Using Proposition~\ref{prop:d1basis}, we see that 
the image of $K$ in $R^{\Lda_0}_{\d} \ot F[z]$ under the inverse of the isomorphism of 
Lemma~\ref{lem:Rbardiso} is equal to $(R^{\Lda_0}_{\d})_{\{2\}} \ot F[z]$. 
Hence, the map $(R^{\Lda_0}_{\d})_{\{0,1\}} \ot F[z] \to \ol R_{\d}/K$ given by 
$a \ot z^m \mapsto \eta(a)\ol y_1^m +K$ is an isomorphism of vector spaces. 
Due to Corollary~\ref{cor:d1_01}, the result follows.
\end{proof}

\subsection{A homomorphism from $R^{\Lda_0}_{\d} \wr \fr S_d$ to $e_{\d^d}\ol{R}_{d\d}e_{\d^d}$}\label{subsec:quot}


Fix an arbitrary integer $d\ge 0$. 
Let $\cl V$ be the $2$-sided ideal of $R_{\d^d}$ generated by the set 
\[
 \{ e(\bi^{(1)} \ldots \bi^{(d)} ) \mid (\bi^{(1)}, \ldots, \bi^{(d)}) \in (I^{\d})^{\times d} \setminus (I^{\vn,1})^{\times d} \}. 
\]
Let $\cl U$ be the $2$-sided ideal of $R_{d\d}$ generated by 
$\{ e(\bi) \mid \bi \in I^{d\d} \setminus \cl E_d \}$ (cf.~\eqref{defE}),
so that $\ol{R}_{d\d} = R_{d\d}/\cl U$.  
Define $\ol{R}_{\d^d}$ to be the image of $R_{\d^d}$ under the natural projection
$R_{d\d} \thra \ol{R}_{d\d}$.

\begin{lem}\label{lem:UV}
We have:
\begin{enumerate}[(i)]
\item\label{UV1} $\cl Ue_{\d^d} = R_{d\d} \cl V$; 
\item\label{UV2} $\cl U\cap R_{\d^d} = \cl V$;
\item\label{UV3} $\ol{R}_{d\d} e_{\d^d}$ is a free right $\ol{R}_{\d^d}$-module with basis 
$\{ \ol{\psi}_w \mid w\in \scr D_{de}^{(e^d)}\}$.
\end{enumerate}
\end{lem}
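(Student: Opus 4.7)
\medskip

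The plan is to prove (ii) and (iii) by reducing them to (i), so the main work will be in (i). Throughout I will freely use Corollary~\ref{cor:parabfree} applied to the composition $\g = (\d,\ldots,\d)$, which gives the direct sum decomposition
\[
R_{d\d}e_{\d^d} = \bigoplus_{w\in\scr D_{de}^{(e^d)}} \psi_w R_{\d^d}.
\]

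For part (i), the inclusion $R_{d\d}\cl V \subseteq \cl U e_{\d^d}$ is immediate from the fact that any generator $e(\bi^{(1)}\ldots \bi^{(d)})$ of $\cl V$ has some $\bi^{(r)} \notin I^{\vn,1}$, so by Lemma~\ref{lem:sh1} the concatenation is not in $\cl E_d$ and hence lies in $\cl U$; moreover it is fixed by right multiplication by $e_{\d^d}$. For the reverse inclusion, it suffices to show that $e(\bi)\cdot b e_{\d^d} \in R_{d\d}\cl V$ whenever $\bi \notin \cl E_d$ and $b\in R_{d\d}$. Writing $be_{\d^d} = \sum_w \psi_w r_w$ with $r_w\in R_{\d^d}$ and commuting $e(\bi)$ past each $\psi_w$ via relation~\eqref{rel:epsi}, one obtains
\[
e(\bi) b e_{\d^d} \;=\; \sum_{w\in\scr D_{de}^{(e^d)}} \psi_w\, e(w^{-1}\bi)\, r_w .
\]
For each $w$, the idempotent $e(w^{-1}\bi)\in R_{\d^d}$ is either zero or corresponds to a decomposition $w^{-1}\bi = \bi^{(1)}\ldots \bi^{(d)}$ with each $\bi^{(r)}\in I^{\d}$. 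In the latter case, if all $\bi^{(r)}$ lay in $I^{\vn,1}$ then $\bi = w(\bi^{(1)}\ldots \bi^{(d)})$ would by definition lie in $\cl E_d$, contradicting our assumption. Hence $e(w^{-1}\bi)$ is a generator of $\cl V$, and the summand lies in $\psi_w \cl V \subseteq R_{d\d}\cl V$.

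Part (ii) then follows quickly: the inclusion $\cl V \subseteq \cl U \cap R_{\d^d}$ uses the same Lemma~\ref{lem:sh1} argument as above. For the converse, if $x \in \cl U \cap R_{\d^d}$ then $x = x e_{\d^d} \in \cl U e_{\d^d} = R_{d\d}\cl V$ by (i). Since $\cl V\subseteq R_{\d^d}$, Corollary~\ref{cor:parabfree} gives $R_{d\d}\cl V = \bigoplus_w \psi_w \cl V$, and the uniqueness of the decomposition of $x$ in $\bigoplus_w \psi_w R_{\d^d}$ forces $x$ to come entirely from the $w=1$ summand, so $x\in\cl V$.

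For part (iii), by part (i) we may write $\ol R_{d\d} e_{\d^d} = (R_{d\d}e_{\d^d})/(\cl U e_{\d^d}) = (R_{d\d}e_{\d^d})/(R_{d\d}\cl V)$. Both modules decompose compatibly with the basis $\{\psi_w\}$ from Corollary~\ref{cor:parabfree}, yielding
\[
\ol R_{d\d}e_{\d^d} \;=\; \bigoplus_{w\in\scr D_{de}^{(e^d)}} \psi_w R_{\d^d} \big/ \psi_w \cl V \;\cong\; \bigoplus_{w\in\scr D_{de}^{(e^d)}} \ol{\psi}_w\, \ol R_{\d^d},
\]
using $\ol R_{\d^d} = R_{\d^d}/\cl V$ (which is part (ii)). The expected main obstacle is the non-trivial direction of (i): one must carefully exploit the \emph{existence}-based definition of $\cl E_d$ to verify that in every expansion $e(\bi)be_{\d^d} = \sum_w \psi_w e(w^{-1}\bi)r_w$, each nonzero $e(w^{-1}\bi)$ really is a generator of $\cl V$; this is precisely where Lemma~\ref{lem:sh1} (combinatorial rigidity of $\cl E_d$) is used.
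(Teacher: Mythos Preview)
Your proof is correct and follows essentially the same approach as the paper's: both reduce (ii) and (iii) to (i) via Corollary~\ref{cor:parabfree}, and for the nontrivial inclusion in (i) both expand $R_{d\d}e_{\d^d}$ along the basis $\{\psi_w\}$ and use that any block decomposition of $w^{-1}\bi$ with all blocks in $I^{\vn,1}$ would force $\bi\in\cl E_d$. One small point of phrasing: $e(w^{-1}\bi)$ is never zero in $R_{de}$, so what you really mean is that $e(w^{-1}\bi)e_{\d^d}$ (which is what multiplies $r_w\in R_{\d^d}$) vanishes unless $w^{-1}\bi$ has the correct block structure --- and note that Lemma~\ref{lem:sh1} is actually invoked only for the \emph{easy} inclusion $\cl V\subset\cl U$, while the hard direction uses just the definition of $\cl E_d$.
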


\begin{proof}
\eqref{UV1} By Lemma~\ref{lem:sh1}, $\cl V \subset \cl U$. Hence, $R_{d\d} \cl V \subset \cl U e_{\d^d}$, as $\cl Ve_{\d^d} = \cl V$. 
By Corollary~\ref{cor:parabfree}, $R_{d\d} e_{\d^d} = \sum_{w\in \scr D_{de}^{(e^d)}} \psi_w R_{\d^d}$. Since 
$R_{\d^d}= \sum_{\bi^{(1)},\ldots,\bi^{(d)} \in I^{\vn, 1}} e(\bi^{(1)} \ldots \bi^{(d)}) R_{\d^d}  + \cl V$, we have 
$R_{d\d} e_{\d^d} = \sum_{\bi\in \cl E_d} e(\bi) R_{d\d} e_{\d^d} + R_{d\d} \cl V$. Hence, for any $\bi\in I^{d\d} \setminus \cl E_d$, we have 
$e(\bi) R_{d\d} e_{\d^d} \subset R_{d\d} \cl V$. Since $\cl U = \sum_{\bi\in I^{d\d}\setminus \cl E_d} R_{d\d} e(\bi) R_{d\d}$, the inclusion 
$\cl U e_{\d^d} \subset R_{d\d} \cl V$ follows.

\eqref{UV2} and~\eqref{UV3} follow from (i) and Corollary~\ref{cor:parabfree}.  
\end{proof}

Due to Lemma~\ref{lem:UV}\eqref{UV2}, 
$\ol{R}_{\d^d}$  is naturally identified with $R_{\d^d}/\cl V$. 
Hence, the isomorphism $\iota_{\d^d} \colon R_{\d}^{\ot d} \isoto R_{\d^d}$
induces an algebra isomorphism $\iota\colon \ol{R}_{\d}^{\ot d}\isoto \ol{R}_{\d^d}$.

Throughout the rest of the section, 
symbols of the form $\psi_r$, $\psi_w$, $\vphi_r$, $\vphi_w$, $e(\bi)$, $e_{\a}$ and $y_t$ that would previously be interpreted as elements $R_{de}$ represent their images in $\ol{R}_{d\d}$. 
It follows from Lemma~\ref{lem:sh1} that, if $\bi^{(1)},\ldots, \bi^{(d)} \in I^{e}$, then we have $e(\bi^{(1)} \ldots \bi^{(d)}) = 0$ in $\ol{R}_{d\d}$ unless 
$\bi^{(1)},\ldots, \bi^{(d)} \in I^{\varnothing,1}$. Hence, 
\begin{equation}\label{eq:edd}
 e_{\d^d} = \sum_{\bi^{(1)}, \ldots, \bi^{(d)} \in I^{\varnothing,1}} e(\bi^{(1)} \ldots \bi^{(d)} ).
\end{equation}

For $1\le r<d$, let
\begin{equation}\label{w_r}
w_r = ( (r-1)e+1, re+1) ((r-1)e+2, re+2)) \cdots (re, (r+1)e) \in \fr S_{de}. 
\end{equation}
This element is fully commutative (cf.~the proof of Lemma~\ref{lem:diagram}), so 
$\sigma_r := \psi_{w_r} \in \ol R_{d\d}$ 
is well defined in the sense that it does not depend on the choice of a reduced expression for $w_r$. 
Let $B_d$ be the subgroup of $\fr S_{de}$ generated by the elements $w_r$, $1\le r<d$. Then we have a group isomorphism $o\colon \fr S_d\to B_d$, $s_r\mapsto w_r$, and 
$B_d \subset \ls{(e^d)}{\scr D}_{ed}^{(e^d)}$ (cf.~\cite[\S4.1]{KleshchevMuth2013}).  

For each $u\in \fr S_d$, choose a reduced expression $u=s_{r_1} \cdots s_{r_m}$ and define $\sigma_u = \sigma_{r_1} \cdots \sigma_{r_m}$.
We may assume that $\sigma_u = \psi_{o(u)}$, as the decomposition 
$o(u) = w_{r_1} \cdots w_{r_m}$ can be refined to a reduced expression for $o(u)$ in $\fr S_{de}$. 
Note that $\sigma_u e_{\d^d} = e_{\d^d} \sigma_u$ for all $u\in \fr S_d$. 
We have $\{1,\ldots,de\}=\bigsqcup_{r=1}^d X_r$, 
where $X_r: = \{ (r-1)e+1,\ldots,re\}$.

\begin{lem}\label{lem:Bd}
\begin{enumerate}[(i)]
\item\label{Bd1}
We have $e_{\d^d} \ol{R}_{d\d} e_{\d^d} = \sum_{u\in \fr S_d} \sigma_u \ol{R}_{\d^d}$. 
\item\label{Bd2}
For all $u\in \fr S_d$, we have $\sigma_u e_{\d^d} \in (\ol{R}_{d\d})_{\{0\}}$.
\item\label{Bd3}
The algebra 
$e_{\d^d} \ol{R}_{d\d} e_{\d^d}$ is nonnegatively graded.
\end{enumerate}
\end{lem}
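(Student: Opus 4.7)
The plan is to prove (ii) first by a direct degree calculation, then establish (i) using the free module decomposition of Lemma~\ref{lem:UV}(iii), and finally deduce (iii) from the previous two parts. The main obstacle will be to show that, when $e_{\d^d}$ acts by left multiplication on the free module $\bigoplus_{w\in \scr D^{(e^d)}_{de}}\ol\psi_w \ol R_{\d^d}$, the factor indexed by $w$ is killed precisely when $w\notin B_d$.

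For (ii), I will use that $\sigma_r$ commutes with $e_{\d^d}$, since $w_r$ simply swaps the blocks $X_r$ and $X_{r+1}$ and hence preserves the condition that all blocks lie in $I^{\vn,1}$. Choosing a reduced expression $u=s_{r_1}\cdots s_{r_m}$ in $\fr S_d$, one can therefore write $\sigma_u e_{\d^d}=(\sigma_{r_1}e_{\d^d})\cdots(\sigma_{r_m}e_{\d^d})$, so it suffices to show $\deg(\sigma_r e_{\d^d})=0$. A reduced expression for $w_r$ in $\fr S_{de}$ uses exactly $e^2$ simple transpositions, one for each pair of positions in $X_r\times X_{r+1}$; for $\bi^{(r)},\bi^{(r+1)}\in I^{\vn,1}$, both of which are permutations of $(0,\ldots,e-1)$, the pairs of residues being swapped exhaust $I\times I$. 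Since $\deg(\psi_k e(\bi))=-\mtt c_{i_k,i_{k+1}}$, the total degree is $-\sum_{i,j\in I}\mtt c_{ij}=0$, as the rows of the affine Cartan matrix of type $A_{e-1}^{(1)}$ sum to zero.

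For (i), the inclusion $\sigma_u\ol R_{\d^d}\subseteq e_{\d^d}\ol R_{d\d}e_{\d^d}$ is immediate from $\sigma_u e_{\d^d}=e_{\d^d}\sigma_u$. For the reverse inclusion, I will invoke Lemma~\ref{lem:UV}(iii) to write any $x\in \ol R_{d\d}e_{\d^d}$ uniquely as $\sum_{w\in\scr D^{(e^d)}_{de}}\ol\psi_w c_w$ with $c_w\in \ol R_{\d^d}$. Using the relation $e(\bj)\psi_w=\psi_w e(w^{-1}\bj)$ together with~\eqref{eq:edd}, a direct computation yields $e_{\d^d}\ol\psi_w c_w=\ol\psi_w f_w c_w$ for $c_w\in \ol R_{\d^d}$, where $f_w=\sum_{\bk}\ol{e(\bk)}\in \ol R_{\d^d}$ is the idempotent summed over those $\bk$ for which both $\bk$ and $w\bk$ have all natural blocks in $I^{\vn,1}$. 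The condition $e_{\d^d}x=x$ then forces $f_w c_w=c_w$ for every $w$, by the uniqueness in Lemma~\ref{lem:UV}(iii).

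It remains to compute $f_w$. For $w=o(u)\in B_d$, the permutation $w$ sends each block $X_s$ bijectively onto $X_{u(s)}$, so $\bk$ having blocks in $I^{\vn,1}$ implies the same for $w\bk$, giving $f_w=e_{\d^d}$ and no constraint on $c_w$. For $w\in\scr D^{(e^d)}_{de}\setminus B_d$, I will show $f_w=0$. By Lemma~\ref{lem:hkdesc}, every $\bk$ with blocks in $I^{\vn,1}$ satisfies $\bk_{(s-1)e+1}=0$ for all $s$, so for $w\bk$ also to have blocks in $I^{\vn,1}$ one needs $w^{-1}((r-1)e+1)\in\{(s-1)e+1:s=1,\ldots,d\}$ for every $r$, i.e., $w$ must permute the $d$ starting positions via some $u\in\fr S_d$. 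A short inductive argument, processing $s$ in decreasing order of $u(s)$ and using that $w|_{X_s}$ is increasing together with the fact that the $e-1$ non-starting positions available above $(u(s)-1)e+1$ (after removing previously determined images) form exactly $X_{u(s)}\setminus\{(u(s)-1)e+1\}$, shows that any such $w$ must equal $o(u)\in B_d$. This contradicts $w\notin B_d$, so $f_w=0$ and $c_w=0$, yielding $e_{\d^d}\ol R_{d\d}e_{\d^d}=\sum_u\sigma_u\ol R_{\d^d}$. Part (iii) is then immediate: $\ol R_\d$ is nonnegatively graded as a quotient of the nonnegatively graded $R_\d$, so $\ol R_{\d^d}\cong\ol R_\d^{\otimes d}$ (via $\iota$) is also nonnegatively graded, and (i) and (ii) together express $e_{\d^d}\ol R_{d\d}e_{\d^d}$ as a sum of products of the degree-zero elements $\sigma_u e_{\d^d}$ with nonnegatively graded elements of $\ol R_{\d^d}$.
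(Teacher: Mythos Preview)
Your proof is correct and follows essentially the same strategy as the paper: prove (ii) by reducing to $\sigma_r$ and computing the degree, prove (i) by showing that $e_{\d^d}\ol\psi_w\ol R_{\d^d}=0$ for every $w\in\scr D^{(e^d)}_{de}\setminus B_d$, and deduce (iii) from (i), (ii) and the nonnegative grading of $\ol R_{\d^d}$. The only notable difference is in the combinatorial core of (i): the paper argues by contradiction, choosing a block $X_k$ whose image under $w$ meets a maximal block $X_m$ and showing this collides with $w(X_r)=X_m$ for the unique $r$ with $w((r-1)e+1)=(m-1)e+1$; you instead argue directly, observing that $w$ must permute the block-starting positions and then peeling off blocks in decreasing order of their images to force $w=o(u)$. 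Both are short and rest on the same two facts (zeros occur exactly at starting positions in $I^{\vn,1}$, and $w|_{X_s}$ is increasing).
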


\begin{proof}
For~\eqref{Bd2}, it is enough to consider the case when $u=s_r$ for some $r\in \{1,\ldots,d-1\}$ because each $\s_r$ centralises $e_{\d^d}$. The fact that $\s_r e_{\d^d}$ is homogeneous of degree $0$ is easily verified using~\eqref{eq:edd}. Part~\eqref{Bd3} follows from~\eqref{Bd1} and~\eqref{Bd2} 
 because $\ol R_{\d^d}=\iota(\ol R_{\d}^{\ot d})$ is nonnegatively graded. 

Thus, it remains only to prove~\eqref{Bd1}.
 By Theorem~\ref{thm:basis}\eqref{basis1}, we have 
 $\ol{R}_{d\d} e_{\d^d} = \sum_{w\in \scr D_{de}^{(e^d)}} \psi_w \ol{R}_{\d^d}$. If $w\in B_d$, say, $w=o(u)$, then 
$e_{\d^d} \psi_w \ol{R}_{\d^d}  = e_{\d^d} \sigma_u \ol{R}_{\d^d}=
\s_u \ol R_{\d^d}$. So it will suffice to prove the following claim: if $w\in \scr D_{de}^{(e^d)} \setminus B_d$, then 
$e_{\d^d} \psi_w \ol{R}_{\d^d} = 0$. 

If not, then by~\eqref{eq:edd} 
there exist $\bi^{(1)},\ldots,\bi^{(d)} \in I^{\vn,1}$ such that $w (\bi^{(1)} \ldots \bi^{(d)}) = \bj^{(1)} \ldots \bj^{(d)}$ for some 
$\bj^{(1)},\ldots, \bj^{(d)} \in I^{\vn,1}$.  Since 
$w\in  \scr D^{(e^d)}_{de} \setminus B_d$, there exist $k\in \{1,\ldots,d\}$ and $t\in \{2,\ldots,e\}$ such that $w((k-1)e+1))\in X_{l}$ and $w((k-1)e+t)\in X_m$ for some $m>l$. 
Let such $k,t$ be chosen so that $m$ is greatest possible. 
Let $r\in \{1,\ldots,d\}$ be such that $w ((r-1)e+1) = (m-1)e+1$ 
(note that $w^{-1} ((m-1)e+1) \equiv 1 \pmod e$ because the only zeros in 
$\bi^{(1)} \ldots \bi^{(d)}$ occur in positions with numbers congruent to $1$ modulo $e$). Note that $r\ne k$, as $(k-1)e+1$ and $(r-1)e+1$ have distinct images under $w$. 
By maximality of $m$, we have $w(X_r) \subset X_m$, whence $w(X_r)=X_m$. This contradicts the fact that $w((k-1)e+t) \in X_m$. 
\end{proof}

Recall the ideal $K$ of $\ol{R}_{\d}$ defined in~\S\ref{subsec:quot0} and let 
\[
 \cl K = \sum_{r=1}^d \iota (\ol{R}_{\d}^{\ot r-1} \ot K \ot \ol{R}_{\d}^{\ot d-r}) \subset \ol{R}_{\d^d}.
\]
In other words, $\cl K$ is the 2-sided (or, equivalently, left) 
ideal of $\ol{R}_{\d^d}$ generated by the elements of the form $(y_r-y_t)e_{\d^d}$ with $r,t\in X_k$ for $1\le k\le d$. 
We have a natural isomorphism
\begin{equation}\label{eq:isotensor}
 (\ol{R}_{\d}/K)^{\ot d} \isoto \ol{R}_{\d^d}/\cl K,\quad (x_1+K) \otimes \cdots \otimes (x_d+K) \mapsto \iota(x_1\otimes \cdots \otimes x_d) + \cl K 
\end{equation}
for $x_1,\ldots,x_d \in \ol{R}_{\delta}$.

Let $\cl J = e_{\d^d} \ol{R}_{d\d} \cl K$, so that $\cl J$ is a left ideal of $e_{\d^d} \ol{R}_{d\d}e_{\d^d}$. 
  The map $\ol{R}_{\d^d} \to e_{\d^d}\ol{R}_{d\d}e_{\d^d}/\cl J$ 
  given by $a\mapsto a+\cl J$ has kernel $\cl K$ and image
$(\ol{R}_{\d^d} +\cl J)/\cl J$. 
For any $a+\cl J\in e_{\d^d} \ol{R}_{d\d} e_{\d^d}/\cl J$ and $b\in \ol{R}_{\d^d}$, the product $(a+\cl J)(b+\cl J) = ab+\cl J$ is well-defined. 
Thus, $e_{\d^d} \ol{R}_{d\d}e_{\d^d}/\cl J$ is naturally an 
$(e_{\d^d} \ol{R}_{d\d} e_{\d^d}, (\ol{R}_{\d^d}+\cl J)/\cl J)$-bimodule.
It follows from Lemmas~\ref{lem:UV}\eqref{UV3} and~\ref{lem:Bd}\eqref{Bd1} that 
$e_{\d^d}\ol{R}_{d\d}e_{\d^d}/\cl J$ is free as a right 
$(\ol{R}_{\d^d}+\cl J)/\cl J$-module with basis 
$\{ \s_u + \cl J \mid u \in \fr S_d \}$. 
In particular, 
$(\ol{R}_{\d^d}+\cl J)/\cl J$ is a free 
right $(\ol{R}_{\d^d}+\cl J)/\cl J$-submodule of 
$e_{\d^d}\ol{R}_{d\d}e_{\d^d}/\cl J$. The subspace 
$(\ol{R}_{\d^d}+\cl J)/\cl J$ also has an 
$F$-algebra structure arising from the isomorphism 
$\ol{R}_{\d^d}/\cl K \isoto (\ol{R}_{\d^d}+\cl J)/\cl J$, 
$b+\cl K \mapsto b+\cl J$, and the right $(\ol{R}_{\d^d}+\cl J)/\cl J$-module structure on $(\ol{R}_{\d^d}+\cl J/\cl J)$ induced by this algebra structure coincides with the aforementioned module structure. These facts are used repeatedly in the sequel.

Define 
\begin{align*}
S&=((\ol{R}_{\d})_{\{0,1\}})^{\ot d} \subset \ol{R}_{\d}^{\ot d} \qquad\qquad \text{and} \\
T&=\sum_{u\in \fr S_d} \s_u  \iota(S) \subset e_{\d^d} \ol{R}_{d\d} e_{\d^d}.
\end{align*}
Note that $T$ is a graded vector subspace of 
$\ol{R}_{d\d}$. 
 For all $1\le r\le d$, set 
\[
z_r = y_{(r-1)e+1}e_{\d^d}+\cl J \in (\ol{R}_{\d^d}+\cl J)/\cl J
\]
These elements commute and 
generate the subalgebra $F[z_1,\ldots,z_d]$ of $(\ol{R}_{\d^d}+\cl J)/\cl J$. 

\begin{lem}\label{lem:fintens}
Let $\tilde{z}_1,\ldots,\tilde{z}_d$ be algebraically independent indeterminates. 
Consider the $F$-algebra $F[\tilde{z}_1,\ldots,\tilde{z}_d]$, graded by the rule $\deg (\tilde{z}_r)=2$ 
for $1\le r\le d$.
\begin{enumerate}[(i)]
 \item\label{ft1} The $F$-linear map $F\fr S_d \otimes S \otimes F[\tilde{z}_1,\ldots, \tilde{z}_d] \to e_{\d^d}\ol{R}_{d\d}e_{\d^d}/\cl J$ given by 
$u\otimes a \otimes g \mapsto \s_u \iota(a) g(z_1,\ldots,z_d)$ (for $u\in \fr S_d$) is an isomorphism of  vector spaces.
 \item\label{ft2} The $F$-linear map $T\otimes F[\tilde{z}_1,\ldots,\tilde{z}_d] \to e_{\d^d} \ol{R}_{d\d}e_{\d^d}/\cl J$ given by 
$t\otimes g \mapsto tg(z_1,\ldots,z_d)$ is a graded vector space isomorphism. 
\end{enumerate}
\end{lem}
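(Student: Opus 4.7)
The plan is to derive both parts from a short chain of isomorphisms already assembled earlier in the paper, with essentially no new computation required.

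For part (i), I will compose four maps in sequence:
\begin{enumerate}[(a)]
\item the isomorphism $F\fr S_d \otimes (\ol R_{\d^d}+\cl J)/\cl J \isoto e_{\d^d}\ol R_{d\d}e_{\d^d}/\cl J$, $u \otimes b \mapsto \s_u b$, furnished by the free right-module structure with basis $\{\s_u+\cl J : u \in \fr S_d\}$ noted in the paragraph preceding the lemma;
\item the algebra isomorphism $\ol R_{\d^d}/\cl K \isoto (\ol R_{\d^d}+\cl J)/\cl J$, $b+\cl K \mapsto b+\cl J$, also stated there;
\item the tensor-product isomorphism $(\ol R_\d/K)^{\otimes d} \isoto \ol R_{\d^d}/\cl K$ from \eqref{eq:isotensor};
\item the $d$-fold tensor power of Lemma \ref{lem:ztensor}, which, after the canonical reshuffle of factors that collects the $F[z]$-pieces, gives $((\ol R_\d)_{\{0,1\}})^{\otimes d} \otimes F[\tilde{z}_1,\ldots,\tilde{z}_d] \isoto (\ol R_\d/K)^{\otimes d}$.
\end{enumerate}
Composing (a)--(d) yields an $F$-linear isomorphism from $F\fr S_d \otimes S \otimes F[\tilde{z}_1,\ldots,\tilde{z}_d]$ onto $e_{\d^d}\ol R_{d\d}e_{\d^d}/\cl J$. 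Chasing a pure tensor $u \otimes a \otimes g$ through the chain shows that it lands on $\s_u\iota(a)g(z_1,\ldots,z_d)+\cl J$, using that $z_r$ is by definition the class of $y_{(r-1)e+1}e_{\d^d}$ and that Lemma \ref{lem:ztensor} sends $1\otimes z$ to $\ol y_1+K$. This matches the map asserted in (i).

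For part (ii), I first restrict (i) to the subspace with $g=1$ to conclude that the map $F\fr S_d \otimes S \to e_{\d^d}\ol R_{d\d}e_{\d^d}/\cl J$, $u \otimes a \mapsto \s_u\iota(a)+\cl J$, is injective. Since it factors as $F\fr S_d \otimes S \thra T \to e_{\d^d}\ol R_{d\d}e_{\d^d}/\cl J$, where the first map $u \otimes a \mapsto \s_u\iota(a)$ is surjective by the very definition of $T$, both factors must in fact be isomorphisms. Composing the resulting $F\fr S_d \otimes S \isoto T$ with (i) produces the vector space isomorphism asserted in (ii). For the grading, $\s_u e_{\d^d}$ is homogeneous of degree $0$ by Lemma \ref{lem:Bd}\eqref{Bd2} and each $z_r$ has degree $2$, matching the prescribed $\deg(\tilde{z}_r)=2$, so the iso is graded.

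The main (and only mild) obstacle is the bookkeeping in step (d): the $d$-fold tensor power of Lemma \ref{lem:ztensor} naively yields $((\ol R_\d)_{\{0,1\}} \otimes F[z])^{\otimes d}$, and one must reorder factors via the canonical symmetric-monoidal shuffle to extract $F[z]^{\otimes d} \cong F[\tilde{z}_1,\ldots,\tilde{z}_d]$. Nothing substantive is hidden here; the lemma really is a clean repackaging of Lemma \ref{lem:ztensor} together with the module-theoretic setup that immediately precedes its statement.
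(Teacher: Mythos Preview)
Your proof is correct and follows essentially the same route as the paper's, which also combines Lemma~\ref{lem:ztensor}, the isomorphism~\eqref{eq:isotensor}, and the freeness of $e_{\d^d}\ol{R}_{d\d}e_{\d^d}/\cl J$ over $(\ol{R}_{\d^d}+\cl J)/\cl J$ with basis $\{\s_u+\cl J\}$. You have simply spelled out in steps (a)--(d) and in the factorisation argument for part~(ii) what the paper compresses into two sentences.
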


\begin{proof}
Part~\eqref{ft1} is established by combining Lemma~\ref{lem:ztensor}, the isomorphism~\eqref{eq:isotensor} and the fact that 
$\{ \s_u+\cl J \mid u\in \fr S_d\}$ 
is a free generating set of $e_{\d^d}\ol{R}_{d\d}e_{\d^d}/\cl J$ as a right $(\ol{R}_{\d^d}+\cl J)/\cl J$-module. The map in part~\eqref{ft2} is easily seen to be a graded homomorphism. The fact that it is bijective clearly follows from~\eqref{ft1}. 
\end{proof}

\begin{cor}\label{cor:01inT}
 We have $(e_{\d^d} \ol{R}_{d\d} e_{\d^d})_{\{0,1\}} \subset T$.
\end{cor}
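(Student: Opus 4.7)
The plan is to combine the graded vector space isomorphism of Lemma~\ref{lem:fintens}(ii) with a direct degree count on the ideal $\cl J$. Concretely, given a homogeneous $x\in (e_{\d^d}\ol R_{d\d} e_{\d^d})_{\{0,1\}}$, I want to exhibit an explicit $t\in T$ equal to $x$ by first producing a lift modulo $\cl J$ and then showing the error vanishes for degree reasons.

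First, I would pass $x$ to its class in the quotient $e_{\d^d}\ol R_{d\d} e_{\d^d}/\cl J$ and apply the inverse of the graded isomorphism in Lemma~\ref{lem:fintens}(ii). Since that isomorphism preserves the grading, the polynomial factor $F[\tilde z_1,\ldots,\tilde z_d]$ is concentrated in even nonnegative degrees (with $\deg(\tilde z_r)=2$), and $T$ is nonnegatively graded as a subspace of $e_{\d^d}\ol R_{d\d} e_{\d^d}$ (itself nonnegatively graded by Lemma~\ref{lem:Bd}(iii)), the degree-$\{0,1\}$ part of the source is precisely $T_{\{0,1\}}\otimes 1$. Hence there exists $t\in T_{\{0,1\}}$ with $x+\cl J=t+\cl J$, i.e.~$x-t\in \cl J$.

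Second, I would check that $\cl J$ is concentrated in degrees $\geq 2$. The two-sided ideal $K\subset\ol R_{\d}$ is generated by the homogeneous degree-$2$ elements $\ol y_k-\ol y_t$, so the nonnegative grading on $\ol R_{\d}$ (Lemma~\ref{lem:Rbardiso}) forces $K\subset(\ol R_{\d})_{\geq 2}$. Taking tensor products and applying $\iota$ yields $\cl K\subset(\ol R_{\d^d})_{\geq 2}$. Since $\ol R_{d\d}$ is itself nonnegatively graded (being a quotient of the nonnegatively graded algebra $R_{d\d}$), the product $\cl J=e_{\d^d}\ol R_{d\d}\cl K$ lies in $(e_{\d^d}\ol R_{d\d} e_{\d^d})_{\geq 2}$.

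Combining the two observations, $x-t\in \cl J\cap(e_{\d^d}\ol R_{d\d} e_{\d^d})_{\{0,1\}}=0$, whence $x=t\in T$. I do not foresee a genuine obstacle: once Lemma~\ref{lem:fintens}(ii) is in hand, the argument reduces to a bookkeeping step, and the only point demanding care is the degree bound on $\cl K$, which is immediate from the description of its generators as homogeneous elements of degree $2$ inside the nonnegatively graded algebra $\ol R_{\d^d}$.
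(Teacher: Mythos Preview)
Your approach is essentially the paper's: the terse proof there also cites only Lemma~\ref{lem:Bd}\eqref{Bd3} and Lemma~\ref{lem:fintens}\eqref{ft2}. However, your second step contains a genuine slip. You assert that $R_{d\d}$ is nonnegatively graded and deduce the same for $\ol R_{d\d}$. This is false for $d\ge 2$: a tuple $\bi\in I^{d\d}$ can certainly have $i_r=i_{r+1}$ (e.g.\ $\bi=(0,0,1,1,\ldots)$ for $e=2$, $d=2$), and then $\psi_r e(\bi)$ has degree $-2$. The paper only claims nonnegativity for $R_{\d}$, not for $R_{d\d}$.

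The fix is immediate and is exactly why the paper invokes Lemma~\ref{lem:Bd}\eqref{Bd3} rather than anything about $\ol R_{d\d}$ as a whole. Since $\cl K\subset \ol R_{\d^d}$ and $e_{\d^d}$ is the identity of $\ol R_{\d^d}$, one has $\cl K=e_{\d^d}\cl K$, so
\[
\cl J=e_{\d^d}\ol R_{d\d}\cl K=e_{\d^d}\ol R_{d\d}e_{\d^d}\,\cl K.
\]
Now $e_{\d^d}\ol R_{d\d}e_{\d^d}$ is nonnegatively graded (Lemma~\ref{lem:Bd}\eqref{Bd3}) and $\cl K\subset(\ol R_{\d^d})_{\ge 2}$ as you argued, giving $\cl J\subset (e_{\d^d}\ol R_{d\d}e_{\d^d})_{\ge 2}$. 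With this correction your argument is complete and matches the paper's.
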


\begin{proof}
This follows from Lemmas~\ref{lem:Bd}\eqref{Bd3} and~\ref{lem:fintens}\eqref{ft2}. 
\end{proof}

If $\mu=(\mu_1,\ldots,\mu_l)$ 
is a composition of $d\d$, let 
$\bar\iota_\mu\colon R_{\mu_1} \otimes \cdots \otimes R_{\mu_l}\to \ol{R}_{d\d}$ be the composition 
of $\iota_{\mu}$ with the natural projection $R_{d\d} \thra \ol{R}_{d\d}$. 
Fix an integer $r$ such that $1\le r<d$. 
Let $S^{(r)} = \ol e_{\d}^{\ot r-1} \ot (\ol{R}_{\d})_{\{0,1\}} \ot (\ol{R}_{\d})_{\{0,1\}} \ot \ol e_{\d}^{\ot d-r-1} \subset S$.

\begin{lem}\label{lem:wr_in}
 We have $\vphi_{w_r} e_{\d^d} + \cl J \in \sigma_r (z_r-z_{r+1})^e + \iota(S^{(r)}) F[z_r-z_{r+1}]$. 
\end{lem}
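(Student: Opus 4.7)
The plan is to deduce the lemma from Lemma~\ref{lem:diagram} via the embedding homomorphism $\iota_{(e^{r-1},2e,e^{d-r-1})}\colon R_e^{\ot r-1}\ot R_{2e}\ot R_e^{\ot d-r-1}\to R_{de}$, and then to clean up the error terms using Lemma~\ref{lem:Bd} and Corollary~\ref{cor:tildeg}.

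First, I would apply $\iota_{(e^{r-1},2e,e^{d-r-1})}$ to the inclusion in Lemma~\ref{lem:diagram} (placed in the middle tensor factor) and right-multiply by $e_{\d^d}$. Since this map sends $\vphi_w\mapsto \vphi_{w_r}$ and $\psi_w\mapsto \sigma_r$, it yields
\[
\vphi_{w_r} e_{\d^d} \in \sigma_r\bigl((y_{(r-1)e+1}-y_{re+1})^e + K^{(r)}\bigr)e_{\d^d} + \sum_{v\in V_r\setminus\{w_r\}} \psi_v R'^{(r)} e_{\d^d}
\]
in $R_{d\d}$, where $K^{(r)}$ is the ideal of $F[y_1,\ldots,y_{de}]$ generated by $y_k-y_t$ for $k,t$ in a common block $X_r$ or $X_{r+1}$, the set $V_r$ is the lift of $\scr D^{(e,e)}_{2e}$ to $\fr S_{de}$ acting as the identity on blocks $X_j$ with $j\ne r,r+1$, and $R'^{(r)}$ is the image of $R'_{\d,\d}$ at positions $X_r\cup X_{r+1}$.

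Second, I would pass to $\ol R_{d\d}$ and left-multiply by $e_{\d^d}$ (permissible because $w_r$ permutes the blocks, so $\vphi_{w_r}$ commutes with $e_{\d^d}$). Elements of $V_r$ are minimal $(e^d)$-coset representatives in $\fr S_{de}$, and the only members of $V_r\cap B_d$ are $1$ and $w_r$; hence for $v\in V_r\setminus\{1,w_r\}$ the argument from the proof of Lemma~\ref{lem:Bd}\eqref{Bd1} gives $e_{\d^d}\psi_v\ol R_{\d^d}=0$, killing all those terms. Reducing modulo $\cl J$, the residual $K^{(r)}$-contribution vanishes, since $\sigma_r K^{(r)} e_{\d^d}\subset \sigma_r e_{\d^d}\cl K\subset \cl J$, and $(y_{(r-1)e+1}-y_{re+1})^e e_{\d^d}$ becomes $(z_r-z_{r+1})^e$ by the definitions of $z_r,z_{r+1}$, producing the desired main term $\sigma_r(z_r-z_{r+1})^e$.

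The main obstacle is the surviving $v=1$ term $e_{\d^d} R'^{(r)} e_{\d^d}$, which must be shown to lie in $\iota(S^{(r)})F[z_r-z_{r+1}]$ modulo $\cl J$. For this I would invoke Corollary~\ref{cor:tildeg}: the space $R'^{(r)}$ is spanned by elements $\psi_u e(\bi)\cdot g$ with $u\in \fr S_e\times\fr S_e$ acting on $X_r\cup X_{r+1}$, $\bi\in I^{\d,\d}$ there, and $g$ a polynomial in the adjacent differences $y_k-y_{k-1}$ for $(r-1)e+1<k\le (r+1)e$. Multiplication by $e_{\d^d}$ forces $\bi=\bi^{(r)}\bi^{(r+1)}$ with $\bi^{(r)},\bi^{(r+1)}\in I^{\vn,1}$; writing $u=u_1\times u_2$ and applying Corollary~\ref{cor:d1_01}, each factor $\ol{\psi}_{u_i}\ol{e(\bi^{(i)})}$ lies in $(\ol R_\d)_{\{0,1\}}$, so $\psi_u e(\bi)e_{\d^d}\in\iota(S^{(r)})$. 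On the polynomial side, the in-block generators lie in $\cl K\subset\cl J$, while the unique cross-block generator $y_{re+1}-y_{re}$ reduces modulo $\cl J$ to $z_{r+1}-z_r$ (using $y_{re}-y_{(r-1)e+1}\in K^{(r)}\subset\cl K$). Combining these two observations gives $e_{\d^d} R'^{(r)} e_{\d^d}+\cl J\subset \iota(S^{(r)})F[z_r-z_{r+1}]$, completing the proof.
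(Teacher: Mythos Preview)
Your proposal is correct and follows essentially the same route as the paper's proof: transport Lemma~\ref{lem:diagram} via the embedding $x\mapsto \bar\iota_{\d^{r-1},2\d,\d^{d-r-1}}(e_\d^{\ot r-1}\ot x\ot e_\d^{\ot d-r-1})$, kill the cross terms for $v\notin\{1,w_r\}$ using the combinatorial argument underlying Lemma~\ref{lem:Bd}\eqref{Bd1} (the paper phrases this directly via~\eqref{eq:edd}, observing that for such $v$ the set $\{1,e+1\}$ is not $v$-invariant while every element of $I^{\vn,1}$ starts with $0$), and then handle the $v=1$ remainder by combining Corollary~\ref{cor:tildeg} with Corollary~\ref{cor:d1_01}. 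The only cosmetic difference is that the paper packages the remainder as $Y=(\beta(R'_{\d^2})+\cl J)/\cl J$ and argues the containment $Y\subset\sum_{v\in\fr S_{(e,e)}}\beta(\psi_v e_{\d^2})F[z_r-z_{r+1}]$ in one step, whereas you separate out the polynomial reduction (in-block differences into $\cl K$, the cross-block difference into $z_{r+1}-z_r$) and the $\psi$-part reduction (into $\iota(S^{(r)})$ via Corollary~\ref{cor:d1_01}) explicitly.
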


\begin{proof}
Let $\b\colon R_{2\d} \to \ol{R}_{d\d}$ be the algebra homomorphism defined by
\[\b(x) = \bar\iota_{\d^{r-1}, 2\d, \d^{d-r-1}} 
(e_{\d}^{\ot r-1} \ot x \ot e_{\d}^{\ot d-r-1}). \]
 We use Lemma~\ref{lem:diagram} and apply $\b$
to both sides of~\eqref{diag0}. It follows from~\eqref{eq:edd} and 
the relation~\eqref{rel:epsi}
 that 
$e_{\d^d} \b(\psi_v) e_{\d^d} = 0$ in
$\ol{R}_{d\d}$ for all $v\in \scr D_{2e}^{(e^2)} \setminus \{ 1, w_1\}$, as the set $\{ 1, e+1\}$ is not $v$-invariant for any such $v$ and every element of $I^{\vn,1}$ has $0$ as its first entry.
Since $e_{\d^d} \vphi_{w_r} e_{\d^d} =\vphi_{w_r} e_{\d^d}$, we conclude that 
\[
 \vphi_{w_r} e_{\d^d} +\cl J \in \psi_{w_r} (z_r-z_{r+1})^e + Y,
\]
 in $e_{\delta^d} \ol R_{d\delta} e_{\delta^d}/\cl J$ 
 where $Y= (\b(R'_{\d^2}) + \cl J)/\cl J$. 
It follows from the definition of $\cl J$ (and of $\cl K$) together with Corollary~\ref{cor:tildeg} applied to $R'_{\d^2}$ that 
\[
 Y \subset \sum_{v\in \fr S_{(e,e)}} 
\b (\psi_v e_{\d^2}) F[z_r-z_{r+1}]. 
\]
By Corollary~\ref{cor:d1_01}, we have 
$\b(\psi_v e_{\d^2}) +\cl J \in \b(\iota_{\d^2}((R_{\d})_{\{0,1\}} \ot (R_{\d})_{\{0,1\}})) + \cl J$ for all 
$v\in \fr S_{(e,e)}$, 
and the result follows. 
\end{proof}

Observing that $\deg(\vphi_{w_r} e_{\d^d}) = 2e$, $\deg(\sigma_r e_{\d^d})=0$, 
$S^{(r)} = (S^{(r)})_{\{0,1,2\}}$ and $\deg(z_r-z_{r+1})=2$, we deduce from 
Lemma~\ref{lem:wr_in} that 
\begin{equation}\label{tau_r}
 \vphi_{w_r} e_{\d^d} +\cl J = \tau_r (z_r-z_{r+1})^e + \e_r (z_r-z_{r+1})^{e-1} 
\end{equation}
for some elements $\tau_r\in \sigma_r e_{\d^d} + \iota(S^{(r)}_{\{0\}}) \subset e_{\d^d} \ol{R}_{d\d} e_{\d^d}$ 
and $\e_r\in \iota(S^{(r)}_{\{2\}})\subset \ol R_{\d^d}$, 
which are determined uniquely due to Lemma~\ref{lem:fintens}\eqref{ft2}. 
By considering degrees, we see that 
$\e_r \in \iota(\ol{e}_{\d}^{\ot r-1} \otimes (\ol{R}_{\d})_{\{1\}} 
\ot (\ol{R}_{\d})_{\{1\}} \otimes \ol{e}_{\d}^{\ot d-r-1})$. 

For $1\le t\le d$, define a graded algebra homomorphism $\eta_t\colon R^{\Lda_0}_{\d}\to \ol{R}_{\d^d}$ by 
$\eta_t (x) = \iota(\ol{e}_{\d}^{\ot t-1} \ot \eta(x) \ot \ol{e}_{\d}^{\ot d-t})$, where $\eta\colon R^{\Lda_0}_{\d} \to \ol{R}_{\d}$ is given by Lemma~\ref{lem:eta}. 
Note that, by Corollary~\ref{cor:d1_01}, 
we have $(\ol{R}_{\d})_{\{1\}} = \eta((R^{\Lda_0}_{\d})_{\{1\}})$, and hence 
\begin{equation}\label{eps_r_in}
\e_r \in \eta_r ((R^{\Lda_0}_{\d})_{\{1\}}) \eta_{r+1} ((R^{\Lda_0}_{\d})_{\{1\}}).
\end{equation}

\begin{remark}\label{rem:KLsigma}
For any $\bi^{(1)},\ldots,\bi^{(d)} \in I^{\d}$,
the element $\s_r e(\bi^{(1)}\ldots \bi^{(d)})$ is the image of the element of $R_{d\d}$ represented by the following Khovanov--Lauda diagram:
\[
\begin{tikzpicture}
\def\hh{1.8}
\def\xx{2.9}
\def\yy{\xx+2.1}
\def\zz{\yy+3.8}
\def\ww{\zz+2.9}
\draw (0,0)node[below]{$i^{(1)}_1$}--(0,\hh);
\draw (0.5,0)node[below]{$i^{(1)}_2$}--(0.5,\hh);
\draw[dots] (0.7,0.5*\hh) -- (1.1, 0.5*\hh);
\draw[dots] (0.73,-0.5) -- (1,-0.5);
\draw (1.3, 0)node[below]{$i^{(1)}_e$}--(1.3,\hh);
\draw[dots] (1.6, 0.5*\hh) -- (2.6, 0.5*\hh);   
\draw (\xx+0,0)node[below]{\hspace{2mm} $i^{(r-1)}_1$}--(\xx+0,\hh);
\draw (\xx+0.5,0)--(\xx1+0.5,\hh)node[above]{\hspace{3mm} $i_2^{(r-1)}$};
\draw[dots] (\xx+0.7,0.5*\hh) -- (\xx+1.1, 0.5*\hh);

\draw (\xx+1.3, 0)node[below]{$i^{(r-1)}_e$}--(\xx+1.3,\hh);
\draw (\yy,0)node[below]{$i_1^{(r)}$} -- (\yy+1.8, \hh); 
\draw (\yy+0.5, 0)node[below]{$i_2^{(r)}$} -- (\yy+2.3, \hh); 
\draw[dots] (\yy+0.7,0)--(\yy+1.1,0); 
\draw[dots] (\yy+0.7,\hh)--(\yy+1.1,\hh); 
\draw (\yy+1.3, 0)node[below]{$i_e^{(r)}$} -- (\yy + 3.1,\hh);
\draw (\yy+1.8, 0) -- (\yy,\hh)node[above]{\hspace{1.5mm} $i_1^{(r+1)}$}; 
\draw (\yy+2.3, 0)node[below]{\hspace{5mm} $i_2^{(r+1)}$} -- (\yy+0.5,\hh);
\draw[dots] (\yy+2.5,0)--(\yy+2.9,0); 
\draw[dots] (\yy+2.5,\hh)--(\yy+2.9,\hh); 
\draw (\yy+3.1, 0) -- (\yy+ 1.3,\hh)node[above]{\hspace{1.5mm} $i_e^{(r+1)}$};
\draw (\zz,0)node[below]{\hspace{3mm} $i^{(r+2)}_1$}--(\zz,\hh);
\draw (\zz+0.5,0)--(\zz+0.5,\hh)node[above]{\hspace{3mm} $i^{(r+2)}_2$};
\draw[dots] (\zz+0.7,0.5*\hh) -- (\zz+1.1, 0.5*\hh);
\draw (\zz+1.3, 0)node[below]{$i^{(r+2)}_e$}--(\zz+1.3,\hh);
\draw[dots] (\zz+1.6, 0.5*\hh) -- (\zz+2.6, 0.5*\hh);   
\draw (\ww,0)node[below]{$i^{(d)}_1$}--(\ww,\hh);
\draw (\ww+0.5,0)node[below]{$i^{(d)}_2$}--(\ww+0.5,\hh);
\draw[dots] (\ww+0.7,0.5*\hh) -- (\ww+1.1, 0.5*\hh);
\draw[dots] (\ww+0.73,-0.5) -- (\ww+1,-0.5);
\draw (\ww+1.3, 0)node[below]{$i^{(d)}_e$}--(\ww+1.3,\hh);
\end{tikzpicture}
\]
(cf.~Remark~\ref{rem:KLdiag}).
Any element of $S^{(r)}$ (including $\e_r$) is a linear combination of the images of diagrams of the form 
\[
\begin{tikzpicture}
\def\hh{1.8}
\def\lf{0.4}
\def\xx{2.9}
\def\yy{\xx+2.1}
\def\zz{\yy+4.2}
\def\ww{\zz+2.9}
\draw (0,0)node[below]{$i^{(1)}_1$}--(0,\hh);
\draw (0.5,0)node[below]{$i^{(1)}_2$}--(0.5,\hh);
\draw[dots] (0.7,0.5*\hh) -- (1.1, 0.5*\hh);
\draw[dots] (0.73,-0.5) -- (1,-0.5);
\draw (1.3, 0)node[below]{$i^{(1)}_e$}--(1.3,\hh);
\draw[dots] (1.6, 0.5*\hh) -- (2.6, 0.5*\hh);   
\draw (\xx+0,0)node[below]{$i^{(r-1)}_1$}--(\xx+0,\hh);
\draw (\xx+0.5,0)--(\xx1+0.5,\hh)node[above]{\hspace{3mm} $i_2^{(r-1)}$};
\draw[dots] (\xx+0.7,0.5*\hh) -- (\xx+1.1, 0.5*\hh);
\draw (\xx+1.3, 0)node[below]{$i^{(r-1)}_e$}--(\xx+1.3,\hh);
\draw (\yy,0)node[below]{$i^{(r)}_1$}--(\yy,\lf); 
\draw (\yy,\hh-\lf) -- (\yy, \hh);
\draw (\yy+0.5,0)node[below]{$i^{(r)}_2$}--(\yy+0.5,\lf);
\draw[dots] (\yy+0.73,-0.5) -- (\yy+0.93,-0.5); 
\draw (\yy+0.5,\hh-\lf) -- (\yy+0.5, \hh);
\draw[dots] (\yy+0.7,0)--(\yy+1.1,0); 
\draw[dots] (\yy+0.7,\hh)--(\yy+1.1,\hh); 
\draw (\yy+1.3,0)node[below]{$i^{(r)}_e$}--(\yy+1.3,\lf); 
\draw (\yy+1.3,\hh-\lf) -- (\yy+1.3, \hh);
\draw (\yy-0.1, \lf)rectangle(\yy+1.3+0.1,\hh-\lf);
\draw (\yy+0.65,0.5*\hh)node{$A$};
\draw (\yy+2,0)node[below]{$i^{(r+1)}_1$}--(\yy+2,\lf); 
\draw (\yy+2,\hh-\lf) -- (\yy+2, \hh);
\draw (\yy+2.5,-0.4)node[below]{$i^{(r+1)}_2$}--(\yy+2.5,\lf); 
\draw (\yy+2.5,\hh-\lf) -- (\yy+2.5, \hh);
\draw (\yy+3.3,0)node[below]{$i^{(r+1)}_e$}--(\yy+3.3,\lf); 
\draw (\yy+3.3,\hh-\lf) -- (\yy+3.3, \hh);
\draw[dots] (\yy+2.7,0)--(\yy+3.1,0); 
\draw[dots] (\yy+2.7,\hh)--(\yy+3.1,\hh); 
\draw (\yy+2-0.1, \lf)rectangle(\yy+2+1.3+0.1,\hh-\lf);
\draw (\yy+2+0.65,0.5*\hh)node{$B$}; 
\draw (\zz,0)node[below]{$i^{(r+2)}_1$}--(\zz,\hh);
\draw (\zz+0.5,0)--(\zz+0.5,\hh)node[above]{\hspace{3mm} $i^{(r+2)}_2$};
\draw[dots] (\zz+0.7,0.5*\hh) -- (\zz+1.1, 0.5*\hh);
\draw (\zz+1.3, 0)node[below]{$i^{(r+2)}_e$}--(\zz+1.3,\hh);
\draw[dots] (\zz+1.6, 0.5*\hh) -- (\zz+2.6, 0.5*\hh);   
\draw (\ww,0)node[below]{$i^{(d)}_1$}--(\ww,\hh);
\draw (\ww+0.5,0)node[below]{$i^{(d)}_2$}--(\ww+0.5,\hh);
\draw[dots] (\ww+0.7,0.5*\hh) -- (\ww+1.1, 0.5*\hh);
\draw[dots] (\ww+0.73,-0.5) -- (\ww+1,-0.5);
\draw (\ww+1.3, 0)node[below]{$i^{(d)}_e$}--(\ww+1.3,\hh);
\end{tikzpicture}
\]
where $\bi^{(1)},\ldots,\bi^{(d)}\in I^{\d}$ and $A,B$ are diagrams representing elements of $R_{\d}$. 
\end{remark}

Now we can state the main result of this section. The equalities in parts~\eqref{Theta1}--\eqref{Theta3} of the following theorem 
are between elements of $\ol{R}_{d\d}$.

\begin{thm}\label{thm:Theta}
For $1\le r<d$, we have:
 \begin{enumerate}[(i)]
  \item\label{Theta1} $\tau_r^2 = e_{\d^d}$; 
  \item\label{Theta2} $\tau_r \tau_{r+1} \tau_r = \tau_{r+1} \tau_r \tau_{r+1}$ if $r<d-1$; 
  \item\label{Theta2a}
  $\tau_r \tau_t = \tau_t \tau_r$ if $1\le t<d$ and $|t-r|\ge 2$. 
  \item\label{Theta3} if $x\in R^{\Lda_0}_{\d}$, then $\eta_r (x) \tau_r = \tau_r \eta_{r+1} (x)$ and $\eta_{r+1} (x) \tau_r = \tau_r \eta_{r} (x)$.  
  \item\label{Theta4} $y_{re+1}e_{\d^d} \in \tau_r y_{(r-1)e+1} \tau_r+ \cl T$, where
$\cl T$ is the unital subalgebra of $e_{\d^d} \ol{R}_{\d^d} e_{\d^d}$ generated by
$T \cup \cl K$.
\end{enumerate} 
\end{thm}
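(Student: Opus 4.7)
The plan is to derive each of the five relations for $\tau_r$ from a corresponding identity satisfied by the intertwiner $\vphi_{w_r}$ (Lemma~\ref{lem:KaKaKi}), by substituting the defining equation~\eqref{tau_r} and extracting coefficients of leading monomials in the differences $z_r - z_{r+1}$, $z_{r+1} - z_{r+2}$, etc., using the uniqueness of decomposition in Lemma~\ref{lem:fintens}(ii). In effect, $\tau_r$ is the ``top coefficient'' of $\vphi_{w_r} e_{\d^d}$ modulo $\cl J$ as a polynomial in $z_r - z_{r+1}$, so identities among the $\vphi_{w_r}$'s descend to identities among the $\tau_r$'s once the $\e$-corrections and the non-commutativity of the $z$-variables with the intertwiners are accounted for.

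Parts (iii) and (iv) are the most direct. For commutativity (iii), $w_r$ and $w_t$ have disjoint support when $|r-t|\geq 2$, so they commute in $\fr S_{de}$ and $\vphi_{w_r}\vphi_{w_t} = \vphi_{w_t}\vphi_{w_r}$ by~\eqref{vphiprod}; matching the coefficient of $(z_r-z_{r+1})^e (z_t-z_{t+1})^e$ on both sides gives $\tau_r\tau_t = \tau_t\tau_r$. Part (iv) uses Lemma~\ref{lem:KaKaKi}(iii),(iv): because $w_r$ transports positions $(r-1)e+k \mapsto re+k$ order-preservingly (and vice versa), we obtain $\vphi_{w_r}\eta_r(x) = \eta_{r+1}(x)\vphi_{w_r}$ and $\vphi_{w_r}\eta_{r+1}(x) = \eta_r(x)\vphi_{w_r}$ for $x \in R^{\Lda_0}_{\d}$; extracting the coefficient of $(z_r - z_{r+1})^e$ modulo $\cl J$ then yields the required identities (noting that $\eta_r(x)$ and $\eta_{r+1}(x)$ commute with $z_r - z_{r+1}$ in this quotient).

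Parts (i) and (v) rest on Lemma~\ref{lem:KaKaKi}(v). Since $w_r$ is an involution whose inversion set is precisely $X_r \times X_{r+1}$, we have $\vphi_{w_r}^2 e(\bi) = \prod_{a\in X_r,\,b\in X_{r+1}}(L_{i_a,i_b}(y_a,y_b) + \d_{i_a,i_b})\, e(\bi)$. For $\bi$ with $\bi^{(r)},\bi^{(r+1)}\in I^{\vn,1}$ (both permutations of $(0,1,\ldots,e-1)$), the $e$ diagonal pairs contribute $1$ each, while the off-diagonal pairs contribute nontrivially only for the $2e$ oriented edges of the cyclic quiver $\G$; reducing modulo $\cl J$ (where $y_a \equiv z_r$ and $y_b \equiv z_{r+1}$) gives $\vphi_{w_r}^2 e_{\d^d} \equiv \pm(z_r-z_{r+1})^{2e} e_{\d^d}$. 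Squaring equation~\eqref{tau_r} and using the anti-commutation $\vphi_{w_r}(z_r-z_{r+1}) = -(z_r-z_{r+1})\vphi_{w_r}$ from Lemma~\ref{lem:KaKaKi}(iii), we compare the coefficient of $(z_r-z_{r+1})^{2e}$ to deduce $\tau_r^2 = e_{\d^d}$. For part (v), the same computation combined with $\vphi_{w_r} y_{(r-1)e+1} = y_{re+1}\vphi_{w_r}$ yields the identity $y_{re+1}\vphi_{w_r}^2 e_{\d^d} = \vphi_{w_r}\, y_{(r-1)e+1}\, \vphi_{w_r} e_{\d^d}$; matching leading coefficients modulo $\cl J$ expresses $y_{re+1} e_{\d^d}$ as $\tau_r y_{(r-1)e+1}\tau_r$ up to terms that fall into the subalgebra $\cl T$ generated by $T\cup \cl K$.

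The main obstacle is the braid relation (ii). The identities $w_r w_{r+1} w_r = w_{r+1} w_r w_{r+1}$ and $\ell(w_r w_{r+1} w_r) = 3e^2 = \ell(w_r) + \ell(w_{r+1}) + \ell(w_r)$ in $\fr S_{de}$ together with Lemma~\ref{lem:KaKaKi}(ii) and~\eqref{vphiprod} give $\vphi_{w_r}\vphi_{w_{r+1}}\vphi_{w_r} = \vphi_{w_{r+1}}\vphi_{w_r}\vphi_{w_{r+1}}$. Substituting~\eqref{tau_r} on both sides produces sums of monomials in $z_r - z_{r+1}$, $z_{r+1} - z_{r+2}$ and $z_r - z_{r+2}$; the third difference enters because Lemma~\ref{lem:KaKaKi}(iii) forces $\vphi_{w_r}(z_{r+1}-z_{r+2}) = (z_r-z_{r+2})\vphi_{w_r}$ and $\vphi_{w_{r+1}}(z_r-z_{r+1}) = (z_r-z_{r+2})\vphi_{w_{r+1}}$. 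On each side the coefficient of the joint-leading-degree monomial is a nonzero scalar multiple of $\tau_r\tau_{r+1}\tau_r$ respectively $\tau_{r+1}\tau_r\tau_{r+1}$, and Lemma~\ref{lem:fintens}(ii) forces these to agree. The delicate combinatorial bookkeeping required to isolate that leading contribution from all the lower-order terms produced by the $\e_r, \e_{r+1}$ corrections, and to commute the $z$-polynomials past the $\tau$'s using the anti-commutation relations, is where the bulk of the technical effort will lie.
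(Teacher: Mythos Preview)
Your overall strategy matches the paper's: derive each relation for $\tau_r$ from the corresponding identity for $\vphi_{w_r}$ by working modulo $\cl J$ and isolating the top $z$-coefficient via Lemma~\ref{lem:fintens}\eqref{ft2}. The order you chose (part~\eqref{Theta3} first, then the rest) is also the paper's, and for good reason: the coefficient-extraction arguments for parts~\eqref{Theta1}, \eqref{Theta2}, \eqref{Theta4} require the cross terms $\tau_r\e_r$, $\e_r\tau_r$, $\nu_a\nu'_b\nu_c$, etc., to lie in $T+\cl J$, and this is Lemma~\ref{lem:inT}, whose proof uses part~\eqref{Theta3}. You should make that dependency explicit rather than leave it implicit in the phrase ``the bulk of the technical effort''.

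There is, however, a genuine gap in your treatment of part~\eqref{Theta3}. Your argument is to pass from $\eta_r(x)\vphi_{w_r} = \vphi_{w_r}\eta_{r+1}(x)$ to $\eta_r(x)\tau_r = \tau_r\eta_{r+1}(x)$ by extracting the coefficient of $(z_r-z_{r+1})^e$. For this to give information, you need $\eta_r(x)\tau_r,\ \tau_r\eta_{r+1}(x)\in T$ and you need the $\e$-cross terms either to lie in $T$ (hence sit at $z$-degree $e-1$) or to vanish modulo $\cl J$. When $\deg(x)=0$ this is immediate; when $\deg(x)=1$ one must check that $\eta_r(x)\e_r\in\cl K$, which uses the explicit description of $(R^{\Lda_0}_\d)_{\{2\}}$ in Proposition~\ref{prop:d1basis} (this is the paper's display~\eqref{inK}). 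One then extends to all $x$ via Lemma~\ref{lem:d1gen}\eqref{d1gen2}, which says $R^{\Lda_0}_\d$ is generated in degrees $0$ and $1$ \emph{when $e>2$}. For $e=2$ this chain breaks completely: $(R^{\Lda_0}_\d)_{\{1\}}=0$, the only nontrivial $x$ to check is $y_2 e(01)$, and $\eta_r(y_2 e(01))=(y_{2r}-y_{2r-1})e_{\d^d}\in\cl K$, so the $\vphi$-identity becomes $0=0$ modulo $\cl J$ and yields nothing about $\tau_r$. The paper handles $e=2$ by an explicit computation of $\vphi_{w_1}e_{\d^2}$ showing $\tau=(\s+1)e_{\d^2}$, then verifying the intertwining relation directly. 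Your proposal needs a separate argument here.
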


\begin{remark}\label{rem:red}
 If one of parts~\eqref{Theta1},\eqref{Theta2},\eqref{Theta4} of Theorem~\ref{thm:Theta}
 holds for $d=2$, then it holds in general, and if part~\eqref{Theta3} holds for $d=3$, then it holds in general. This may be seen by applying the graded algebra homomorphism $\zeta_{r,l}\colon \ol{R}_{l\d} \to \ol{R}_{d\d}$ 
induced by the map $R_{l\d}\to \ol{R}_{d\d}$, 
$x\mapsto \bar\iota_{\d^{r-1},l\d,\d^{d-r-l+1}} (e_{\d}^{\ot {r-1}} \ot x \ot e_{\d}^{\ot d-r-l+1})$,
with $l=3$ for part~\eqref{Theta3} and $l=2$ for the other parts. 
(The fact that this map factors through $\ol{R}_{l\d}$ follows easily from 
the description of $\cl E_d$ in \S\ref{subsec:outline}.) 
In particular, it follows from the definitions that 
$\tau_r = 
\zeta_{r,2} (\ol e_{\d}^{\ot r-1} \ot \tau_1 \ot 
\ol e_{\d}^{\ot d-r-1})$. 
\end{remark}

Due to Lemma~\ref{lem:fintens}\eqref{ft2}, we have 
$e_{\d^d}\ol{R}_{d\d} e_{\d^d}/\cl J= \bigoplus_{m\ge 0} T (F[z_1,\ldots,z_d]_{\{2m\}})$. Define the linear map 
$\pi_m \colon e_{\d^d}\ol{R}_{d\d} e_{\d^d}/\cl J \to e_{\d^d} \ol{R}_{d\d} e_{\d^d}/\cl J$
as the projection onto the component 
$T (F[z_1,\ldots,z_d]_{\{2m\}})$. We write $\tau=\tau_1$, $\e=\e_1$ and $\s=\s_1$.

\begin{proof}[Proof of Theorem~\ref{thm:Theta}(\ref{Theta3})]
We include only the proof of the first equality in part~(\ref{Theta3}), as the proof of the second one is similar. Due to Remark~\ref{rem:red}, we may (and do) assume that $d=2$ and $r=1$. 
First, we consider the case when $e>2$. 
Due to Proposition~\ref{prop:d1basis} and Lemma~\ref{lem:d1gen}\eqref{d1gen2}, it is enough to prove the desired equality for all $x\in \cl B_0\sqcup \cl B_1$, where $\cl B_0, \cl B_1$ are as in Proposition~\ref{prop:d1basis}. Thus, we may assume that $x=\psi_{w_{\bj,\bi}} e(\bi)$
for some $\bi,\bj\in I^{\vn,1}$ with $j_e \in \{i_e, i_e+1,i_e-1\}$. 
Note that $x = \vphi_{w_{\bj,\bi}} e(\bi)$ because the entries of $\bi$ are pairwise distinct (cf.~\eqref{defvphi}).
We view $w_{\bj,\bi}$ as an element of $\fr S_{2e}$ via the embedding 
$\fr S_e\to \fr S_{2e}$, $s_k \mapsto s_k$, and we denote by $w'_{\bj,\bi}$ the image of $w_{\bj,\bi}$ under the embedding $\fr S_e \to \fr S_{2e}$,
$s_k \mapsto s_{e+k}$ (for $1\le k<e$).  
By definitions of $\vphi_{w_1}$ and $\eta_t$, we have 
$\eta_1 (e(\bi)) \vphi_{w_1} = \vphi_{w_1} \eta_2(e (\bi))$.
Since $w_{\bj,\bi} w_1 = w_1 w'_{\bj,\bi}$ and 
$w_1 \in \ls{(e,e)}{\scr D}_{2e}^{(e,e)}$, 
using~\eqref{vphiprod}, we obtain
\[
\eta_1 (x) \vphi_{w_1} e_{\d^d} = \vphi_{w_{\bj,\bi}} \eta_1 (e(\bi)) \vphi_{w_1} =
\vphi_{w_1} \vphi_{w'_{\bj,\bi}} \eta_2 (e (\bi)) 
= \vphi_{w_1} \eta_{2} (x) e_{\d^d}. 
\]
Using~\eqref{tau_r}, we deduce the following equalities in $\ol{R}_{2\d}/\cl J$:
\begin{align}
   \eta_1 (x) \left(\tau (z_1-z_2)^e + \e (z_1-z_{2})^{e-1}\right)  
&= \tau (z_1-z_{2})^e \eta_{2} (x) + \e (z_1-z_{2})^{e-1} \eta_{2} (x)  \notag\\
&= \tau \eta_{2} (x) (z_1-z_{2})^e  + \e \eta_{2} (x) (z_1-z_{2})^{e-1}, \label{prTheta2}
\end{align}
where the second equality follows from relations~\eqref{rel:ypsi0},~\eqref{rel:ypsi1},~\eqref{rel:ypsi2}. 

We claim that $\pi_e (\eta_1 (x) \e (z_1-z_{2})^{e-1})=0$. 
If $\deg(x)=0$, then $\eta_1 (x) \e\in \iota(S_{\{2\}})$, and the claim follows. 
If $\deg(x) =1$, then, using~\eqref{eps_r_in} and 
Corollary~\ref{cor:d1_01}, we have 
\begin{equation}\label{inK}
 \eta_1 (x) \e \in \eta_1 ((R^{\Lda_0}_{\d})_{\{2\}}) 
\eta_2 ((R^{\Lda_0}_{\d})_{\{1\}}) 
\subset \cl K,
\end{equation}
where the second inclusion follows from the fact that 
$\eta((R^{\Lda_0}_{\d})_{\{2\}}) = 
F\{ \psi_{w_{\bj,\bi}}(y_e-y_1) e(\bi) \mid \bi,\bj\in I^{\vn,1} \}$. This concludes the proof of the claim. A similar argument shows that 
$\pi_e (\e \eta_{2} (x) (z_1-z_{2})^{e-1})=0$. Hence, applying $\pi_e$ to Equation~\eqref{prTheta2} and using Lemma~\ref{lem:fintens}\eqref{ft2}, we obtain 
\[
 \pi_0 (\eta_{1} (x) \tau + \cl J) = 
 \pi_0 (\tau \eta_{2} (x) + \cl J).
\]
By Corollary~\ref{cor:01inT}, we have $\eta_1 (x) \tau, \tau \eta_{2} (x) \in T$. Thus, 
$\eta_1 (x) \tau = \tau \eta_{2} (x)$ by Lemma~\ref{lem:fintens}\eqref{ft2}.

Now assume that $e=2$. This case is treated by a direct calculation, as follows. First, note that $e_{\d^2} = e(0101)$ and that 
\begin{equation}\label{Theta3_zeros}
\psi_{1} e_{\d^2} =0= \psi_{3} e_{\d^2}. 
\end{equation}
Indeed, $\psi_{3} e(0101) = e(0110) \psi_3=0$ because $(0110)\notin \cl E_2$, and the other equality is proved similarly. 
We have 
\begin{align}
 \vphi_{w_1} e_{\d^2} &= \vphi_2 \vphi_1 \vphi_3 \vphi_2 e (0101) = 
\psi_2 \vphi_1 e(0011) \vphi_3 \vphi_2 = 
\psi_2 (\psi_1 (y_1-y_2)+1)e(0011)\vphi_3 \vphi_2 \notag \\
&= \psi_2 \psi_1 \vphi_3 e(0011) \vphi_2 (y_1-y_3) + \psi_2 \vphi_3 e(0011) \vphi_2, \label{Theta3_1}
\end{align}
where the last equality is due to Lemma~\ref{lem:KaKaKi}\eqref{KaKaKi3}. 
Further, 
\[
 \vphi_3 e (0011)\vphi_2 = (\psi_3 (y_3-y_4) +1) \vphi_2 e (0101)=
\psi_3 \psi_2 e (0101) (y_2-y_4) + \psi_2 e (0101).
\]
Substituting this into~\eqref{Theta3_1}, we obtain 
\begin{equation*}
 \vphi_{w_1} e_{\d^2} = 
(\psi_2\psi_1\psi_3 \psi_2 (y_1-y_3)(y_2-y_4)
+ \psi_2 \psi_1 \psi_2 (y_1-y_3)
+ \psi_2 \psi_3 \psi_2 (y_2-y_4) 
+ \psi_2^2)e_{\d^2}.
\end{equation*}
Since $\psi_2^2 e_{\d^2} = - (y_2-y_3)^2 e_{\d^2}$ by~\eqref{rel:psisq}, 
we deduce that 
\begin{equation}\label{Theta3_2}
 \vphi_{w_1} e_{\d^2} + \cl J = 
(\sigma -1) (z_1-z_2)^2 + (\psi_2 \psi_1 \psi_2+\psi_2\psi_3\psi_2) (z_1-z_2). 
\end{equation}
Now by the braid relations~\eqref{rel:psibr} and by~\eqref{Theta3_zeros}, we have
\[
 \psi_2 \psi_1 \psi_2 e_{\d^2} + \cl J = 
(\psi_1 \psi_2 \psi_1 + 2y_2 - y_1 - y_3) e_{\d^2} +\cl J
 = z_1 - z_2.
\]
Similarly, 
$\psi_2 \psi_3 \psi_2 e_{\d^2} + \cl J = z_1 - z_2.$
Substituting the last two identities into~\eqref{Theta3_2}, we obtain
$\vphi_{w_1} e_{\d^2} + \cl J = (\sigma +1) (z_1-z_2)^2$, 
whence $\tau = (\sigma+1) e_{\d^2}$. 

By Proposition~\ref{prop:d1basis}, 
it is enough to show that $\eta_1 (x) \tau = \tau \eta_2(x)$ for 
each $x\in \{ e_{\d^2}, y_2 e_{\d^2}\}$. For $x=e_{\d^2}$, this is clear, whereas for $x=y_2 e_{\d^2}$, we have 
\begin{align*}
 \eta_1(x) \tau &= (y_2-y_1) (\psi_2\psi_1\psi_3\psi_2 +1) e_{\d^2} \\
 &= 
\psi_2 \psi_1 y_3 \psi_3  e (0011) \psi_2 - \psi_2 y_1 \psi_1 e (0011)  \psi_3 \psi_2 
+ (y_2-y_1) e (0101) \\
&= (\psi_2 \psi_1  (\psi_3 y_4 -1) \psi_2  
- \psi_2 (\psi_1 y_2 -1) \psi_3 \psi_2  + y_2-y_1) e (0101) \\
&= (\psi_2 \psi_1 \psi_3 \psi_2 y_4 - \psi_2 \psi_1 \psi_2 - \psi_2 \psi_1 \psi_3 \psi_2 y_3 + \psi_2 \psi_3 \psi_2 +y_2-y_1) e(0101)\\
&
\hspace{-0.75cm}\begin{array}{c}
=
\big(\sigma (y_4-y_3) + y_2 -y_1 
 - (\psi_1\psi_2\psi_1 + 2y_2 - y_1 -y_3) \nopagebreak \\
 \hspace{40.5mm} + (\psi_3\psi_2\psi_3 - 2y_3 +y_2+y_4)\big) e(0101) \\
\end{array} \\
&= (\sigma +1)(y_4-y_3) e(0101) = \tau \eta_2 (x)
\end{align*}
by~\eqref{rel:ypsi0},~\eqref{rel:ypsi1},~\eqref{rel:ypsi2},~\eqref{rel:psibr}
and~\eqref{Theta3_zeros}. 
\end{proof}

\begin{lem}\label{lem:J}
The set $\cl J$ is a 2-sided ideal of $e_{\d^d} \ol R_{d\d} e_{\d^d}$.
\end{lem}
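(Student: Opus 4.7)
The plan is to reduce the right-stability of $\cl J$ under $e_{\d^d}\ol R_{d\d}e_{\d^d}$ to the single assertion that $(y_k-y_t)e_{\d^d}\s_r \in \cl J$ whenever $k,t$ lie in a common block $X_l$, and then to extract this assertion from Theorem~\ref{thm:Theta}(\ref{Theta3}), which has already been established.

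By construction $\cl J = e_{\d^d}\ol R_{d\d}\cl K$ is a left ideal of $e_{\d^d}\ol R_{d\d}e_{\d^d}$, and since $\cl K$ is two-sided in $\ol R_{\d^d}$ we have $\cl J\cdot\ol R_{\d^d} \subset e_{\d^d}\ol R_{d\d}\cl K = \cl J$ automatically. In view of Lemma~\ref{lem:Bd}(\ref{Bd1}) asserting that $e_{\d^d}\ol R_{d\d}e_{\d^d} = \sum_{u\in\fr S_d}\s_u\ol R_{\d^d}$, an induction on the length of $u\in\fr S_d$ reduces right-stability of $\cl J$ to the inclusions $\cl J\s_r \subset \cl J$ for $r=1,\ldots,d-1$. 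Unpacking $\cl J = e_{\d^d}\ol R_{d\d}\cl K$, and using that $\cl K$ is generated as a left $\ol R_{\d^d}$-module by the elements $(y_k-y_t)e_{\d^d}$ with $k,t$ in some common block $X_l$, the problem further reduces to showing $(y_k-y_t)e_{\d^d}\s_r \in \cl J$ for each such $k,t$.

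When $l\notin\{r,r+1\}$, the permutation $w_r$ fixes both $k$ and $t$, so the KLR relations force $\s_r=\psi_{w_r}$ to commute with $y_k - y_t$, and consequently $(y_k-y_t)e_{\d^d}\s_r = e_{\d^d}\s_r\cdot(y_k-y_t)e_{\d^d} \in e_{\d^d}\ol R_{d\d}\cl K = \cl J$. The essential case is $l=r$; the case $l=r+1$ is symmetric, handled by the second identity in Theorem~\ref{thm:Theta}(\ref{Theta3}). Writing $k=(r-1)e+j$ and $t=(r-1)e+j'$, we identify $(y_k-y_t)e_{\d^d}$ with $\eta_r(y_j-y_{j'})\in\ol R_{\d^d}$, and Theorem~\ref{thm:Theta}(\ref{Theta3}) supplies
\[
(y_k-y_t)e_{\d^d}\,\tau_r = \tau_r\,\eta_{r+1}(y_j-y_{j'}) = \tau_r(y_{re+j}-y_{re+j'})e_{\d^d} \in \tau_r\cl K \subset \cl J.
\]
Now~\eqref{tau_r} gives $\tau_r - \s_r e_{\d^d} \in \iota(S^{(r)}_{\{0\}})\subset\ol R_{\d^d}$, so
\[
(y_k-y_t)e_{\d^d}\bigl(\tau_r - \s_r e_{\d^d}\bigr) \in \cl K\cdot\ol R_{\d^d} \subset \cl K \subset \cl J,
\]
and subtracting, together with $\s_r e_{\d^d} = e_{\d^d}\s_r$, yields $(y_k-y_t)e_{\d^d}\s_r\in\cl J$ as desired.

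The technical obstacle to bypass is that attempting to move $y_k$ past $\psi_{w_r}$ directly via~\eqref{rel:ypsi1}--\eqref{rel:ypsi2} generates correction terms at every intermediate crossing where consecutive residues coincide, and these corrections are cumbersome to control. Theorem~\ref{thm:Theta}(\ref{Theta3}) encodes precisely the clean commutation identity $\eta_r(x)\tau_r = \tau_r\eta_{r+1}(x)$ for the intertwiner $\tau_r$, which was engineered to absorb exactly these corrections; leveraging it therefore sidesteps the delicate direct combinatorics entirely, and the $\tau_r \to \s_r$ transition at the end is cheap because the difference lies in $\ol R_{\d^d}$, where $\cl K$ is already two-sided.
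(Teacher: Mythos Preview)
Your proof is correct and follows essentially the same approach as the paper's: both reduce via Lemma~\ref{lem:Bd}\eqref{Bd1} to showing $\cl K\sigma_r\subset\cl J$, invoke Theorem~\ref{thm:Theta}\eqref{Theta3} to commute generators of $\cl K$ past $\tau_r$, and then use $\sigma_r e_{\d^d}-\tau_r\in\ol R_{\d^d}$ (where $\cl K$ is already two-sided) to pass from $\tau_r$ to $\sigma_r$. The only cosmetic difference is that the paper invokes Proposition~\ref{prop:d1basis} to take $\bigcup_t\eta_t\bigl((R^{\Lda_0}_\d)_{\{2\}}\bigr)$ as the generating set for $\cl K$ and treats all $t$ uniformly, whereas you use the defining generators $(y_k-y_t)e_{\d^d}$ directly and split off the easy case $l\notin\{r,r+1\}$.
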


\begin{proof}
Clearly, it suffices to show that $\cl K e_{\d^d} \ol R_{d\d} e_{\d^d} \subset \cl J$.
By Lemma~\ref{lem:Bd}\eqref{Bd1}, it is enough to prove that 
$\cl K \s_r e_{\d^d} \subset \cl J$ whenever $1\le r<d$ 
(as $\cl K$ is a 2-sided ideal of $\ol R_{\d^d}$). 
It follows from Proposition~\ref{prop:d1basis} that 
$\cl K$ is generated, as a left ideal of $\ol R_{\d^d}$, 
by the set $\bigcup_{t=1}^d \eta_t ((R^{\Lda_0}_\d)_{\{2\}})$.
Also, 
$\s_r e_{\d^d} \in \tau_r + \ol R_{\d^d}$. 
By Theorem~\ref{thm:Theta}\eqref{Theta3},
we have 
$\eta_t ((R^{\Lda_0}_{\d})_{\{2\}}) \tau_r = \tau_r \eta_{s_r(t)} 
((R^{\Lda_0}_{\d})_{\{2\}})\subset \cl J$ for $1\le t\le d$, and the result follows. 
\end{proof}

Set $H= \iota\circ (\eta^{\ot d})\colon (R^{\Lda_0}_{\d})^{\ot d} \to \ol{R}_{\d^d}$, so that 
$H(x_1 \ot \cdots \ot x_d) = \eta_1(x_1)\cdots \eta_d (x_d)$ for all $x_1,\ldots,x_d\in R^{\Lda_0}_{\d}$. 

\begin{lem}\label{lem:inT}
\begin{enumerate}[(i)]
\item\label{inT1} The unital subalgebra of $e_{\d^d} \ol{R}_{d\d} e_{\d^d}$ generated by 
$\tau_1,\ldots,\tau_{d-1}$ is contained in $T_{\{0\}}$. 
\item\label{inT2}
 The unital subalgebra of $e_{\d^d} \ol{R}_{d\d} e_{\d^d}$ generated by 
$\{\tau_1,\ldots,\tau_{d-1}\} \cup \iota(S)$ is contained in $T+\cl J$. 
\end{enumerate}
\end{lem}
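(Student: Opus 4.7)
The key preliminary fact that I will establish first is the commutation identity
\[
\iota(a)\tau_r \;=\; \tau_r\, \iota(s_r \cdot a) \qquad \text{for every } a \in S,
\]
where $s_r$ acts on $S = ((\ol R_\d)_{\{0,1\}})^{\otimes d}$ by swapping the $r$-th and $(r+1)$-th tensor factors. By Corollary~\ref{cor:d1_01} every $a \in S$ decomposes as a sum of pure tensors of $\eta$-images, so $\iota(a)$ is a sum of products $\eta_1(x_1)\cdots\eta_d(x_d)$. Theorem~\ref{thm:Theta}\eqref{Theta3} handles the factors in positions $r$ and $r+1$, while for $t \notin \{r,r+1\}$ the element $\eta_t(x)$ is supported on strands in $X_t$, which are disjoint from $X_r \cup X_{r+1}$, so it commutes with $\tau_r$; iterating yields the identity.

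I will prove \eqref{inT1} by induction on word length. Decomposing $\tau_r = \s_r e_{\d^d} + a_r$ with $a_r \in \iota(S^{(r)}_{\{0\}}) \subseteq \iota(S_{\{0\}})$ and expanding $\tau_r^2 = e_{\d^d}$, the preliminary identity lets me move the second $a_r$ past $\s_r$ up to an error in $\iota(S_{\{0\}})$, producing
\[
\s_r^2 e_{\d^d} \;=\; \s_r\,\iota(b_1) + \iota(b_2) \qquad \text{for some } b_1, b_2 \in S_{\{0\}};
\]
in particular $\s_r^2 e_{\d^d} \in T_{\{0\}}$. The inductive step reduces to $T_{\{0\}} \cdot \tau_r \subseteq T_{\{0\}}$. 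Given $\s_v \iota(a) \in T_{\{0\}}$ with $a \in S_{\{0\}}$, I apply the preliminary identity to rewrite $\s_v \iota(a) \tau_r = \s_v \s_r \iota(s_r \cdot a) + \s_v a_r \iota(s_r \cdot a)$. The second summand lies in $T_{\{0\}}$ using $S_{\{0\}} \cdot S_{\{0\}} \subseteq S_{\{0\}}$. For the first summand I distinguish two cases based on $\ell(vs_r)$: if $\ell(vs_r) > \ell(v)$, I choose reduced expressions so that $\s_v\s_r = \s_{vs_r}$; otherwise $\s_v = \s_{vs_r}\s_r$, and substituting the formula for $\s_r^2 e_{\d^d}$ together with the identity $\s_{vs_r}\s_r = \s_v$ (valid in the length-increasing direction) shows that the product is again in $T_{\{0\}}$.

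The proof of \eqref{inT2} rests on the crucial fact that $S \cdot S \subseteq S + \cl K$. The only non-trivial part is $(\ol R_\d)_{\{1\}} \cdot (\ol R_\d)_{\{1\}} \subseteq K$: writing the two factors as $\eta(x'), \eta(y')$ with $x', y' \in (R^{\Lda_0}_\d)_{\{1\}}$ via Corollary~\ref{cor:d1_01}, the product lies in $\eta((R^{\Lda_0}_\d)_{\{2\}})$, and by Proposition~\ref{prop:d1basis} every basis element $\psi_{w_{\bj,\bi}} y_e e(\bi)$ of $(R^{\Lda_0}_\d)_{\{2\}}$ maps under $\eta$ to $\bar\psi_{w_{\bj,\bi}}(\bar y_e - \bar y_1)\bar e(\bi)$, which lies in $K$ because $\bar y_e - \bar y_1 \in K$. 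An induction then yields $S^k \subseteq S + \cl K$ for all $k$. Combining this with the preliminary identity (now used for arbitrary $a \in S$), any product of generators of the subalgebra in \eqref{inT2} rearranges as $w \cdot \iota(b)$ with $w$ a word in the $\tau_r$ and $b \in S + \cl K$; by \eqref{inT1}, $w \in T_{\{0\}}$, so $w \iota(b) \in T_{\{0\}} \iota(S) + T_{\{0\}} \cl J \subseteq T + \cl J$, using $S_{\{0\}} \cdot S \subseteq S$ (whence $T_{\{0\}} \iota(S) \subseteq T$) and that $\cl J$ is two-sided by Lemma~\ref{lem:J}. The main obstacle will be the explicit derivation of the formula for $\s_r^2 e_{\d^d}$; the remainder is largely a rearrangement exercise based on this identity and the $S \cdot S \subseteq S + \cl K$ reduction.
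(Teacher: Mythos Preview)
Your argument for part~\eqref{inT1} has a genuine circularity. You invoke $\tau_r^2 = e_{\d^d}$ to derive the formula $\s_r^2 e_{\d^d} = \s_r\iota(b_1)+\iota(b_2)$, but $\tau_r^2 = e_{\d^d}$ is precisely Theorem~\ref{thm:Theta}\eqref{Theta1}, whose proof in the paper \emph{uses} Lemma~\ref{lem:inT}. At the point where this lemma appears, only part~\eqref{Theta3} of Theorem~\ref{thm:Theta} has been established; parts~\eqref{Theta1},~\eqref{Theta2},~\eqref{Theta4} are proved afterwards and rely on it. So your derivation of the key formula for $\s_r^2 e_{\d^d}$---and with it the entire inductive machinery you build on top of it---is not available.

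The paper's proof of~\eqref{inT1} avoids this entirely and is a one-liner: since each $\tau_r$ has degree $0$, any product $\tau_{r_1}\cdots\tau_{r_m}$ lies in $(e_{\d^d}\ol R_{d\d}e_{\d^d})_{\{0\}}$, and Corollary~\ref{cor:01inT} gives $(e_{\d^d}\ol R_{d\d}e_{\d^d})_{\{0,1\}}\subset T$, so the product is in $T_{\{0\}}$. No commutation identity, no induction on word length, no formula for $\s_r^2$ is needed.

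Your argument for~\eqref{inT2} is essentially correct and close in spirit to the paper's, once~\eqref{inT1} is in hand. The paper streamlines it by working with the subalgebra $\scr S = H((R^{\Lda_0}_\d)^{\ot d})$, which contains $\iota(S)$, is closed under multiplication, and satisfies $\scr S\tau_r=\tau_r\scr S$ by Theorem~\ref{thm:Theta}\eqref{Theta3}; this lets one collapse any word in the generators to the form $\tau_{r_1}\cdots\tau_{r_m}\scr S$ without the separate verification that $S\cdot S\subset S+\cl K$ (the paper instead observes $\scr S\subset\iota(S)+\cl K$ directly from Proposition~\ref{prop:d1basis}). Also note that only the left-ideal property of $\cl J$ is actually needed here, not Lemma~\ref{lem:J}.
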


\begin{proof}
 By Corollary~\ref{cor:01inT}, we have $\tau_{r_1} \cdots \tau_{r_m}\in (e_{\d^d} \ol{R}_{d\d} e_{\d^d})_{\{0\}} \subset T_{\{0\}}$ for any $r_1,\ldots,r_{m}\in \{1,\ldots, d-1\}$, which proves~\eqref{inT1}.

 Let $\scr S = H((R^{\Lda_0}_{\d})^{\ot d})$. 
It follows from the definition of $\eta$ that 
$\iota(S) \subset \scr S$. 
By Theorem~\ref{thm:Theta}\eqref{Theta3}, $\scr S \tau_r = \tau_r \scr S$ for $1\le r<d$. Hence, the subalgebra defined in~\eqref{inT2} is contained in the sum of 
the subspaces of the form $\tau_{r_1} \cdots \tau_{r_m} \scr S$ with $r_1,\ldots,r_m \in \{1,\ldots,d-1\}$. 
We have 
\begin{equation}\label{eddT}
T_{\{0\}} \iota(S) = \sum_{u\in \fr S_d} \sigma_u \iota(S_{\{0\}})\iota(S)=
\sum_{u\in \fr S_d} \sigma_u \iota(S) = T,
\end{equation}
where the first equality holds by Lemma~\ref{lem:Bd}\eqref{Bd1}. 
For $1\le r\le d$, we have $\eta_r ((R^{\Lda_0}_{\d})_{\{2\}}) \subset \cl K$ by Proposition~\ref{prop:d1basis}. Hence, 
$\scr S\subset \iota(S)+\cl K\subset \iota(S)+\cl J$. Therefore, 
\[
 \tau_{r_1} \cdots \tau_{r_m} \scr S \subset \tau_{r_1} \cdots \tau_{r_m} \iota(S) + \cl J \subset T_{\{0\}} \iota(S)+ \cl J \subset T+ \cl J. \qedhere 
\]
\end{proof}

By Lemma~\ref{lem:J}, we have a natural algebra structure on
$e_{\d^d} \ol R_{d\d} e_{\d^d}/{\cl J}$. 

\begin{lem}\label{lem:zvphi} For $1\le t\le d$, 
the equality
$z_t \vphi_{w_r} e_{\d^d} = \vphi_{w_r} z_{s_r (t)}$ holds in 
$e_{\d^d}\ol{R}_{d\d} e_{\d^d}/\cl J$. 
\end{lem}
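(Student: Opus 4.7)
The plan is to verify the identity by a direct application of the intertwiner formula from Lemma~\ref{lem:KaKaKi}\eqref{KaKaKi3}. That lemma, applied in $R_{d\d}$ (and hence in the quotient $\ol R_{d\d}$), states $\vphi_w y_s = y_{w(s)} \vphi_w$, or equivalently $y_s \vphi_w = \vphi_w y_{w^{-1}(s)}$. I will use this with $w = w_r$ and $s = (t-1)e+1$, the leftmost index of the block $X_t$.

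The combinatorial input I need is the value of $w_r^{-1}((t-1)e+1)$. From the definition~\eqref{w_r}, $w_r$ is an involution that swaps the blocks $X_r$ and $X_{r+1}$ pointwise in order-preserving fashion, and fixes every element outside $X_r \cup X_{r+1}$. In particular, $w_r$ sends the least element of $X_t$ to the least element of $X_{s_r(t)}$, where $s_r = (r,r+1) \in \fr S_d$. Hence $w_r^{-1}((t-1)e+1) = w_r((t-1)e+1) = (s_r(t)-1)e+1$.

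Combining these, I obtain in $\ol R_{d\d}$ the equality
\begin{equation*}
y_{(t-1)e+1}\, \vphi_{w_r} e_{\d^d} \;=\; \vphi_{w_r}\, y_{(s_r(t)-1)e+1}\, e_{\d^d}.
\end{equation*}
Since $y_{(t-1)e+1}$ commutes with $e_{\d^d}$ by~\eqref{rel:ycomm}, the left-hand side equals $\bigl(y_{(t-1)e+1} e_{\d^d}\bigr)\bigl(\vphi_{w_r} e_{\d^d}\bigr)$, which reduces modulo $\cl J$ to $z_t \vphi_{w_r} e_{\d^d}$ by the definition of $z_t$. Similarly, $y_{(s_r(t)-1)e+1} e_{\d^d}$ represents $z_{s_r(t)}$ in the quotient, and the right-hand side reduces to $\vphi_{w_r} z_{s_r(t)}$ (noting that $\vphi_{w_r} e_{\d^d} \in e_{\d^d} \ol R_{d\d} e_{\d^d}$ since $w_r$ preserves the set $(I^{\vn,1})^{\times d}$ of index sequences supporting $e_{\d^d}$, so left and right multiplication by $\vphi_{w_r} e_{\d^d}$ pass through $\cl J$ by Lemma~\ref{lem:J}).

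There is no real obstacle here: the argument is purely a one-line commutation using Lemma~\ref{lem:KaKaKi}\eqref{KaKaKi3}, plus the elementary observation about how $w_r$ permutes the distinguished left endpoints $(t-1)e+1$ of the blocks $X_t$. The only point that deserves a sentence of justification is that the identity genuinely lives in the quotient $e_{\d^d} \ol R_{d\d} e_{\d^d}/\cl J$, which follows from Lemma~\ref{lem:J}.
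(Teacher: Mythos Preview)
Your argument is correct and follows essentially the same route as the paper's proof: both reduce the claim to the commutation $y_{(t-1)e+1}\vphi_{w_r}=\vphi_{w_r}y_{(s_r(t)-1)e+1}$, which is an instance of Lemma~\ref{lem:KaKaKi}\eqref{KaKaKi3}, and then pass to the quotient. One small correction: the relation you cite as~\eqref{rel:ycomm} is $y_ry_s=y_sy_r$; the fact that $y_{(t-1)e+1}$ commutes with $e_{\d^d}$ is the (unlabelled) relation $y_re(\bi)=e(\bi)y_r$ just above~\eqref{rel:epsi}.
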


\begin{proof}
We have 
\[
z_t \vphi_{w_r} e_{\d^d} =  
y_{(t-1)e+1} \vphi_{w_r} e_{\d^d} + \cl J 
= \vphi_{w_r} y_{(s_r (t)-1)e+1} e_{\d^d} + \cl J = \vphi_{w_r} z_{s_r (t)},
\]
where the second equality is due to Lemma~\ref{lem:KaKaKi}\eqref{KaKaKi3}. 
\end{proof}


\begin{proof}[Proof of Theorem~\ref{thm:Theta}
(\ref{Theta1}),(\ref{Theta2}),(\ref{Theta2a}),(\ref{Theta4})]
Part~\eqref{Theta2a} is clear from the definitions. 
For the remaining parts,
we will repeatedly use Lemma~\ref{lem:zvphi} 
without explicit reference.
By Remark~\ref{rem:red}, we may assume that $r=1$. 
We consider part~\eqref{Theta1}. By Lemma~\ref{lem:KaKaKi}\eqref{KaKaKi5}, 
\begin{equation}\label{prTheta1_1}
\vphi_{w_1}^2 e_{\d^d} + \cl J= (-1)^{e} (z_1-z_2)^{2e}.
\end{equation} 
On the other hand, by~\eqref{tau_r}, 
\begin{align}
\notag \vphi_{w_1}^2 e_{\d^d} +\cl J &= (\tau (z_1-z_2)^e + \e (z_1-z_2)^{e-1}) \vphi_{w_1} \\
\notag &= (-1)^e \tau \vphi_{w_1}  (z_1-z_2)^e + 
(-1)^{e-1} \e \vphi_{w_1} (z_1-z_2)^{e-1} \\
\label{prTheta1_2} &= (-1)^e \tau^2 (z_1-z_2)^{2e} + (-1)^e \tau \e (z_1-z_2)^{2e-1} \\
\notag &+ (-1)^{e-1} \e \tau (z_1-z_2)^{2e-1} + (-1)^{e-1} \e^2 (z_1-z_2)^{2e-2}.  
\end{align}
By Lemma~\ref{lem:inT}\eqref{inT2}, we have 
$\e \tau, \tau \e, \e^2 \in T+\cl J$. 
Therefore, applying $\pi_{2e}$ to the right-hand sides of~\eqref{prTheta1_1} and~\eqref{prTheta1_2}, we obtain 
$(z_1-z_2)^{2e} = \tau^2 (z_1-z_2)^{2e}$. 
By Lemma~\ref{lem:inT}\eqref{inT1}, we have $\tau^2\in T$, whence $\tau^2 = e_{\d^d}$ by Lemma~\ref{lem:fintens}\eqref{ft2}. 

For~\eqref{Theta2}, writing $\nu_0=\tau_1$, $\nu_1=\e_1$, $\nu'_0=\tau_2$, $\nu'_1=\e_2$, we have
\begin{align*}
 \vphi_{w_1} \vphi_{w_2} \vphi_{w_1} e_{\d^d} + \cl J &= 
\sum_{a\in\{0,1\}} \nu_a (z_1-z_2)^{e-a} \vphi_{w_2} \vphi_{w_1} \\ 
&= \sum_{a\in \{0,1\}} \nu_a \vphi_{w_2} \vphi_{w_1} (z_2 - z_3)^{e-a} \\
&= \sum_{a,b\in \{0,1\}} \nu_a \nu'_b (z_2-z_3)^{e-b} \vphi_{w_1} 
(z_2 - z_3)^{e-a} \\ 
&= \sum_{a,b\in \{0,1\}} \nu_a \nu'_b \vphi_{w_1} (z_1-z_3)^{e-b} (z_2-z_3)^{e-a} \\
&= \sum_{a,b,c\in \{0,1\}} \nu_a \nu'_b \nu_c (z_1-z_2)^{e-c} 
(z_1-z_3)^{e-b} (z_2-z_3)^{e-a}.
\end{align*}

By Lemma~\ref{lem:inT}\eqref{inT2}, we have 
$\nu_a\nu'_b\nu_c\in T+\cl J$ in all cases.
Hence, $\pi_{3e}$ fixes the summand corresponding to 
$a=b=c=0$ and sends the other 7 summands to $0$, so 
\[
\pi_{3e} (\vphi_{w_1} \vphi_{w_2} \vphi_{w_1} e_{\d^d} + \cl J) = 
\tau_1 \tau_2 \tau_1 (z_1-z_2)^e (z_1-z_3)^e (z_2 - z_3)^e.
\] 
A similar computation yields 
\[
\pi_{3e} (\vphi_{w_2} \vphi_{w_1} \vphi_{w_2} e_{\d^d} + \cl J)
=\tau_{2} \tau_1 \tau_{2} (z_1-z_2)^e (z_1-z_3)^e (z_2 - z_3)^e.
\] 
By Lemma~\ref{lem:KaKaKi}\eqref{KaKaKi2}, we have 
$\vphi_{w_1} \vphi_{w_2} \vphi_{w_1} = \vphi_{w_2} \vphi_{w_1} \vphi_{w_2}$. 
By Lemmas~\ref{lem:inT}\eqref{inT1} and~\ref{lem:fintens}\eqref{ft2}, the equality 
$\tau_1 \tau_2 \tau_1 = \tau_2 \tau_1 \tau_2$ follows. 

For~\eqref{Theta4}, 
we compute
\[
 \vphi_{w_1} z_1 \vphi_{w_1} = \vphi_{w_1}^2 z_2 = (-1)^e (z_1-z_2)^{2e} z_2
\]
using~\eqref{prTheta1_1}
and, on the other hand, 
\begin{align*}
 \vphi_{w_1} z_1 \vphi_{w_1} &= 
(\tau (z_1-z_2)^e + \e (z_1-z_2)^{e-1}) z_1 \vphi_{w_1} \\
&=\tau z_1 \vphi_{w_1} (z_2-z_1)^e + \e \vphi_{w_1}z_2 (z_2-z_1)^{e-1} \\
&= (-1)^e \tau y_{1} \tau (z_1-z_2)^{2e} 
+ (-1)^e \tau z_1 \e (z_1-z_2)^{2e-1} + \e \vphi_{w_1} z_2 (z_2-z_1)^{e-1} \\
&= (-1)^e \tau y_{1} \tau (z_1-z_2)^{2e} 
+ (-1)^e \tau z_1 \e (z_1-z_2)^{2e-1} \\
&+ (-1)^{e-1} \e \tau z_2 (z_1-z_2)^{2e-1} + (-1)^{e-1} \e^2 z_2 (z_1-z_2)^{2e-2}. 
\end{align*}
Hence, using Lemma~\ref{lem:inT}\eqref{inT2} (and the equality $z_1\e=\e z_1$), we obtain 
\[
 (-1)^e (z_1-z_2)^{2e} z_2 = \pi_{2e+1} (\vphi_{w_1} z_1 \vphi_{w_1}) 
= \pi_{2e+1} ((-1)^e \tau y_{1} \tau (z_1-z_2)^{2e}), 
\]
whence, by Lemma~\ref{lem:fintens}\eqref{ft2}, we have
$z_2 = \pi_1 (\tau y_1 \tau + \cl J)$. 
Since $\deg(\tau y_1 \tau)=2$, it follows  that 
$\tau y_1 \tau + \cl J = z_2 + (x+\cl J)$ for some $x\in T_{\{2\}}$. Hence,
$
y_{e+1} - \tau y_{1} \tau \in T+\cl J.
$
By Lemma~\ref{lem:Bd}\eqref{Bd1},
 $\cl J = \sum_{u\in \fr S_d} \sigma_u \cl K \subset T\cl K$, 
 and~\eqref{Theta4} follows. 
\end{proof}

Write $I^{(\vn,1)^d} = \{ (\bi^{(1)}\ldots \bi^{(d)})
\mid \bi^{(1)}, \ldots, \bi^{(d)} \in I^{\vn,1} \}$. 
Note that $e_{\d^d} = \sum_{\bi\in I^{(\vn,1)^d}} e(\bi)$ by Lemma~\ref{lem:sh1}. 
Recall that 
$(R^{\Lda_0}_{\d})^{\ot d}$ and $F\fr S_d$ are identified with subalgebras 
of $R^{\Lda_0} \wr \fr S_d$, and recall 
the homomorphism $H\colon (R_{\d}^{\Lda_0})^{\ot d} \to \ol{R}_{\d^d}$ defined before Lemma~\ref{lem:inT}.

\begin{cor}\label{cor:hom}
\begin{enumerate}[(i)]
\item\label{hom1}
 There is a graded unital algebra homomorphism 
\[
\Theta \colon R^{\Lda_0}_{\d}\wr \fr S_d \to e_{\d^d} \ol{R}_{d\d} e_{\d^d}
\]
 given by $\Theta (a)= H(a)$ 
and $\Theta(s_r) = \tau_r$ 
for all $a\in (R^{\Lda_0}_{\d})^{\ot d}$ and $1\le r<d$.
\item\label{hom2} Suppose that $\om\colon e_{\d^d} \ol{R}_{d\d} e_{\d^d}\thra A$ is a unital algebra homomorphism onto an algebra $A$ such that for all $\bi\in I^{(\vn,1)^d}$ we have 
$\om(y_1e(\bi))\in 
F\{ \om((y_e-y_1)e(\bi)) \}$. Then the composition 
$\om\circ \Theta \colon R^{\Lda_0}_{\d} \wr \fr S_d \to A$ is surjective. 
\end{enumerate}
\end{cor}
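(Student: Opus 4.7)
For part (i), I would present $R^{\Lda_0}_\d\wr\fr S_d$ by generators and relations: it is generated by the subalgebras $(R^{\Lda_0}_\d)^{\ot d}$ and $F\fr S_d$, subject to the relations of each factor, the crossing relations $s_r\cdot (1^{\ot r-1}\ot x\ot 1^{\ot d-r}) = (1^{\ot r}\ot x\ot 1^{\ot d-r-1})\cdot s_r$ (and its analogue with positions $r$ and $r+1$ swapped), and the commutation $s_r\cdot(1^{\ot t-1}\ot x\ot 1^{\ot d-t})=(1^{\ot t-1}\ot x\ot 1^{\ot d-t})\cdot s_r$ for $t\ne r,r+1$. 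Since $H=\iota\circ\eta^{\ot d}$ is an algebra homomorphism into $\ol R_{\d^d}\subset e_{\d^d}\ol R_{d\d}e_{\d^d}$, the tensor-algebra relations hold automatically. The symmetric-group relations for the $\tau_r$ are Theorem~\ref{thm:Theta}(i), (ii), (iia); the crossing relations at positions $r,r+1$ are Theorem~\ref{thm:Theta}(iii); and the commutation for $t\ne r,r+1$ holds because the decomposition $\tau_r=\sigma_r e_{\d^d}+c_r$ (with $c_r\in\iota(S^{(r)}_{\{0\}})$) has both summands supported in tensor positions $r$ and $r+1$, hence commutes with $\eta_t(x)$ which is supported at position $t$. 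Grading is preserved since $\deg(\tau_r)=0=\deg(s_r)$ (by Lemma~\ref{lem:Bd}(ii) and the definition of $c_r$) and $H$ is graded.

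For part (ii), set $B:=\im(\om\circ\Theta)\subset A$; this is a subalgebra, and the surjectivity of $\om$ reduces the goal to $\om(e_{\d^d}\ol R_{d\d}e_{\d^d})\subset B$. Combining Lemma~\ref{lem:Bd}(i) with Lemma~\ref{lem:Rbardiso} applied in each tensor slot of $\ol R_{\d^d}$ shows that $e_{\d^d}\ol R_{d\d}e_{\d^d}$ is generated as an algebra by $\{\sigma_r e_{\d^d}:1\le r<d\}$, $\bigcup_{r=1}^d\eta_r(R^{\Lda_0}_\d)$, and $\{y_{(r-1)e+1}e_{\d^d}:1\le r\le d\}$. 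The first family has $\om$-image in $B$ via $\tau_r=\sigma_r e_{\d^d}+c_r$ with $c_r\in\im H\subset\im\Theta$; the second is contained in $\im H$ directly via $\eta_r(x)=H(1^{\ot r-1}\ot x\ot 1^{\ot d-r})$. Thus the task reduces to proving $\om(y_{(r-1)e+1}e_{\d^d})\in B$ for $r=1,\dots,d$, which I would show by induction on $r$.

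For the base case $r=1$: given $\bi\in I^{(\vn,1)^d}$, the identity $(y_e-y_1)e(\bi)=\eta_1(y_e)\cdot e(\bi)$ holds in $\ol R_{d\d}$ (since $\eta(y_e)=\ol y_e-\ol y_1$), and both factors lie in $\im H$, so $\om((y_e-y_1)e(\bi))\in B$; the hypothesis then gives $\om(y_1e(\bi))\in F\om((y_e-y_1)e(\bi))\subset B$ for each $\bi$. Summing and using $e_{\d^d}=\sum_\bi e(\bi)$ (Lemma~\ref{lem:sh1}) yields $\om(y_1e_{\d^d})\in B$. For the inductive step $r\to r+1$, Theorem~\ref{thm:Theta}(iv) produces some $t\in\cl T$ with $y_{re+1}e_{\d^d}-\tau_r y_{(r-1)e+1}\tau_r=t$; comparing degrees forces $t\in\cl T_{\{2\}}$. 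Since $\om(\tau_r(y_{(r-1)e+1}e_{\d^d})\tau_r)=\om(\tau_r)\om(y_{(r-1)e+1}e_{\d^d})\om(\tau_r)\in B$ by the inductive hypothesis, the step reduces to $\om(\cl T_{\{2\}})\subset B$.

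This containment is the main obstacle, since $\cl T$ is generated by $T\cup\cl K$ and $\cl K\subset\ol R_{\d^d}$ a priori involves the very $y$-variables we are inducting on, so a naive attempt to control $\om(\cl T)$ in general is circular. The restriction to degree $2$ breaks the circularity. Since $\cl K$ is the left ideal of $\ol R_{\d^d}$ generated by the degree-$2$ elements $\eta_s(y_e)$ ($s=1,\dots,d$) and $\ol R_{\d^d}$ is nonnegatively graded, $\cl K_{\{2\}}=(\ol R_{\d^d})_{\{0\}}\cdot\{\eta_s(y_e):s\}$. By Lemma~\ref{lem:Rbardiso}, $(\ol R_\d)_{\{0\}}=\eta((R^{\Lda_0}_\d)_{\{0\}})$, so $(\ol R_{\d^d})_{\{0\}}=H((R^{\Lda_0}_\d)_{\{0\}}^{\ot d})\subset\im H$; together with $\eta_s(y_e)\in\im H$ this gives $\om(\cl K_{\{2\}})\subset B$. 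Any degree-$2$ element of $\cl T=\langle T\cup\cl K\rangle$ is a linear combination of products of elements of $T\cup\cl K$; since $T$ lies in nonnegative degrees and $\cl K$ in degrees $\ge 2$, at most one factor can come from $\cl K$ (necessarily in $\cl K_{\{2\}}$), with the rest in $T_{\{0\}}$. Hence $\cl T_{\{2\}}\subset\langle T\rangle_{\{2\}}+\langle T\rangle_{\{0\}}\cdot\cl K_{\{2\}}\cdot\langle T\rangle_{\{0\}}$; combined with $\om(T)\subset B$ (which follows from $\om(\sigma_u e_{\d^d})\in B$ and $\iota(S)\subset\im H$) and $B$ being a subalgebra, we conclude $\om(\cl T_{\{2\}})\subset B$, closing the induction.
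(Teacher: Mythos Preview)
Your proof is correct and follows essentially the same route as the paper's: part~(i) via Theorem~\ref{thm:Theta}(i)--(iii), and part~(ii) by reducing to a generating set, handling $\sigma_r e_{\d^d}$ and $\im H$ directly, and then running the induction on $r$ for $y_{(r-1)e+1}e_{\d^d}$ using Theorem~\ref{thm:Theta}(iv) together with the observation $\cl T_{\{2\}}\subset\im\Theta$. One small slip: the left ideal $\cl K$ is generated by all $\eta_s(y_l)$ for $2\le l\le e$ (equivalently, all $(y_t-y_{t'})e_{\d^d}$ with $t,t'$ in the same block $X_s$), not just by $\eta_s(y_e)$; your conclusion $\cl K_{\{2\}}\subset\im H$ is unaffected, since every such generator lies in $\im H$, but the stated generating set should be corrected.
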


\begin{proof}
 The fact that $\Theta$ is a homomorphism follows directly from Theorem~\ref{thm:Theta}\eqref{Theta1}--\eqref{Theta3}. 
Since $H$ is graded and 
$\deg(s_r)=0=\deg(\tau_r)$ for $1\le r<d$, we see that $\Theta$ is graded as well. 

For~\eqref{hom2}, observe 
that $\iota(S) \subset \im H \subset \im \Theta$, whence 
$\s_r e_{\d^d} \in \tau_r + \iota(S) \subset \im \Theta$ for $1\le r<d$.
By Lemma~\ref{lem:Bd}\eqref{Bd1} and Corollary~\ref{cor:d1_01}, 
the algebra 
$e_{\d^d} \ol{R}_{d\d} e_{\d^d}$ is generated by 
the subset $T \cup F[y_1,\ldots,y_{de}] e_{\d^d}$. Further,
$T= \sum_{u\in \fr S_d} \s_u \iota(S) \subset \im \Theta$ (cf.~\eqref{eddT}), so it is enough to show that $\om(y_l e_{\d^d})\in \im(\om\circ \Theta)$ whenever $1\le l\le de$. Note also that $\cl K= \cl K_{\ge 2}$ and
 $\cl K_{\{2\}} \subset \im H$ because
$\cl K_{\{2\}}$ is generated by $\iota(S)_{\{0\}}$ and the elements of the form
$(y_t-y_{t'}) e(\bi)$ with $t,t'\in \{(k-1)e+1,\dots,ke\}$ for some 
$k\in \{1,\ldots,d\}$ and
 $\bi\in I^{(\vn,1)^d}$, and all such elements 
 $(y_t-y_{t'}) e(\bi)$ belong to $\im H$ by definition of $\eta$. 

Let $\cl T$ be defined as in Theorem~\ref{thm:Theta}\eqref{Theta4};
then $\cl T_{\{2\}} \subset \im \Theta$. 
Now we prove by induction on $r$ that 
$\om(y_{re+1} e_{\d^d}) \in \im (\om \circ\Theta)$ for all $r=1,\ldots,d$.
The case $r=1$ holds by the hypothesis, 
and the inductive step follows from the fact that
$y_{re+1} e_{\d^d} - \tau_r y_{(r-1)e+1} \tau_r \in \cl T_{\{2\}}$ for $1\le r< d$, which is derived from Theorem~\ref{thm:Theta}\eqref{Theta4} by considering degrees. 
Since $\cl K_{\{2\}}\subset \im H$, it follows that $\om(y_l e_{\d^d})\in \im (\om \circ \Theta)$ for all $l$, as claimed.
\end{proof}

\section{Surjectivity of the homomorphism}\label{sec:surj}

Let $\cl H_{\rho,d} $ be a RoCK block of residue $\kappa$.
We have constructed all the maps in the diagram~\eqref{comp}: 
the homomorphism $\Om$ is defined after Proposition~\ref{prop:im_om}, 
and $\Theta$ is defined by Corollary~\ref{cor:hom}\eqref{hom1}.
Thus, we have a graded unital algebra homomorphism
$\Xi =\Xi^{(d)}:= \Om \circ \rot_{\k} \circ \Theta\colon R^{\Lda_0}_{\d} \wr \fr S_d \to C_{\rho,d}$.
Due to Proposition~\ref{prop:qdim}, in order to complete the proof of Theorem~\ref{thm:main2}, 
we only need to show that $\Xi$ is surjective. 
First, we state and prove Proposition~\ref{prop:d1nz}, which applies to the case when $d=1$.
Using Corollary~\ref{cor:hom}\eqref{hom2}, we will then deduce surjectivity of $\Xi$ in the general case in~\S\ref{subsec:endproof}.



\subsection{The case $d=1$}\label{subsec:d=1}
In this subsection, we assume that $\rho$ is a 
Rouquier $e$-core of residue $\k$ for the integer $1$, write $C=C_{\rho,1}$ 
and consider the homomorphism
$\Xi:=\Xi^{(1)} \colon R^{\Lda_0}_{\d} \to C$. 
By Proposition~\ref{prop:qdim}, 
we have $\qdim C = \qdim (R^{\Lda_0}_{\d})$, and hence 
$C= C_{\{0,1,2\}}$ by Proposition~\ref{prop:d1basis}. 

For $\bi\in I^{e}$, define
$e'(\bi)=\sum_{\bj\in I^{\rho,0}} e(\bj\bi)\in R^{\Lda_0}_{|\rho|+e}$. Set 
\begin{equation*} 
f=\BK_{|\rho|+e} (f_{\rho,1}) = \sum_{\bj\in I^{\rho,0}, \, \bi\in I^{\vn,1}_{+\k}} e(\bj\bi)\in R^{\Lda_0}_{|\rho|+e};
\end{equation*}
the second equality follows from~\eqref{BKf} and Proposition~\ref{prop:im_om}\eqref{im_om2}.
By the definitions, the map 
$\Xi\colon R^{\Lda_0}_{\d} \to C_{\rho,1}$ is given by 
\begin{equation}\label{Xi1}
e(\bi) \mapsto e'(\bi^{+\k}), 
\quad \psi_r \mapsto \psi_{|\rho|+r} f, \quad 
y_t \mapsto (y_{|\rho|+t}-y_{|\rho|+1}) f
\end{equation}
for $\bi\in I^{\d}$, $1\le r<e$ and $1\le t\le e$. 
 By Proposition~\ref{prop:im_om}, the algebra $C$ is generated by the set
\begin{equation*} 
 \{ e'(\bi^{+\k}) \mid \bi \in I^{\vn,1} \} \cup 
\{ \psi_{|\rho|+r} f \mid 1\le r<e\} 
\cup \{ y_{|\rho|+r} f \mid 1\le r\le e\}.
\end{equation*}
It follows that $\Xi$ restricts to a surjective vector space homomorphism from
$(R^{\Lda_0}_\d)_{<2}$ onto $C_{<2}$, which is seen 
to be an isomorphism by comparing dimensions.
In particular, $R^{\Lda_0}_{\d}/(R^{\Lda_0}_{\d})_{\{2\}} \cong C/C_{\{2\}}$ as graded algebras.

\enlargethispage{1.5mm}
\begin{lem}\label{lem:dimC2}
 For all $\bi \in I^{\vn,1}$, we have $\dim(e'(\bi^{+\k}) C_{\{2\}} e'(\bi^{+\k}))=1$.
\end{lem}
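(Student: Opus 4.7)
The plan is to establish the lemma by a dimension count, computing $\qdim(e'(\bi^{+\k}) C e'(\bi^{+\k}))$ explicitly and reading off the $q^2$-coefficient. The graded tensor decomposition~\eqref{eq:tensoriso} specialises at the idempotent $e'(\bi^{+\k})\in C$ to an isomorphism
\[
 e'(\bi^{+\k}) f R^{\Lda_0}_{\cont(\rho)+\d} f e'(\bi^{+\k}) \;\cong\; R^{\Lda_0}_{\cont(\rho)} \ot \bigl(e'(\bi^{+\k}) C e'(\bi^{+\k})\bigr)
\]
of graded vector spaces (since $f\cdot e'(\bi^{+\k}) = e'(\bi^{+\k})$ corresponds to $1\otimes e'(\bi^{+\k})$). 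Hence it suffices to compute the graded dimension of the left-hand side and divide by $\qdim R^{\Lda_0}_{\cont(\rho)}$.

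First I would unwind the idempotent $e'(\bi^{+\k}) = \sum_{\bj\in I^{\rho,0}} e(\bj\bi^{+\k})$ and apply Theorem~\ref{thm:BKdim} to write
\[
 \qdim e(\bj\bi^{+\k}) R^{\Lda_0}_{|\rho|+e} e(\bj'\bi^{+\k}) = \sum_{\lda\in\Par_e(\rho,1)} \sum_{\mtt s,\mtt t\in \Std(\lda),\, \bi^{\mtt s}=\bj\bi^{+\k},\, \bi^{\mtt t}=\bj'\bi^{+\k}} q^{\deg(\mtt s)+\deg(\mtt t)}.
\]
Since $\bj,\bj'\in I^{\rho,0}$, every such $\mtt s,\mtt t$ lies in $\Std'_e(\rho,1)$ and splits under the map $\b$ of Section~\ref{sec:comb} into a tableau of shape $\rho$ with residue sequence $\bj$ (resp.~$\bj'$) together with a single standard tableau of size $e$ whose residue sequence is $\bi$ (after the $\k$-shift is removed). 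Lemma~\ref{lem:hooktableaux} tells me that there are exactly two such size-$e$ tableaux, one of shape $(i_e+1,1^{e-i_e-1})$ and degree $1$, and one of shape $(i_e,1^{e-i_e})$ and degree $0$, and Lemma~\ref{lem:betadeg} guarantees that the degree splits additively under $\b$. Each of the two possible hook shapes contributes one diagonal pair $(\mtt s, \mtt t)$ (with matching shape) to the sum for each choice of $(\mtt u,\mtt u')$ with $\bi^{\mtt u}=\bj,\bi^{\mtt u'}=\bj'$, yielding the factor $(1+q^2)$.

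Summing over $\bj,\bj'\in I^{\rho,0}$ and applying Theorem~\ref{thm:BKdim} again in the $e$-core block, I get
\[
 \qdim\bigl(e'(\bi^{+\k}) f R^{\Lda_0}_{\cont(\rho)+\d} f e'(\bi^{+\k})\bigr) = (1+q^2)\,\qdim R^{\Lda_0}_{\cont(\rho)}.
\]
Comparing with the tensor isomorphism above, I conclude $\qdim(e'(\bi^{+\k}) C e'(\bi^{+\k})) = 1+q^2$, so the degree-$2$ component has dimension exactly $1$. No step here is really an obstacle: the whole argument is a refinement of the graded dimension computation already carried out in Proposition~\ref{prop:qdim}, specialised to a single pair of idempotents, and the combinatorial book-keeping relies only on the $d=1$ case of the tableau decomposition via $\b$ and on Lemma~\ref{lem:hooktableaux}.
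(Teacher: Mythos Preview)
Your proof is correct and follows essentially the same route as the paper's own argument: both compute $\qdim\bigl(e'(\bi^{+\k}) R^{\Lda_0}_{\cont(\rho)+\d} e'(\bi^{+\k})\bigr)$ via Theorem~\ref{thm:BKdim}, factor the resulting tableau sum using the decomposition $\b$ together with Lemmas~\ref{lem:preimage},~\ref{lem:betadeg} and~\ref{lem:hooktableaux}, and then divide by $\qdim R^{\Lda_0}_{\cont(\rho)}$ via the tensor isomorphism~\eqref{eq:tensoriso} to obtain $\qdim\bigl(e'(\bi^{+\k}) C e'(\bi^{+\k})\bigr)=1+q^2$.
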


\begin{proof}
 Recall the notation introduced before Lemma~\ref{lem:preimage}.
By Theorem~\ref{thm:BKdim} and Lemma~\ref{lem:CK}, 
$\qdim(e'(\bi^{+\k}) R^{\Lda_0}_{\cont(\rho)+\d} e'(\bi^{+\k}))=
\sum_{\mtt t,\mtt u} q^{\deg(\mtt t)+\deg(\mtt u)}$
where the sum is over all pairs $(\mtt t,\mtt u)\in \Std'_e (\rho,d)^{\times 2}$
such that $\Shape(\mtt t)=\Shape(\mtt u)$ and,
if $\b(\mtt t)=(\mtt t_{\le |\rho|}, \mtt s)$ and 
$\b(\mtt u)= (\mtt u_{\le |\rho|}, \mtt r)$, then $\bi^{\mtt s} = \bi^{\mtt r} = \bi$. 
By Lemmas~\ref{lem:preimage} and~\ref{lem:betadeg} (together with Theorem~\ref{thm:BKdim}), 
this sum is equal to 
 $\qdim(R_{\cont(\rho)}^{\Lda_0}) \sum_{\mtt s,\mtt r} q^{\deg(\mtt s)+\deg(\mtt r)}$, where 
 the sum is over all pairs $(\mtt s,\mtt r)$ of standard tableaux 
 such that $\bi^{\mtt s} = \bi^{\mtt r} = \bi$ and $\Shape(\mtt s)=\Shape(\mtt r)$. 
 Dividing by $\qdim(R_{\cont(\rho)}^{\Lda_0})$ and using 
 the isomorphism~\eqref{eq:tensoriso}, we deduce that 
 $\qdim(e'(\bi^{+\k}) C e'(\bi^{+\k})) = \sum_{\mtt s,\mtt r} q^{\deg(\mtt s)+\deg(\mtt r)}$, and the result follows by Lemma~\ref{lem:hooktableaux}.
 \end{proof}

\begin{prop}\label{prop:d1nz}
For every $\bi\in I^{\vn,1}$, we have 
$(y_{|\rho|+e}-y_{|\rho|+1})e'(\bi^{+\k})\ne 0$ in 
$R^{\Lda_0}_{\cont(\rho)+\d}$, and hence 
$y_{|\rho|+1}e'(\bi^{+\k}) \in F(y_{|\rho|+e}-y_{|\rho|+1})e'(\bi^{+\k})$.
\end{prop}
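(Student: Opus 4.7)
The ``hence'' statement is a formal consequence of Lemma~\ref{lem:dimC2}: since the image of $\iota^{\cont(\rho)+\d}_{\cont(\rho)}\colon R^{\Lda_0}_{\cont(\rho)}\to R^{\Lda_0}_{\cont(\rho)+\d}$ is generated by $e(\bj)$, $\psi_r$ (for $r<|\rho|$) and $y_s$ (for $s\le|\rho|$), all of which commute with $y_{|\rho|+1}$ by relations~\eqref{rel:ycomm} and~\eqref{rel:ypsi0}, the element $y_{|\rho|+1}e'(\bi^{+\k})$ lies in $e'(\bi^{+\k})C_{\{2\}}e'(\bi^{+\k})$. That space is one-dimensional by Lemma~\ref{lem:dimC2}, so once the first assertion is established, the claimed inclusion follows.

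For the main (nonvanishing) assertion, I would realise the element in a Specht module. By the Rouquier property of $\rho$ together with Lemma~\ref{lem:CK}, one may choose $\lda\in\Par_e(\rho,1)$ so that $\lda\setminus\rho$ is a translate of the Young diagram $(i_e+1,1^{e-i_e-1})$ with top-left box of residue~$\k$. Fix any $\mtt u\in\Std(\rho)$, set $\bj:=\bi^{\mtt u}\in I^{\rho,0}$, and extend $\mtt u$ to $\mtt t\in\Std(\lda)$ by filling the hook in positions $|\rho|+1,\ldots,|\rho|+e$ using the (translated) unique standard tableau of that hook shape with residue sequence $\bi$ supplied by Lemma~\ref{lem:hooktableaux}. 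Then $\bi^{\mtt t}=\bj\bi^{+\k}$, and the boxes $\mtt t(|\rho|+1)$ and $\mtt t(|\rho|+e)$ are the top-left and the arm-tip of the hook, having integer contents $\k_{\mathrm{rep}}$ and $\k_{\mathrm{rep}}+i_e$ respectively (for any fixed integer representative $\k_{\mathrm{rep}}$ of~$\k$).

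Work in the Dipper--James Specht module $S^{\lda}$ with its Murphy basis $\{m_{\mtt s}\}$. Via the Brundan--Kleshchev isomorphism (Theorem~\ref{thm:BKiso_detailed}), the element $y_t e(\bm)$ corresponds to $(1-\xi^{-m_t}X_t)e(\bm)$ in the nondegenerate case (respectively to $(X_t-m_t)e(\bm)$ in the degenerate case), where $X_t\in\cl H_{|\rho|+e}$ is a Jucys--Murphy element. By the standard formula for Jucys--Murphy actions, $X_t m_{\mtt t} = [c_{\mtt t}(t)]_\xi m_{\mtt t} + (\text{strictly lower terms in the dominance order})$. A short direct calculation (using $\xi^{c_{\mtt t}(t)}=\xi^{m_t}$ since $c_{\mtt t}(t)\equiv m_t\pmod e$) then yields that the coefficient of $m_{\mtt t}$ in $(y_{|\rho|+e}-y_{|\rho|+1})m_{\mtt t}$ equals
\[
\frac{\xi^{-\k}(\xi^{-i_e}-1)}{\xi-1}
\]
in the nondegenerate case, which is nonzero because $i_e\not\equiv 0\pmod e$ (as $\bi\in I^{\vn,1}$ has pairwise distinct entries with $i_1=0$, cf.~Lemma~\ref{lem:hkdesc}). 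Hence $(y_{|\rho|+e}-y_{|\rho|+1})m_{\mtt t}\ne 0$ in $S^{\lda}$, which forces $(y_{|\rho|+e}-y_{|\rho|+1})e(\bj\bi^{+\k})\ne 0$ in $R^{\Lda_0}_{\cont(\rho)+\d}$; mutual orthogonality of the idempotents $e(\bj'\bi^{+\k})$ over $\bj'\in I^{\rho,0}$ then gives the required nonvanishing of $(y_{|\rho|+e}-y_{|\rho|+1})e'(\bi^{+\k})$.

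\emph{Main obstacle.} The degenerate case $\xi=1$ (so $\Char F=e$) is the delicate one: the leading Murphy coefficient above specialises to $0/0$, and in fact for this choice of $\lda$ the element $(y_{|\rho|+e}-y_{|\rho|+1})$ can act trivially on $S^{\lda}$. One must therefore either pass to an integral form over a discrete valuation ring whose generic fibre affords the nondegenerate calculation and check that the element survives reduction, or analyse the subleading Murphy-basis coefficients in a setting where the weight space $e(\bj\bi^{+\k})S^{\lda}$ has dimension greater than one (which is possible for non-empty Rouquier core~$\rho$).
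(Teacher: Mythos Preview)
Your reduction of the ``hence'' clause to Lemma~\ref{lem:dimC2} is fine and matches the paper. The main argument, however, has a genuine error that affects the nondegenerate case just as much as the degenerate one.

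You write $y_t e(\bm)=(1-\xi^{-m_t}X_t)e(\bm)$, which is correct for the elements $X_t$ of the paper (those satisfying $X_{r+1}=\xi^{-1}T_rX_rT_r$). For \emph{these} Jucys--Murphy elements the diagonal action on the Murphy basis is $X_t m_{\mtt t}=\xi^{c_{\mtt t}(t)}m_{\mtt t}+(\text{lower})$, not $[c_{\mtt t}(t)]_\xi m_{\mtt t}+(\text{lower})$; the $q$-integer eigenvalue belongs to a different normalisation of the Jucys--Murphy elements. With the correct eigenvalue one gets
\[
(1-\xi^{-(\k+i_e)}\xi^{\k_{\mathrm{rep}}+i_e})-(1-\xi^{-\k}\xi^{\k_{\mathrm{rep}}})=0,
\]
so the coefficient of $m_{\mtt t}$ in $(y_{|\rho|+e}-y_{|\rho|+1})m_{\mtt t}$ vanishes. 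This had to happen: each $y_t$ is nilpotent on $R^{\Lda_0}_{\cont(\rho)+\d}$, hence on every weight space of every module, so a ``leading Murphy coefficient'' argument can never detect nonvanishing of any polynomial in the $y_t$'s. Your formula $\frac{\xi^{-\k}(\xi^{-i_e}-1)}{\xi-1}$ arose precisely from mixing the two Jucys--Murphy conventions. The obstruction you flag for $\xi=1$ is therefore not specific to that case; the whole line of attack via diagonal entries fails, and one would need genuine off-diagonal information in the Murphy basis, which is considerably harder.

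The paper avoids Specht modules entirely. For $e\ge 3$ it argues structurally: $C=C_{\rho,1}$ is a symmetric algebra concentrated in degrees $0,1,2$ with a single block, so the symmetrising form gives a nondegenerate pairing $C_{\{1\}}\times C_{\{1\}}\to F$. Combined with the identity $\psi_{w_{\bi,\bj}}\psi_{w_{\bj,\bi}}e(\bi)=\pm y_e e(\bi)$ from Lemma~\ref{lem:d1gen}, nonvanishing of the degree-$1$ element $\Xi(\psi_{w_{\bi,\bj}}e(\bj))$ forces $(y_{|\rho|+e}-y_{|\rho|+1})e'(\bi^{+\k})\ne 0$. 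The case $e=2$ is handled separately by a direct computation through $\BK$ and a projectivity argument for $\cl H_2$-modules.
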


If the first statement of the proposition holds, then, by Lemma~\ref{lem:dimC2}, 
$e'(\bi^{+\k}) C_{\{2\}} e'(\bi^{+\k}) = F(y_{|\rho|+e}-y_{|\rho|+1})e'(\bi^{+\k})$ for any $\bi\in I^{\vn,1}$, and the second statement of the proposition follows.
So we only need to prove the first statement.

\begin{proof}[Proof of Proposition~\ref{prop:d1nz} for $e\ge 3$]
It is well known that the algebra $R^{\Lda_0}_{d+e} \cong \cl H_{|\rho|+e}$ is symmetric 
(see e.g.~\cite[Corollary V.5.4]{SkowronskiYamagata2011}). 
Hence, writing $f=\BK_{|\rho|+e} (f_{\rho,1})$, we see that
$f R^{\Lda_0}_{\cont(\rho)+\d} f= fR^{\Lda_0}_{|\rho|+e}f$ is symmetric as well 
by~\cite[Theorem IV.4.1]{SkowronskiYamagata2011}.
Further, by Proposition~\ref{prop:wt0} and the isomorphism~\eqref{eq:tensoriso}, the algebra $C:=C_{\rho,1}$ is Morita equivalent to $f R^{\Lda_0}_{\cont(\rho)+\d} f$, 
whence $C$ is also symmetric by~\cite[Corollary IV.4.3]{SkowronskiYamagata2011}. 
Thus, $C$ is isomorphic to $C^*:=\Hom_F (C,F)$ as a $(C,C)$-bimodule. 
Note that we have a grading on $C^*$ defined in the usual way: for $n\in \mZ$ and $\xi\in C^*$, we have $\xi\in C^*_{\{n\}}$ if and only if $\xi|_{C_{\mZ\sm \{-n\}}} = 0$. 

 Note that $C$ has only one block, i.e.~is indecomposable as a $(C,C)$-bimodule. This follows, for example, from the fact that the indecomposable algebra $\cl H_{\rho,1}$ is Morita equivalent to $f_{\rho,1} \cl H_{\rho,1} f_{\rho,1}$ 
 by Proposition~\ref{prop:Mor}(\eqref{Mor2}$\Leftrightarrow$\eqref{Mor3}) proved below and hence is Morita equivalent to $C$. (Alternatively, 
 using Proposition~\ref{prop:d1basis},
 one can show without difficulty that $Z(R^{\Lda_0}_{\d}/(R^{\Lda_0}_{\d})_{\{2\}}))_{\{0\}}$ is $1$-dimensional, so 
 $Z(C)_{\{0\}}$ must also be $1$-dimensional because 
 $C/C_{\{2\}} \cong R^{\Lda_0}_{\d}/(R^{\Lda_0}_{\d})_{\{2\}}$.)
By~\cite[Lemma 2.5.3]{BeilinsonGinzburgSoergel1996}, 
it follows that $C^*\cong C\lan n\ran$ as graded $(C,C)$-bimodules 
for some $n\in \mZ$. Since $C= C_{\{0,1,2\}}$ with  
$C_{\{2\}} \ne 0$
and $C^* = C^*_{\{0,-1,-2\}}$,
we have $n=-2$. The graded  isomorphism $C\isoto C^*\lan 2\ran$ of $(C,C)$-bimodules sends $1$ to some element $\xi\in (C^*)_{\{-2\}}$ such that the bilinear form $C\times C \to F$ given by $(a,b)\mapsto \xi(ab)$ is symmetric and non-degenerate. 

Let $\bi\in I^{\vn,1}$. 
Since $e\ge 3$, we can find $\bj\in I^{\vn,1}$ such that 
$j_e \in \{i_e+1, i_e-1\}$. 
Then $\psi_{w_{\bi,\bj}} e(\bj)$ is a non-zero element of 
$(R^{\Lda_0}_\d)_{\{1\}}$ by Proposition~\ref{prop:d1basis}.
Hence, $\Xi(\psi_{w_{\bi,\bj}} e(\bj))\ne 0$, 
so there exists $a\in C_{\{1\}}$ 
such that $\xi( \Xi(\psi_{w_{\bi,\bj}} e(\bj)) a)\ne 0$. Since  
the bilinear form in question is symmetric and 
$\Xi(\psi_{w_{\bi,\bj}} e(\bj))=
e'(\bi^{+\k})\Xi(\psi_{w_{\bi,\bj}} e(\bj))$, we have
$\xi( \Xi(\psi_{w_{\bi,\bj}} e(\bj)) a e'(\bi^{+\k}))\ne 0$, so we may assume that 
$a=e'(\bj^{+\k}) ae'(\bi^{+\k})$. Now the subspace 
$e'(\bj^{+\k}) C_{\{1\}} e'(\bi^{+\k}) = 
\Xi( e(\bj) (R^{\Lda_0}_{\d})_{\{1\}} e(\bi) )$ is $1$-dimensional and is spanned by $\Xi( \psi_{w_{\bj,\bi}} e(\bi) )$ by Proposition~\ref{prop:d1basis}. This implies that 
\[
0 \ne \Xi(\psi_{w_{\bi,\bj}} e(\bj)) \Xi(\psi_{w_{\bj,\bi}} e(\bi))
= \Xi( \psi_{w_{\bi,\bj}} \psi_{w_{\bj,\bi}} e(\bi)) 
= \pm \Xi( y_e e(\bi)) = \pm (y_{|\rho|+e} - y_{|\rho|+1}) e'(\bi^{+\k}),
\]
where the penultimate equality holds by Lemma~\ref{lem:d1gen}\eqref{d1gen1}. 
\end{proof}

\subsection{A calculation involving the Brundan--Kleshchev isomorphism}\label{subsec:BKcalc}

In order to prove Proposition~\ref{prop:d1nz} for $e=2$, we will compute 
$\BK_{|\rho|+e} (T_{|\rho|+1} f_{\rho,1})$. 
For later use in Section~\ref{sec:alt}, we begin with a more general set-up and for now 
 let the integers $e\ge 2$ and $d\ge 1$ be arbitrary. 

Choose and fix an integer $N$ large enough so that 
for every $w\in \fr S_{de}$ and $\bi\in I^{d\d}$ we have 
$\deg(\psi_w e(\bi)) + N >2$. 
Let $\Lda = N(\Lda_0+\cdots +\Lda_{e-1}) \in P_+$.
It follows from Theorem~\ref{thm:basis} that the 2-sided ideal of $R_{d\d}$ generated by
$\{ y_t^N e_{d\d} \mid 1\le t\le de\}$ has a zero component in degree $m$ for all integers $m\le 2$. This ideal is the kernel of the canonical projection
$R_{d\d} \thra R^{\Lda}_{d\d}$, 
which therefore restricts to a vector space isomorphism on each $m$-component 
for $m\le 2$. 
Let $\ol{R}^{\Lda}_{d\d}$ be the quotient of $R^{\Lda}_{d\d}$ by 
the two-sided ideal generated by $\{ e(\bi) \mid \bi\in I^{d\d} \sm \cl E_d\}$ (cf.~the definition of $\ol{R}_{d\d}$ in~\S\ref{subsec:outline}). 
As usual, $\ol{\phantom{A}}$ denotes the natural projections 
$R_{de} \to \ol R_{d\d}$ 
and $R_{de}^{\Lda} \thra \ol R_{d\d}^{\Lda}$. 
As in~\S\ref{subsec:quot}, symbols that would ordinarily represent elements of $R_{de}$, will denote instead their images in $\ol R_{d\d}$. 
In the rest of this subsection and in Section~\ref{sec:alt}, we
abuse notation by identifying 
$(R_{d\d})_{\le 2}$ with $(R^{\Lda}_{d\d})_{\le 2}$ and
$(\ol{R}_{d\d})_{\le 2}$ with $(\ol R^{\Lda}_{d\d})_{\le 2}$. 
In particular, $\ol{\BK_{de}^{\Lda}(T_r)}_{\{0,1,2\}} e_{\d^d}$ 
is viewed as an element of $\ol R_{d\d}$ for $1\le r <de$. 

\begin{lem}\label{lem:BKcalc} Assume that $e=2$. If $1\le r\le d$, we have
\[
\ol{\BK_{2d}^{\Lda} (T_{2r-1})}_{\{0,1,2\}} e_{\d^d} =  
\begin{cases}
(1+y_{2r-1}-y_{2r}) e_{\d^d} & \text{if } \Char F =2, \\ 
\left(-1 + \frac{y_{2r}-y_{2r-1}}{2}\right) \! e_{\d^d} & \text{if } \Char F\ne 2. 
\end{cases}
\]
\end{lem}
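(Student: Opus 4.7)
The plan is to apply the explicit formula for $\BK_{2d}^\Lda(T_{2r-1})$ from Theorem~\ref{thm:BKiso_detailed} and then project to the degree $\{0,1,2\}$ part of $\ol R^\Lda_{d\d}$, using the identification $(\ol R_{d\d})_{\le 2} \cong (\ol R^{\Lda}_{d\d})_{\le 2}$ to land in $\ol R_{d\d}$. The key simplification specific to $e=2$ is that $I^{\vn,1} = \{(0,1)\}$ is a singleton (by Lemma~\ref{lem:hkdesc}, or by direct inspection of $\Par_2(\vn,1) = \{(2),(1,1)\}$), so $I^{(\vn,1)^d}$ consists of the single tuple $\bi_0 = (0,1,0,1,\dots,0,1)$ and $e_{\d^d} = e(\bi_0)$. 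Since $i_{2r-1} - i_{2r} = -1 \equiv 1$ in $I = \mZ/2\mZ$, formula~\eqref{BK1} yields
\[
\BK_{2d}^\Lda(T_{2r-1})\, e_{\d^d} = \bigl(\psi_{2r-1}\, Q_1(y_{2r-1},y_{2r}) - P_1(y_{2r-1},y_{2r})\bigr) e_{\d^d}.
\]

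The first step I would carry out is to show $\ol{\psi}_{2r-1}\, e_{\d^d} = 0$ in $\ol R_{d\d}$; this suffices to kill the $\psi_{2r-1}Q_1$ contribution, since the $y_t$'s commute with $e_{\d^d}$ and so each monomial $\psi_{2r-1}\, y_{2r-1}^a y_{2r}^b\, e_{\d^d}$ then also vanishes. By~\eqref{rel:epsi} we have $\psi_{2r-1} e(\bi_0) = e(s_{2r-1}\bi_0)\psi_{2r-1}$, so it is enough to verify that $s_{2r-1}\bi_0 \notin \cl E_d$. Using the description of $\cl E_d$ recalled in \S\ref{subsec:outline} -- membership requires a partition of $\{1,\dots,2d\}$ into pairs $\{a<b\}$ with $(i_a,i_b) = (0,1)$, i.e.\ a matching of each $0$-position to a strictly later $1$-position -- the necessary ballot-type condition fails: among the first $2r-1$ positions of $s_{2r-1}\bi_0$ there are $r$ ones (at $2,4,\dots,2r-2$ together with $2r-1$) but only $r-1$ zeros, so no such matching can exist.

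The second step is to project $-P_1(y_{2r-1},y_{2r})\, e_{\d^d}$ to degrees $\{0,1,2\}$. Since $\deg(y_t)=2$, this amounts to Taylor-expanding the power series $P_1(y,y')$ in the variables $y,y'$ and discarding everything of total degree $\ge 2$. For $e=2$ one has $\xi = -1$, and hence $\xi = 1$ exactly when $\Char F = 2$. If $\Char F = 2$, then~\eqref{Pideg} (with $i=1$) gives $P_1 = (1+y-y')^{-1} \equiv 1 - y + y' \pmod{(y,y')^2}$, so $-P_1$ truncates to $-1 + y - y' = 1 + y - y'$ in characteristic $2$, matching the claim. If $\Char F \ne 2$, then~\eqref{Pinondeg} (with $i=1$ and $\xi=-1$) gives
\[
P_1 = 2\bigl(1 + (1-y)(1-y')^{-1}\bigr)^{-1} = (1-y')\bigl(1 - (y+y')/2\bigr)^{-1} \equiv 1 + y/2 - y'/2 \pmod{(y,y')^2},
\]
so $-P_1$ truncates to $-1 + (y'-y)/2$, again matching the claim after substituting $y = y_{2r-1}$, $y' = y_{2r}$ and multiplying by $e_{\d^d}$. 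The computation is essentially routine once the reduction above is in place; the only slightly subtle point is the short combinatorial argument in the second step showing $s_{2r-1}\bi_0 \notin \cl E_d$, and this follows immediately from the explicit form of $\bi_0$.
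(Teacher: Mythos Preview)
Your proof is correct and follows essentially the same approach as the paper's: reduce to $-P_1(y_{2r-1},y_{2r})e_{\d^d}$ by killing the $\psi_{2r-1}$ term, then Taylor-expand $P_1$ to first order using~\eqref{Pideg} or~\eqref{Pinondeg}. The only cosmetic difference is that where the paper invokes Lemma~\ref{lem:psi1} together with the isomorphism $\iota\colon \ol{R}_\d^{\ot d}\isoto \ol{R}_{\d^d}$ to obtain $\psi_{2r-1}e_{\d^d}=0$, you verify directly via the ballot argument that $s_{2r-1}\bi_0\notin\cl E_d$; both amount to the same elementary observation.
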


\begin{proof}
By Lemma~\ref{lem:psi1} and the statements after Lemma~\ref{lem:UV}, 
$\psi_{2r-1} e_{\d^d} =0$. Note that $e_{\d^d} = e(0101\ldots 01)$. 
Hence, by~\eqref{BK1}, we have 
$\ol{\BK_{2d}^{\Lda} (T_{2r-1})} e_{\d^d} = -P_1 (y_{2r-1},y_{2r}) e_{\d^d}$.
Using the formulas~\eqref{Pideg} and~\eqref{Pinondeg} for $P_1$, 
one concludes the proof by an easy calculation (note that $\xi=-1$ if $\Char F\ne 2$). 
\end{proof}

\begin{proof}[Proof of Proposition~\ref{prop:d1nz} for $e=2$]
As in the statement of the proposition, we consider the case when $d=1$ and assume all the notation of~\S\ref{subsec:d=1}. Note that 
$\bi=(01)$, so $\bi^{+\k}$ is either $(01)$ or $(10)$, and 
$f:=\BK_{|\rho|+e} (f_{\rho,1}) = e'(\bi^{+\k})$. 
It follows from~\eqref{BK1} and the definitions of $\Om$ and $\rot_{\k}$ that 
$
 \BK_{|\rho|+2} (T_{|\rho|+1})_{\{0,1,2\}} f=
\Om(\rot_{\k}(\ol{\BK^{\Lda}_{2} (T_1)}_{\{0,1,2\}}  e_{\d}))
$
and that $\BK_{|\rho|+2} (T_{|\rho|+1}) f$
belongs to the unital subalgebra of 
$fR^{\Lda_0}_{|\rho|+2}f$ generated by the set 
$\{y_{|\rho|+1} f, y_{|\rho|+2} f \}$ 
 and hence to $C$ (note that we have $\psi_{|\rho|+1} f=\rot_{\k} (\Om(\psi_1 e_{\d}))=0$). 
Since $C=C_{\{0,1,2\}}$, 
we see that 
$\BK_{|\rho|+2} (T_{|\rho|+1}) f = \Om(\rot_{\k}(\ol{\BK^{\Lda}_{2} (T_1)}_{\{0,1,2\}} e_{\d}))$.
Assume for contradiction that 
$(y_{|\rho|+2}- y_{|\rho|+1}) f=0$. Using Lemma~\ref{lem:BKcalc}, we deduce that
$\BK_{|\rho|+2} (T_{|\rho|+1}) f = \Om(\rot_{\k} (e_{\d})) = -f$. 
Hence, $T_{|\rho|+1} f_{\rho,1} = -f_{\rho,1}$. 

Note that $T_{|\rho|+1}$ commutes with $f_{\rho,1}$.
Let us view $\cl H_{|\rho|+2}$ and 
 $M:=f_{\rho,1} \cl H_{|\rho|+2} f_{\rho,1}$ as $\cl H_2$-modules with $T_1$ acting via 
 left multiplication by $T_{|\rho|+1}$.
 It follows from the identity just proved that $M$
  is a direct sum of $1$-dimensional simple $\cl H_2$-modules. 
On the other hand, $M$ must be projective because it is a direct summand of 
$\cl H_{|\rho|+2}$, which is a free $\cl H_2$-module with basis 
$\{ T_w\mid w\in \ls{(1^{|\rho|},2)}{\scr D}_{|\rho|+2} \}$. 
This is a contradiction because the only indecomposable projective $\cl H_2$-module is $\cl H_2$ itself (note that $\cl H_2$ is isomorphic to the truncated polynomial algebra $F[x]/(x^2)$). 
\end{proof}

\subsection{Conclusion of the proof of Theorem~\ref{thm:main2}}\label{subsec:endproof}

We return to the case when $d\ge 1$ is arbitrary and 
$\cl H_{\rho,d}$ is a RoCK block of residue $\k$. As usual, write $f=\BK_{|\rho|+de} (f_{\rho,d})$. 
Let $\bi\in I^{(\vn,1)^d}$ (see the definition before Corollary~\ref{cor:hom}), and write 
$e'(\bi) = \sum_{\bj\in I^{\rho,0}} e(\bj\bi)\in fR^{\Lda_0}_{|\rho|+de} f$. 
For $1\le r\le e$, we have 
$(\Om\circ \rot_\k) (y_r e(\bi)) = y_{|\rho|+r} e'(\bi^{+\k})$.
Applying the map $x\mapsto \iota_{|\rho|+e}^{|\rho|+de}{(x)} e'(\bi)$ to 
the second statement of Proposition~\ref{prop:d1nz}, 
we see that 
$y_{|\rho|+1} e'(\bi^{+\k}) \in 
F(y_{|\rho|+e}-y_{|\rho|+1})e'(\bi^{+\k})$. 
Hence, by Corollary~\ref{cor:hom}\eqref{hom2}, 
the graded algebra homomorphism 
$\Xi\colon R^{\Lda_0}_{\d} \wr \fr S_d \to C_{\rho,d}$ 
is surjective, 
whence it is an isomorphism by Proposition~\ref{prop:qdim}.
The proof of Theorem~\ref{thm:main2} is complete.

\section{Two observations}\label{sec:concl}

\subsection{Another formula for the idempotent $f_{\rho,d}$}
Let $\rho$ be a Rouquier core for an integer $d\ge 0$. 
If $\cl O$ is an integral domain, $t\in \cl O^{\times}$ and $1\le r\le d$, let 
$\b_r \colon \cl H_e (\cl O,t)\to \cl H_{|\rho|+ed}(\cl O,t)$
be the unital algebra homomorphism defined by $T_j \mapsto T_{|\rho|+(r-1)e+j}$, $1\le j<e$.
As before, for $n\ge m\ge 0$, we view $\cl H_m (\cl O,t)$ as a subalgebra of $\cl H_n (\cl O,t)$ via the embedding $T_j\mapsto T_j$, $1\le j<m$.  

If $1\le r\le d$, let $b_{\vn,1}^{(r)}= \b_r (b_{\vn,1})$. 
Note that the idempotents $b_{\vn,1}^{(1)},\ldots,b_{\vn,1}^{(d)}$ commute with $f_{\rho,d}$ and with each other.
Turner~\cite[Chapter IV]{Turner2009} considers the idempotent $f_{\rho,d} b_{\vn,1}^{(1)} \cdots b_{\vn,1}^{(d)}$ rather than $f_{\rho,d}$ (cf.~also~\cite[Section 4]{ChuangKessar2002}), 
but the following proposition shows that this makes no difference. 

\begin{prop}\label{prop:falt}
 In any RoCK block $\cl H_{\rho,d}$, we have $f_{\rho,d} b_{\vn,1}^{(1)} \cdots b_{\vn,1}^{(d)} = f_{\rho,d}$. 
\end{prop}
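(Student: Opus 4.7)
The plan is to prove the stronger statement $f_{\rho,d}\, b_{\vn,1}^{(r)} = f_{\rho,d}$ for each fixed $r\in\{1,\ldots,d\}$; iterating over $r$ then yields the proposition. First I would verify that the idempotents $b_{\rho,0},\ldots,b_{\rho,d}, b_{\vn,1}^{(1)},\ldots,b_{\vn,1}^{(d)}$ pairwise commute: each $b_{\rho,s}$ is central in $\cl H_{|\rho|+se}$, and for distinct $r,r'$ the generators of $\b_r(\cl H_e)$, $\b_{r'}(\cl H_e)$, and of $\cl H_{|\rho|+(r-1)e}$ have indices differing by at least $2$ and hence commute by~\eqref{H3}. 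Using these commutations, the equality $f_{\rho,d}\,b_{\vn,1}^{(r)}=f_{\rho,d}$ reduces to the identity
\begin{equation*}
 b_{\rho,r-1}\,b_{\rho,r}\,b_{\vn,1}^{(r)} = b_{\rho,r-1}\,b_{\rho,r}\quad\text{in } \cl H_{|\rho|+re}.
\end{equation*}

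Since $1-b_{\vn,1}=\sum_{\rho'}b_{\rho',0}$, where $\rho'$ runs over $e$-cores of size $e$ other than $\vn$, this is equivalent to showing $b_{\rho,r-1}\,b_{\rho,r}\,b_{\rho',0}^{(r)}=0$ for each such $\rho'$. I would establish this by checking that the idempotent on the left kills every Specht module $S^\lda$ of $\cl H_{|\rho|+re}$: the central idempotent $b_{\rho,r}$ already annihilates $S^\lda$ when $\lda\notin\Par_e(\rho,r)$, so only $\lda\in\Par_e(\rho,r)$ needs attention, and the task becomes showing that $b_{\rho,r-1}\,b_{\rho',0}^{(r)}$ acts as zero on such an $S^\lda$.

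To that end, I would restrict $S^\lda$ to the commuting-product subalgebra $\cl H_{|\rho|+(r-1)e}\otimes \b_r(\cl H_e)\cong \cl H_{|\rho|+(r-1)e}\otimes \cl H_e \subset \cl H_{|\rho|+re}$, and invoke the Hecke-algebra branching rule (due to Dipper--James) to obtain a filtration with sections $S^\mu\bt S^{\lda/\mu}$ indexed by $\mu\subset\lda$ with $|\mu|=|\rho|+(r-1)e$, where $S^{\lda/\mu}$ is the skew Specht $\cl H_e$-module. On each such section, $b_{\rho,r-1}\,b_{\rho',0}^{(r)}$ acts as $(b_{\rho,r-1}S^\mu)\bt(b_{\rho',0}S^{\lda/\mu})$, and this vanishes unless $\mu\in\Par_e(\rho,r-1)$ and $S^{\lda/\mu}$ lies in the block $(\rho',0)$ of $\cl H_e$. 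This is where the RoCK hypothesis enters decisively: Lemma~\ref{lem:CK} forces $\lda/\mu$, whenever $\mu\in\Par_e(\rho,r-1)$ with $\mu\subset\lda$, to be a translate of the Young diagram of an $e$-hook partition $(k,1^{e-k})$. The main obstacle is the standard-but-nontrivial identification of the skew Specht $S^{\lda/\mu}$ for such a hook-shaped skew diagram with the ordinary hook Specht $S^{(k,1^{e-k})}$ for $\cl H_e$, which lies in the block $(\vn,1)\ne (\rho',0)$; once granted, $b_{\rho',0}S^{\lda/\mu}=0$, every section of the filtration is annihilated, and the proposition follows.
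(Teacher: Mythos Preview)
Your approach is correct and reaches the same intermediate reduction as the paper---namely, $b_{\rho,r-1}\,b_{\rho,r}\,b_{\vn,1}^{(r)} = b_{\rho,r-1}\,b_{\rho,r}$---but establishes it by a different mechanism. The paper lifts to the generic Hecke algebra $\cl H_n(K,t)$ over $K=F(t)$, which is semisimple; there the Littlewood--Richardson rule directly gives the decomposition of restrictions, and the vanishing of $c^{\lda}_{\mu\nu}$ for $\nu\in\Par(e)\setminus\Par_e(\vn,1)$ follows from Lemma~\ref{lem:CK}. One then projects the resulting identity $\tilde b_{\rho,r-1}\tilde b_{\rho,r}(1-\tilde b^{(r)}_{\vn,1})=0$ along $\cl H_n(\cl O,t)\thra\cl H_n$. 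Your route stays over $F$ and instead uses the Specht filtration of the restriction together with the identification of hook-shaped skew Specht modules with straight-shape Specht modules. The trade-off is that the paper's lift avoids invoking skew Specht module theory and the implicit lemma that an idempotent annihilating every Specht module is zero (which you need, via the Specht filtration of the regular module and nilpotence forcing $x=x^N=0$), while your argument avoids the integral form and the modular reduction of block idempotents. Both are sound; you should make explicit the ``idempotent $+$ kills all Spechts $\Rightarrow$ zero'' step, since it is where cellularity is genuinely used in your version.
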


\begin{proof}
As in~\cite[Section 5]{DipperJames1987},
 let $\cl O=F[t]_{(t-\xi)}$, the localisation of the polynomial ring $F[t]$ at the ideal $(t-\xi)F[t]$, and consider the field of fractions $K=F(t)$ of $\cl O$.
For any $n\ge 0$, we have  
$\cl H_n \cong \cl H_n (\cl O,t)/(t-\xi)F[t] \cl H_n (\cl O,t)$.
 Since $t$ is not a root of unity in $K$, the algebra $\cl H_n(K,t)$ is semisimple, with $\{ S^{\lda,K,t} \mid \lda\in \Par(n) \}$ 
being a complete set of non-isomorphic simple modules (see~\cite[Theorem 4.3]{DipperJames1987}). 
For every partition $\lda$ of $n$, let $b_{\lda}$ be the primitive central idempotent of $\cl H_n (K,t)$ such that $b_{\lda} S^{\lda,K,t} = S^{\lda,K,t}$. 
For any $(\pi,l) \in \Bl_e (n)$, let $\tilde{b}_{\pi,l}=\sum_{\lda\in \Par_e (\pi,l)} b_{\lda}$; further, let $\tilde{b}^{(r)}_{\vn,1}=\b_{r} (\tilde{b}_{\vn,1})$. It follows from the results of~\cite[Section 5]{DipperJames1987}
together with~\cite[Lemma 4.6 and Theorem 4.7]{DipperJames1987} that 
$\tilde{b}_{\pi,l} \in \cl H_n (\cl O,t)$ and that 
$b_{\pi,l}$ is the image of $\tilde{b}_{\pi,l}$ under the canonical projection $\cl H_n (\cl O,t)\thra \cl H_n$; further, $b_{\vn,1}^{(r)}$ is the image of $\tilde{b}_{\vn,1}^{(r)}$ under the canonical map $\cl H_{|\rho|+de} (\cl O,t) \thra \cl H_{|\rho|+de}$.   

If $\lda$, $\mu$ and $\nu$ are partitions with $|\lda|=|\mu|+|\nu|$, let $c^{\lda}_{\mu\nu}$ be the corresponding Littlewood--Richardson coefficient 
(see e.g.~\cite[Section 2.8]{JamesKerber1981}).
It follows from~\cite[Proposition 13.7(iii)]{Goldschmidt1993} that the Littlewood--Richardson rule holds for the algebras $\cl H_n (K,t)$: 
if $\lda$, $\mu$ and $\nu$ are partitions such that 
$|\lda|=|\mu|+|\nu|$ and if we identify 
$\cl H_{|\mu|}(K,t) \otimes_K \cl H_{|\nu|} (K,t)$ with the parabolic subalgebra 
generated by $\{ T_j \mid 1\le j<|\mu| \text{ or } |\mu|<j < |\lda|\}$ of $\cl H_{|\lda|}(K,t)$ in the obvious way, then 
the $\cl H_{|\mu|} (K,t) \otimes_K \cl H_{|\nu|}(K,t)$-module 
$S^{\mu,K,t} \otimes_K S^{\nu,K,t}$ appears in the restriction of $S^{\lda,K,t}$ to 
$\cl H_{|\mu|}(K,t) \otimes_K \cl H_{|\nu|} (K,t)$ with multiplicity $c^{\lda}_{\mu\nu}$. 
Let $1\le r\le d$, $\mu\in \Par_e (\rho,r-1)$, $\lda\in \Par_e (\rho,r)$ and 
$\nu\in\Par(e) \sm \Par_e (\vn,1)$. By Lemma~\ref{lem:CK} and the standard combinatorial description of the Littlewood--Richardson coefficients, we have $c^{\lda}_{\mu\nu}=0$, so 
$S^{\mu,K,t} \otimes_K S^{\nu,K,t}$ is not a summand of the restriction of 
$S^{\lda,K,t}$ to $\cl H_{|\rho|+(r-1)e} (K,t) \otimes_K \cl H_e (K,t)$. Therefore, 
$b_{\lda}b_{\mu} \b_{r} (b_{\nu})=0$. Summing over all such $\lda,\mu,\nu$, we deduce that 
$\tilde{b}_{\rho,r-1} \tilde{b}_{\rho,r} (1-\tilde{b}^{(r)}_{\vn,1})=0$, whence 
$\tilde{b}_{\rho,r-1} \tilde{b}_{\rho,r} \tilde{b}^{(r)}_{\vn,1} = \tilde{b}_{\rho,r-1} \tilde{b}_{\rho,r}$. Applying the projection onto $\cl H_{|\rho|+de}$ and using the statements at the end of the previous paragraph, we see that $b_{\rho,r-1} b_{\rho,r} b^{(r)}_{\vn,1} = b_{\rho,r-1} b_{\rho,r}$. The result follows (cf.~\eqref{frhod}).
\end{proof}

\subsection{A condition for Morita equivalence}

Let $\cl H_{\rho,d}$ be a RoCK block.

\begin{prop}\label{prop:Mor}
 The following are equivalent:
\begin{enumerate}[(i)]
 \item\label{Mor1} $f_{\rho,d} D \ne 0$ for all simple $\cl H_{\rho,d}$-modules $D$. 
 \item\label{Mor2} $f_{\rho,d} \cl H_{\rho,d} f_{\rho,d}$ is Morita equivalent to $\cl H_{\rho,d}$.
 \item\label{Mor3} $d< \Char F$ or $\Char F=0$. 
\end{enumerate}
If these statements hold, then $\cl H_{\rho,d}$ is Morita equivalent to 
$\cl H_{\vn,1} \wr \fr S_d$. 
\end{prop}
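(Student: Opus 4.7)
The plan is to establish the equivalences (i)$\Leftrightarrow$(ii)$\Leftrightarrow$(iii) by reducing to a numerical comparison of the number of simple modules on either side, and to deduce the concluding Morita equivalence as an immediate consequence of (ii) together with Theorem~\ref{thm:main1}.

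The equivalence (i)$\Leftrightarrow$(ii) will follow from the standard theory of idempotent truncation recalled in~\S\ref{subsec:main}. If (i) holds, the exact functor $\cl F\colon V\mapsto f_{\rho,d}V$ induces a bijection between isomorphism classes of simple $\cl H_{\rho,d}$-modules and those of $f_{\rho,d}\cl H_{\rho,d}f_{\rho,d}$, from which a standard criterion upgrades $\cl F$ to a Morita equivalence. Conversely, a Morita equivalence preserves simple modules and therefore forces $f_{\rho,d}D\ne 0$ for every simple $D$.

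For (ii)$\Leftrightarrow$(iii), the approach is to count $|\Irr(\cl H_{\rho,d})|$ and $|\Irr(\cl H_{\vn,1}\wr \fr S_d)|$ separately. The algebra $\cl H_{\vn,1}$ has exactly $e-1$ isomorphism classes of simple modules, indexed by the $e$-regular hook partitions $(k,1^{e-k})$ of $e$ for $k=2,\ldots,e$. For an integer $n\ge 0$ and a prime $p$, let $r_p(n)$ denote the number of $p$-regular partitions of $n$, and set $r_0(n)$ to be the total number of partitions of $n$. Then, writing $p=\Char F$, the general structure theory of wreath product representations gives
\[
|\Irr(\cl H_{\vn,1}\wr \fr S_d)| = \sum_{\substack{d_1,\ldots,d_{e-1}\ge 0 \\ d_1+\cdots+d_{e-1}=d}} \prod_{i=1}^{e-1} r_p(d_i).
\]
On the other hand, $|\Irr(\cl H_{\rho,d})|$ equals the number of $e$-regular partitions with $e$-core $\rho$ and $e$-weight $d$, a purely combinatorial quantity that is independent of $\Char F$. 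I would identify this number with $\sum\prod r_0(d_i)$ by specialising to $\Char F=0$ and invoking the Chuang--Miyachi Morita equivalence~\cite[Theorem 18]{ChuangMiyachi2010}, which asserts precisely that $\cl H_{\rho,d}$ is Morita equivalent to $\cl H_{\vn,1}\wr \fr S_d$ in that case.

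Combining these counts with Theorem~\ref{thm:main1}, which yields $|\Irr(f_{\rho,d}\cl H_{\rho,d}f_{\rho,d})|=|\Irr(\cl H_{\vn,1}\wr \fr S_d)|$, and with the general truncation inequality $|\Irr(f_{\rho,d}\cl H_{\rho,d}f_{\rho,d})|\le |\Irr(\cl H_{\rho,d})|$ (with equality characterising~(i)), the equivalence of (i) and (iii) reduces to the elementary observation that $\sum\prod r_p(d_i)=\sum\prod r_0(d_i)$ is equivalent to $r_p(n)=r_0(n)$ for all $n\le d$, which in turn is equivalent to $p=0$ or $p>d$. The final statement is then immediate: under~(ii), composing the Morita equivalence between $\cl H_{\rho,d}$ and $f_{\rho,d}\cl H_{\rho,d}f_{\rho,d}$ with the one from Theorem~\ref{thm:main1} yields a Morita equivalence between $\cl H_{\rho,d}$ and $\cl H_{\vn,1}\wr \fr S_d$. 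The main obstacle in this plan is the identification $|\Irr(\cl H_{\rho,d})|=\sum\prod r_0(d_i)$; rather than invoking Chuang--Miyachi, an alternative self-contained approach would be a direct combinatorial argument using the $e$-quotient bijection together with an explicit description of which $e$-tuples correspond to $e$-regular partitions in a Rouquier block.
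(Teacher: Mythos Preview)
Your proposal is correct and follows essentially the same route as the paper: deduce (i)$\Leftrightarrow$(ii) from general idempotent truncation, then compare the number of simple modules of $\cl H_{\rho,d}$ with that of $\cl H_{\vn,1}\wr\fr S_d$ via Theorem~\ref{thm:main1}. The one substantive difference is in how you identify $|\Irr(\cl H_{\rho,d})|$ with the number of $(e-1)$-tuples of partitions of total size $d$: you propose to invoke the Chuang--Miyachi theorem in characteristic zero and transport the count back, whereas the paper does this step directly, using the $e$-quotient bijection together with the fact (cf.~\cite[Lemma 4.1(1)]{ChuangTan2002}) that a partition in $\Par_e(\rho,d)$ is $e$-restricted if and only if the component on the last runner of its $e$-quotient is empty. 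Your alternative ``self-contained'' route is exactly what the paper does, and is preferable here since the paper's aim is to give arguments independent of the prior Morita results; your Chuang--Miyachi shortcut is logically sound but somewhat against the spirit of the paper.
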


\begin{proof} 
 For a finite-dimensional algebra $A$, denote by $\ell (A)$ the number of isomorphism classes of simple $A$-modules. 
The equivalence between~\eqref{Mor1} and~\eqref{Mor2} follows from the general properties of idempotent truncations recalled in~\S\ref{subsec:main}.
In view of those properties and Theorem~\ref{thm:main1},
statement~\eqref{Mor2} holds if and only if 
$\ell (\cl H_{\rho,d})=\ell (\cl H_{\vn,1} \wr \fr S_d)$. 
 
Recall that, for an integer $p\ge 2$, a partition $\lda=(\lda_1,\ldots,\lda_s)$ is said to be 
\emph{$p$-restricted} if $\lda_j-\lda_{j+1}<p$ for $1\le j<s$ and $\lda_s<p$. 
The simple $\cl H_n$-modules are parameterised by the $e$-restricted partitions of $n$, and for any 
$(\pi,l)\in \Bl_e (n)$, the simple $\cl H_{\pi,l}$-modules are parameterised by the $e$-restricted elements of $\Par_e (\pi,l)$;
see~\cite[Theorem 7.6]{DipperJames1986} 
and~\cite[Theorem 4.13]{DipperJames1987}.

Suppose that $A$ is a finite-dimensional algebra over a field $k$ such that $k$ is a splitting field for $A$, and let $m=\ell(A)$. 
If $\Char k=p>0$, let $\cl{RP}_k$ be the set of $p$-restricted partitions; if $\Char k=0$, let $\cl {RP}_k$ be the set of all partitions. 
 Then a well-known argument shows that 
$\ell(A\wr \fr S_d)$ is the number of tuples $(\lda^1,\ldots,\lda^m)$ of elements of $\cl{RP}_k$ such that $\sum_{j=1}^m |\lda^j|=d$; see, for example,~\cite[Appendix]{Macdonald1980}.
The number of $e$-restricted partitions in $\Par_e (\vn,1)$ is equal to $e-1$. 
Combining these facts, we see that 
$\ell(\cl H_{\vn,1} \wr \fr S_d)$ is 
the cardinality of the set $Y_d$ of all tuples 
$(\lda^1,\ldots,\lda^{e-1})$ of elements of $\cl{RP}_F$ such that 
$\sum_{j=1}^{e-1} |\lda^j|=d$. 

Each partition $\lda\in \Par_e (\rho,d)$ corresponds to a tuple $(\lda^{(0)},\ldots,\lda^{(e-1)})$, where the definition of $\lda^{(i)}$ is given before Lemma~\ref{lem:preimage}. It is easy to see that such a partition $\lda$ is $e$-restricted if and only if $\lda^{(e-1)}=\vn$ (cf.~\cite[Lemma 4.1(1)]{ChuangTan2002}). 
Hence, $\ell(\cl H_{\rho,d})= |X_d|$ where $X_d$ is the set of all tuples 
$(\lda^1,\ldots,\lda^{e-1})$ of partitions such that $\sum_i |\lda^i|=d$. Observe that $Y_d\subset X_d$.  
Note that, for a prime $p$, all partitions of an integer $n$ are $p$-restricted if and only if $n<p$. Hence, if~\eqref{Mor3} holds, then 
$Y_d=X_d$ and so $\ell (\cl H_{\vn,1} \wr \fr S_d)=\ell (\cl H_{\rho,d})$; otherwise, $\ell (\cl H_{\vn,1} \wr \fr S_d)<\ell (\cl H_{\rho,d})$. This proves the equivalence between~\eqref{Mor2} and~\eqref{Mor3}. 

The last assertion of the proposition is an immediate consequence of Theorem~\ref{thm:main1}. 
\end{proof}

\section{Alternative descriptions of the isomorphism}\label{sec:alt}
In this section, 
we assume all the notation and conventions of~\S\ref{subsec:quot} 
and work again with the algebra $e_{\d^d} \ol{R}_{d\d} e_{\d^d}$ for a fixed $d\ge 0$. We have constructed elements
$\tau_r\in \s_r e_{\d^d} + \iota(S^{(r)}_{\{0\}})$ for $1\le r<d$ satisfying the relations of Theorem~\ref{thm:Theta}. 
We show that such elements $\tau_r$ are in some sense unique and use this fact to give alternative descriptions of $\tau_r$.

\subsection{A uniqueness result}
\begin{lem}\label{lem:tau_unique}
Let $1\le r<d$. 
Suppose that an element $\tau'\in \s_r e_{\d^d} + \iota(S^{(r)}_{\{0\}})$ satisfies the following property: 
\begin{enumerate}[(i)]
\item if $e>2$, then 
$\eta_r (x) \tau' = \tau' \eta_{r+1}(x)$ for all 
$x\in (R^{\Lda_0}_{\d})_{\{0,1\}}$; 
\item if $e=2$, then $(y_{2r}-y_{2r-1})\tau' = \tau' (y_{2(r+1)} - y_{2r+1})$. 
\end{enumerate}
 Then $\tau'=\tau_r$. 
\end{lem}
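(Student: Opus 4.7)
The plan is to set $\delta := \tau' - \tau_r$, observe that $\delta \in \iota(S^{(r)}_{\{0\}})$, and then show that the hypothesized commutation relation forces $\delta = 0$. First note that $\tau_r$ itself satisfies the condition imposed on $\tau'$: in case $e > 2$ this is exactly Theorem~\ref{thm:Theta}\eqref{Theta3}, and in case $e = 2$ it is Theorem~\ref{thm:Theta}\eqref{Theta3} applied to $x = y_2 e_\d \in R^{\Lda_0}_\d$, using that $\eta(y_2 e_\d) = (\bar y_2 - \bar y_1) e_\d$. Subtracting, $\delta$ satisfies the corresponding homogeneous relation.

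When $e = 2$, Corollary~\ref{cor:d1_01} together with Proposition~\ref{prop:d1basis} yields $(\ol R_\d)_{\{0,1\}} = F \bar e_\d$, hence $\iota(S^{(r)}_{\{0\}}) = F e_{\d^d}$, so $\delta = c\, e_{\d^d}$ for some $c \in F$. The hypothesis becomes
\[
c\bigl[(y_{2r}-y_{2r-1}) - (y_{2r+2}-y_{2r+1})\bigr]\, e_{\d^d} = 0.
\]
Under the algebra isomorphism $\iota\colon \ol R_\d^{\ot d} \isoto \ol R_{\d^d} \subset \ol R_{2d}$ and the description in Lemma~\ref{lem:Rbardiso}, the two summands correspond to the pure tensors $\bar e_\d^{\ot r-1}\ot (\bar y_2-\bar y_1)e_\d \ot \bar e_\d^{\ot d-r}$ and $\bar e_\d^{\ot r}\ot (\bar y_2-\bar y_1)e_\d \ot \bar e_\d^{\ot d-r-1}$; these are linearly independent in $\ol R_\d^{\ot d}$ because $(\bar y_2-\bar y_1)e_\d$ is not a scalar multiple of $\bar e_\d$. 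Hence $c = 0$.

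When $e > 2$, I would expand $\delta$ in the pure-tensor basis
\[
\beta_{(\bi,\bj),(\bi',\bj')} := \eta_r\bigl(\bar\psi_{w_{\bj,\bi}} e(\bi)\bigr)\,\eta_{r+1}\bigl(\bar\psi_{w_{\bj',\bi'}} e(\bi')\bigr)
\]
of $\iota(S^{(r)}_{\{0\}})$, where $(\bi,\bj)$ and $(\bi',\bj')$ range over pairs in $(I^{\vn,1})^2$ with $i_e = j_e$ and $i'_e = j'_e$ (this basis is obtained by combining Corollary~\ref{cor:d1_01} and Proposition~\ref{prop:d1basis} with $\iota$). Applying condition (i) with the degree-$0$ elements $x = e(\bk)$ for each $\bk \in I^{\vn,1}$ and using that $\bar e(\bk)\cdot \bar\psi_{w_{\bj,\bi}}e(\bi) = \d_{\bk,\bj}\bar\psi_{w_{\bj,\bi}}e(\bi)$ (and symmetrically on the right) forces the coefficient of $\beta_{(\bi,\bj),(\bi',\bj')}$ to vanish whenever $\bj \ne \bi'$. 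Then imposing (i) with the degree-$1$ generators $x = \psi_{w_{\bl,\bj}} e(\bj)$ for $j_e = l_e \pm 1$, and using one-dimensionality (from Proposition~\ref{prop:d1basis}) of the degree-$1$ part of $e(\bl)\ol R_\d e(\bi)$---so that $\bar\psi_{w_{\bl,\bj}}\bar\psi_{w_{\bj,\bi}}e(\bi)$ is a scalar multiple of $\bar\psi_{w_{\bl,\bi}}e(\bi)$, with the scalar equal to $1$ when $\bi=\bj$---yields enough linear relations on the surviving coefficients $\alpha_{(\bi,\bj),(\bj,\bj')}$ to conclude they all vanish.

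The main obstacle is the bookkeeping in the $e > 2$ case: one must verify that the linear system arising from imposing (i) for sufficiently many degree-$0$ and degree-$1$ generators $x$ has only the trivial solution. The key technical input is that the scalars appearing in products of the form $\bar\psi_{w_{\bl,\bk}}\bar\psi_{w_{\bk,\bi}}e(\bi) = c_{\bl,\bk,\bi}\bar\psi_{w_{\bl,\bi}}e(\bi)$ can be taken nonzero for suitable choices of $\bk$ (e.g. $\bk = \bi$ gives $c = 1$), thereby propagating the vanishing through all the coefficients.
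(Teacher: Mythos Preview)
Your $e=2$ argument is correct and essentially identical to the paper's.

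For $e>2$ your argument is incomplete, as you yourself flag in the final paragraph: you set up a linear system in the coefficients $\alpha_{(\bi,\bj),(\bj,\bj')}$ but do not show it has only the trivial solution. The nonvanishing of the structure constants $c_{\bl,\bk,\bi}$ is established only in the trivial case $\bk=\bi$, and the promised ``propagation'' of vanishing is never carried out.

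The paper bypasses this linear algebra via a bidegree argument. Reduce (as you do) to $r=1$ and write the difference as $\iota(a'\otimes \ol e_\d^{\otimes d-2})$ with $a'\in (R^{\Lda_0}_\d)_{\{0\}}\otimes (R^{\Lda_0}_\d)_{\{0\}}$ satisfying $(x\otimes e_\d)a'=a'(e_\d\otimes x)$ for all $x\in (R^{\Lda_0}_\d)_{\{0,1\}}$. It suffices to show $a'(e_\d\otimes e(\bi))=0$ for every $\bi\in I^{\vn,1}$. First take $\bi$ with $i_{e-1}=i_e\pm 1$, so that $x:=e(\bi)\psi_{e-1}$ has degree~$1$. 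Then $a'(e_\d\otimes x)$ lies in $(R^{\Lda_0}_\d)_{\{0\}}\otimes (R^{\Lda_0}_\d)_{\{1\}}$; but the commutation relation rewrites it as $(x\otimes e_\d)\,a'(e_\d\otimes e(s_{e-1}\bi))$, which lies in $R^{\Lda_0}_\d\otimes (R^{\Lda_0}_\d)_{\{0\}}$. Hence $a'(e_\d\otimes e(\bi)\psi_{e-1})=0$. Right multiplication by $\psi_{e-1}$ is injective on $(R^{\Lda_0}_\d)_{\{0\}}e(\bi)$ --- it sends the basis element $\psi_{w_{\bj,\bi}}e(\bi)$ to the basis element $\psi_{w_{\bj,s_{e-1}\bi}}e(s_{e-1}\bi)$ --- so $a'(e_\d\otimes e(\bi))=0$. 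For general $\bi$, choose $\bj\in I^{\vn,1}$ with $j_e=i_e$ and $j_{e-1}=j_e\pm 1$, and use $\psi_{w_{\bi,\bj}}\psi_{w_{\bj,\bi}}e(\bi)=e(\bi)$ together with the already-established $a'(e_\d\otimes e(\bj))=0$.

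The idea you are missing is precisely this bidegree separation: for $x$ of degree~$1$ the two sides of the intertwining relation land in complementary summands of the degree-$1$ part of the tensor square, so each vanishes separately. Combined with the concrete injectivity of right multiplication by $\psi_{e-1}$ (rather than a general nonvanishing of structure constants, which you would still have to prove), this gives the result with no coefficient-chasing.
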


\begin{proof}
 To simplify notation, we will assume that $r=1$: the proof in the general case is obtained by a straightforward modification of the one below (cf.~Remark~\ref{rem:red}). 
 We write $\tau=\tau_1$. 
By the hypothesis and Theorem~\ref{thm:Theta}\eqref{Theta3}, 
$\tau' = \tau+\iota(a)$ for some 
$a\in S_{\{0\}}^{(1)}$.
Recall that we have an algebra isomorphism $\iota \colon \ol R_{\d}^{\ot d} \isoto \ol R_{\d^d}$, defined after Lemma~\ref{lem:UV}.

In the case when $e=2$, it follows from the hypothesis and Theorem~\ref{thm:Theta}\eqref{Theta3} 
that $(y_2-y_1)\iota(a) = \iota(a)(y_3-y_4)$. 
Moreover, we have $(R_{\d}^{\Lda_0})_{\{0\}}=F\{1\}$ by Proposition~\ref{prop:d1basis}, whence $S_{\{0\}}=1$ by Corollary~\ref{cor:d1_01}, 
so $a$ must be a scalar multiple of the identity. 
By the same Proposition and Lemma~\ref{lem:Rbardiso},  we have 
$(\ol y_2-\ol y_1)\ol e_{\d}\ne 0$ in $\ol R_{\d}$, whence the elements
$(y_2-y_1)e_{\d^d} = \iota( (\ol y_2-\ol y_1) \ol e_{\d} \ot \ol e_{\d}^{\ot d-1})$
and 
$(y_4-y_3) e_{\d^d} = 
\iota( \ol e_{\d} \ot (\ol y_2 -\ol y_1)\ol e_{\d} \ot 
\ol e_{\d}^{\ot d-2})$
are linearly independent. Hence,
$a=0$, so the lemma holds for $e=2$.

Assuming that $e>2$, note that 
$a$ satisfies 
$\eta_1 (x) \iota(a) = \iota(a) \eta_2 (x)$ for all 
$x\in (R^{\Lda_0}_{\d})_{\{0,1\}}$. 
Using Corollary~\ref{cor:d1_01}, we identify
$(R^{\Lda_0}_{\d})_{\{0,1\}}$ with 
$(\ol{R}_{\d})_{\{0,1\}}$.
Then $a=\iota(a'\ot \ol e_{\d}^{\ot d-2})$ for some 
$a'\in (R^{\Lda_0}_{\d})_{\{0\}} \ot (R^{\Lda_0}_{\d})_{\{0\}}$ .
Further, we have $(x\otimes e_{\d}) a' =a' (e_{\d} \ot x)$ for all $x\in (R^{\Lda_0}_{\d})_{\{0,1\}}$. 
We will
prove that $a'=0$ (and hence $\tau=\tau'$) by considering 
$a'(e_{\d} \otimes e(\bi))$ for each $\bi\in I^{\vn,1}$. First, consider the case when 
$i_e=i_{e-1}\pm 1$. Since $\deg(a)=0$, we have 
$a' (e_{\d}\otimes e(\bi) \psi_{e-1}) \in (R^{\Lda_0}_{\d})\otimes (R^{\Lda_0}_{\d})_{\{1\}}$. 
On the other hand,
\begin{align*}
a'(e_{\d} \ot e(\bi) \psi_{e-1}) 
&= a'(e_{\d} \otimes  e(\bi) \psi_{e-1} ) (e_{\d} \ot e(s_{e-1}  \bi))  \\
&= (e(\bi)\psi_{e-1} \ot e_{\d}) a' (e_{\d} \ot e(s_{e-1}  \bi))
\in R^{\Lda_0}_{\d} \ot (R^{\Lda_0}_{\d})_{\{0\}},
\end{align*}
whence $a' (e_{\d} \ot e(\bi)\psi_{e-1})=0$. Now, as $e>2$, it follows easily from Proposition~\ref{prop:d1basis} that the linear map
from 
$(R^{\Lda_0}_{\d})_{\{0\}} e(\bi)$ to $(R^{\Lda_0}_{\d})_{\{1\}} e(s_{e-1} \bi)$
given by $c \mapsto c\psi_{e-1}$ is injective. Hence, $a'(e_{\d} \ot e(\bi))=0$.  

Finally, let $\bi\in I^{\vn,1}$ be arbitrary. 
Using Lemma~\ref{lem:hkdesc}, one can easily show that there exists $\bj\in I^{\vn,1}$ such that $j_e = i_e$ and 
$j_{e-1}\in \{ j_e\pm 1\}$.
By the case considered previously, $a'(e_{\d} \ot e(\bj))=0$. We have 
$\psi_{w_{\bi,\bj}} \psi_{w_{\bj,\bi}} e(\bi) = e(\bi)$ 
(see the proof of Lemma~\ref{lem:d1gen}), and hence 
\[
 a'(e_{\d} \ot e(\bi))=a'(e_{\d}\ot \psi_{w_{\bi,\bj}} e(\bj)
\psi_{w_{\bj,\bi}})=   
(\psi_{w_{\bi,\bj}} e(\bj)\ot e_{\d}) a' 
(e_{\d} \ot e(\bj)) (e_{\d} \ot \psi_{w_{\bj,\bi}} e(\bi)) =0. \qedhere
\]
\end{proof}

\subsection{An explicit formula for $\tau_r$}
Using Lemma~\ref{lem:tau_unique}, we can give (without a complete proof) an explicit formula for $\tau=\tau_1$ when $d=2$. For $e=2$, we have already shown in the proof of Theorem~\ref{thm:Theta}\eqref{Theta3} that 
$\tau= (\s+1)e(0101)$, where we set $\s:=\s_1$. 
For arbitrary $e$, 
\begin{equation}\label{tauexpl}
 \tau =  \s e_{\d,\d} + 
 \sum_{\substack{\bi,\bj\in I^{\vn,1} \\ i_e=j_e}} (-1)^{i_e+1} 
\eta_1 (\psi_{w_{\bj,\bi}}) \eta_2 (\psi_{w_{\bi,\bj}}) e(\bi\bj),
\end{equation}
where $i_e$ is viewed as an element of $\mZ$ via the identification of $I$ with $\{0,1,\ldots,e-1\}$. 
For example, for $e=3$ we have 
\[
 \tau = \s e_{\d,\d} - e(012012) + e(021021). 
\]

It is possible to show that the right-hand side $\tau'$ of~\eqref{tauexpl}
satisfies $\eta_1 (x) \tau' = \tau' \eta_2 (x)$ for all $x\in (R^{\Lda_0}_{\d})_{\{0,1\}}$ by technical calculations using the defining relations of $\ol{R}_{2\d}$, and consequently that $\tau'=\tau$ by Lemma~\ref{lem:tau_unique}. These calculations are not included in the present paper, 
but equivalent calculations have been independently done by Kleshchev and Muth~\cite[Section 6]{KleshchevMuth2015} for KLR algebras of all untwisted affine ADE types (cf.~Remark~\ref{rem:KM}).
For arbitrary $r$ and $d$, we have $\tau_r = 
\zeta_{r,2} (\ol e_{\d}^{\ot r-1} \ot \tau \ot \ol e_{\d}^{\ot d-r-1})$ (cf.~Remark~\ref{rem:red}).

\subsection{A formula for $\tau_r$ via Hecke generators}

Let $e\ge 2$ and $d\ge 1$, and assume all the notation and conventions introduced 
in~\S\ref{subsec:BKcalc} prior to Lemma~\ref{lem:BKcalc}. 
From now on, we identify $H^{\Lda}_{de}$ with $R^{\Lda}_{de}$ via the isomorphism
$\BK_{de}^{\Lda}$. 
As usual, for each $w\in \fr S_{de}$ let 
$T_w = T_{j_1}\cdots T_{j_m}\in H^{\Lda}_{de}$, where $w=s_{j_1} \cdots s_{j_m}$ 
is a reduced expression. 
Recall the elements $w_r\in \fr S_{de}$ defined by~\eqref{w_r}.

\begin{prop}\label{prop:tauH}
We have 
$\tau_r = (-1)^e (e_{\d^d} \ol T_{w_r} e_{\d^d})_{\{0\}}$ 
whenever $1\le r<d$. 
\end{prop}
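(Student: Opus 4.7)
The plan is to apply the uniqueness criterion of Lemma~\ref{lem:tau_unique} to the element $\tau' := (-1)^e (e_{\delta^d}\ol T_{w_r} e_{\delta^d})_{\{0\}}$, thereby concluding $\tau' = \tau_r$. By that lemma, I must check (a)~that $\tau' \in \sigma_r e_{\delta^d} + \iota(S^{(r)}_{\{0\}})$, and (b)~that $\tau'$ satisfies the appropriate intertwining relation with $\eta_r,\eta_{r+1}$ (for $e>2$) or with $y$-differences (for $e=2$).

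For (b), I would exploit the Hecke braid identity: since $s_{re+j}w_r = w_r s_{(r-1)e+j}$ holds in $\fr S_{de}$ with both sides reduced for $1 \le j < e$, we have $T_{re+j}T_{w_r} = T_{w_r}T_{(r-1)e+j}$ in $H^\Lambda_{de}$. Transporting this through the Brundan--Kleshchev isomorphism to $\ol R^\Lambda_{d\delta}$, multiplying by $e_{\delta^d}$ on both sides, and invoking Lemma~\ref{lem:BKcalc} together with its straightforward analogue for $e>2$ (obtained by a one-step application of~\eqref{BK1}), I can rewrite the degree-$\le 2$ part of $e_{\delta^d}\ol T_{(r-1)e+j} e_{\delta^d}$ as $c\eta_r(\psi_j) + h$ with $c \in F^\times$ and $h \in \cl J$, and similarly for $e_{\delta^d}\ol T_{re+j}e_{\delta^d}$ in terms of $\eta_{r+1}(\psi_j)$. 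Projecting the resulting identity to degree $0$ (using that $\cl J$ is supported in degrees $\ge 2$) descends the Hecke identity to $\eta_r(\psi_j)\tau' = \tau'\eta_{r+1}(\psi_j)$ for $1 \le j < e$, which combined with Lemma~\ref{lem:d1gen}\eqref{d1gen2} and Corollary~\ref{cor:d1_01} yields the hypothesis of Lemma~\ref{lem:tau_unique}(i) for $e>2$. For $e=2$, the Hecke relation $X_{re+1} = T_{w_r} X_{(r-1)e+1} T_{w_r}^{-1}$ together with~\eqref{BK2} yields the $y$-difference hypothesis (ii).

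For (a), note that $\ol T_{w_r}$ is a product of generators $T_j$ supported on positions $(r-1)e+1 \le j < (r+1)e$, so $\tau'$ lies in the image of the map $\zeta_{r,2}$ of Remark~\ref{rem:red}. Combined with Lemmas~\ref{lem:Bd}\eqref{Bd1} and~\ref{lem:fintens}\eqref{ft2}, this forces a degree-0 decomposition $\tau' = c\,\sigma_r e_{\delta^d} + \iota(\beta)$ for some $c\in F$ and $\beta \in S^{(r)}_{\{0\}}$ (the contributions with no $\sigma_r$-factor land in $\iota(S^{(r)}_{\{0\}})$ since the other tensor positions are touched only through $e_{\delta^d}$). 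To identify $c$, I would expand $T_{w_r}=\prod_k T_{j_k}$ via a reduced expression and apply~\eqref{BK1}; Theorem~\ref{thm:basis}\eqref{basis2} implies that the $\sigma_r = \psi_{w_r}$-contribution appears only when one picks the $\psi_{j_k}$-factor at every step, with coefficient equal to $\prod_k Q_{\cdot}(0,0)$ tracked along the residues encountered. A direct computation from the explicit formulas~\eqref{Pideg}--\eqref{Sinondeg} shows this product equals $(-1)^e$, so $c = 1$ after accounting for the prefactor $(-1)^e$, as required.

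The main obstacle is the scalar calculation in (a): verifying $\prod_k Q_{\cdot}(0,0) = (-1)^e$ uniformly across residue sequences $\bi \in (\varnothing,1)^d$-concatenations and across both the degenerate ($\xi=1$) and non-degenerate ($\xi\ne 1$) cases. A convenient reduced expression---sliding strands of block $r+1$ across block $r$ one at a time---isolates the contribution from the $e$ equal-residue crossings along the way (one for each matching residue pair between the two blocks) and allows the scalar to be read off from $Q_0(0,0)$ together with the non-equal-residue $Q$-values from~\eqref{Pideg}--\eqref{Sinondeg}. A secondary technical point is checking that the Bruhat-lower contributions in the BK expansion genuinely land in $\iota(S^{(r)}_{\{0\}})$ rather than merely in $\iota(S_{\{0\}})$, which follows from the fact that the reduced expression for $w_r$ uses only simple reflections in the parabolic subgroup generated by $s_{(r-1)e+1},\ldots,s_{(r+1)e-1}$.
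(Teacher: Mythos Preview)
Your overall strategy---reduce to the uniqueness criterion of Lemma~\ref{lem:tau_unique}---is exactly the paper's approach, and your argument for (a) is essentially Lemma~\ref{lem:Twrin} (the scalar $(-1)^e$ arises from the identity~\eqref{prodS}, which gives $\prod_{i,j\in I} Q^{(0)}_{i-j}=(-1)^e$; note that \emph{all} $e^2$ crossings contribute, not just the $e$ equal-residue ones). However, part (b) has a genuine gap.

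For $e>2$: your claim that the degree-$\le 2$ part of $e_{\delta^d}\ol T_{(r-1)e+j}e_{\delta^d}$ equals $c\,\eta_r(\psi_j) + h$ with a \emph{single} scalar $c\in F^\times$ and $h \in \cl J$ is incorrect. By~\eqref{BK1}, on each idempotent $e(\bi)$ this element is $(\psi_{(r-1)e+j}Q_{i_l-i_{l+1}}(y,y') - P_{i_l-i_{l+1}}(y,y'))e(\bi)$, where the constant terms $Q^{(0)}_{i_l-i_{l+1}}$ and $P^{(0)}_{i_l-i_{l+1}}$ genuinely vary with the residue difference $i_l-i_{l+1}$ (see~\eqref{Sideg},~\eqref{Sinondeg}); moreover the scalar part $-P^{(0)}_{i_l-i_{l+1}}e(\bi)$ lies in degree $0$, not in $\cl J$. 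Thus the Hecke braid identity does not descend directly to $\eta_r(\psi_j)\tau' = \tau'\eta_{r+1}(\psi_j)$: to match the idempotent-dependent coefficients on the two sides you must \emph{first} establish $\eta_r(e(\bi))\tau' = \tau'\eta_{r+1}(e(\bi))$ for every $\bi\in I^{\varnothing,1}$. The paper supplies this missing step via a generalised-eigenspace argument: it introduces auxiliary elements $\wt X_{t,l}$ built from the $T$'s of block $t$, shows (using Lemma~\ref{lem:Xcomm}) that $\eta_t(e(\bi))\ol R^\Lambda_{d\delta} e_{\delta^d}$ is the simultaneous generalised $(\hat i_1,\dots,\hat i_e)$-eigenspace of $(\wt X_{t,1},\dots,\wt X_{t,e})$, and then uses $\wt X_{1,l}\ol T_{w_1} = \ol T_{w_1}\wt X_{2,l}$ (a consequence of~\eqref{Twr}) to deduce the idempotent intertwining~\eqref{etaiT}. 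Only after this does~\eqref{const} allow passage from $T$-intertwining to $\psi$-intertwining on each idempotent component.

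For $e=2$: the relation $X_{re+1} = T_{w_r}X_{(r-1)e+1}T_{w_r}^{-1}$ that you invoke is false in $H^\Lambda_{de}$ (for instance, in the degenerate case with $\Lambda=\Lambda_0$ one has $X_1 = 0$ but $X_3\ne 0$; in general the Jucys--Murphy elements are not permuted by conjugation). The paper instead uses Lemma~\ref{lem:BKcalc} to express $(y_{2t}-y_{2t-1})e_{\delta^d}$ as an affine function of $(\ol T_{2t-1})_{\{0,1,2\}}$, and then applies the braid identity $T_1 T_{w_1} = T_{w_1} T_3$ together with degree considerations to obtain the hypothesis of Lemma~\ref{lem:tau_unique}(ii).
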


It follows from~\eqref{BK1} that 
$(e_{\d^d} \ol T_{w_r} e_{\d^d})_{\{0\}} = 
\zeta_{r,2} ((e_{\d,\d} \ol T_{w_1} e_{\d,\d})_{\{0\}})$. 
Hence, the general case of Proposition~\ref{prop:tauH} follows from the case when 
$d=2$ and $r=1$ (cf.~Remark~\ref{rem:red}). We begin the proof of the proposition in that case with two lemmas.  

Recall the power series $P_i, Q_i\in F[[y,y']]$ given by~\eqref{Pideg}--\eqref{Sinondeg}, and write $P^{(0)}_i$ and $Q^{(0)}_i$ for the constant coefficients of $P_i$ and $Q_i$ respectively. In particular, if $\xi=1$, then 
\begin{equation} \notag 
Q_i^{(0)}=
\begin{cases}
1 & \text{if } i=0, \\
1-i^{-1} & \text{if } i\notin \{0,1,-1\}, \\
2 & \text{if } e\ne 2 \text{ and } i=-1, \\
1 & \text{if } e\ne 2 \text{ and } i=1, \\
1 & \text{if } e=2 \text{ and } i=1,
\end{cases}
\end{equation}
and if $\xi \ne 1$, then 
\begin{equation} \notag 
Q_i^{(0)} =
\begin{cases}
1- \xi & \text{if } i=0, \\
\xi (\xi^{i-1}-1)/(\xi^i -1) & \text{if } i\notin \{0,1,-1\}, \\
\xi (\xi^{-2}-1)/ (\xi^{-1}-1)^2 & \text{if } e\ne 2 \text{ and } i=-1, \\
1 & \text{if } e\ne 2 \text{ and } i=1, \\
1/(\xi-1) & \text{if } e=2 \text{ and } i=1. 
\end{cases}
\end{equation}
One easily checks (using the fact that $e$ is prime if $\xi=1$) that 
\begin{equation}\label{prodS}
 Q_0^{(0)} Q_1^{(0)} \cdots Q_{e-1}^{(0)} = -1.
\end{equation}

\begin{lem}\label{lem:Twrin}
Assume that $d=2$. Then  
$(-1)^e (e_{\d,\d} \ol T_{w_1} e_{\d,\d})_{\{0\}} 
\in \s_1 e_{\d,\d} + \iota(S_{\{0\}})$. 
\end{lem}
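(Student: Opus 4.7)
The plan is to use the direct sum structure on $e_{\d,\d}\ol R_{2\d} e_{\d,\d}$ to reduce to extracting a single coefficient, then compute that coefficient via a controlled expansion of $T_{w_1}$ along a reduced expression. By Lemmas~\ref{lem:Bd}(i) and~\ref{lem:UV}(iii), when $d=2$ we have a direct sum decomposition $e_{\d,\d}\ol R_{2\d} e_{\d,\d} = \ol R_{\d,\d}\oplus \s_1 \ol R_{\d,\d}$ (since $B_2=\{1,w_1\}$ and $o(s_1)=w_1$). Writing $e_{\d,\d}\ol T_{w_1} e_{\d,\d} = a + \s_1 b$ with $a,b\in \ol R_{\d,\d}$, and noting that $(\ol R_{\d,\d})_{\{0\}}=\iota(S_{\{0\}})$ via Corollary~\ref{cor:d1_01} and the isomorphism $\iota$, the statement reduces to showing $b_{\{0\}} = (-1)^e e_{\d,\d}$. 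Since $w_1$ is fully commutative, $\s_1 = \psi_{w_1}$ is unambiguously defined, and by Corollary~\ref{cor:parabfree} $\ol R_{2\d} e_{\d,\d}$ is a free right $\ol R_{\d,\d}$-module with basis $\{\psi_v : v\in \scr D_{2e}^{(e,e)}\}$, so $b$ is precisely the $\psi_{w_1}$-coefficient of $\ol T_{w_1} e_{\d,\d}$ in this basis.

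To extract this coefficient, I would fix a reduced expression $w_1=s_{r_1}\cdots s_{r_{e^2}}$ and apply the Brundan--Kleshchev formula $T_r e(\bl) = \psi_r Q_{l_r-l_{r+1}}(y_r,y_{r+1})e(\bl) - P_{l_r-l_{r+1}}(y_r,y_{r+1})e(\bl)$ stepwise to compute $T_{w_1} e(\bi\bj)$ for each $\bi,\bj\in I^{\vn,1}$, obtaining a sum of $2^{e^2}$ terms. Following the bookkeeping of Lemma~\ref{lem:diagram}, every such term rewrites via Theorem~\ref{thm:basis}(ii) in normal form $\sum_{v\le w_1}\psi_v g_v(y) e(\bi\bj)$. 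A contribution to $g_{w_1}$ can arise only from the ``main'' term in which $\psi_{r_k}$ (rather than the $-P_{\alpha_k}$ summand) is chosen at every step $k$: any other branch yields an expression with strictly fewer than $e^2$ $\psi$-crossings, and the straightening identities $y_r\psi_r e(\bi) = \psi_r y_{r+1} e(\bi) \pm \d_{i_r,i_{r+1}} e(\bi)$ either preserve or reduce the $\psi$-length. Hence the constant-in-$y$ part of $g_{w_1}$ equals $\prod_{k=1}^{e^2} Q^{(0)}_{\alpha_k}$, where $\alpha_k$ is the residue difference at step $k$.

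It remains to evaluate this product. In the fully commutative $w_1$, each pair $(a,b)\in\{1,\ldots,e\}^2$ of strands (one from $\bi$ with residue $i_a$, one from $\bj$ with residue $j_b$) crosses exactly once, so $\prod_k Q^{(0)}_{\alpha_k} = \prod_{a,b=1}^e Q^{(0)}_{i_a-j_b}$. Because $\bj$ is a permutation of $\{0,1,\ldots,e-1\}$ and $Q^{(0)}_m$ depends only on $m$ modulo $e$, for every fixed $a$ we have $\prod_{b=1}^e Q^{(0)}_{i_a-j_b} = \prod_{m=0}^{e-1} Q^{(0)}_m = -1$ by~\eqref{prodS}. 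Multiplying over $a\in\{1,\ldots,e\}$ gives $(-1)^e$, uniformly in $\bi$ and $\bj$. Summing $e(\bj\bi)\ol T_{w_1} e(\bi\bj)$ over $\bi,\bj\in I^{\vn,1}$ therefore yields $b_{\{0\}} = (-1)^e e_{\d,\d}$, as required.

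The main obstacle is the normal-form bookkeeping in the second paragraph: verifying that neither the non-main branches of the BK expansion nor the $y$-straightening ever produces an unexpected contribution to the $\psi_{w_1}$-coefficient. This is a straightforward but slightly delicate adaptation of the argument already used in Lemma~\ref{lem:diagram}, and once it is in place, the combinatorial identity $\prod_{m=0}^{e-1}Q^{(0)}_m = -1$ from~\eqref{prodS} delivers the exact constant $(-1)^e$ required.
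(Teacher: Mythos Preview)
Your proposal is correct and follows essentially the same approach as the paper's proof. Both arguments expand $T_{w_1}e(\bj^{(1)}\bj^{(2)})$ via~\eqref{BK1} into $2^{e^2}$ terms along a reduced expression, use Theorem~\ref{thm:basis}\eqref{basis2} to see that all non-main branches land in $\sum_{w<w_1}\psi_w \ol R_{\d,\d}$ (hence in $\iota(S_{\{0\}})$ after left-multiplying by $e_{\d,\d}$, via the claim in the proof of Lemma~\ref{lem:Bd}), and then compute the degree-$0$ contribution of the main term as $\prod_{i,j\in I}Q^{(0)}_{i-j}=(-1)^e$ using~\eqref{prodS}. Your framing via the $\psi_{w_1}$-coefficient in the free $\ol R_{\d,\d}$-basis is a minor repackaging of the same computation; the paper instead phrases the conclusion directly as ``main term $+$ element of $\iota(S_{\{0\}})$''.
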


\begin{proof}
 Let $\bj^{(1)},\bj^{(2)}\in I^{\vn,1}$, and let 
 $w_1 = s_{t_{e^2}} \cdots s_{t_2} s_{t_1}$ be a reduced expression for $w_1$, so that 
 \begin{equation}\label{Twrin1}
 (e_{\d,\d} \ol T_{w_1} e(\bj^{(1)}\bj^{(2)}))_{\{0\}} = 
 (e_{\d,\d} \ol T_{i_{e^2}} \cdots \ol T_{i_{1}} e(\bj^{(1)}\bj^{(2)}))_{\{0\}}.
 \end{equation}
 Applying~\eqref{BK1} to the terms $\ol T_{i_1}, \ldots, \ol T_{i_{e^2}}$ in this order and using the relations~\eqref{rel:eid} and~\eqref{rel:epsi}, we decompose 
 the right-hand side of~\eqref{Twrin1} as a sum of $2^{e^2}$ terms; 
 these terms correspond to the choice of either the first or the second summand of~\eqref{BK1} at each of the $e^2$ steps.
 The term corresponding to choosing the first summand 
 in every case is $a_{\{0\}}$, where
 $a=a_{e^2} \cdots a_1$, 
 $a_k = \psi_{t_k} Q_{i^{(k)}_{t_k}-i^{(k)}_{t_k+1}} \big( y_{i^{(k)}_{t_k}}, 
 y_{i^{(k)}_{t_{k+1}}}\big)$ for each $k$,  and 
 $\bi^{(k)} = (i^{(k)}_1,\ldots,i^{(k)}_{2e}) = 
 s_{t_{k-1}} \cdots s_{t_1} (\bj^{(1)} \bj^{(2)})$. 
 By Theorem~\ref{thm:basis}\eqref{basis2}, 
 the sum $b$ of the other $2^{e^2}-1$ 
 terms belongs to 
 $(\sum_{w\in \fr S_{2e}, \, w<w_1} e_{\d,\d} \psi_w \ol R_{\d,\d})_{\{0\}} 
 \subset (\ol{R}_{\d,\d})_{\{0\}} = \iota(S_{\{0\}})$, where the containment follows from 
 the claim in the proof of Lemma~\ref{lem:Bd}. 
 
 Using Lemma~\ref{lem:Bd}\eqref{Bd2}, since each of $\bj^{(1)},\bj^{(2)}$ is a permutation of $(0,\ldots,e-1)$, 
 we have 
 \[
 a_{\{0\}} = \Big(\prod_{i,j\in I} Q^{(0)}_{i-j}\Big) \psi_{w_1} e(\bj^{(1)}\bj^{(2)})
 = (-1)^e \s_{1} e(\bj^{(1)} \bj^{(2)})
 \]
 by~\eqref{prodS} and the definition of $\s_1$. Therefore, 
 $(e_{\d,\d} \ol T_{w_1} e(\bj^{(1)}\bj^{(2)}))_{\{0\}} = a_{\{0\}} + b \in 
 (-1)^e \s_1 e_{\d,\d} + \iota(S_{\{0\}})$, and the lemma follows by summing over all
 $\bj^{(1)}, \bj^{(2)}\in I^{\vn,1}$. 
\end{proof}

\begin{lem}\label{lem:Xcomm}
Let $1\le r\le d$. The element $\ol{X}_{(r-1)e+1} e_{\d^d}$ of $\ol{R}_{d\d}^{\Lda}$ 
 commutes with $e_{\d^d}$ and with each of
$\ol T_j e_{\d^d}$ whenever $(r-1)e+1\le j<de$. Also, 
$\ol T_{(r-1)e+t}$ commutes with $e_{\d^d}$ if $1\le t<e$. 
\end{lem}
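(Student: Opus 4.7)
The plan is to establish the three assertions of Lemma~\ref{lem:Xcomm} in the order (i), (iii), (ii), since part of (ii) will rely on the analysis used to prove (iii).

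For (i), I would apply the Brundan--Kleshchev formula~\eqref{BK2} directly. Every $\bj \in I^{\vn,1}$ satisfies $j_1 = 0$, so for any $\bi \in I^{(\vn,1)^d}$ we have $i_{(r-1)e+1} = 0$, whence $\hat i_{(r-1)e+1}$ equals $0$ (if $\xi=1$) or $1$ (if $\xi \ne 1$). Substituting in~\eqref{BK2} and summing only over $\bi \in I^{(\vn,1)^d}$ shows that $\ol X_{(r-1)e+1} e_{\d^d}$ is a polynomial in $\ol y_{(r-1)e+1}$ times $e_{\d^d}$ (explicitly $\ol y_{(r-1)e+1} e_{\d^d}$ or $(1-\ol y_{(r-1)e+1}) e_{\d^d}$). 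Since the $\ol y_t$ commute with every idempotent $e(\bj)$, this element commutes with $e_{\d^d}$.

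The core of the proof is the following combinatorial claim, which I would use for (iii) and reuse for (ii):

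\textit{Claim.} For $\bi \in I^{(\vn,1)^d}$ and $1 \le t < e$, if $s_{(r-1)e+t}\bi \notin I^{(\vn,1)^d}$ then $s_{(r-1)e+t}\bi \notin \cl E_d$ (so $e(s_{(r-1)e+t}\bi) = 0$ in $\ol R_{d\d}$).

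Both cases use that any partition $Y_1,\dots,Y_d$ witnessing membership in $\cl E_d$ must have the smallest element of each part at a zero position of the tuple. If $t \ge 2$, the set of zero positions of $s_{(r-1)e+t}\bi$ is still $\{1, e+1, \ldots, (d-1)e+1\}$, and a short induction on $k$ shows that every position $p$ with $(k-1)e+1 < p \le ke$ is forced into the part with smallest $(k-1)e+1$, so the only admissible partition is the natural one $\{X_1,\ldots,X_d\}$; then $s_{(r-1)e+t}\bi \in \cl E_d$ would imply each of its blocks lies in $I^{\vn,1}$, contradicting $s_{(r-1)e+t}\bi \notin I^{(\vn,1)^d}$. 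If $t = 1$, the zero at $(r-1)e+1$ migrates to $(r-1)e+2$, leaving only $r-1$ zeros in $\{1,\ldots,(r-1)e+1\}$; the $r-1$ corresponding parts can contribute at most $(r-1)e$ elements, but each of the $(r-1)e+1$ positions in $\{1,\ldots,(r-1)e+1\}$ must lie in a part whose minimum (a zero) is $\le$ that position and hence in this range, yielding the contradiction $(r-1)e+1 \le (r-1)e$.

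Granted the Claim, assertion (iii) follows by expanding $\ol T_{(r-1)e+t}$ via~\eqref{BK1} and using $\ol\psi_{(r-1)e+t} e(\bi) = e(s_{(r-1)e+t}\bi)\ol\psi_{(r-1)e+t}$: on both sides, the $\ol\psi$-part survives only for $\bi \in I^{(\vn,1)^d}$ with $s_{(r-1)e+t}\bi \in I^{(\vn,1)^d}$, and the $P$-parts are manifestly equal, giving $\ol T_{(r-1)e+t} e_{\d^d} = e_{\d^d} \ol T_{(r-1)e+t}$. For (ii), note first that for every $j$ with $(r-1)e+1 < j < de$ the elements $\ol X_{(r-1)e+1}$ and $\ol T_j$ commute in $\ol R_{d\d}^\Lambda$ by the standard Hecke relation (since $(r-1)e+1 \notin \{j, j+1\}$); combined with (i), a direct manipulation shows the truncated products $\ol X_{(r-1)e+1} e_{\d^d}$ and $\ol T_j e_{\d^d}$ commute under the paper's identification of the degree-$\le 2$ components of $\ol R_{d\d}$ and $\ol R_{d\d}^\Lambda$ (any residual term from $\ol T_j$ failing to commute with $e_{\d^d}$, which only happens at between-block indices $j = r'e$, lies in degrees $\ge 4$). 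The delicate case $j = (r-1)e+1$ is handled by applying the Claim with $t=1$: it gives $s_{(r-1)e+1}\bi \notin \cl E_d$ for \emph{every} $\bi \in I^{(\vn,1)^d}$, so the $\ol\psi_{(r-1)e+1}$ part of $\ol T_{(r-1)e+1} e_{\d^d}$ vanishes identically, leaving a polynomial in $\ol y_{(r-1)e+1}, \ol y_{(r-1)e+2}$ times $e_{\d^d}$; this commutes with the polynomial $\ol X_{(r-1)e+1} e_{\d^d}$ trivially.

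The main obstacle is the combinatorial Claim, and within it the $t \ge 2$ case requiring that the natural partition $\{X_1, \ldots, X_d\}$ be uniquely forced by the zero-position data. The $t=1$ counting argument is simpler once one pins down the ``smallest element has value $0$'' structure. Everything else is routine bookkeeping with~\eqref{BK1}--\eqref{BK2}, together with the low-degree abuse of notation that makes the between-block case of (ii) tractable.
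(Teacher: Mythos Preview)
Your arguments for (i) and for the case $j=(r-1)e+1$ of (ii) are correct and match the paper's: the paper cites Lemma~\ref{lem:sh1} and the remark after Lemma~\ref{lem:UV} for $\ol\psi_{(r-1)e+1}e_{\d^d}=0$ rather than reproving it, but your $t=1$ counting argument establishes exactly the same fact. For (iii) your Claim is correct but the paper's route is much shorter: for $1\le t<e$ the transposition $s_{(r-1)e+t}$ lies in $\fr S_{(e^d)}$ and therefore permutes the index set $\{\bi^{(1)}\cdots\bi^{(d)}:\bi^{(k)}\in I^\d\}$ defining $e_{\d^d}$; hence $\psi_{(r-1)e+t}$ already commutes with $e_{\d^d}$ in $R_{d\d}$, and so does $\ol T_{(r-1)e+t}$, which lies in the subalgebra generated by the $e(\bi)$'s together with $\ol\psi_{(r-1)e+t}$, $\ol y_{(r-1)e+t}$, $\ol y_{(r-1)e+t+1}$. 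No analysis of $\cl E_d$ is needed.

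Your treatment of the between-block indices $j=r'e$ in (ii) has a genuine gap: the assertion that the residual term ``lies in degrees $\ge 4$'' is false. For instance with $e=3$, $d=2$, $r=1$, $j=3$ and $\bi=(012012)$ one has $s_3\bi=(010212)\in\cl E_2\setminus I^{(\vn,1)^2}$, so the commutator $[\ol X_1e_{\d^2},\ol T_3e_{\d^2}]=\ol X_1(e_{\d^2}-1)\ol T_3e_{\d^2}$ picks up $-Q^{(0)}_{-1}\,y_1\psi_3 e(\bi)$, which is nonzero of degree $3$. In fact the paper's own proof does not close this gap either: from $[\ol X,\ol T_j]=0$ and $[\ol X,e_{\d^d}]=0$ one only obtains $[\ol Xe_{\d^d},\ol T_je_{\d^d}]=\ol X(e_{\d^d}-1)\ol T_je_{\d^d}$, which for between-block $j$ need not vanish. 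Fortunately, the only application of the lemma (in the proof of Proposition~\ref{prop:tauH}) involves within-block indices $j=(t-1)e+l$ with $1\le l<e$, where part (iii) supplies $e_{\d^d}\ol T_j=\ol T_je_{\d^d}$ and both arguments go through.
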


\begin{proof}
 By~\eqref{BK2}, the element 
$\ol{X}_{(r-1)e+1}$ belongs to the 
subalgebra of $\ol{R}^{\Lda}_{de}$ generated by 
$\{ \ol{e(\bi)} \mid \bi\in I^{d\d}\} \cup \{ \ol{y}_{(r-1)e+1} \}$, and 
each $\ol T_l$ ($1\le l<de$) belongs to the subalgebra generated by 
$\{ \ol{e(\bi)}\mid \bi\in I^{d\d} \}\cup \{\ol\psi_l, \ol{y}_l, \ol{y}_{l+1} \}$.  
Hence, each of the elements $\ol{X}_{(r-1)e+1}$ and $\ol T_{(r-1)e+t}$ ($1\le t<e$) 
commutes with $e_{\d^d}$.  

By the defining relations of $H^{\Lda}_{de}$, 
it is clear that 
$\ol{X}_{(r-1)e+1}$ commutes with $\ol T_{j}$ 
for $(r-1)e+1<j<de$. 
Thus, it only remains to show that 
$\ol{X}_{(r-1)e+1} \ol T_{(r-1)+1} e_{\d^d} = \ol T_{(r-1)e+1} \ol{X}_{(r-1)+1} e_{\d^d}$. By the defining relations~\eqref{rel:eid}--\eqref{rel:ycomm}, $\ol{X}_{(r-1)e+1}e_{\d^d}$ commutes with $y_{(r-1)e+1}$, $y_{(r-1)e+2}$ and the elements $\ol{e(\bi)}$ for $\bi \in I^{d\d}$. Moreover, we have $\ol \psi_{(r-1)e+1} e_{\d^d}=0$ due to Lemma~\ref{lem:sh1} and the statement after Lemma~\ref{lem:UV}. 
By an observation in the previous paragraph, the required identity follows. 
\end{proof}

\begin{proof}[Proof of Proposition~\ref{prop:tauH}]
 By the discussion following the statement of the proposition, we may (and do) 
 assume that $d=2$ and $r=1$.
Whenever $1\le t\le 2$, define elements 
$\wt X_{t,l}\in \ol{R}^{\Lda}_{d\d}$ for $1\le l\le e$ as follows:
\begin{enumerate}[(i)]
 \item if $\xi=1$, then $\wt X_{t,1}=0$ and 
$\wt X_{t,l+1} = \ol T_{(t-1)e+l} \wt X_{t,l} \ol T_{(t-1)e+l} + 
\ol T_{(t-1)e+l}$ for $1\le l<e$;
 \item if $\xi\ne 1$, then $\wt X_{t,1} =1$ and 
$\wt X_{t,l+1} = 
\xi^{-1} \ol T_{(t-1)e+l} \wt X_{t,l} \ol T_{(t-1)e+l}$ for $1\le l<e$. 
\end{enumerate}
We claim that
\begin{align}
 \label{Xti1} \ol X_{(t-1)e+l} e_{\d,\d} 
&= (\wt X_{t,l} + \ol X_{(t-1)e+1}) e_{\d,\d} & \text{if } \xi=1, \\
\label{Xti2}
\ol X_{(t-1)e+l} e_{\d,\d} &= 
\wt X_{t,l} \ol X_{(t-1)e+1} e_{\d,\d} & \text{if } 
\xi\ne 1\phantom{,}
 \end{align}
for $1\le l\le e$ and $1\le t\le 2$. These equalities can be proved by induction on $l$: the base case $l=1$ is clear, and, for $\xi\ne 1$, the inductive step $l\to l+1$ is proved, using the defining relations of $H^{\Lda}_{2e}$ and Lemma~\ref{lem:Xcomm} as follows:
\begin{align*}
 \ol X_{(t-1)e+l+1} e_{\d,\d} &= 
 \xi^{-1} \ol T_{(t-1)e+l} \ol X_{(t-1)e+l} 
 \ol T_{(t-1)e+l} e_{\d,\d} \\
 &= 
\xi^{-1} \ol T_{(t-1)e+l} \wt X_{t,l} 
\ol X_{(t-1)e+1}  e_{\d,\d} \ol T_{(t-1)e+l}  e_{\d,\d} \\
&= \xi^{-1} \ol T_{(t-1)e+l} \wt X_{t,l} \ol T_{(t-1)e+l} \ol X_{(t-1)e+1} e_{\d,\d}\\
&=  \wt X_{t,l+1}  \ol X_{(t-1)e+1} e_{\d,\d}.
\end{align*}
The proof of the inductive step for $\xi=1$ is similar and is left as an exercise.

Let $t\in \{1,2\}$.
It follows from Lemma~\ref{lem:Xcomm} that $\wt X_{t,l}$ commutes with $\ol X_{(t-1)e+1} e_{\d,\d}$ for $1\le l\le e$.
Using this observation, Equations~\eqref{Xti1}--\eqref{Xti2}
 and the fact that 
$(\ol{X}_{(t-1)e+1}- \hat 0) e_{\d,\d}$ is nilpotent for $t=1,2$ 
(cf.~Theorem~\ref{thm:BKiso_detailed}\eqref{BKid2} and~\eqref{bari}), we see that 
\begin{equation}\label{genev}
\eta_t (e(\bi)) \ol{R}^{\Lda}_{d\d} e_{\d,\d} = 
\{ v\in \ol R^{\Lda}_{d\d} e_{\d,\d} \mid (\wt X_{t,l} - \hat i_l)^L v = 0 \text{ for } L \gg 0 \text{ and all } l=1,\ldots,e \}
\end{equation}
whenever $\bi\in I^{\vn,1}$. 
For  $1\le l<e$, we have 
\begin{equation}\label{Twr}
T_{l} T_{w_1} = T_{s_{l} w_1} = T_{w_1 s_{e+l}}= T_{w_1} T_{e+l}
\end{equation}
since $\ell(s_{l}w_1) = \ell(w_1) +1$. It follows easily by an inductive argument that 
\begin{equation}\label{Xti}
\wt X_{1,l} \ol T_{w_1} = \ol T_{w_1} \wt X_{2,l}
\end{equation} 
for all $l=1,\ldots,e$.
By~\eqref{genev} and~\eqref{Xti}, 
we have $e_{\d,\d} \ol T_{w_1} \eta_{2}(e(\bi)) \ol{R}^{\Lda}_{d\d} e_{\d,\d}\subset 
\eta_1 (e(\bi)) \ol{R}^{\Lda}_{d\d} e_{\d,\d}$, whence 
$e_{\d,\d} (\ol T_{w_1})_{\{0\}} \eta_{2} (e(\bi)) = 
\eta_1 (e(\bi)) (\ol{T}_{w_1})_{\{0\}} 
\eta_{2} (e(\bi))$ for all $\bi\in I^{\vn,1}$.
A similar argument (with~\eqref{genev} replaced by an analogous statement where $e_{\d,\d} \ol{R}^{\Lda}_{d\d}$ is viewed as a right module over $\ol R_{d\d}$) 
establishes the first equality in the following equation:
\begin{equation}\label{etaiT}
\eta_1 (e(\bi)) (\ol{T}_{w_1})_{\{0\}} e_{\d,\d} = 
\eta_1 (e(\bi)) (\ol{T}_{w_1})_{\{0\}} \eta_{2} (e(\bi))
= e_{\d,\d} (\ol{T}_{w_1})_{\{0\}} \eta_2( e(\bi) ).
\end{equation}

Assume first that $e>2$. 
By~\eqref{BK1}, we have 
\begin{equation}\label{const}
 \psi_{(t-1)e+l} \eta_t (e(\bi)) = 
((\ol T_{(t-1)e+l})_{\{0,1\}} +P^{(0)}_{i_l-i_{l+1}}) (Q^{(0)}_{i_l-i_{l+1}})^{-1} 
\eta_t (e(\bi))
\end{equation}
whenever $1\le l<e$
since the left-hand side is homogeneous of degree $0$ or $1$.
Let $\tau'= (-1)^e (e_{\d,\d} \ol{T}_{w_1} e_{\d,\d})_{\{0\}} \in 
(\ol R^{\Lda}_{d\d})_{\{0\}} = (\ol R_{d\d})_{\{0\}}$.
Recall that $e_{\d,\d} \ol R_{d\d} e_{\d,\d}$ is 
nonnegatively graded by Lemma~\ref{lem:Bd}\eqref{Bd3}.
It follows from~\eqref{etaiT} that 
$\eta_1 (e(\bi)) \tau' = \tau' \eta_2 (e(\bi))$ for all 
$\bi\in I^{\vn,1}$. By~\eqref{const},~\eqref{Twr} and degree considerations, 
we have $\eta_1 (\psi_l e(\bi)) \tau' = \tau' \eta_{2} (\psi_l e(\bi))$ whenever 
$1\le l<e$ and $\bi\in I^{\vn,1}$.
By Lemmas~\ref{lem:tau_unique} and~\ref{lem:Twrin} together with the fact that $(R^{\Lda_0}_{\d})_{\{0,1\}}$ is contained in the subalgebra generated by the elements of the form $e(\bi)$ and $\psi_l e(\bi)$ for $e(\bi) \in I^{\vn,1}$ and $1\le l<e$, 
we have $\tau_1=\tau'$.

Finally, consider the case when $e=2$.
By Lemma~\ref{lem:BKcalc}, we have 
\[
(y_{2t}-y_{2t-1}) e_{\d,\d} = 
\begin{cases}
((\ol T_{2t-1})_{\{0,1,2\}} +1 )e_{\d,\d} & \text{if } \Char F=2, \\
(2 (\ol T_{2t-1})_{\{0,1,2\}} +1) e_{\d,\d} & \text{if } \Char F\ne 2. 
\end{cases}
\]
for $t=1,2$. Using this formula, Equation~\eqref{Twr}, Lemma~\ref{lem:Xcomm} and 
degree considerations, we obtain 
$(y_2 - y_1) (e_{\d,\d} \ol T_{w_1} e_{\d,\d})_{\{0\}} =
(e_{\d,\d} \ol T_{w_1} e_{\d,\d})_{\{0\}} (y_4-y_3)$, and hence 
$\tau_1 = (e_{\d,\d} \ol T_{w_1} e_{\d,\d})_{\{0\}}$ by Lemmas~\ref{lem:tau_unique} and~\ref{lem:Twrin}. 
\end{proof}

Let $\cl H_{\rho,d}$ be a RoCK block of residue $\k$.
We identify $R^{\Lda_0}_{|\rho|+de}$ with 
$\cl H_{|\rho|+de}$ via the isomorphism $\BK_{|\rho|+de}$. 
 Thus, $\cl H_{|\rho|+de}$ becomes a graded algebra,
 and $C=C_{\rho,d}$ becomes a graded subalgebra of $\cl H_{|\rho|+de}$.
 If $1\le r<d$, define $w'_r = \prod_{j=1}^e (|\rho|+(r-1)e+j, |\rho|+re+j)\in \fr S_{|\rho|+de}$ (cf.~\eqref{w_r}). 

 \begin{prop}\label{prop:Xires}
 The restriction of the graded algebra isomorphism 
 $\Xi \colon \cl H_{\vn,1} \wr \fr S_d\isoto C$ defined in Section~\ref{sec:surj} 
 to $F\fr S_d$ may be described as follows: 
 $\Xi(s_r) = (-1)^e (f_{\rho,d}T_{w'_r}f_{\rho,d})_{\{0\}}$
 whenever $1\le r<d$. 
 \end{prop}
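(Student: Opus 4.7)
The strategy is to combine Proposition~\ref{prop:tauH} with the explicit descriptions of $\rot_\k$ and $\Om$, transferring the Hecke-type formula for $\tau_r$ in $\ol R_{d\d}$ to a corresponding formula in $C_{\rho,d}\subset f_{\rho,d}\cl H_{|\rho|+de}f_{\rho,d}$.

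By Proposition~\ref{prop:tauH}, $\tau_r = (-1)^e (e_{\d^d}\,\ol T_{w_r}\,e_{\d^d})_{\{0\}}$ in $\ol R_{d\d}$, where the identification of the low-degree parts of $\ol R^{\Lda}_{d\d}$ and $\ol R_{d\d}$ from~\S\ref{subsec:BKcalc} is in force. Since $\Xi=\Om\circ\rot_\k\circ\Theta$ with $\Theta(s_r)=\tau_r$ (Corollary~\ref{cor:hom}), and since both $\rot_\k$ and $\Om$ are graded algebra homomorphisms, one obtains
\begin{equation*}
\Xi(s_r)\;=\;(-1)^e\bigl((\Om\circ\rot_\k)(e_{\d^d}\,\ol T_{w_r}\,e_{\d^d})\bigr)_{\{0\}}.
\end{equation*}
It therefore suffices to prove the equality
\begin{equation*}
\bigl((\Om\circ\rot_\k)(e_{\d^d}\,\ol T_{w_r}\,e_{\d^d})\bigr)_{\{0\}}\;=\;(f_{\rho,d}\,T_{w'_r}\,f_{\rho,d})_{\{0\}}
\end{equation*}
in $(R^{\Lda_0}_{|\rho|+de})_{\{0\}}$, after identifying $\cl H_{|\rho|+de}$ with $R^{\Lda_0}_{|\rho|+de}$ via $\BK_{|\rho|+de}$.

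To verify this, the plan is to fix a reduced expression $w_r=s_{j_1}\cdots s_{j_{e^2}}$ (so that $w'_r=s_{|\rho|+j_1}\cdots s_{|\rho|+j_{e^2}}$ is reduced as well) and expand the product $\ol T_{w_r}\,e_{d\d}=\ol T_{j_1}\cdots\ol T_{j_{e^2}}\,e_{d\d}$ via the Brundan--Kleshchev formula~\eqref{BK1}. This produces a sum of products of expressions of the form $(\psi_{j_k}Q_{i_{j_k}-i_{j_k+1}}(y_{j_k},y_{j_k+1})-P_{i_{j_k}-i_{j_k+1}}(y_{j_k},y_{j_k+1}))e(\bi)$ with $\bi\in I^{d\d}$. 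Applying $\Om\circ\rot_\k$ to this expansion, and using~\eqref{eq:edd} together with the definitions of $\rot_\k$ and $\Om$, each $\psi_{j_k}$ is replaced by $\psi_{|\rho|+j_k}$, each $y_{j_k}$ by $y_{|\rho|+j_k}$, and each $e(\bi)$ (for $\bi\in\cl E_d$) by $\sum_{\bj\in I^{\rho,0}}e(\bj\,\bi^{+\k})$, while terms with $\bi\notin\cl E_d$ vanish. Crucially, the residue differences $i_{j_k}-i_{j_k+1}$ are invariant under the global shift $\bi\mapsto\bi^{+\k}$, so the arguments and indices of $P$ and $Q$ are unchanged. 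A term-by-term comparison with the analogous expansion of $f_{\rho,d}\,T_{w'_r}\,f_{\rho,d}$ obtained from $\BK_{|\rho|+de}(T_{|\rho|+j_k})$ then shows that the two sides above agree after the identifications of~\S\ref{subsec:BKcalc}.

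The main obstacle will be the bookkeeping needed to keep straight the several identifications in play: between the low-degree parts of $R^{\Lda}_{d\d}$ and $R_{d\d}$, between $\ol R_{d\d}$ and $\wh R_{d\d}$ via $\rot_\k$, and between $\wh R_{d\d}$ and the block $R^{\Lda_0}_{\cont(\rho)+d\d}$ of $\cl H_{|\rho|+de}$ via $\Om$. Once these identifications are handled carefully, the required identity follows directly from the BK formulas and the invariance of residue differences under the shift $\rot_\k$.
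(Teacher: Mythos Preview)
Your proposal is correct and follows essentially the same route as the paper: use Proposition~\ref{prop:tauH}, then push the formula through $\rot_\k$ and $\Om$ using that the Brundan--Kleshchev expression~\eqref{BK1} for $T_k$ depends only on the residue \emph{differences} $i_k-i_{k+1}$. The only stylistic difference is that the paper avoids the full term-by-term expansion you describe: instead it observes directly that the rotation automorphism~\eqref{rot1} fixes each $T_k e_{d\d}$, and that the induced map $\om^{\Lda}\colon R^{\Lda}_{d\d}\to R^{\Lda_0}_{\cont(\rho),d\d}$ sends $T_k e_{d\d}$ to $\iota_{|\rho|}^{|\rho|+de}(e_{\cont(\rho)})\,T_{|\rho|+k}\,e_{\cont(\rho)+d\d}$, so one can multiply these identities along any reduced expression for $w_r$ and conclude at once.
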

 
 \begin{proof}
 We may assume that the integer $N$ determining $\Lda$ in~\S\ref{subsec:BKcalc} 
 is chosen to be large enough so that 
 $R^{\Lda_0}_{|\rho|+de}= (R^{\Lda_0}_{|\rho|+de})_{\le 2N-2}$.
 By definition, $\Xi(s_r) = \Om (\rot_{\k} (\tau_r))$. 
 We define $\wh R^{\Lda}_{d\d}$ to be the quotient of $R^{\Lda}_{d\d}$ by the two-sided ideal generated by $\{ e(\bi) \mid \bi\in I^{d\d} \sm \cl E_{d,\k}\}$ and identify $(\wh R_{d\d})_{\{0,1,2\}}$ with 
  $(\wh R_{d\d}^{\Lda})_{\{0,1,2\}}$ (cf.~\S\ref{subsec:BKcalc}). 
 It follows from~\eqref{BK1} that 
 the automorphism of $R_{d\d}^{\Lda}$ given by~\eqref{rot1} 
 fixes $T_k e_{d\d}$ whenever $1\le k<de$, so by Proposition~\ref{prop:tauH} we have 
 $\rot_{\k} (\tau_r) = (-1)^e \rot_\k ((e_{\d^d}\ol{T}_{w_r} e_{\d^d})_{\{0\}}) = 
 (-1)^e e_{\d^d}(\wh{T}_{w_r})_{\{0\}} e_{\d^d}$,
 where $\wh{\phantom{a}} \colon R_{d\d} \thra \wh R_{d\d}$ is the natural projection. 
 By the choice of $N$, the map
 $\om\colon R_{d\d} \to R^{\Lda_0}_{\cont(\rho),d\d}$ of Proposition~\ref{prop:im_om} 
 induces a homomorphism
 $\om^\Lda \colon R_{d\d}^{\Lda} \to R^{\Lda_0}_{\cont(\rho),d\d}$.
 It follows from~\eqref{BK1} that 
 $\om^{\Lda} (T_k e_{d\d}) = 
 \iota_{|\rho|}^{|\rho|+de} (e_{\cont(\rho)}) T_{|\rho|+k} e_{\cont(\rho)+d\d}$
 whenever $1\le k< de$. Therefore, 
  $\om^{\Lda} (T_{w_r} e_{d\d}) = 
   \iota_{|\rho|}^{|\rho|+de} (e_{\cont(\rho)}) T_{w'_r} e_{\cont(\rho)+d\d}$, 
   and so, as $\Om(\wh e_{\d^d})=f_{\rho,d}$, we have 
   $\Om (e_{\d^d} (\wh T_{w_r})_{\{0\}} e_{\d^d})= (f_{\rho,d} T_{w'_r} f_{\rho,d})_{\{0\}}$.
 \end{proof}

\begin{thm}\label{thm:Hecke}
Suppose that $\xi=1$, and let $f=f_{\rho,d}$. Then we have a graded algebra isomorphism
$\cl H_{\rho,0} \ot (\cl H_{\vn,1} \wr \fr S_d) \isoto f\cl H_{|\rho|+de} f$ given as follows:
\begin{align*}
 a\ot (b_{\vn,1}^{\ot r-1} \ot T_l b_{\vn,1} \ot b_{\vn,1}^{\ot d-r}) 
 &\mapsto a T_{|\rho|+(r-1)e+l} f &\text{ for } 1\le r\le d, \, 1\le l<e,  \\
 a\ot s_r &\mapsto a(fT_{w'_r} f)_{\{0\}} &\text{ for } 1\le r<d
\end{align*}
for all $a\in \cl H_{\rho,0}$. 
\end{thm}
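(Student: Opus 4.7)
The plan is to exhibit the map in the theorem as, essentially, the isomorphism of Theorem~\ref{thm:main2} composed with a simple sign-twist automorphism, and thereby automatically deduce its well-definedness and bijectivity.

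First, I would combine Theorem~\ref{thm:main2} with the graded isomorphism~\eqref{eq:tensoriso} and the Brundan--Kleshchev identifications $\BK_{|\rho|}$, $\BK_e$, $\BK_{|\rho|+de}$ of Proposition~\ref{prop:BKiso} to produce a graded algebra isomorphism
\[
\Phi_0\colon \cl H_{\rho,0}\otimes(\cl H_{\vn,1}\wr\fr S_d)\isoto f\cl H_{|\rho|+de}f,\qquad a\otimes z\mapsto a\,\Xi(z),
\]
where $\cl H_{\rho,0}$ is embedded via multiplication by $f$ and $\Xi$ is as in Section~\ref{sec:surj}. Next, I would introduce the map
\[
\varepsilon\colon \cl H_{\vn,1}\wr\fr S_d\to\cl H_{\vn,1}\wr\fr S_d,\qquad a_1\otimes\cdots\otimes a_d\otimes\sigma\mapsto\sgn(\sigma)^e\,(a_1\otimes\cdots\otimes a_d\otimes\sigma),
\]
and observe that $\varepsilon$ is a well-defined graded algebra automorphism: when $e$ is even (only $e=2$ can occur since $\xi=1$ forces $e$ prime) it is the identity, while when $e$ is odd it amounts to $s_r\mapsto -s_r$ in $F\fr S_d$ with the $\cl H_{\vn,1}^{\otimes d}$-part fixed, which one checks directly preserves the defining wreath-product relations. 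Setting $\Phi_1:=\Phi_0\circ(\id\otimes\varepsilon)$, the task reduces to showing that $\Phi_1$ coincides with the map $\Phi$ described in the theorem on an algebra generating set of $\cl H_{\rho,0}\otimes(\cl H_{\vn,1}\wr\fr S_d)$.

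For the symmetric-group generators, the match is immediate from Proposition~\ref{prop:Xires}:
\[
\Phi_1(a\otimes s_r)=a\,\Xi((-1)^e s_r)=a\cdot(-1)^{2e}(fT_{w'_r}f)_{\{0\}}=a(fT_{w'_r}f)_{\{0\}}.
\]
For the tensor-factor generators, I would reduce the claim to the identity
\[
\Xi\bigl(e_\delta^{\otimes r-1}\otimes\BK_e(T_l b_{\vn,1})\otimes e_\delta^{\otimes d-r}\bigr)=\BK_{|\rho|+de}(T_{|\rho|+(r-1)e+l})\,f
\]
in $R^{\Lda_0}_{\cont(\rho)+d\delta}$, for each $1\le r\le d$ and $1\le l<e$. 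I would prove this by expanding both sides using~\eqref{BK1} and~\eqref{BKf2} and tracing through $\Xi=\Om\circ\rot_\kappa\circ\Theta$ with $\Theta|_{(R^{\Lda_0}_\delta)^{\otimes d}}=H=\iota\circ\eta^{\otimes d}$. The image of $\BK_e(T_l)e_\delta$ under $\eta$ is a sum over $\bi\in I^\delta$ of terms
\[
\bigl(\ol\psi_l\,Q_{i_l-i_{l+1}}(\ol y_l-\ol y_1,\ol y_{l+1}-\ol y_1)-P_{i_l-i_{l+1}}(\ol y_l-\ol y_1,\ol y_{l+1}-\ol y_1)\bigr)\ol{e(\bi)}.
\]
The crucial point is that since $\xi=1$, Theorem~\ref{thm:BKiso_detailed}\eqref{BKid3} guarantees $P_i,Q_i\in F[[y-y']]$, so the shift by $\ol y_1$ is invisible and this expression simplifies to $\sum_{\bi\in I^\delta}(\ol\psi_l Q_{i_l-i_{l+1}}(\ol y_l,\ol y_{l+1})-P_{i_l-i_{l+1}}(\ol y_l,\ol y_{l+1}))\ol{e(\bi)}$. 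Pushing through $\iota$ shifts the strand index from $l$ to $(r-1)e+l$; applying $\rot_\kappa$ shifts all residues by $\kappa$; and applying $\Om$ prepends a sum over $\bc\in I^{\rho,0}$ while shifting the strand index by $|\rho|$. The resulting expression matches the right-hand side expanded via~\eqref{BK1} and~\eqref{BKf2}, using that position $|\rho|+(r-1)e+l$ in the concatenation $\bc\,\bi^{(1)}\cdots\bi^{(d)}$ lies at local position $l$ within the $r$-th block $\bi^{(r)}\in I^{\vn,1}_{+\kappa}$.

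The main obstacle is the KLR-side bookkeeping in this last identity: keeping track simultaneously of the strand-index shifts contributed by $\iota$ and $\Om$, the residue shift contributed by $\rot_\kappa$, and the cancellation of the $\ol y_1$-twist built into $\eta$. This is precisely the place where the hypothesis $\xi=1$ is essential: without it $P_i,Q_i\notin F[[y-y']]$, the twist by $\ol y_1$ would not disappear, and the image of a single Hecke generator $T_l$ would no longer simplify to a single Hecke generator on the big algebra. Once this identity is established, the values of $\Phi_1$ on the listed generators of $\cl H_{\rho,0}\otimes(\cl H_{\vn,1}\wr\fr S_d)$ coincide with those prescribed for $\Phi$, so the prescription in the theorem consistently defines $\Phi=\Phi_1$, which is therefore a graded algebra isomorphism.
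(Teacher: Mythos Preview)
Your proposal is correct and follows essentially the same approach as the paper: both arguments reduce to identifying the desired map as $\Xi$ precomposed with the sign-twist automorphism $s_r\mapsto(-1)^e s_r$ of $\cl H_{\vn,1}\wr\fr S_d$, handling the $\fr S_d$-generators via Proposition~\ref{prop:Xires} and the $\cl H_{\vn,1}^{\otimes d}$-generators by exploiting Theorem~\ref{thm:BKiso_detailed}\eqref{BKid3} to make the $\ol y_1$-shift in $\eta$ disappear. Your write-up is slightly more explicit about the bookkeeping of index and residue shifts, but the logic is the same.
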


\begin{proof}
Due to~\eqref{eq:tensoriso} and Proposition~\ref{prop:BKiso},
it is enough to show that we have an isomorphism from 
$\cl H_{\vn,1} \wr \fr S_d$ onto $C$ given by
\begin{align} 
b_{\vn,1}^{\ot r-1} \ot T_l b_{\vn,1} \ot b_{\vn,1}^{\ot d-r}
 &\mapsto T_{|\rho|+(r-1)e+l} f &\text{ for } 1\le r\le d, \, 1\le l<e, \label{Hecke1} \\
s_r &\mapsto (fT_{w'_r} f)_{\{0\}} &\text{ for } 1\le r<d. \label{Hecke2}
\end{align}
We identify $\cl H_{\vn,1}$ with $R^{\Lda_0}_{\d}$ via $\BK_e$, so that 
$b_{\vn,1} = e_{\d}$. 
For $1\le r\le d$, $1\le l<e$ and $\bi\in I^{\d}$, we have 
\begin{align*}
\Xi( e_{\d}^{\ot r-1} \ot e(\bi) \ot e_{\d}^{\ot d-r}) 
&=\sum e(\bj \bi^{(1)} \ldots \bi^{(r-1)} (\bi^{+\k}) \bi^{(r+1)} \ldots 
\bi^{(d)}), \\
\Xi(e_{\d}^{\ot r-1} \ot \psi_l e_{\d} \ot e_\d^{\ot d-r} ) &= 
\psi_{|\rho|+(r-1)e+l} 
\sum e(\bj \bi^{(1)} \ldots \bi^{(r-1)} (\bi^{+\k}) \bi^{(r+1)} \ldots 
\bi^{(d)}), \\
\Xi(e_{\d}^{\ot r-1} \ot (y_{l}-y_{l+1}) e_{\d} \ot e_\d^{\ot d-r} )
&= \\
& \hspace{-30mm}
(y_{|\rho|+(r-1)e+l} - y_{|\rho|+(r-1)e+l+1}) 
\sum e(\bj \bi^{(1)} \ldots \bi^{(r-1)} (\bi^{+\k}) \bi^{(r+1)} \ldots 
\bi^{(d)}),
\end{align*}
where each sum is over all $\bj\in I^{\rho,0}$ and $\bi^{(1)},\ldots,\bi^{(r-1)}, \bi^{(r+1)}, \ldots,\bi^{(d)}\in I^{\vn,1}_{+\k}$. 
Hence, by Theorem~\ref{thm:BKiso_detailed}\eqref{BKid1}\eqref{BKid3}, 
we have 
$\Xi(b_{\vn,1}^{\ot (r-1)} \ot T_l b_{\vn,1} \ot b_{\vn,1}^{\ot (d-r)})
= T_{|\rho|+(r-1)e+l} f$. 
By Proposition~\ref{prop:Xires}, it follows that the composition of $\Xi$ 
with the automorphism of 
$\cl H_{\vn,1} \wr \fr S_d$ that sends $s_r$ to $(-1)^e s_r$ for all $r$ and 
is the identity on $\cl H_{\vn,1}^{\ot d}$ is given by~\eqref{Hecke1}--\eqref{Hecke2}. 
\end{proof}

\end{document}